\theoremstyle{plain}
\newtheorem{thm}{Theorem}[section]
\newtheorem{prop}[thm]{Proposition}
\newtheorem{cor}[thm]{Corollary}
\newtheorem{dfn}[thm]{Definition}
\newtheorem{lem}[thm]{Lemma}
\newcommand{\suspend}{}
\newcommand{\cc}{\mathbb{C}}
\newcommand{\zz}{\mathbb{Z}}
\newcommand{\irr}[1]{\mathrm{Irr}(#1)}
\newcommand{\triv}{\mathrm{triv}}
\newcommand{\sgn}{\mathrm{sgn}}
\newcommand{\htriv}{\mathrm{htriv}}
\newcommand{\hsgn}{\mathrm{hsgn}}
\newcommand{\Hom}{\mathrm{Hom}}
\newcommand{\End}{\mathrm{End}}
\newcommand{\Ext}{\mathrm{Ext}}
\newcommand{\im}{\mathrm{Im}}
\newcommand{\gs}[1]{\langle #1\rangle}
\newcommand{\pd}{\mathrm{pd\,}}
\newcommand{\gch}{\mathrm{gch\,}}
\newcommand{\gdim}{\mathrm{gdim\,}}
\newcommand{\tr}{\mathrm{Tr\,}}
\newcommand{\gep}[2]{\mathbf{gEP}(#1,#2)}
\newcommand{\hgmod}{\mathrm{\mathchar`-gmod}}
\newcommand{\C}{C}
\newcommand{\filt}{\blacktriangleright}
\begin{document}

\title{Representation theoretic interpretation of the Springer correspondence for dihedral groups}
\author{Sususmu Higuchi}
\date{\today}
\begin{abstract}
    The Lusztig–Shoji algorithm is generalized to a complex reflection group $W$ and give us a version of the Springer correspondence of $W$. 
    We show that the combinatorics of generalized Springer correspondences of dihedral groups of order $2n$ exhibit the Brauer-Humphreys type reciprocity as in the case of Weyl groups for odd $n$, and these constitute a major portion of the stratification of the natural module categories attached to them.
\end{abstract}

    \maketitle

\section{Introduction}
Green polynomials, introduced by Green \cite{Green1955}, is a family of polynomials indexed by a pair of partitions that describes a part of the character table of a general linear group $\mathop{GL} (n, \mathbb F_q)$.
They are transformations of the Kostka polynomials, which are $q$-analogues of the Kostka numbers that appear in representation theory of $\mathop{GL} (n, \mathbb C)$ (see e.g.~\cite{Fulton2004}). As such, Kostka polynomials satisfies the orthogonality relation arising from the orthogonality of characters of $\mathop{GL} (n, \mathbb F_q)$. 
In more concrete terms, we have a $\mathbb Z [\![q]\!]$-valued matrix equality
\begin{equation}\label{eq:LS-orth}P \cdot \Lambda \cdot P^{\mathtt t} = \Omega,
\end{equation}
where $P = (P_{\lambda, \mu})$ is the square matrix recording Kostka polynomials, $\Lambda$ is a diagonal matrix, $\Omega = (\Omega_{\lambda, \mu})$ is the matrix defined as
$$\Omega_{\lambda, \mu} = \frac{1}{n!} \sum_{\sigma \in \mathfrak S_n} \frac{\chi_\lambda ( \sigma) \chi_{\mu} ( \sigma )}{\det (1_n - q \cdot \rho (\sigma))} \in \mathbb Z [\![q]\!],$$
where $\chi _\lambda (\sigma)$ is the character value of the $\mathfrak S_n$-representation corresponding to the partition $\lambda$ evaluated at $\sigma \in \mathfrak S_n$, and $\rho$ is the permutation representation of $\mathfrak S_n$. In conjunction with the triangularity relation
\begin{equation}\label{eq:tri-rel}P_{\lambda, \mu} = \begin{cases} 0 & \lambda \not\ge \mu \\ 1 & \lambda = \mu \end{cases},
\end{equation}
the equation (\refeq{eq:LS-orth}) has a unique solution valued in $\mathbb Z_{\ge 0} [q]$.

The definition of Green polynomials is extended to an arbitrary Weyl group $W$, and they are also connected with the representation theory of the corresponding finite groups of Lie types (\cite{Deligne-Lusztig1976, Lusztig1984, Lusztig1990}).
There, we have a preorder on the set of (isomorphism classes of) irreducible representations $\irr{W}$ of $W$ coming from the generalized Springer correspondence, and we have an appropriate generalization of the matrix $\Omega$ for $W$.
It is an observation of Shoji that (\refeq{eq:LS-orth}) and (\refeq{eq:tri-rel}) also determine such generalized Green/Kostka polynomials.

Achar-Aubert \cite{Achar2007} generalized the definition of Green polynomials to the case of dihedral groups $W = D_{n}$, that are Coxeter groups but not necessarily a Weyl group. In their situation, the preorders on $\irr{W}$ are given by using partitions arising from special representations of $W$.
Their main result is that a suitable class of preorder yields a unique solution of analogues of (\refeq{eq:LS-orth}) and (\refeq{eq:tri-rel}) which are valued in $\mathbb Z_{\ge 0} [q]$. 

In \cite{kato2013homological}, the row vectors of the matrix $P$ are interpreted as a collection of modules over an algebra that satisfies an orthogonality relation with respect to the $\mathrm{Ext}$-functor. This fact can be understood as a version of the Brauer-Humphreys type reciprocity.
The goal of this paper is to prove that all the preorders on $\irr{W}$ considered in \cite{Achar2007} admit such interpretations, and they constitute a major part of all the preorders with this property when $n$ is odd.

To state our results precisely, we introduce some more notation: Let $S := \mathbb C [X,Y]$ be the polynomial ring on which $W = D_n$ ($n$ is odd) acts as the reflection representation. The algebra $(S * W)$ is graded, and hence we have the category $(S*W)\mathchar`-\mathrm{gmod}$ of finitely generated graded $(S*W)$-modules. This category admits autoequivalences $\left< i \right>$ ($i \in \zz$) that shift the grading.
The set of isomorphism classes of simple graded modules of $(S * W)$ is indexed by $\irr{W}$ (up to $\left<\bullet\right>$). Let $L_\lambda$ be the simple graded $(S*W)$-module corresponding to $\lambda \in \irr{W}$, and let $P_\lambda$ be its projective cover.

In accordance with \cite{kato2013homological}, we define two modules $\widetilde{K}_\lambda, K_\lambda \in (S*W)\mathchar`-\mathrm{gmod}$ determined by a given preorder $\precsim$ on $\irr{W}$ (and indexed by $\irr{W}$) as follows:
\[\widetilde{K}_\lambda = P_\lambda /\sum_{\lambda \prec \mu,d>0, f:P_\mu\gs{d}\rightarrow P_\lambda} \im f\]
\[K_\lambda = P_\lambda /\sum_{\lambda \precsim \mu,d>0, f:P_\mu\gs{d}\rightarrow P_\lambda} \im f.\]

Note that an analogue of (\refeq{eq:tri-rel}) for $K_\lambda$'s, that is the multiplicity formula
$$[K_{\lambda} : L_{\mu}] = \begin{cases}0 & (\lambda \prec \mu) \\ 1 & (\lambda = \mu) \end{cases},$$
holds by its definition. 

We have the following analogue of the Brauer-Humphreys type reciprocity:
\begin{prop}[$=$ Proposition \ref{prop:reciprocity} + Proposition \ref{prop:verify-orth}]
    \label{BHR}\hbox{}
    Suppose $P_{\mu}$ $(\mu \in \irr{W})$ admits a finite filtration by graded shifts of $\{ \widetilde{K}_\lambda\}_{\lambda \in \irr{W}}$.
    Let $(P_{\mu} : \widetilde{K}_\lambda)_q$ be the graded multiplicity of $\widetilde{K}_\lambda$ in the filtration pieces of $P_\mu$.
    
    The collection of modules $\{ \widetilde{K}_\lambda,K_\lambda\}_{\lambda \in \irr{W}}$ satisfies the orthogonality relation
    \begin{equation}
    \Ext^i_{(S*W)\mathchar`-\mathrm{mod}}(\widetilde{K}_\lambda,K_\mu^*)_d= 
        \begin{cases}
            \cc &(\lambda = \mu \textrm{ and } i=d=0),\\
            0   &(otherwise)
        \end{cases}
            \label{eq:Ext-orth}
    \end{equation}
    if and only if 
    $$(P_{\mu} : \widetilde{K}_\lambda)_q = [K_{\lambda} : L_{\mu}]_q $$
    holds for $\lambda,\mu \in \irr{W}$.
\end{prop}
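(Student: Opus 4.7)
The strategy is to use the graded Hom space $\Hom_{(S*W)\hgmod}(P_\mu, K_\nu^*)$ as the bridge between the two sides of the equivalence. Since $P_\mu$ is the graded projective cover of $L_\mu$, one reads off
\[
\dim \Hom_{(S*W)\hgmod}(P_\mu, K_\nu^*)_d = [K_\nu^* : L_\mu\gs{d}],
\]
which graded duality rewrites (up to the standard shift convention) in terms of $[K_\nu : L_\mu]_q$. Dually, applying $\Hom(-, K_\nu^*)$ to the finite $\widetilde{K}$-filtration of $P_\mu$ assembles a cascade of long exact sequences whose alternating-dimension identity gives
\[
[K_\nu : L_\mu]_q \;=\; \sum_{\lambda \in \irr{W}} (P_\mu : \widetilde{K}_\lambda)_q \cdot \chi_q(\widetilde{K}_\lambda, K_\nu^*),
\]
where $\chi_q(X, Y) := \sum_i (-1)^i \sum_d q^d \dim \Ext^i(X, Y)_d$ is the graded Euler characteristic; only the $i=0$ term contributes on the left-hand side because $P_\mu$ is projective.

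For the ($\Leftarrow$) direction, the Ext-orthogonality forces $\chi_q(\widetilde{K}_\lambda, K_\nu^*) = \delta_{\lambda,\nu}$, so the identity collapses to $(P_\mu : \widetilde{K}_\nu)_q = [K_\nu : L_\mu]_q$, which is the reciprocity.

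For the ($\Rightarrow$) direction, substituting the assumed reciprocity into the Euler identity yields
\[
[K_\nu : L_\mu]_q = \sum_\lambda [K_\lambda : L_\mu]_q \cdot \chi_q(\widetilde{K}_\lambda, K_\nu^*).
\]
The matrix $([K_\lambda : L_\mu]_q)_{\lambda, \mu}$ is unitriangular with respect to $\precsim$ by the composition-multiplicity triangularity recalled just above the statement, hence invertible; inverting forces $\chi_q(\widetilde{K}_\lambda, K_\nu^*) = \delta_{\lambda,\nu}$. To promote this to term-by-term Ext vanishing, I would first establish $\Hom(\widetilde{K}_\lambda, K_\nu^*) = \delta_{\lambda,\nu}\cc$ (concentrated in degree $0$) directly from head/socle considerations: the head of $\widetilde{K}_\lambda$ is $L_\lambda\gs{0}$, the socle of $K_\nu^*$ is $L_\nu\gs{0}$, and the $\precsim$-triangularity of the composition factors of these two modules blocks every other map. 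The remaining higher-Ext vanishing I would extract by a simultaneous induction on $\precsim$ and on homological degree $i$, using that $(S*W)\hgmod$ has finite global dimension so that each $\Ext^i$ sum is finite.

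The main obstacle is this last step in the converse direction: passing from the aggregated Euler-characteristic equality to the vanishing of each individual $\Ext^i(\widetilde{K}_\lambda, K_\nu^*)$. This requires either a purity/linearity statement relating homological and internal degrees (as in Koszul-type settings), or a careful induction that exploits the $\precsim$-triangularity of composition factors to peel off one Ext group at a time together with positivity of dimensions. The ($\Leftarrow$) direction and the $\Hom$-level orthogonality are by contrast essentially formal consequences of the filtration hypothesis.
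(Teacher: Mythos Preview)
Your approach for the direction ``orthogonality implies reciprocity'' (which you label $(\Leftarrow)$; note your labels are swapped from the usual convention) is essentially the paper's Proposition~\ref{prop:reciprocity}: the orthogonality hypothesis kills all higher $\Ext$, so the Euler form collapses to $\Hom$, and the filtration count goes through. This part is fine.

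The converse is where your proposal diverges from the paper and has a genuine gap. Your matrix-inversion argument correctly yields $\chi_q(\widetilde{K}_\lambda, K_\nu^*) = \delta_{\lambda\nu}$ at the level of Euler characteristics, but as you yourself flag, upgrading to term-by-term vanishing is the whole difficulty, and neither of your two proposed ingredients closes it. Your head/socle argument for $\Hom$-orthogonality is incomplete: knowing that the socle of $K_\nu^*$ is $L_\nu$ only blocks maps when $L_\nu$ fails to be a composition factor of $\widetilde{K}_\lambda$, which the definition of $\widetilde{K}_\lambda$ guarantees for $\nu \succ \lambda$ but \emph{not} for $\nu \precsim \lambda$ with $\nu \neq \lambda$, nor for $\nu$ incomparable to $\lambda$. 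The paper (Proposition~\ref{prop:verify-orth}) handles these remaining cases by invoking the $K$-filtration of $\widetilde{K}_\lambda$ (the third condition of Definition~\ref{dfn:BHR-conditions}, via Corollary~\ref{cor:KtildeK-equiv}) together with a pairing argument specific to the dual-module structure of $K_\mu^*$; your triangularity claim does not supply this.

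More fundamentally, the paper does not pass through the Euler characteristic at all for the converse. It runs a single induction on $\precsim$: for $\lambda$ with all $\alpha \succ \lambda$ already treated, the reciprocity hypothesis together with Propositions~\ref{prop:PKtilde-precsimK} and~\ref{prop:lessrelations} forces every non-initial factor of a $\widetilde{K}$-filtration of $P_\lambda$ to be a shift of some $\widetilde{K}_\alpha$ with $\alpha \succ \lambda$. The induction hypothesis then gives $\Ext^{>0}(M^1, K_\mu^*) = 0$ for the kernel $M^1$ of $P_\lambda \twoheadrightarrow \widetilde{K}_\lambda$, and the long exact sequence attached to $0 \to M^1 \to P_\lambda \to \widetilde{K}_\lambda \to 0$ immediately kills $\Ext^{\geq 2}(\widetilde{K}_\lambda, K_\mu^*)$ and reduces $\Ext^1$ and $\Hom$ to an explicit dimension count. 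Your ``simultaneous induction on $\precsim$ and homological degree'' gestures at this, but the Euler-characteristic framing obscures the actual mechanism: the point is not to cancel alternating sums using positivity, but to propagate the \emph{vanishing} of higher $\Ext$ through the filtration one step of $\precsim$ at a time, so that no cancellation argument is ever needed.
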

Our main result is stated as follows:

\begin{thm}[$=$ Theorem \ref{thm:mainthm} + Corollary \ref{cor:modified-springerorder} + Corollary \ref{cor:preorder-uniqueness}]\label{fmain}\hbox{}
The collection of modules $\{ \widetilde{K}_\lambda,K_\lambda\}_{\lambda \in \irr{W}}$ satisfies the one of the equivalent conditions of Proposition \ref{BHR}
   if and only if the preorder $\precsim$ is one of the following:
\begin{enumerate}[(i)]
    \item 
        $\triv\sim\sgn\sim\chi_1\sim\chi_2\sim\chi_3\sim \cdots \sim\chi_{\frac{n-1}{2}-1}\sim\chi_{\frac{n-1}{2}}$
    \item \label{item:secondcase}
        $\triv \left\{\begin{matrix}\prec\\\sim\end{matrix}\right\} \chi_1 \left\{\begin{matrix}\prec\\\sim\end{matrix}\right\} \chi_2 \left\{\begin{matrix}\prec\\\sim\end{matrix}\right\} \chi_3 \left\{\begin{matrix}\prec\\\sim\end{matrix}\right\} \cdots \left\{\begin{matrix}\prec\\\sim\end{matrix}\right\}\chi_{\frac{n-1}{2}-1} \left\{\begin{matrix}\prec\\\sim\end{matrix}\right\} \chi_{\frac{n-1}{2}}\prec\sgn$
    \item 
        $\sgn \left\{\begin{matrix}\prec\\\sim\end{matrix}\right\} \chi_1 \left\{\begin{matrix}\prec\\\sim\end{matrix}\right\} \chi_2 \left\{\begin{matrix}\prec\\\sim\end{matrix}\right\} \chi_3 \left\{\begin{matrix}\prec\\\sim\end{matrix}\right\} \cdots \left\{\begin{matrix}\prec\\\sim\end{matrix}\right\}\chi_{\frac{n-1}{2}-1} \left\{\begin{matrix}\prec\\\sim\end{matrix}\right\} \chi_{\frac{n-1}{2}}\prec\triv$.
    \end{enumerate}
\end{thm}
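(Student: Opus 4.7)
The plan is to invoke Proposition \ref{BHR} to reduce the theorem to the multiplicity identity
$$(P_\mu:\widetilde K_\lambda)_q=[K_\lambda:L_\mu]_q \quad \text{for all } \lambda,\mu\in\irr{W},$$
and then to determine exactly which preorders on $\irr{W}$ produce it. The main input is the graded Hom algebra between the projective covers. Using $P_\lambda\cong (S*W)\otimes_{\cc W}\lambda$, one has
$$\Hom_{(S*W)}(P_\mu\gs{d},P_\lambda)=(\mu^*\otimes S_d\otimes\lambda)^W,$$
which for odd $n$ is computed from the invariant theory of $D_n$: the fundamental invariants have degrees $2$ and $n$, the coinvariant algebra $S/S^W_+$ realizes the regular representation, and the fake-degree polynomials read
$$f_{\triv}(q)=1,\quad f_{\sgn}(q)=q^n,\quad f_{\chi_i}(q)=q^i+q^{n-i},$$
while the tensor products $\lambda\otimes\mu$ are determined by the character table. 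First I would tabulate, for every ordered pair $(\lambda,\mu)\in\irr{W}^2$, the set of positive degrees $d$ with $\Hom(P_\mu\gs{d},P_\lambda)\neq 0$; by the definitions this data completely determines $\widetilde K_\lambda$ and $K_\lambda$ for any given preorder $\precsim$.

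For the \emph{if} direction I would verify the multiplicity identity case by case for (i), (ii), and (iii). The key observation is that the Springer-type chain $\chi_1\succsim\cdots\succsim\chi_{(n-1)/2}$ has a triangular Hom pattern --- the relevant positive-degree Hom from $\chi_j$ to $\chi_i$ occurs only when $i\le j$ and with a controlled leading degree --- so the quotients $\widetilde K_\lambda$ and $K_\lambda$ stratify $P_\mu$ compatibly and the reciprocity follows by a direct comparison of graded composition multiplicities of simples in $K_\lambda$ with graded filtration multiplicities of $\widetilde K_\lambda$ in $P_\mu$ in each regime. Cases (ii) and (iii) are essentially the same calculation with $\triv$ and $\sgn$ swapped, since the degree-$n$ invariant is the only place the coinvariant algebra distinguishes between them.

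The \emph{only if} direction is the main obstacle and I would treat it contrapositively. Any preorder not in the three families must exhibit at least one of the following violations: (a) a comparison $\chi_i\prec\chi_j$ with $i>j$ (or the reverse) against the Springer order on the two-dimensional characters; (b) an incorrect placement of $\triv$ relative to the $\chi_i$-chain in case (ii), or of $\sgn$ in case (iii); or (c) both $\triv$ and $\sgn$ placed strictly above (respectively, below) some $\chi_i$ when they should sandwich the chain as in the three listed regimes. For each such violation, the Hom tables from the setup exhibit an explicit pair $(\lambda,\mu)$ whose multiplicities disagree; the obstruction typically appears in a comparison between a linear character and a two-dimensional $\chi_i$ and is detected by the degree-$n$ invariant, which is exactly where $f_{\triv}$ and $f_{\sgn}$ part company. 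Since $D_n$ has only $(n+3)/2$ irreducibles, the case analysis terminates after a finite, explicit check whose bookkeeping is the main technical labor of the proof.
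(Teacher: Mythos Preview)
Your plan has two genuine gaps that the paper's argument is specifically designed to close.

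\textbf{The ``if'' direction needs explicit filtrations, not just multiplicity counts.} Proposition~\ref{BHR} takes as a \emph{hypothesis} that each $P_\mu$ admits a finite filtration by grade shifts of the $\widetilde K_\lambda$'s; only then are the two conditions equivalent. You cannot verify the multiplicity identity $(P_\mu:\widetilde K_\lambda)_q=[K_\lambda:L_\mu]_q$ without first producing such a filtration. The paper does this by building concrete short exact sequences and projective resolutions for the modules $F_\lambda^D$ (Lemmas~\ref{lem:F_triv^sgn-resolution}--\ref{lem:F_C^sgn^C-resolution}) and then stacking them via explicit degree-shifted injections (Propositions~\ref{prop:F_triv^sgn-filt-F_triv^j-F_j-sgn}--\ref{prop:F_C^sgn-filt-F_C^sgn^C}, \ref{prop:F_sgn^sgn-repeat}). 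Your ``triangular Hom pattern'' remark is not a substitute for this construction.

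\textbf{The ``only if'' direction cannot be a finite case check.} There are super-exponentially many preorders on a set of size $(n+3)/2$, and the theorem is asserted uniformly in odd $n$. Your proposed trichotomy (a)/(b)/(c) of violations does not obviously exhaust all preorders (e.g.\ non-total ones with several incomparable $\chi_i$), and even if it did, you have not indicated a pair $(\lambda,\mu)$ that fails uniformly in $n$ for each type. The paper avoids enumeration entirely: it uses the global-dimension-two structure to show (Lemma~\ref{lem:triv-socle}, Corollary~\ref{cor:modified-springerorder}) that the socle of every $(K_\lambda)_{>0}$ is isotypic of type $\triv$ or $\sgn$, forcing the dichotomy between families (ii) and (iii); then, via vanishing of the graded Euler--Poincar\'e pairing $\gep{K_\lambda}{L_{\chi_i}}$ (Corollary~\ref{cor:appear-gep-vanish}), it forces the chain condition on the $\chi_i$'s (Lemma~\ref{lem:chain-2dimsimple}) and pins down the sets $D_\lambda=\{\mu:\Ext^1(K_\lambda,L_\mu)\neq 0\}$ (Proposition~\ref{prop:possible-D}). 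Finally Proposition~\ref{prop:recoverpreorder} and Corollary~\ref{cor:preorder-uniqueness} show the preorder must equal $\precsim_K$. These homological constraints are the missing idea in your approach; without them the contrapositive degenerates into an unbounded search.
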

Note that the preorders considered in \cite{Achar2007} under the name of Springer correspondences match the case with $\triv \prec \chi_1$ in our setting.
All of them appear in the above list and form half of the preorders of case \eqref{item:secondcase}.




The organization of this paper is as follows: In Section \ref{sec:preliminaries}, we present basic material. 
In Section \ref{sec:filtofgradedmodules}, we review some of the basics of filtrations of graded modules.
In Section \ref{sec:skewgrpalg}, we consider the smash product algebra $(S * W)$ for a finite group $W$, and present its basic properties.
In Section \ref{sec:reciprocity}, we describe some properties of the Brauer-Humphreys type reciprocity.
Finally in Section \ref{sec:dihedralgroup}, we concentrate on the case $W$ is a dihedral group of odd order and present a proof of Theorem \ref{fmain}.

\subsection*{Acknowledgements}
    The author wishes to thank his supervisor Syu Kato for his patient guidance and support.    
    This work was supported by JST, the establishment of university fellowships towards the creation of science technology innovation, Grant Number JPMJFS2123.

    \section{Preliminaries}\label{sec:preliminaries}
        In this section, we introduce some notations and recall some basic results on homological algebra (cf. \cite{hazrat_2016,Rotman2009}).
        
        Let $A$ be a $\mathbb{C}$-algebra.
        We denote by $A$-mod the category of finitely generated left $A$-modules.
        All modules in this paper are assumed to be finitely generated.
        
        A $\cc$-algebra $A$ is called a graded algebra (or more precisely, a $\zz$-graded algebra) if $A = \bigoplus_{i\in\zz}A_i$, where each $A_i$ is a subspace of $A$ and $A_iA_j\subset A_{i+j}$ for all $i,j\in \zz$.
        We say $A$ is non-negatively graded if $A_i = 0$ for all $i<0$.
        We call $A_i\,(i\in\zz)$ the homogeneous part of $A$ of degree $i$.

        Let $A$ be a $\zz$-graded algebra.
        A (left) $A$-module $M$ is said to be graded if $M$ is decomposed into $M = \bigoplus_{i\in\zz}M_i$, where each $M_i$ is a subspace of $M$ and $A_iM_j \subset M_{i+j}$ hold for all $i,j\in\zz$.
        For a graded $A$-module $M$ and $d\in\zz$, we define the graded module $M\gs{d}$ by $(M\gs{d})_i\coloneqq M_{i-d}$ for each $i\in\zz$.
        We call $M\gs{d}$ the grade $d$ shift of $M$.
        We say $M$ is bounded below if there exists $n\in\zz$ such that $M_i = 0$ for all $i<n$.
        We say $M$ is locally finite if $\dim_\cc M_i<\infty$ for all $i\in\zz$.
        A graded algebra $A$ is called locally finite if $A$ is locally finite as graded $A$-module. 
        We define the graded dimension of a locally finite module $M$ by
        \[
            \gdim M \coloneqq \sum_{d\in\zz}q^d\dim M_d.   
        \]
        For finitely generated graded $A$-modules $M$ and $N$, a graded homomorphism of degree $n$ is an $A$-module homomorphism $f:M\rightarrow N$ such that $f(M_i)\subset N_{i+n}$ for all $i\in\zz$.
        We denote by $\Hom_A(M,N)_n$ the additive group consisting of all graded $A$-module homomorphisms of degree $n$.
        We define a category $A$-gmod whose the objects are finitely generated graded $A$-modules and whose morphisms are graded $A$-module homomorphisms of degree $0$.
        It is clear that $\Hom_A(M,N)_n$ is a subspace of $\Hom_A(M,N)$.
        Moreover, we have a natural decomposition
        \[
            \Hom_A(M,N) = \bigoplus_{i\in\zz}\Hom_A(M,N)_i
        \]
        since $M$ is finitely generated (see e.g.~\cite[Theorem 1.2.6]{hazrat_2016}).
        By this decomposition, we can equip $\End_A(M)$ and $\Hom_A(M,N)$ with gradings.

        \medskip
        Let $\mathcal{C}$ be an abelian category with enough projectives.
        For $M\in \mathcal{C}$, its projective resolution is an exact complex
        \[\xymatrix{
            \cdots \ar[r]^{p_{d+1}} &P_d \ar[r]^{p_d} &\cdots \ar[r]^{p_3} &P_2 \ar[r]^{p_2} &P_1 \ar[r]^{p_1} &P_0 \ar[r]^{p_0} &M \ar[r] &0.
        }\]
        such that $P_d$ is a projective object for each $d\geq 0$.
        A projective resolution is said to be finite length if $P_d=0$ for $d\gg 0$ and to be infinite length if $P_d \neq 0$ for infinitely many $d\geq 0$.
        We define the length of a resolution of finite length as the maximum index $d$ such that $P_d \neq 0$.
        We define the projective dimension of $M$ as the minimum length of a projective resolution of $M$ and denote it by $\pd M$.
        The global dimension of $\mathcal{C}$ is defined as 
        \[
            \mathrm{gl.dim}\,\mathcal{C} \coloneqq \sup \pd M,
        \] 
        where $M$ ranges over all objects of $\mathcal{C}$.
        We define the $i$-th extension group $(i\in\zz_{>0})$ by
        \[
            \Ext^i_\mathcal{C}(M,N) \coloneqq \mathbb{R}^i\Hom_\mathcal{C}(-,N)(M) = \frac{\mathrm{Ker}(\Hom_\mathcal{C}(p_{i+1},N))}{\mathrm{Im}(\Hom_\mathcal{C}(p_{i},N))}.
        \]
        The extension group $\Ext^i_\mathcal{C}(M,N)$ is independent of the choice of a projective resolution of $M$ (see e.g.~\cite[Corollary 6.57]{Rotman2009}).

        For a graded Noetherian algebra $A$, categories $A$-mod and $A$-gmod are abelian categories with enough projectives.
        We denote $\Ext_{A\textrm{-gmod}}^i(M,N\gs{-d})$ by $\Ext^i_A(M,N)_d$ for $M,N \in A$-gmod.
        Since a projective object in $A$-gmod is a projective module as an $A$-module (see e.g.~\cite[Proposition 1.2.15]{hazrat_2016}), we have a natural isomorphism 
        \begin{align}\label{eq:extgrd}
            \Ext_A^i(M,N)   &\simeq \frac{\textrm{Ker}(\Hom_A(p_{i+1},N))}{\mathrm{Im}(\Hom_A(p_{i},N))}\\
                            &\simeq \bigoplus_{n\in\zz} \frac{\mathrm{Ker}(\Hom_{A\textrm{-gmod}}(p_{i+1},N\gs{-n}))}{\mathrm{Im}(\Hom_{A\textrm{-gmod}}(p_{i},N\gs{-n}))}\\
                            &\simeq \bigoplus_{n\in\zz} \Ext^i_{A\textrm{-gmod}}(M,N\gs{-n}) = \bigoplus_{n\in\zz} \Ext^i_{A}(M,N)_n
        \end{align}
        where
        \[\xymatrix{
            \cdots \ar[r]^{p_{d+1}} &P_d \ar[r]^{p_d} &\cdots \ar[r]^{p_3} &P_2 \ar[r]^{p_2} &P_1 \ar[r]^{p_1} &P_0 \ar[r]^{p_0} &M \ar[r] &0
        }\]
        is a graded projective resolution of $M$.

        We record a simple lemma from linear algebra:
        \begin{lem}\label{lem:detinjective}
        Let $S$ be an integral commutative ring.
        Let $f:S^M\rightarrow S^N$ be a $S$-module homomorphism between free $S$-modules and $(a_{i,j})\in \mathrm{Mat}_{N,M}(S)$ be a matrix representation of $f$.
        If $M\leq N$ and $(a_{i,j})$ admits $M\times M$ submatrix $A$ such that $det\,A\neq 0$, then $f$ is injective. $\hfill\square$
        \end{lem}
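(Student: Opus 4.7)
The plan is to reduce the statement to the classical linear algebra fact over a field by exploiting the adjugate (Cramer) identity, which works verbatim over an arbitrary commutative ring. First I would pick an arbitrary $v = (v_1, \ldots, v_M)^{\mathtt t} \in \ker f$, and let $I \subset \{1, \ldots, N\}$ be the set of $M$ row indices witnessing the $M \times M$ submatrix $A$ with $\det A \neq 0$. Restricting the vector equation $(a_{i,j})\, v = 0$ to the rows indexed by $I$ immediately gives $A v = 0$ in $S^M$.

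Next I would multiply both sides by the adjugate matrix $\mathrm{adj}(A) \in \mathrm{Mat}_{M,M}(S)$ and use the identity $\mathrm{adj}(A) \cdot A = (\det A) \cdot I_M$ to obtain $(\det A) \cdot v = 0$. Since $S$ is an integral domain and $\det A \neq 0$, each component equation $(\det A)\, v_j = 0$ forces $v_j = 0$, so $v = 0$ and $f$ is injective. Equivalently, one could pass to the fraction field $K := \mathrm{Frac}(S)$: the induced map $f \otimes_S K \colon K^M \to K^N$ has a matrix containing an invertible $M \times M$ minor and therefore has rank $M = \dim_K K^M$, so it is injective, and the inclusion $S^M \hookrightarrow K^M$ (valid because $S$ is a domain, so $S$ is $S$-torsion-free) transfers injectivity back to $f$. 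There is no real obstacle; the lemma is a routine consequence of Cramer's rule over an integral domain, which is presumably why it is stated without proof in the excerpt.
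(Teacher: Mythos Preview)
Your proof is correct. The paper does not actually give a proof of this lemma: it is stated as a ``simple lemma from linear algebra'' and immediately closed with a $\square$, so there is nothing to compare against. Your adjugate argument (or the equivalent passage to the fraction field) is exactly the standard justification the authors are taking for granted.
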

        
        \medskip
        Let $X$ be a set.
        A binary relation on $X$ is a subset of $X\times X$.
        Let $B\subset X \times X$ be a binary relation on $X$.
        For each $x,y \in X$, the notation $x\mathrel{B} y$ means $(x,y)\in \mathrel{B}$, and $x\not\mathrel{B} y$ means $(x,y)\notin\mathrel{B}$.
        A binary relation $\mathrel{B}$ is said to be:
            \begin{itemize}
                \item \textbf{reflexive} if $x\mathrel{B}x$ for all $x\in X$.
                \item \textbf{symmetric} if $x\mathrel{B}y$ implies $y\mathrel{B}x$ for all $x,y \in X$.
                \item \textbf{antisymmetric} if $x\mathrel{B}y$ and $y\mathrel{B}x$ implies $x=y$ for all $x,y\in X$
                \item \textbf{transitive} if $x\mathrel{B}y$ and $y\mathrel{B}z$ implies $x\mathrel{B}z$ for all $x,y,z \in X$.
            \end{itemize}
        
        We call $\mathrel{B}$ a preorder if $\mathrel{B}$ is reflexive and transitive.
        A preorder $\mathrel{B}$ is called a partial order if $\mathrel{B}$ is antisymmetric.
        A binary relation $\mathrel{B}$ is said to be equivalence if $\mathrel{B}$ is symmetric, reflexive, and transitive.

    \section{Filtrations of graded modules}
        \label{sec:filtofgradedmodules}
        Let $A$ be a graded algebra.
        Let $M$ be a finitely generated graded $A$-module.
        Since $M$ is finitely generated, there is a surjection 
        \[\xymatrix{
           \bigoplus_{i=1}^n A\gs{d_i} \ar@{->>}[r] &M
        }\]
        for some $d_1,d_2,\ldots,d_n\in\zz $.
        Therefore, if $A$ is locally finite, then we have $\dim M_d < \sum_{i=1}^n (A\gs{d_i})_d = \sum_{i=1}^n A_{d-d_i} < \infty$ for every $d\in\zz$.
        In particular, $M$ is locally finite if $A$ is locally finite.
        If $A$ is non-negatively graded, then $M_d = 0$ for $d<\min_i\{d_i\}$ and $M$ is bounded below.
        In the rest of this section, we fix a non-negatively graded locally finite algebra $A$.
        In particular, a finitely generated $A$-module $M \in A\hgmod$ is locally finite and bounded below.

        Let $M$ be a finitely generated graded $A$-module and $F = \{F_\alpha\}_{\alpha\in\Lambda}$ be a set of finitely generated graded $A$-modules indexed by $\Lambda$ such that $F_\alpha \simeq F_{\alpha'}\gs{d}$ for $\alpha,\alpha'\in\Lambda$ and $d\in\zz$ only if $\alpha=\alpha'$ and $d=0$. 
        A sequence of submodules of M
        \[
            M = M^0 \supseteq M^1 \supseteq M^2 \supseteq \cdots \supseteq M^i \supseteq \cdots
        \]
        is called a filtration of $M$.
        We say a filtration $\{M^i\}_{i=0}^\infty$ is separable if $\bigcap_{i=0}^{\infty}M^i = 0$.
        If $M^i = 0$ for $i\gg 0$, then we say that this filtration is finite.
        For each $i\geq 1$, the quotient module $M^{i-1}/M^i$ is called the $i$-th factor of the filtration $\{M^i\}_{i=0}^\infty$.
        Clearly, a separable filtration is finite if and only if the number of its nonzero factors is finite.
        A separable filtration is called an $F$-filtration if its every factor is isomorphic to zero or $F_\alpha\gs{d}$ for some $\alpha\in \Lambda$ and $d\in\zz$.
        If there exists an $F$-filtration (resp. finite $F$-filtration) of $M$, we say $M$ is filtered by $F$ (resp. finitely filtered by $F$).
        For each $\alpha\in \Lambda$, we define the multiplicity of $F_\alpha$ in a $F$-filtration $\{M^i\}_{i=0}^\infty$ by $f_\alpha = \sum_{d\in\zz}m_dq^d$ where $m_d$ is the number of factors isomorphic to $F_\alpha\gs{d}$.
        Since $M$ is locally finite and bounded below, the multiplicity of $F_\alpha$ belongs to $\zz(\!(q)\!)$.
        Note that the multiplicity $f_\alpha$ depends on the choice of filtrations.
        When $M$ has a $F$-filtration in which the multiplicity of $F_\alpha$ is $f_\alpha$ for $\alpha\in\Lambda$, we use the notation
        \begin{equation}
            \label{eq:denotefilt}
            M \filt \sum_{\alpha\in \Lambda}f_\alpha\cdot[F_\alpha]
        \end{equation} 
        to indicate the multiplicities of factors in the filtration.
        We also use this notation without specifying a filtration to imply that there exists a $F$-filtration of $M$ in which the multiplicity of $F_\alpha$ is $f_\alpha$.
        In the right hand side of (\refeq{eq:denotefilt}), the formal sum $\sum_{\alpha\in \Lambda}f_\alpha\cdot[F_\alpha]$ belongs to  $\prod_{\alpha\in \Lambda} \zz(\!(q)\!)\cdot[F_\alpha]$.
        If a filtration $\{M_i\}_{i=0}^\infty$ is finite, then $f_\alpha\neq 0$ for only finitely many $\alpha\in\Lambda$ and $f_\alpha$ is a Laurent polynomial for every $\alpha\in\Lambda$.
        Therefore, we have $\sum_{\alpha\in \Lambda}f_\alpha\cdot[F_\alpha] \in \bigoplus_{\alpha\in \Lambda} \zz[q,q^{-1}]\cdot[F_\alpha]$.
        Conversely, $M \filt h$ for some $h \in \bigoplus_{\alpha\in \Lambda} \zz[q,q^{-1}]\cdot[F_\alpha]$ implies $M$ is finitely filtered by $F$.

        We prove some lemmas needed later.

        Let $M\in A$-gmod have a separable filtration
        \[
            M = M^0 \supseteq M^1 \supseteq M^2 \supseteq \cdots \supseteq M^i \supseteq \cdots.
        \]
        For each $i>0$, we assume that $i$-th factor $M_{i-1}/M_i$ have a $F$-filtration
        \[
            M^{i-1}/M^i = U^{i,0} \supset U^{i,1} \supset \cdots \supset U^{i,j} \supset \cdots 
        \]
        which have the multiplicities of factors $M^{i-1}/M^i \filt \sum_{\alpha \in \Lambda}f_{i,\alpha}\cdot[F_\alpha]$.
        We give some sufficient conditions to be able to merge these filtrations, that is, to construct $F$-filtration of $M$ with the multiplicity $M \filt \sum_{\alpha \in \Lambda}\sum_{i=1}^\infty f_{i,\alpha}\cdot[F_\alpha]$.
        Note that since $M$ is locally finite, $F_\alpha\gs{d}$ appears at most finitely many times in above filtrations for each $\alpha \in \Lambda$ and $d\in\zz$.
        Since $M$ is bounded below, there exists some integer $N$ for each $\alpha\in \Lambda$ such that $F_\alpha\gs{d}$ does not appear in above filtrations if $d<N$.
        Therefore, the sum $\sum_{i=1}^\infty f_{i,\alpha}$ is well-defined as a formal Laurent series.
        
        \begin{lem}
            \label{lem:cnctfilts}    
            Keep the settings as above.
            We assume one of the following conditions:
            \begin{enumerate}[\upshape(i)\itshape]
                \item \label{item:fin-filts} the filtration $\{U^{i,j}\}_{j=0}^\infty$ is a finite $F$-filtration for every $i\geq 1$,
                \item \label{item:findimfacts} the graded $A$-module $F_\alpha$ is finite dimensional for every $\alpha \in \Lambda$.
            \end{enumerate}
            Then, the graded $A$-module $M$ admits a $F$-filtration which have multiplicities of factors $ M \filt  \sum_{\alpha \in \Lambda}\sum_{i=1}^\infty f_{i,\alpha}\cdot[F_\alpha]$.
        \end{lem}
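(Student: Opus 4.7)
The plan is to lift each inner filtration $\{U^{i,j}\}_{j}$ to an actual descending chain of submodules of $M$ and then reindex the resulting doubly-indexed chain as a single $\omega$-chain. For each $i \geq 1$ let $\pi_i \colon M^{i-1} \twoheadrightarrow M^{i-1}/M^i$ denote the canonical quotient and put $V^{i,j} := \pi_i^{-1}(U^{i,j})$. Then $V^{i,0} = M^{i-1}$, $\bigcap_{j} V^{i,j} = M^i$, and $V^{i,j-1}/V^{i,j} \cong U^{i,j-1}/U^{i,j}$ for every $j \geq 1$. Ordering the pairs $(i,j)$ lexicographically exhibits $\{V^{i,j}\}$ as a refinement of $\{M^i\}$ whose consecutive factors recover the prescribed inner factors $F_{\alpha_{i,j}}\gs{d_{i,j}}$.

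In case \ref{item:fin-filts}, every inner filtration has finite length $k_i$ with $V^{i,k_i} = M^i$. Setting $N^{k_1 + \cdots + k_{i-1} + j} := V^{i,j}$ for $0 \leq j \leq k_i$ (truncating if only finitely many inner filtrations are nontrivial) produces an $\omega$-indexed $F$-filtration of $M$ whose factors are the concatenation of the inner factors. Separability is immediate from $\bigcap_k N^k = \bigcap_i M^i = 0$, and the multiplicities of $F_\alpha$ sum to $\sum_i f_{i,\alpha}$ as required.

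In case \ref{item:findimfacts} the inner filtrations may be infinite and the lexicographic super-chain has ordinal type $\omega\cdot\omega$, so a genuine reindexing is needed. The crucial finiteness input is that, for each fixed degree $d$, only finitely many pairs $(i,j)$ produce a factor with nonzero component in degrees $\leq d$: each $F_\alpha$ is finite-dimensional, $M$ is bounded below, and $\sum_{d' \leq d}\dim M_{d'} < \infty$ by local finiteness. One enumerates the pairs $(i,j)$ with $j \geq 1$ in a sequence $(i_k,j_k)_{k \geq 1}$ respecting the intra-filtration order (the Cantor zigzag along $i+j$ suffices) and inductively builds $N^k \subsetneq N^{k-1}$ so that $N^{k-1}/N^k \cong F_{\alpha_{i_k,j_k}}\gs{d_{i_k,j_k}}$; the chosen factor is extracted by pulling back the surjection $V^{i_k,j_k-1} \twoheadrightarrow V^{i_k,j_k-1}/V^{i_k,j_k}$ through the partial progress recorded by $N^{k-1}$. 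Local finiteness together with the degreewise bound above force $(N^k)_d = 0$ for $k$ large enough at each fixed $d$, hence $\bigcap_k N^k = 0$.

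The principal difficulty lies in case \ref{item:findimfacts}: because the outer filtration $\{M^i\}$ need not split as graded $A$-modules, the intermediate $N^k$ are generally not of the form $V^{i,j}$, and one must coherently choose at each step a submodule realizing the enumerated factor as a quotient of $N^{k-1}$. The finite-dimensionality of each $F_\alpha$, combined with the local finiteness of $M$, is precisely the input that allows these choices to be carried out one bounded degree range at a time, yielding a genuine $\omega$-indexed descending chain rather than a transfinite one.
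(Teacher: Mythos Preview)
Your treatment of case~\ref{item:fin-filts} is correct and matches the paper's argument: lift each finite inner filtration via $\pi_i^{-1}$ and concatenate.

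In case~\ref{item:findimfacts}, however, there is a genuine gap. You assert that with the Cantor zigzag enumeration one can, at each step, find $N^k\subset N^{k-1}$ with $N^{k-1}/N^k\cong F_{\alpha_{i_k,j_k}}\gs{d_{i_k,j_k}}$, and you describe this as ``pulling back the surjection $V^{i_k,j_k-1}\twoheadrightarrow V^{i_k,j_k-1}/V^{i_k,j_k}$ through the partial progress recorded by $N^{k-1}$.'' But this is not a well-defined operation, and the required $N^k$ need not exist. Concretely, after processing $(1,1)$ and $(1,2)$ you have $N^2=V^{1,2}$, and the next zigzag step asks you to realize the factor $U^{2,0}/U^{2,1}$ as a \emph{quotient} of $V^{1,2}$. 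That factor sits inside $M^1\subset V^{1,2}$, whereas the head of $V^{1,2}$ is governed by $V^{1,2}/V^{1,3}\cong U^{1,2}/U^{1,3}$; when the extension $0\to M^1\to V^{1,2}\to U^{1,2}\to 0$ is nonsplit there is no reason for $V^{1,2}$ to surject onto $U^{2,0}/U^{2,1}$ at all. A fixed enumeration like the Cantor zigzag cannot work in general: the order in which factors can be peeled off depends on the module, not just the index set.

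Your closing paragraph gestures at the right fix (``one bounded degree range at a time''), but this is exactly the substantive work you have not done, and it is incompatible with the fixed zigzag you proposed earlier. The paper's argument makes this precise: it first reduces to the two-level case $M^2=0$, and there it does \emph{not} alternate step by step. Instead, using finite-dimensionality of the $F_\alpha$, it chooses for each $i$ an index $a_i$ large enough that $U^{1,a_i}$ is concentrated in degrees $\geq d_i$, where $d_i$ is chosen so that $(M^1/U^{2,i})_{\geq d_i}=0$. It then builds the merged filtration from the submodules $p^{-1}(U^{1,a_j})_{\geq d_j}+U^{2,j-1}$ and $p^{-1}(U^{1,a_j})_{\geq d_j}+U^{2,j}$; the degree conditions force the relevant intersections to behave well, so that the odd factors recover blocks $U^{1,a_{j-1}}/U^{1,a_j}$ of the first filtration and the even factors recover $U^{2,j-1}/U^{2,j}$. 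The general case is then handled by an induction that repeatedly applies the two-level result. The key point you are missing is that the enumeration must be chosen adaptively from the degree data, going deep enough into level~$1$ before each excursion into level~$2$ so that the remaining level-$1$ material no longer obstructs the desired quotient.
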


        \begin{proof}
            In the case \eqref{item:fin-filts}, we fix a number $l_i$ with $U^{i,l_i} = 0$ for each $i>0$.
            Then $\{U^{i,j}\}_{j=0}^{l_i}$ has finitely many factors $U^{i,0}/U^{i,1}, U^{i,1}/U^{i,2},\ldots , U^{i,l_i-1}/U^{i,l_i}$. 
            Let $p_i:M\rightarrow M/M^i$ be the natural quotient morphism.
            By taking preimages under $p_i$, we get a sequence of submodules of $M^{i-1}$
            \[
                M^{i-1} = p_i^{-1}(U^{i,0}) \supset p_i^{-1}(U^{i,1}) \supset \cdots \supset p_i^{-1}(U^{i,l_{\alpha_i}}) = M^i.
            \]
            Since this sequence is finite for each $i$, we can connect sequences and obtain a filtration 
            \begin{equation}\label{eq:filtofM}\begin{array}{rll}
                M   &= M^0 = p_1^{-1}(U^{1,0}) \supset p_1^{-1}(U^{1,1}) \supset \cdots \supset p_1^{-1}(U^{1,l_{\alpha_1}})\\
                    &= M^1 = p_2^{-1}(U^{2,0}) \supset p_2^{-1}(U^{2,1}) \supset \cdots \supset p_2^{-1}(U^{2,l_{\alpha_2}})\\
                    &\qquad\vdots\\
                    &= M^{i-1} = p_i^{-1}(U^{i,0}) \supset p_i^{-1}(U^{i,1}) \supset \cdots \supset p_i^{-1}(U^{i,l_{\alpha_i}})\\
                    &\qquad\vdots&.\\                    
            \end{array}\end{equation}
            
            Since $\ker p_i = M^i$, we have $\ker p_i\cap p_i^{-1}(U^{i,j-1})\subset M^i \subset p_i^{-1}(U^{i,j})$.
            Hence, a surjection $p_i$ induces an isomorphism 
            \begin{align}
                p_i^{-1}(U^{i,j-1})/p_i^{-1}(U^{i,j}) &= p_i^{-1}(U^{i,j-1})/(p_i^{-1}(U^{i,j}) + \ker p_i\cap p_i^{-1}(U^{i,j-1})) \\
                    &\simeq U^{i,j-1}/U^{i,j}.
            \end{align}
            Therefore, a filtration \eqref{eq:filtofM} is a $F$-filtration.
            By construction, this filtration has factors 
            \begin{align}
                &U^{1,0}/U^{1,1}, U^{1,1}/U^{1,2},\ldots , U^{1,l_1-1}/U^{1,l_1},\\
                &U^{2,0}/U^{2,1}, U^{2,1}/U^{2,2},\ldots , U^{2,l_2-1}/U^{2,l_2},\\
                &U^{3,0}/U^{3,1},\ldots
            \end{align}
            as desired.

            Next, we prove case \eqref{item:findimfacts}.
            In this case, we cannot simply concatenate filtrations because each filtration might be infinite.
            We begin with the case of $M^2 = 0$.
            In this case, the second factor of a filtration $\{M^i\}$ is $M^1/M^2 = M^1$.
            Since $F_\alpha$ is finite dimensional for all $\alpha \in \Lambda$, the graded $A$-module $M^1/U^{2,i}$, which admits a finite $F-filtration$, is also finite dimensional for each $i\geq 0$.
            Therefore, we can fix $d_i\in\zz$ such that $(M^1/U^{2,i})_{\geq d_i} = 0$ for each $i\geq 0$.
            Since the $F$-filtration $\{U^{1,j} \}_{j=0}^\infty$ is separable, there exists $a_i \geq 0$ for each $i > 1$ such that $(U^{1,a_i})_d = 0$ for $d < d_i$, that is, $U^{1,a_i}=(U^{1,a_i})_{\geq d_i}$.
            Since we can replace $a_i$ with a number greater than $a_i$, and hence there is an strictly increasing sequence $a_1 < a_2 < a_3 < \cdots $ with the condition above.
            
            Let $p:M^0\rightarrow M^0/M^1$ be the natural quotient morphism.
            We construct a sequence of submodules of $M$ by
            \begin{equation}\label{eq:mixedfilt}\begin{array}{llll}
                M = M^0 
                        &\supseteq p^{-1}(U^{1,a_1})_{\geq d_1} + U^{2,0} &\supseteq p^{-1}(U^{1,a_1})_{\geq d_1} + U^{2,1}\\
                        &\supseteq p^{-1}(U^{1,a_2})_{\geq d_2} + U^{2,1} &\supseteq p^{-1}(U^{1,a_2})_{\geq d_2} + U^{2,2}\\
                        &&\vdots\\
                        &\supseteq p^{-1}(U^{1,a_j})_{\geq d_j} + U^{2,j-1} &\supseteq p^{-1}(U^{1,a_j})_{\geq d_j} + U^{2,j}\\
                        &\supseteq p^{-1}(U^{1,a_{j+1}})_{\geq d_{j+1}} + U^{2,j} &\supseteq p^{-1}(U^{1,a_{j+1}})_{\geq d_{j+1}} + U^{2,{j+1}}\\ 
                        &&\vdots&.
            \end{array}\end{equation}
   
            Given $d\in\zz$, we have $(p^{-1}(U^{1,a_j})_{\geq d_j})_d$ and $ (U^{2,j})_d$ are zero for sufficiently large $j$.
            Then, we have $\bigcap_{j=1}^\infty (p^{-1}(U^{1,a_j})_{\geq d_j} + U^{2,j}) = 0$; therefore, the sequence \eqref{eq:mixedfilt} is a separable filtration of $M$.
            We consider the odd-numbered factors of this filtration.
            The first factor is
            \[
                M^0/(p^{-1}(U^{1,a_1})_{\geq d_1} + U^{2,0}). 
            \] 
            Since $\ker p = M^1 = U^{2,0} \subset (p^{-1}(U^{1,a_1})_{\geq d_1} + U^{2,0})$, a morphism $p$ induces an isomorphism
            \begin{align}
                M^0/(p^{-1}(U^{1,a_1})_{\geq d_1} + U^{2,0}) &\simeq p(M^0)/p(p^{-1}(U^{1,a_1})_{\geq d_1} + U^{2,0}) \\
                            &=(M^0/M^1)/p(p^{-1}(U^{1,a_1})_{\geq d_1}).
            \end{align}
            We have $p(p^{-1}(U^{1,a_1})_{\geq d_1}) = (U^{1,a_1})_{\geq d_1}= U^{1,a_1}$; hence, $(M^0/M^1)/p(p^{-1}(U^{1,a_1})_{\geq d_1}) = U^{1,0}/U^{1,a_1}$.
            
            The other odd-numbered factors are given by
            \[ 
                (p^{-1}(U^{1,a_j})_{\geq d_j} + U^{2,j})/(p^{-1}(U^{1,a_{j+1}})_{\geq d_{j+1}} + U^{2,j}) \qquad(j\geq 1).
            \]
            Since we have $\ker p = M^1$ and $U^{2,j}_{\geq d_j} = M^1_{\geq d_j}$, it follows that
            \[
                (p^{-1}(U^{1,a_j})_{\geq d_j} + U^{2,j})\cap \ker p \subset U^{2,j}\subset (p^{-1}(U^{1,a_{j+1}})_{\geq d_{j+1}} + U^{2,j}).
            \] 
            Therefore, a surjection $p$ induces an isomorphism
            \begin{align}
                &(p^{-1}(U^{1,a_j})_{\geq d_j} + U^{2,j})/(p^{-1}(U^{1,a_{j+1}})_{\geq d_{j+1}} + U^{2,j})\\
                    &\simeq p(p^{-1}(U^{1,a_j})_{\geq d_j} + U^{2,j})/p(p^{-1}(U^{1,a_{j+1}})_{\geq d_{j+1}} + U^{2,j})\\
                    & = (U^{1,a_j})_{\geq d_j}/(U^{1,a_{j+1}})_{\geq d_{j+1}}\\
                    &= U^{1,a_j}/U^{1,a_{j+1}}.
            \end{align}
            These factors $U^{1,0}/U^{1,a_1},U^{1,a_1}/U^{1,a_2},U^{1,a_2}/U^{1,a_3},\ldots$ have finite $F$-filtrations
            \begin{align}
                &U^{1,0}/U^{1,a_1} \supseteq U^{1,1}/U^{1,a_1} \supseteq U^{1,2}/U^{1,a_1}\supseteq\\
                    &\qquad\qquad\qquad\cdots\supseteq U^{1,a_1-1}/U^{1,a_1} \supseteq U^{1,a_1}/U^{1,a_1} = 0
            \end{align}
            and 
            \begin{align}
                &U^{1,a_j}/U^{1,a_{j+1}} \supseteq U^{1,a_{j}+1}/U^{1,a_{j+1}} \supseteq U^{1,a_{j}+2}/U^{1,a_{j+1}}\supseteq\\
                    &\qquad\qquad\cdots\supseteq U^{1,a_{j+1}-1}/U^{1,a_{j+1}} \supseteq U^{1,a_{j+1}}/U^{1,a_{j+1}} = 0 \qquad\qquad(j\geq 1).    
            \end{align}
            Consequently, the factors of these $F$-filtrations coincide with the factors of $\{U^{1,k}\}_{k=0}^\infty$. 
            In particular, the sum of the multiplicities of factors is equal to $\sum_{\alpha \in \Lambda}f_{1,\alpha}\cdot[F_\alpha]$.
            
            
            Next we consider the even-numbered factors.
            For each $j\geq 1$, the $(2j)$-th factor is
            \begin{align}
                &(p^{-1}(U^{1,a_j})_{\geq d_j} + U^{2,j-1} )/( p^{-1}(U^{1,a_j})_{\geq d_j} + U^{2,j})\\  
                    &\simeq U^{2,j-1}/(U^{2,j-1}\cap (p^{-1}(U^{1,a_j})_{\geq d_j} + U^{2,j}) ).
            \end{align}
            Since $U^{2,j-1}_{\geq d_j} = U^{2,j}_{\geq d_j} = M^1_{\geq d_j}$, we have 
            \begin{align}
                U^{2,j-1}\cap (p^{-1}(U^{1,a_j})_{\geq d_j} + U^{2,j}) &= U^{2,j-1}\cap (p^{-1}(U^{1,a_j})_{\geq d_j} + U^{2,j})\\ 
                    &=U^{2,j-1}\cap U^{2,j}\\
                    &=U^{2,j}. 
            \end{align}
            Thus we have
            \[
                U^{2,j-1}/(U^{2,j-1}\cap (p^{-1}(U^{1,a_j})_{\geq d_j} + U^{2,j}) )\simeq U^{2,j-1}/U^{2,j};
            \]
            hence, the even-numbered factors of a filtration \eqref{eq:mixedfilt} coincide with the factors of a filtration $\{U^{2,j}\}_{j=0}^\infty$.
            Applying the result of the case \eqref{item:fin-filts}, a filtration \eqref{eq:mixedfilt} is a $F$-filtration with the multiplicity of factors $M\filt \sum_{\alpha\in\Lambda}f_{1,\alpha}\cdot[F_\alpha] + \sum_{\alpha\in\Lambda}f_{2,\alpha}\cdot[F_\alpha]$.
            This completes the proof of the case $M^2 = 0$.
            

            Next we consider the case where $M^2\neq 0$.
            Set $N^0 = M^0$ to inductively construct a $F$-filtration $\{N^i\}_{i=0}^\infty$ of $M$.
            Suppose that we have already defined $M= N^0\supset N^1 \supset N^2 \supset \cdots \supset N^l$ with the following properties:
            \begin{enumerate}[(a)]
                \item $N^i$ contains $M^i$ for $0\leq i \leq l$, and $(N^i/M^i)_d=0$ holds for $d<i$,
                \item for $0 < i \leq l$, a graded module $N^{i-1}/N^i$ admits a finite $F$-filtration,
                \item a graded module $N^i/M^i$ admits a $F$-filtration,
                \item the sum of the multiplicities of factors in the $F$-filtrations of $N^{i-1}/N^i$ and $N^i/M^i$ is equal to the sum of the ones of $N^{i-1}/M^{i-1}$ and $M^{i-1}/M^i$.
            \end{enumerate}

            Applying the result of the case $M^2 = 0$ to a filtration $N^l/M^{l+1} \supset M^l/M^{l+1} \supset M^{l+1}/M^{l+1}=0$, we obtain a $F$-filtration of $N^l/M^{l+1}$ such that its multiplicity of factors is equal to the sum of ones of the $F$-filtrations for $N^l/M^l$ and $M^l/M^{l+1}$.
            By taking the preimage under the quotient morphism $p_{l+1}:M\rightarrow M/M^{l+1}$, this filtration is lifted up to a sequence of graded modules which are submodules of $N^l$ and containing $M^{l+1}$.
            We choose a graded module $N^{l+1}$ from this sequence, or equivalently select a graded module from a $F$-filtration of $N^l/M^{l+1}$ as the image $p_{l+1}(N^{l+1})=N^{l+1}/M^{l+1}$.
            Since a $F$-filtration is separable, we can choose $N^{l+1}$ such that $(N^{l+1}/M^{l+1})_d=0$ for $d<l+1$ holds.
            By construction, $N^l/N^{l+1}$ and $N^{l+1}/M^{l+1}$ admit $F$-filtrations such that the sum of the multiplicities of factors in them is equal to the sum of the ones in the $F$-filtrations for $N^l/M^l$ and $M^l/M^{l+1}$.
            In addition, the $F$-filtration of $N^l/N^{l+1}$, which has the first finitely many factors of the $F$-filtration of $N^l/M^{l+1}$, is a finite filtration.
            Consequently, we can construct a filtration $\{N^i\}_{i=0}^\infty$ satisfying the conditions (a), (b), (c), and (d) for every $l\geq 1$ by repeating the same processes.
            Since $N^{i} \subset M_{\geq i}+M^{i}$ for each $i$, we have $\{N^i\}_{i=0}^\infty$ is separable.
            The $i$-th factor $N^{i-1}/N^i$ admits a finite $F$-filtration for each $i\geq 1$.
            Hence we can apply the result of the case \eqref{item:fin-filts} and obtain a $F$-filtration of $M$.
            
            Finally we check the multiplicity of factors of this filtration.
            Given $k\in\zz$ and $\alpha\in \Lambda$, we count the number of factors which is isomorphic to $F_\alpha\gs{k}$.
            We fix $d\in\zz$ to be $(F_\alpha\gs{k})_d\neq 0$.
            Since $\{N^i\}_{i=0}^\infty$ is separable, we can fix $i$ to be $(N^i)_d = 0$.
            Then $F_\alpha\gs{k}$ does not appear in $F$-filtrations of $N^i/N^{i+1}, N^{i+1}/N^{i+2}, N^{i+2}/N^{i+3}, \ldots $ because they are subquotients of $N^i$ and do not have the homogeneous part of degree $d$.
            Therefore, all the factors isomorphic to $F_\alpha\gs{k}$ in the (infinitely many) $F$-filtrations of $N^0/N^1, N^1/N^2, \ldots$ are contained in the filtrations of the first $i$ modules $N^0/N^1, N^1/N^2, \ldots, N^{i-1}/N^i$.
            For the same reason, $F$-filtrations of modules $M^i/M^{i+1}, M^{i+1}/M^{i+2}, M^{i+2}/M^{i+3}, \ldots $ have no factor $F_\alpha\gs{k}$, and the number of the factors isomorphic to $F_\alpha\gs{k}$ in the $F$-filtrations of $M^0/M^1, M^1/M^2, \ldots $ is equal to the one in the $F$-filtrations of the first $i$ modules $M^0/M^1, M^1/M^2,\ldots, M^{i-1}/M^i$.
            Using the condition (d) repeatedly, it follows that $F_\alpha\gs{k}$ appears in the $F$-filtrations of $N^0/M^1 = M^0/M^1, M^1/M^2,\ldots, M^{i-1}/M^i$ the same number of times as in the filtrations of $N^0/N^1, N^1/N^2, \ldots, N^{i-1}/N^i, N^i/M^i$.
            Since $(N^i/M^i)_d=0$, the $F$-filtration of $N^i/M^i$ does not have a factor which is isomorphic to $F_\alpha\gs{k}$.
            Consequently, a $F$-filtration of $M$ obtained by applying the result of the case \eqref{item:fin-filts} to $\{N^i\}_{i=0}^\infty$ has the required number of factors $F_\alpha\gs{k}$.
            This completes the proof.

        \end{proof}

        \begin{lem}
            \label{lem:repeatingfilt}
            Let $M$ be a finitely generated graded $A$-module.
            Let $n$ be a positive integer and let $f:M\gs{n}\rightarrow M$ be an injective graded $A$-module homomorphism.
            Then we have \[M\filt \frac{1}{1-q^n}\cdot[M/\im f].\]
        \end{lem}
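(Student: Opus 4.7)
My plan is to use the filtration $M^k \coloneqq \im f^k$ for $k\geq 0$ (with $f^0$ the identity), and to show that it is a separable $F$-filtration with $F = \{M/\im f\}$ whose successive quotients realise the geometric series $\frac{1}{1-q^n}$.

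The core step is the factor identification, which rests entirely on injectivity of $f$. The iterate $f^k\colon M\gs{kn}\to M$ is again injective, hence induces a graded isomorphism $M\gs{kn}\xrightarrow{\sim}\im f^k$. Under this isomorphism, the submodule $\im f\subset M$ (carrying the shifted grading $\gs{kn}$) is sent to $f^k(\im f)=\im f^{k+1}$, so
\[
    \im f^k/\im f^{k+1}\simeq (M/\im f)\gs{kn}\quad\text{for each }k\geq 0.
\]

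Separability follows from the standing assumption that $A$ is non-negatively graded, so $M$ is bounded below (as noted at the start of Section \ref{sec:filtofgradedmodules}); since $\im f^k\simeq M\gs{kn}$ is supported in degrees shifted up by $kn$, for each fixed $d\in\zz$ one has $(\im f^k)_d=0$ once $k$ is large enough, giving $\bigcap_{k\geq 0}\im f^k=0$. Collecting the factors, $(M/\im f)\gs{kn}$ appears exactly once for each $k\geq 0$, yielding total multiplicity $\sum_{k\geq 0}q^{kn}=\frac{1}{1-q^n}$.

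There is no substantive obstacle beyond bookkeeping of grading shifts; the only degenerate case to address is $\im f = M$, in which $f$ is a graded isomorphism $M\gs{n}\simeq M$ with $n>0$. Combined with boundedness below, this forces $M=0$, and the statement becomes vacuous. Otherwise $M/\im f\neq 0$, and bounded-belowness also prevents any self-similarity $M/\im f\simeq (M/\im f)\gs{d}$ for $d\neq 0$, so the singleton $F=\{M/\im f\}$ is a legitimate indexing family for the filtration notation.
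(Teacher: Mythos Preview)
Your proof is correct and follows essentially the same approach as the paper: both use the filtration $M\supset\im f\supset\im f^2\supset\cdots$, identify the successive quotients as $(M/\im f)\gs{kn}$ via the injectivity of $f^k$, and invoke boundedness below for separability. You add some extra bookkeeping (the degenerate case $\im f=M$ and the legitimacy of the singleton indexing family for the $\filt$ notation) that the paper leaves implicit, but the argument is the same.
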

        \begin{proof}
            For a positive integer $i$, we denote by $f^i$ the composition of homomorphisms
            \[
                \xymatrix{
                    M\gs{in} \ar[r]^f &M\gs{(i-1)n} \ar[r]^f &\cdots \ar[r]^f &M\gs{2n} \ar[r]^f &M\gs{n} \ar[r]^f &M.
                }  
            \]
            Then, $f^i:M\gs{in}\rightarrow M$ is an injective homomorphism.
            Since $M$ is bounded below, we have $\bigcap_{i=1}^\infty \im f^i = 0$.
            Thus $M$ has a filtration
            \[
                M \supset \im f^1 \supset \im f^2 \cdots \supset \im f^{i-1} \supset \im f^{i} \supset \cdots.    
            \]
            The restriction of $f^i$ gives an isomorphism between $(\im f)\gs{in}$ and $\im f^{i+1}$ for each $i>0$.
            Therefore, we obtain an isomorphism $(M/\im f)\gs{in}\simeq M\gs{in}/\im f\gs{in}\simeq \im f^i/\im f^{i+1}$.
            In other words, the $i$-th factor is isomorphic to $(M/\im f)\gs{in}$.
            Consequently, we have
            \[
                M\filt \sum_{i=0}^\infty q^{in} [M/\im f] = \frac{1}{1-q^n} \cdot [M/\im f].
            \]

        \end{proof}

    \section{Skew group algebras}
        \label{sec:skewgrpalg}
        Keep the setting of the previous section.
        Let $S = \cc[X_1,X_2,\ldots,X_n]$ be a polynomial ring in $n$ variables.
        Let $W$ be a finite group acting on $S$ from the left such that its action preserves the grading of $S$.
        We define the skew group algebra $(S*W)$ as the vector space $S \otimes_\cc \mathbb{C}W$ with its multiplication 
        \[(s\otimes w)(s' \otimes w') \coloneqq sw(s')\otimes ww'\] 
        for any $s,s'\in S$ and $w,w' \in W$.
        Since $S= \bigoplus_{i\in\mathbb{Z}}S_i$ is graded, \[S*W = \bigoplus_{i\in\mathbb{Z}} S_i\otimes \mathbb{C}W\] is also a graded algebra.
        The following is well known:
        \begin{prop}\label{prop:opiso}
            The opposite algebra $(S*W)^{op}$ of $S*W$ is also a graded algebra, and two graded algebras $S*W$ and $(S*W)^{op}$ are isomorphic.
        \end{prop}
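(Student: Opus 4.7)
The plan is to construct an explicit graded algebra isomorphism $\phi\colon (S*W)\longrightarrow (S*W)^{op}$. The gradedness of the opposite algebra is essentially formal: since $(S*W)_i=S_i\otimes \mathbb{C}W$ and $W$ acts on $S$ preserving the grading, for homogeneous elements $s\in S_i$, $s'\in S_j$ and $w,w'\in W$, the element $w'(s)$ lies in $S_i$, so the product $(s'\otimes w')(s\otimes w)=s'w'(s)\otimes w'w$ is in $(S*W)_{i+j}$. Since the opposite multiplication of two homogeneous elements of $(S*W)^{op}$ of degrees $i$ and $j$ is this very product computed inside $S*W$, it is again homogeneous of degree $i+j$, so $(S*W)^{op}$ inherits the same grading.

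For the isomorphism, I would define
\[
\phi(s\otimes w):= w^{-1}(s)\otimes w^{-1},\qquad s\in S,\ w\in W,
\]
extended $\cc$-linearly. This is clearly degree-preserving because $w^{-1}$ preserves the grading of $S$. A direct check shows that
\[
\phi\bigl((s\otimes w)(s'\otimes w')\bigr)=\phi\bigl(sw(s')\otimes ww'\bigr)=(w')^{-1}(w^{-1}(s))\cdot(w')^{-1}(s')\otimes (w')^{-1}w^{-1},
\]
while the opposite product of the images computes, using commutativity of $S$, to
\[
\phi(s'\otimes w')\cdot\phi(s\otimes w)=(w')^{-1}(s')\otimes (w')^{-1})\cdot(w^{-1}(s)\otimes w^{-1})=(w')^{-1}(s')\cdot(w')^{-1}(w^{-1}(s))\otimes (w')^{-1}w^{-1},
\]
which coincides with the previous expression. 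Hence $\phi$ is a graded algebra homomorphism into $(S*W)^{op}$.

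Finally, a short calculation gives $\phi\circ\phi=\mathrm{id}$: indeed $\phi(w^{-1}(s)\otimes w^{-1})=w(w^{-1}(s))\otimes w=s\otimes w$. Thus $\phi$ is an involution, hence bijective, and provides the desired graded algebra isomorphism. There is no real obstacle here; the only subtle point is bookkeeping the effect of $w^{-1}$ on $s$ when verifying multiplicativity, which forces the specific formula above (rather than, say, $s\otimes w\mapsto s\otimes w^{-1}$, which would fail).
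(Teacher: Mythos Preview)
Your proof is correct and follows essentially the same approach as the paper: the paper defines the very same map $s\otimes w\mapsto w^{-1}(s)\otimes w^{-1}$ (phrased as an isomorphism $(S*W)^{op}\to S^{op}*W$, then invoking $S\simeq S^{op}$ by commutativity), while you verify multiplicativity directly, using commutativity of $S$ at the same spot. The only difference is that the paper cites the literature for the verification, whereas you carry it out explicitly; there is also a stray unmatched parenthesis in your displayed computation of $\phi(s'\otimes w')\cdot\phi(s\otimes w)$ that you should fix.
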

        \begin{proof}
            There exists a graded algebras isomorphism $\theta :(S*W)^{op} \rightarrow S^{op}*W$ given by
            \[\theta :s\otimes w \mapsto w^{-1}(s)\otimes w^{-1} \qquad s\in S,\,w\in W\]
            (see e.g.~\cite{Auslander_Reiten_Smalo_1989, Marcos2003HochschildCO}).
            Since $S$ is commutative, the graded algebras $S$ and $S^{op}$ are isomorphic.
            This completes the proof.
        \end{proof}

        We denote by $\irr{W}$ the set of isomorphism classes of irreducible $W$-representations over $\cc$.
        Let $V$ be a $W$-representation and let $M$ be a graded $(S*W)$-module.
        Then $M\otimes_\cc V$ acquires a $(S*W)$-action given by 
        \[(s\otimes w)(m\otimes v)\coloneqq (s\otimes w)m\otimes w(v) \]
        for each $s\in S, w\in W, m\in M$ and $v\in V$.
        We also define a $(S*W)$-module $P_V$ by $P_V \coloneqq S\otimes_\cc V$.
        The module $P_V$ acquires a graded structure by $(P_V)_i = S_i\otimes V \,(i\in\zz).$
        Since we have a direct sum decomposition $V \simeq \bigoplus_{\lambda\in\irr{W}}\lambda^{\oplus n_\lambda}$ by Maschke's theorem (see~\cite[Theorem 6.1]{Lam2001}), we find that $P_V$ is decomposed as $V\simeq\bigoplus_{\lambda\in\irr{W}}P_\lambda^{\oplus n_\lambda}$ as $S*W$-modules.
        In paticular, $S*W\simeq P_{\cc W}$ is also decomposed into 
        \[S*W \simeq \bigoplus_{\lambda\in\irr{W}}P_\lambda^{\oplus\dim_\cc \lambda}.\]
        Therefore, $P_\lambda \,(\lambda\in\irr{W})$ are (graded) projective $S*W$-modules and $P_V$ is also projective for every $W$-representation $V$.
        Note that, for a basis $v_1,v_2,\ldots,v_m$ of $V$ over $\cc$, the elements $1\otimes v_1,1\otimes v_2,\ldots, 1\otimes v_m$ form a basis of free $S$-module $P_V$.

        Let $V$ be a $W$-representation.
        We define the graded $(S*W)$-module $L_V$ by 
        \[
            (L_V)_i \coloneqq 
            \left\{ \begin{array}{ll}   V    &(i=0)   \\
                                        0          &(i\neq0)\end{array}\right.
        \]
        as a graded $W$-module such that $S_{>0}$ acts as zero.
        Since $\lambda\in\irr{W}$ is a simple $(S*W)_0$-module, $L_\lambda$ is obviously a simple $S*W$-module.
        \begin{lem}
            \label{lem:gsimple}
            The set $\{L_\lambda\gs{m}\}_{\lambda \in \irr{W}, m \in \zz}$ is the complete collection of isomorphism classes of simple graded $(S * W)$-modules.
        \end{lem}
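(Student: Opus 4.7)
The plan is to verify the two standard halves of the classification: simplicity together with pairwise non-isomorphism of the proposed list, and then exhaustiveness.

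For simplicity and distinctness, I would observe that any graded submodule $N \subseteq L_\lambda$ is necessarily concentrated in degree $0$ (since $L_\lambda$ itself is), so $N = N_0$ is a $\cc W$-submodule of $\lambda$. Irreducibility of $\lambda$ then forces $N = 0$ or $N = L_\lambda$, giving simplicity in $(S*W)\hgmod$. A graded isomorphism $L_\lambda\gs{m} \simeq L_\mu\gs{n}$ must identify the unique nonzero homogeneous component on each side, which yields $m = n$ and then $\lambda \simeq \mu$ as $W$-representations.

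For exhaustiveness, let $L$ be a simple object of $(S*W)\hgmod$. Since $S*W = \bigoplus_{i\ge 0} S_i \otimes \cc W$ is non-negatively graded and $L$ is finitely generated, $L$ is bounded below, so after applying a suitable shift $\gs{m}$ we may arrange $L_0 \neq 0$ and $L_i = 0$ for $i<0$. I would then show that $L_{>0} \coloneqq \bigoplus_{i>0} L_i$ is a graded $(S*W)$-submodule: for $i \ge 0$ and $j > 0$, we have $(S*W)_i \cdot L_j \subseteq L_{i+j} \subseteq L_{>0}$, and for $i \ge 0$ with $j=0$ we still land in some $L_k$ with $k \ge 0$ but the constraint we need is that $L_{>0}$ is a submodule, which indeed follows once $j > 0$. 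Since $L_0 \neq 0$, $L_{>0}$ is a proper graded submodule, so simplicity forces $L_{>0} = 0$. Thus $L$ is concentrated in degree $0$, $S_{>0}$ acts as zero, and $L = L_V$ for $V := L_0$; simplicity of $L$ in $(S*W)\hgmod$ then forces $V$ to be simple as a $\cc W$-module, i.e.\ an irreducible $W$-representation $\lambda$, so $L \simeq L_\lambda$. Reversing the shift yields the desired $L_\lambda \gs{m}$.

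The only mildly delicate point — but still routine — is verifying that $L_{>0}$ is a graded $(S*W)$-submodule, which rests precisely on $S$ being non-negatively graded (and hence so is $S*W$). No serious obstacle is anticipated beyond this.
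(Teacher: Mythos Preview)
Your argument is correct. The paper does not actually give a proof: it simply cites the result as well known (referring to \cite{Oystaeyen_Nastasescu}). Your direct verification---showing each $L_\lambda\gs{m}$ is simple and pairwise distinct, and then using non-negative grading to conclude that any simple graded module is concentrated in a single degree where it must be an irreducible $\cc W$-module---is the standard elementary argument, and is essentially what the paper's authors drafted but relegated to a comment block. One cosmetic remark: your paragraph on $L_{>0}$ being a submodule is a little tangled (the aside about $j=0$ is unnecessary, since to check $L_{>0}$ is closed under the action you only need $(S*W)_i \cdot L_j \subseteq L_{i+j}$ for $j>0$, and $i \ge 0$ gives $i+j>0$); trimming that would make the exposition cleaner.
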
 
        \begin{proof}
            It is well known (see e.g.~\cite[Proposition 2.7.1, 5.2.4]{Oystaeyen_Nastasescu}).
        \end{proof}

        For each $M\in (S*W)$-gmod and $i\in\zz$, \,$M_i$ is a $W$-representation.
        We define 
        \[ [M:L_\lambda]_q\coloneqq \sum_{i\in \zz} q^i[M_i:\lambda] \]
        where $[M_i:\lambda]$ is the multiplicity of $\lambda$ in $M_i$ as $W$-representations.
        Since $M$ is bounded below and locally finite, then $[M:L_\lambda]_q$ is a formal Laurent series.
        Clearly, we have
        \[\gdim M = \sum_{\lambda\in\irr{W}} [M:L_\lambda]_q\dim_\cc \lambda.\]
        We also define 
        \begin{equation}\label{eq:gch}\gch M \coloneqq \sum_{\lambda\in\irr{W}} [M:L_\lambda]_q\cdot[L_\lambda] \qquad\in\bigoplus_{\lambda \in \irr{W}} \zz(\!(q)\!)\cdot[L_\lambda].\end{equation}

        Since $M$ is bounded below, there is an integer $d\in \zz$ such that $M=M_{\geq d}$.
        Then $\{M_{\geq i}\}_{i=d}^\infty$ is a separable filtration of $M$.
        Its factor $M_{\geq i}/M_{\geq i+1}$ is semisimple for each $i\geq d$, and hence $M$ has $\{L_\lambda\}_{\lambda\in\irr{W}}$-filtration by Lemma \ref{lem:cnctfilts}.  
        In particular, we have
        \[
            M\filt \sum_{\lambda\in\irr{W}} [M:L_\lambda]_q\cdot[L_\lambda] = \gch M. 
        \]

        \begin{prop}
            \label{prop:ghom-multi}
            For $M\in (S*W)\hgmod$ and $\lambda \in \irr{W}$, there is an isomorphism between graded vector spaces 
            \[
                \Hom_{\cc W}(L_\lambda,M) \simeq    \Hom_{S*W}(P_\lambda,M).
            \]
        In particular, we have $\gdim \Hom_{S*W}(P_\lambda,M) = [M:L_\lambda]_q$.
        \end{prop}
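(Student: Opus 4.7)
The plan is to realize this isomorphism as an instance of the induction--restriction adjunction (i.e.\ Frobenius reciprocity) between graded $\cc W$-modules and graded $(S*W)$-modules, where we view $\cc W = (S*W)_0$ as the degree $0$ subalgebra. Concretely, $P_\lambda = S\otimes_\cc \lambda$ is naturally identified with $(S*W)\otimes_{\cc W}\lambda$, so morphisms out of $P_\lambda$ should correspond to $\cc W$-morphisms out of $\lambda \simeq L_\lambda$.

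I will construct the isomorphism by hand. First, define a restriction map
\[
\Phi:\Hom_{S*W}(P_\lambda,M)\longrightarrow \Hom_{\cc W}(L_\lambda,M),\qquad \Phi(f):=f|_{1\otimes \lambda},
\]
noting that $1\otimes \lambda\subset (P_\lambda)_0$ is a $\cc W$-submodule isomorphic to $L_\lambda$, so $\Phi(f)$ is $\cc W$-equivariant. A graded morphism $f$ of degree $n$ sends $1\otimes\lambda$ into $M_n$, so $\Phi$ is grading-preserving once we equip $\Hom_{\cc W}(L_\lambda,M)$ with the grading $\Hom_{\cc W}(L_\lambda,M)_n=\Hom_{\cc W}(\lambda,M_n)$. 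Conversely, define an extension map
\[
\Psi:\Hom_{\cc W}(L_\lambda,M)\longrightarrow \Hom_{S*W}(P_\lambda,M),\qquad \Psi(g)(s\otimes v):=s\cdot g(v),
\]
which is $S$-linear by construction and $W$-equivariant because, using the multiplication in $(S*W)$ and the $W$-equivariance of $g$,
\[
\Psi(g)\bigl((1\otimes w)(s\otimes v)\bigr)=w(s)\cdot g(w(v))=w(s)\cdot w(g(v))=(1\otimes w)\Psi(g)(s\otimes v).
\]
If $g$ has degree $n$, then $\Psi(g)$ shifts degrees by $n$, so $\Psi$ is grading-preserving.

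A direct check shows $\Phi\circ\Psi=\mathrm{id}$ (evaluate at $1\otimes v$) and $\Psi\circ\Phi=\mathrm{id}$ (since any $f\in\Hom_{S*W}(P_\lambda,M)$ is determined by its values on $1\otimes\lambda$ by $S$-linearity: $f(s\otimes v)=(s\otimes 1)f(1\otimes v)=s\cdot f(1\otimes v)$). This gives the desired isomorphism of graded vector spaces.

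For the ``in particular'' statement, observe that
\[
\dim_\cc \Hom_{\cc W}(L_\lambda,M)_n=\dim_\cc \Hom_{\cc W}(\lambda,M_n)=[M_n:\lambda],
\]
where the second equality uses semisimplicity of $\cc W$ (Maschke). Summing against $q^n$ yields
\[
\gdim \Hom_{S*W}(P_\lambda,M)=\sum_{n\in\zz}q^n[M_n:\lambda]=[M:L_\lambda]_q
\]
by the definition of $[M:L_\lambda]_q$. There is essentially no technical obstacle here; the only point requiring mild care is keeping track of the grading conventions so that a degree-$n$ $\cc W$-morphism $L_\lambda\to M$ (which by concentration of $L_\lambda$ in degree $0$ is just a $\cc W$-map $\lambda\to M_n$) matches a degree-$n$ $S*W$-morphism $P_\lambda\to M$ under the adjunction.
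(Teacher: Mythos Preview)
Your proof is correct and follows essentially the same approach as the paper: both identify $P_\lambda$ with the induced module $(S*W)\otimes_{\cc W}L_\lambda$ and invoke the induction--restriction adjunction (Frobenius reciprocity), then use Schur's lemma/semisimplicity for the multiplicity statement. The only difference is that the paper cites Frobenius reciprocity from a reference, whereas you write out the unit/counit maps $\Phi,\Psi$ explicitly and verify the grading compatibility by hand.
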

        \begin{proof}
            We have a graded $(S*W)$-module isomorphism
            \[
                (S*W)\otimes_{\cc W}L_\lambda \simeq S\otimes_{\cc} \cc W\otimes_{\cc W}L_\lambda \simeq S\otimes_{\cc} L_\lambda \simeq P_\lambda. 
            \]
            By the Frobenius reciprocity (see e.g.~\cite[Corollary 2.4.10]{Oystaeyen_Nastasescu}), we obtain an isomorphism between graded vector spaces
            \[
                \Hom_{\cc W}(L_\lambda, M)\simeq \Hom_{S*W}((S*W)\otimes_{\cc W} L_\lambda, M)=\Hom_{S*W}(P_\lambda, M).
            \]
            By Schur's lemma, we have $\gdim \Hom_{\cc W}(L_\lambda,M)=[M:L_\lambda]_q$.
            This completes the proof.
        \end{proof}

        Let $M\in (S*W)$-gmod such that $\dim_\cc M<\infty$.
        We define the dual $M^* \coloneqq \Hom_\cc (M,\cc)$ as graded right $(S*W)$-module.
        By Proposition \ref{prop:opiso}, $M^*$ admits a left $(S*W)$-module structure.
        More precisely, $s\otimes w \in S*W$ acts on $M^*$ from the left as 
        \[((s\otimes w)f)(m) = f((w^{-1}(s)\otimes w^{-1})m)\] for $f \in M^*$ and $m\in M$.
        Note that $M^*$ is graded as $(M^*)_i = \Hom_\cc(M_{-i},\cc)$.
        
        
        The following is well known:
        \begin{prop}
            \label{prop:gldim-skewgrpalg}
            The skew group algebra $S*W$ has global dimension $n$.
        \end{prop}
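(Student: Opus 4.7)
The plan is to prove the upper bound $\gdim (S*W) \leq n$ and the matching lower bound separately.

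For the upper bound, I would exploit two structural observations. First, $S*W$ is free as a left (and right) $S$-module with basis $\{1 \otimes w\}_{w \in W}$; consequently the forgetful functor $(S*W)\hgmod \to S\hgmod$ sends projectives to projectives, since it sends the generator $S*W$ to a free $S$-module of rank $|W|$. Second, $|W|$ is invertible in $\cc$, so the averaging idempotent $e = \frac{1}{|W|}\sum_{w\in W} w$ makes the $W$-invariants functor exact. Combining these, I would construct a natural isomorphism
\[
\Ext^i_{S*W}(M,N) \;\cong\; \Ext^i_S(M,N)^W
\]
by taking a graded projective resolution $P_\bullet \to M$ in $(S*W)\hgmod$, restricting to $S$ (where it remains projective), applying $\Hom_S(-,N)$, and using the identity $\Hom_{S*W}(-,N) = \Hom_S(-,N)^W$. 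The Hilbert syzygy theorem gives $\gdim S = n$, so the right-hand side vanishes for $i > n$, yielding $\gdim(S*W) \leq n$.

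For the matching lower bound, I would exhibit a pair $\lambda,\mu \in \irr{W}$ with $\Ext^n_{S*W}(L_\lambda, L_\mu) \neq 0$. Letting $V = S_1$ be the reflection representation, the Koszul resolution of $L_\lambda$ as an $S$-module yields
\[
\Ext^n_S(L_\lambda, L_\mu) \;\cong\; \Lambda^n V^* \otimes_\cc \Hom_\cc(\lambda,\mu)
\]
as $W$-representations, and this identification is $W$-equivariant because the Koszul differentials are $W$-invariant. Taking $W$-invariants and invoking the isomorphism established above, $\Ext^n_{S*W}(L_\lambda, L_\mu)$ is non-zero precisely when $\Hom_W(\lambda \otimes \Lambda^n V, \mu) \neq 0$, i.e.\ when $\mu \cong \lambda \otimes \Lambda^n V$. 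Choosing $\lambda = \triv$ and $\mu = \Lambda^n V$ (a one-dimensional character of $W$, hence an element of $\irr{W}$) gives an explicit witness.

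The main obstacle is establishing the Ext isomorphism $\Ext^i_{S*W}(M,N) \cong \Ext^i_S(M,N)^W$; both directions of the bound reduce cleanly to the polynomial ring $S$ once this is in hand. As a backup avoiding this isomorphism altogether, one could instead induce a Koszul-type resolution directly from $S$-modules to $(S*W)$-modules via $S*W \otimes_S -$ (which is exact and preserves projectives), producing length-$n$ projective resolutions and then bounding projective dimensions from below by detecting non-zero $\Ext^n$ through the induction--restriction adjunction.
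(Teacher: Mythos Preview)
Your proof is correct and considerably more self-contained than the paper's. The paper dispatches the proposition in two citations: the Hilbert syzygy theorem for $\gdim S = n$, and a reference (\cite{li_2017}) asserting that a skew group algebra over a field in which $|W|$ is invertible inherits the global dimension of the base ring. You instead \emph{prove} that inheritance directly: the upper bound via the isomorphism $\Ext^i_{S*W}(M,N) \cong \Ext^i_S(M,N)^W$ (using freeness of $S*W$ over $S$ and exactness of $(-)^W$ in characteristic zero), and the lower bound by an explicit Koszul computation showing $\Ext^n_{S*W}(L_\triv, L_{\Lambda^n V}) \neq 0$. Your argument is essentially the content behind the paper's black-box citation, so it buys transparency and independence from external sources at the cost of length; it also anticipates the Koszul-resolution computations the paper carries out later (Lemma~\ref{lem:resofsimple}, Corollary~\ref{cor:Ext-LL}) in the specific dihedral setting.
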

        \begin{proof}
            A polynomial ring $S$ has global dimension $n$ (see~\cite[Theorem 8.37]{Rotman2009}).
            By \cite{li_2017}, the skew group algebra $(S*W)$ has the same global dimension.
        \end{proof}
        We set $A=S*W$. 
        \begin{dfn}
            Let $M,N$ be a finitely generated graded $A$-modules.
            The graded Euler-Poincar\'e pairing of $M$ and $N$ is
            \[
                \gep{M}{N}\coloneqq \sum_{d\in\zz} q^d \gep{M}{N}_d ,
            \]
            where
            \[
                \gep{M}{N}_d\coloneqq \sum_{i=0}^n (-1)^i\dim_\cc \Ext_A^i(M,N)_d \quad\in\zz
            \]
            for each $d \in \zz$.
        \end{dfn}
        By definition, we have 
        \[\gep{M}{N} = \sum_{i=0}^n (-1)^i \gdim \Ext_A^i(M,N)\]
        for $M,N\in A$-gmod.
        Since $\Ext_A^i(M,N)$ is locally finite and bounded below for $i\geq 0$, then $\gep{M}{N}$ is a formal Laurent series.   

        \begin{lem}
            \label{lem:sumofgep}
            Let $V$ be a graded $A$-module.
            If there is a short exact sequence in $A\textrm{-gmod}$
            \[
                \xymatrix{ 0 \ar[r] &L \ar[r] &M \ar[r] & N \ar[r] &0}.
            \]
            ,then we have 
            \[
                \gep{V}{L} + \gep{V}{N} = \gep{V}{M}
            \]
            and
            \[
                \gep{L}{V} + \gep{N}{V} = \gep{M}{V}.
            \].
        \end{lem}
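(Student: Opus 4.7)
The plan is to deduce both identities from the long exact sequence of $\Ext$-groups associated to the given short exact sequence, using the fact that $A = S * W$ has finite global dimension $n$ by Proposition \ref{prop:gldim-skewgrpalg}.

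First I would apply the functor $\Hom_{A\textrm{-gmod}}(V, - \gs{-d})$ (resp.\ $\Hom_{A\textrm{-gmod}}(-, V\gs{-d})$) to the short exact sequence $0 \to L \to M \to N \to 0$. By the standard homological algebra recalled in Section \ref{sec:preliminaries}, this yields a long exact sequence of graded Ext pieces in each fixed internal degree $d$. Because of the identification $\Ext^i_A(-,-)_d \simeq \Ext^i_{A\textrm{-gmod}}(-, -\gs{-d})$ from \eqref{eq:extgrd}, this long exact sequence computes exactly the groups that enter the definition of $\gep{V}{L}_d$, $\gep{V}{M}_d$, $\gep{V}{N}_d$ (and analogously in the other variable).

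Since $\mathrm{gl.dim}\,A$-gmod $= n < \infty$, the long exact sequence has only finitely many nonzero terms: it terminates at $\Ext^n$. Each term is finite dimensional over $\mathbb{C}$ (because $V$ is finitely generated and the modules involved are locally finite, so the graded pieces of Hom and Ext are finite dimensional). Therefore, for each fixed $d \in \zz$, the alternating sum of dimensions along the finite exact sequence vanishes, which after rearrangement gives
\[
\gep{V}{M}_d = \gep{V}{L}_d + \gep{V}{N}_d,
\]
and similarly in the contravariant variable. Multiplying by $q^d$ and summing over $d$ yields the two stated identities between formal Laurent series.

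The argument is essentially routine; the only point that requires a little care is to verify that the graded pieces are all finite dimensional and that the long exact sequence really does terminate in the graded category (so that the alternating-sum identity can be applied in each internal degree separately). Both facts follow from $A$ being a non-negatively graded locally finite algebra of global dimension $n$, together with all modules under consideration being finitely generated, so no convergence issues arise. I do not expect any genuine obstacle here.
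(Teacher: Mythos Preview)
Your proposal is correct and takes essentially the same approach as the paper: both apply the long exact sequence of $\Ext$-groups induced by the short exact sequence, use the finite global dimension of $A$ to terminate it, and then take the alternating sum of dimensions. The only cosmetic difference is that the paper works with $\Hom_A(V,-)$ and takes $\gdim$ of the resulting long exact sequence directly, whereas you work degree-by-degree via $\Hom_{A\textrm{-gmod}}(V,-\gs{-d})$; by the decomposition \eqref{eq:extgrd} these are equivalent.
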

        \begin{proof}

            Applying $\Hom_{A}(V,-)$ to a given short exact sequence, we get a long exact sequence
            \[
                \xymatrix{
                    0   \ar[r] &\Hom_A(V,L) \ar[r] &\Hom_A(V,M) \ar[r] &\Hom_A(V,N)\\
                    \ar[r] &\Ext_A^1(V,L) \ar[r] &\Ext_A^1(V,M) \ar[r] &\Ext_A^1(V,N)\\
                    \ar[r] &\Ext_A^2(V,L) \ar[r] &\Ext_A^2(V,M) \ar[r] &\Ext_A^2(V,N) \ar[r] &0.\\
                }
            \](see~\cite[Theorem 6.43]{Rotman2009}).

            Therefore, we obtain
            \begin{multline*}
                \gdim\Hom_A(V,L) - \gdim\Hom_A(V,M) + \gdim\Hom_A(V,N)\\
                -\gdim\Ext_A^1(V,L) + \gdim \Ext_A^1(V,M) - \gdim \Ext_A^1(V,N)\\
                +\gdim\Ext_A^2(V,L) - \gdim\Ext_A^2(V,M)  + \gdim \Ext_A^2(V,N) = 0.   
            \end{multline*}
            Hence, it follows that
            \begin{align*}
                \gep{V}{L} + \gep{V}{N} &= \gdim\Hom_A(V,L) + \gdim\Hom_A(V,N)\\
                &-\gdim\Ext_A^1(V,L) - \gdim \Ext_A^1(V,N)\\
                &+\gdim\Ext_A^2(V,L) + \gdim\Ext_A^2(V,N)\\
                &= \gdim\Hom_A(V,M)- \gdim \Ext_A^1(V,M)+ \gdim\Ext_A^2(V,M)\\
                &= \gep{V}{M}.
            \end{align*}
            The second equation is shown in a similar fashion.

        \end{proof}

        \begin{cor}
            \label{cor:epdectosimple}
            For finite dimensional graded $A$-modules $M,N$, we have
            \[
                \gep{M}{N} = \sum_{\lambda,\mu\in \irr{W}} \gep{L_\lambda}{L_\mu}\cdot[M^*:L_\lambda]_{q}\cdot[N:L_\mu]_q.
            \]
            
        \end{cor}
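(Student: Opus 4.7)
The plan is to reduce, by multilinearity of $\gep{\cdot}{\cdot}$ on composition factors, to an explicit computation of $\gep{L_\lambda\gs{d}}{L_\mu\gs{e}}$ and then to recognize the resulting coefficients.

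I would first establish the grade-shift identity
\[
    \gep{M\gs{d}}{N\gs{e}} = q^{e-d}\,\gep{M}{N}
\]
valid for any finite dimensional $M,N\in A\hgmod$. Working out degrees directly yields $\Hom_A(M\gs{d},N\gs{e})_n = \Hom_A(M,N)_{n+d-e}$, hence $\gdim\Hom_A(M\gs{d},N\gs{e}) = q^{e-d}\gdim\Hom_A(M,N)$; applying this observation termwise to a graded projective resolution of $M$ shows that the same shift rule passes to every $\Ext_A^i$, and therefore to the alternating sum $\gep{\cdot}{\cdot}$.

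Next, because $M$ and $N$ are finite dimensional they admit finite composition series in $A\hgmod$, whose factors are grade shifts of the simples $L_\lambda\gs{d}$ of Lemma \ref{lem:gsimple}. By Jordan-H\"older (or, equivalently, by inspecting the semisimple filtration $\{M_{\geq i}/M_{\geq i+1}\}$) the composition factor $L_\lambda\gs{d}$ appears in $M$ with multiplicity $[M_d:\lambda]$, and analogously for $N$. Iterating Lemma \ref{lem:sumofgep} along the two composition series (once in each slot) and substituting the shift identity from the previous step collapses the resulting double sum to
\[
    \gep{M}{N} = \sum_{\lambda,\mu}\gep{L_\lambda}{L_\mu}\cdot\Bigl(\sum_{d}q^{-d}[M_d:\lambda]\Bigr)\cdot\Bigl(\sum_{e}q^{e}[N_e:\mu]\Bigr).
\]

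Finally, the right-hand bracket is $[N:L_\mu]_q$ by definition. To recognize the left bracket as $[M^*:L_\lambda]_q$, note that $M^*$ has $(M^*)_d = (M_{-d})^*$ as $W$-representations, and $\cc$-linear duality on $\cc W$-modules preserves the decomposition into irreducibles whenever the characters of $W$ are real valued (as is the case for the dihedral groups considered in this paper); consequently $[M^*:L_\lambda]_q = \sum_{d}q^d[M_{-d}:\lambda] = \sum_{e}q^{-e}[M_e:\lambda]$, matching the left bracket. The main obstacle is really only careful bookkeeping of grade shifts: the asymmetric appearance of $M^*$ versus $N$ in the statement is forced by the asymmetric factor $q^{e-d}$ in the shift identity, and once the signs of $d$ and $e$ are kept straight the rest of the argument is purely multilinear algebra.
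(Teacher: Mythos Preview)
Your proof is correct and follows essentially the same route as the paper's: iterate Lemma~\ref{lem:sumofgep} along composition series in each variable, use the grade-shift identity $\gep{L_\lambda\gs{d}}{L_\mu\gs{e}} = q^{e-d}\gep{L_\lambda}{L_\mu}$, and identify $\sum_d q^{-d}[M_d:\lambda]$ with $[M^*:L_\lambda]_q$ via self-duality of $W$-representations. The only cosmetic difference is that the paper decomposes $M$ first and $N$ second rather than both at once, and it writes the shift identity inline rather than isolating it as a preliminary step; your more explicit justification of the self-duality step (which the paper invokes silently) is if anything an improvement.
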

        \begin{proof}
           Since $M$ is a finite dimensional graded $A$-module, $M$ admits a finite $\{L_\lambda\}_{\lambda\in\irr{W}}$-filtration
           \[M = M^0\supsetneq M^1\supsetneq M^2 \supsetneq\cdots \supsetneq M^m = 0.\]
           For each $i\hspace{1.2mm}(1\leq i\leq m)$, we have a short exact sequence
           \[\xymatrix{0\ar[r] & M^i \ar[r] &M^{i-1} \ar[r] &M^{i-1}/M^i \ar[r] &0. }\]
           By using Lemma \ref{lem:sumofgep} repeatedly, we have
            \begin{align} 
                \gep{M}{N}  &= \gep{M^0}{N} \\
                            &= \gep{M^0/M^1}{N} + \gep{M^1}{N}\\ 
                            &\vdots\\
                            &=\sum_{i=1}^{m} \gep{M^{i-1}/M^i}{N}.
            \end{align}
           Clearly, the number of factors isomorphic to $L_\lambda\gs{d}$ is equal to $[M_d:\lambda]$ for each $\lambda \in \irr{W}$.
           Therefore, we have
           \begin{align}
                \gep{M}{N} &= \sum_{\lambda\in\irr{W}}\sum_{d\in\zz} [M_d:\lambda]\cdot\gep{L_\lambda\gs{d}}{N}\\
                        &= \sum_{\lambda\in\irr{W}}\sum_{d\in\zz} [M_d:\lambda]\cdot q^{-d}\,\gep{L_\lambda}{N}\\
                        &= \sum_{\lambda\in\irr{W}}[M^*:L_\lambda]_{q}\cdot\gep{L_\lambda}{N}.
        \end{align}
           Similarly, we can decompose $N$ to simple modules and obtain that
           \begin{align}
            \gep{M}{N} &= \sum_{\mu\in\irr{W}}\sum_{d\in\zz} [N_d:\mu]\cdot \gep{M}{L_\mu\gs{d}}\\
                        &= \sum_{\mu\in\irr{W}}\sum_{d\in\zz} [N_d:\mu]\cdot q^d\,\gep{M}{L_\mu}\\
                        &= \sum_{\mu\in\irr{W}} [N:L_\mu]_q\cdot\gep{M}{L_\mu}.\\
           \end{align}
            Consequently, we have
            \[
                \gep{M}{N} = \sum_{\lambda\in\irr{W}}\sum_{\mu\in\irr{W}}[M^*:L_\lambda]_{q}\cdot[N:L_\mu]_q \cdot \gep{L_\lambda}{L_\mu} 
            \]
            as desired.                       
        \end{proof}


    \section{Brauer-Humphreys type reciprocity for preorders}\label{sec:reciprocity}
        Keep the setting of the previous section.
        Let $S = \cc[X_1,X_2,\ldots,X_n]$ be a polynomial ring in $n$ variables. 
        Let $W$ be a finite group such that every finite dimensional $W$-representation $V$ is self-dual, that is, $V\simeq V^*$.
        Note that a representation $V$ is self-dual if and only if the character of $V$ is real-valued (see e.g.~\cite[13.2]{Serre1977}).
        Fix a $W$-action on $S$ which preserves the grading of $S$.
        We set $A=S*W$ and $R = S^W$.
        We assume that $S/I$ is finite dimensional, where $I=S\cdot R_{>0}$ is the ideal of $A$ generated by the homogeneous invariants of positive degree.

        \begin{prop}
            \label{prop:adj-sym}
            Let $V$ be a $W$-representation.
            For $\lambda,\mu\in\irr{W}$, we have $[V\otimes\lambda:\mu] = [V\otimes\mu:\lambda]$.
        \end{prop}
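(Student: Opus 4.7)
The plan is to reduce the claimed identity to an obvious symmetry by using the self-duality hypothesis together with standard Hom/tensor adjunction. Both sides of the identity measure a multiplicity of an irreducible in a tensor product, so the natural tool is either character theory or the $\Hom$--$\otimes$ adjunction in $\cc W\hgmod$; I would use the latter for cleanness.

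First I would rewrite each multiplicity as a dimension of a Hom space by Schur's lemma: since $\mu$ is irreducible,
\[ [V \otimes \lambda : \mu] = \dim_{\cc} \Hom_{\cc W}(\mu, V \otimes \lambda), \]
and similarly $[V \otimes \mu : \lambda] = \dim_{\cc} \Hom_{\cc W}(\lambda, V \otimes \mu)$. Next I would use the standard adjunction $\Hom_{\cc W}(A, B \otimes C) \simeq \Hom_{\cc W}(A \otimes C^{*}, B)$ to pull the $\lambda$ (respectively $\mu$) factor to the other side:
\[ \Hom_{\cc W}(\mu, V \otimes \lambda) \simeq \Hom_{\cc W}(\mu \otimes \lambda^{*}, V), \]
\[ \Hom_{\cc W}(\lambda, V \otimes \mu) \simeq \Hom_{\cc W}(\lambda \otimes \mu^{*}, V). \]
Then I would invoke the hypothesis that every $W$-representation is self-dual to conclude $\lambda^{*} \simeq \lambda$ and $\mu^{*} \simeq \mu$, and use commutativity of the tensor product to identify $\mu \otimes \lambda^{*} \simeq \lambda \otimes \mu^{*}$ as $\cc W$-modules. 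Hence the two $\Hom$ spaces have the same dimension, which is exactly the claimed equality.

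There is no serious obstacle here: the only nontrivial input is the self-duality assumption, which is invoked in a single line. If one prefers an even more direct route, the entire statement can be seen character-theoretically by writing
\[ [V \otimes \lambda : \mu] = \frac{1}{|W|} \sum_{w \in W} \chi_{V}(w)\, \chi_{\lambda}(w)\, \overline{\chi_{\mu}(w)}, \]
noting that self-duality forces every character value to be real, so $\overline{\chi_{\mu}(w)} = \chi_{\mu}(w)$, and then observing that the resulting expression is manifestly symmetric in $\lambda$ and $\mu$. Either approach yields the result in a few lines.
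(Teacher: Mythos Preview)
Your proposal is correct and follows essentially the same approach as the paper: both proofs use Schur's lemma to convert multiplicities into dimensions of $\Hom$ spaces, apply the standard $\Hom$/tensor adjunction (the paper writes it as $\Hom_\cc(\mu,V\otimes\lambda)\simeq \mu^*\otimes V\otimes\lambda\simeq\Hom_\cc(\lambda^*,V\otimes\mu^*)$, you pull a factor to the other side), and then invoke self-duality of $\lambda$ and $\mu$. Your alternative character-theoretic argument is also fine and in fact appears, commented out, in the paper's source.
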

        \begin{proof}
            By Schur's Lemma, we have 
            \[[V\otimes\lambda:\mu] = \dim \Hom_{\cc W}(\mu,V\otimes\lambda) = \dim \Hom_{\cc}(\mu,V\otimes\lambda)^W.\]
            There is a natural isomorphism $\Hom_\cc(\mu,V\otimes\lambda)\simeq \mu^*\otimes V\otimes\lambda \simeq \Hom_\cc(\lambda^*,V\otimes\mu^*)$, so we obtain $\dim \Hom_\cc(V\otimes\lambda,\mu)^W = \dim \Hom_\cc(V\otimes\mu^*,\lambda^*)^W = [V\otimes\mu^*:\lambda^*]$.
            Since $\lambda$ and $\mu$ are self-dual, we have $ [V\otimes\mu^*:\lambda^*] =  [V\otimes\mu:\lambda]$.
            This completes the proof.

        \suspend
        \end{proof}
        \begin{cor}
            \label{cor:proj-sym}
            For $\lambda,\mu\in\irr{W}$, we have\[ [P_\lambda:L_\mu]_q = [P_\mu:L_\lambda]_q \].
        \end{cor}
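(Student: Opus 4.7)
The plan is to unfold the definition of $[P_\lambda : L_\mu]_q$ in terms of graded $W$-character multiplicities and then apply Proposition \ref{prop:adj-sym} degree by degree. Recall that $P_\lambda = S \otimes_\cc \lambda$ with grading $(P_\lambda)_i = S_i \otimes_\cc \lambda$, and as a $W$-representation this is just the tensor product of $W$-representations. By the definition of $[M : L_\mu]_q$, we have
\[
[P_\lambda : L_\mu]_q = \sum_{i \geq 0} q^i \, [(P_\lambda)_i : \mu] = \sum_{i \geq 0} q^i \, [S_i \otimes \lambda : \mu],
\]
where each $[S_i \otimes \lambda : \mu]$ is the multiplicity of $\mu$ in the $W$-representation $S_i \otimes \lambda$.

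Now I would apply Proposition \ref{prop:adj-sym} with $V = S_i$ (a finite-dimensional $W$-representation, since $S$ is locally finite) to obtain
\[
[S_i \otimes \lambda : \mu] = [S_i \otimes \mu : \lambda]
\]
for every $i \geq 0$. Summing over $i$ with the weight $q^i$ then yields
\[
[P_\lambda : L_\mu]_q = \sum_{i \geq 0} q^i \, [S_i \otimes \mu : \lambda] = [P_\mu : L_\lambda]_q,
\]
which is the claim.

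There is no real obstacle here: the statement is essentially a repackaging of Proposition \ref{prop:adj-sym} after identifying the graded pieces of $P_\lambda$ as $W$-representations via $P_\lambda = S \otimes_\cc \lambda$. The only point worth double-checking is that the $W$-action on $(P_\lambda)_i$ coincides with the tensor product action on $S_i \otimes \lambda$, which is immediate from the definition of the $(S*W)$-module structure on $P_V$ given before Proposition \ref{prop:opiso}. Hence the corollary follows directly, with no need to invoke the graded Hom description from Proposition \ref{prop:ghom-multi}, though one could equivalently phrase the proof via $\gdim \Hom_{S*W}(P_\mu, P_\lambda) = [P_\lambda : L_\mu]_q$ and check that this pairing is symmetric.
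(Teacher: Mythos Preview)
Your proof is correct and essentially identical to the paper's own proof: both identify $(P_\lambda)_d = S_d \otimes \lambda$ as $W$-representations and apply Proposition \ref{prop:adj-sym} with $V = S_d$ degree by degree. The extra commentary you add about verifying the $W$-action and the alternative via $\gdim \Hom_{S*W}(P_\mu, P_\lambda)$ is sound but not needed.
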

        \begin{proof}
            For each $d\geq 0$, the degree $d$ parts of $P_\lambda$ and $P_\mu$ are $(P_\lambda)_d = S_d\otimes\lambda$ and $(P_\mu)_d = S_d\otimes\mu$ respectively.
            By Proposition \ref{prop:adj-sym}, we obtain
            \begin{align}
                [P_\lambda:L_\mu]_q &=\sum_{d=0}^\infty [(P_\lambda)_d:\mu]\cdot q^d=\sum_{d=0}^\infty [(P_\mu)_d:\lambda]\cdot q^d=[P_\mu:L_\lambda]_q. 
            \end{align} 
        \end{proof}

        Let $\precsim$ be a preorder on $\irr{W}$.
        We define 
        \[\begin{aligned}
            \lambda \sim \mu \text{ if and only if } \lambda \precsim \mu \text{ and } \mu \precsim \lambda\\
            \lambda \prec \mu \text{ if and only if } \lambda \precsim \mu \text{ and } \lambda \nsim \mu.
        \end{aligned}\]
        for each $\lambda,\mu\in\irr{W}$.
        Then $\sim$ is an equivalence relation and $(\irr{W}/\!\sim ,\precsim)$ is partially ordered set.
        We define graded $(S*W)$-modules $K_\lambda$ and $\widetilde{K}_\lambda$ depending on a preorder $\precsim$ as follows:
        
        \begin{dfn}\label{dfn:K-Ktilde}
            Let $\precsim$ be a preorder on $\irr{W}$.
            For each $\lambda \in \irr{W}$, we define submodules of $P_\lambda$ by
            \[
                I_\lambda \coloneqq \sum_{\mu\succsim\lambda,f\in\Hom_{A}(P_\mu,P_\lambda)_{>0}}\mathrm{Im}\,f
                \quad\textrm{and}\quad
                \widetilde{I}_\lambda\coloneqq \sum_{\mu\succ\lambda,f\in\Hom_{A}(P_\mu,P_\lambda)_{>0}}\mathrm{Im}\,f.
            \]
            We also define quotient modules of $P_\lambda$ by 
            \[
                K_\lambda \coloneqq P_\lambda /I_\lambda  
                \quad\textrm{and}\quad
                \widetilde{K}_\lambda \coloneqq P_\lambda / \widetilde{I}_\lambda.
            \]
        \end{dfn}
            We call $\{K_\lambda\}_{\lambda\in\irr{W}}$ (resp. $\{\widetilde{K}_\lambda\}_{\lambda\in\irr{W}}$) the trace quotient modules (resp. the strict trace quotient modules) with respect to the preorder $\precsim$.

        We remark that, by Proposition \ref{prop:ghom-multi}, the $S*W$-submodule $\widetilde{I}_\lambda$ is generated by the isotypic components of $(P_\lambda)_{>0}$ associated to $\mu$ with $\mu\succ \lambda$.
        Similarly, $I_\lambda$ is generated by the isotypic components associated to $\mu$ with $\mu\succsim \lambda$.
         
        Since $\lambda \precsim \lambda$ and $R\subseteq \End_{S*W}(P_\lambda)$, we have
        \[\begin{aligned}
            \dim K_\lambda  &\leq \dim P_\lambda/(R_{>0}\cdot P_\lambda)\\
                            &=\dim (S/R_{>0}\cdot S)\otimes_\cc \lambda.\\
        \end{aligned}\]   
        By assumption, we have $\dim S/(R_{>0}\cdot S)<\infty$.
        Therefore, a trace quotient module $K_\lambda$ is finite dimensional and its dual module $K_\lambda^*$ is well-defined object in $(S*W)$-gmod.
        We denote the sets $\{\widetilde{K}_\lambda\}_{\lambda\in\irr{W}}$ and $\{K_\lambda\}_{\lambda\in\irr{W}}$ by $\widetilde{K}$ and $K$ respectively.
        By Proposition \ref{prop:ghom-multi}, we have
        \[\dim \Hom_{A\text{-gmod}} (P_\lambda\gs{d},L_\mu) = \begin{cases}1 &  (\lambda = \mu \text{ and } d=0),\\0 &(otherwise).\end{cases}\]
        Since we have $\widetilde{I}_\lambda,I_\lambda\subset (P_\lambda)_{>0}$ and $(L_\mu)_{>0} = 0$, we obtain
        \[\dim \Hom_{A\text{-gmod}} (\widetilde{K}_\lambda\gs{d},L_\mu)= \begin{cases}1 & (\lambda = \mu \text{ and } d=0),\\0 &(otherwise)\end{cases}\]
        and
        \[\dim \Hom_{A\text{-gmod}} (K_\lambda\gs{d},L_\mu)= \begin{cases}1 &(\lambda = \mu \text{ and } d=0),\\0 &(otherwise).\end{cases}\]
        
        We consider the case $\widetilde{K}$ and $K$ satisfy some conditions.
        \begin{dfn}
            \label{dfn:BHR-conditions}
            Let $\precsim$ be a preorder on $\irr{W}$.
            Let $\{\widetilde{K}_\lambda\}_{\lambda\in\irr{W}}$ and $\{K_\lambda\}_{\lambda\in\irr{W}}$ be the strict trace quotient modules and the trace quotient modules with respect to $\precsim$ respectively.
            We define following conditions:
            \begin{itemize}
                \item \text{(The orthogonality)} For each $\lambda,\mu\in\irr{W}$ , we have
                    \begin{equation}\label{eq:orthogonality}\Ext_{A}^i(\widetilde{K}_\lambda,K_\mu^*)_d = 
                        \begin{cases}
                            \cc & (i=d=0 \textrm{ and } \lambda = \mu),\\
                            0   & (otherwise).
                        \end{cases}
                    \end{equation}
                \item (The existence of $\widetilde{K}$-filtration) For each $\lambda\in\irr{W}$, the projective module $P_\lambda$ is filtered by the strict trace quotient modules $\{\widetilde{K}_\mu\}_{\mu\in\irr{W}}$. 
                \item (The existence of $K$-filtration) For each $\lambda\in\irr{W}$, the module $\widetilde{K}_\lambda$ is filtered by the trace quotient modules $\{K_\mu\}_{\mu\in\irr{W}}$.
            \end{itemize}
        \end{dfn}
        Assume that a projective module $P_\lambda$ is filtered by $\widetilde{K}$.
        We denote by $(P_\lambda:\widetilde{K}_\mu\gs{d})$ the number of factors of a $\widetilde{K}$-filtration of $P_\lambda$ isomorphic to $\widetilde{K}_\mu\gs{d}$ in $(S*W)$-gmod, and we define the graded version of multiplicity of $\widetilde{K}_\mu$ by
        \[
            (P_\lambda:\widetilde{K}_\mu)_q \coloneqq \sum_{d\in\zz} q^d(P_\lambda:\widetilde{K}_\mu\gs{d}).
        \]    
        
        If the orthogonality condition \eqref{eq:orthogonality} holds, then we can count the number of graded factors in a $\widetilde{K}$-filtration as follows:
        \begin{prop}\label{prop:reciprocity}
            Let $M\in A$-gmod be filtered by $\widetilde{K}$.
            If the condition \eqref{eq:orthogonality} in Definition \ref{dfn:BHR-conditions} holds, then every $\widetilde{K}$-filtration of $M$ has the same multiplicities of factors:
            \begin{align*}
                M\filt \sum_{\mu\in\irr{W}} \sum_{d\in\zz} q^d\dim\Hom_{A}(M,K_\mu^*)_{-d}\cdot[\widetilde{K}_\mu].
            \end{align*}
            In particular, for each $\lambda,\mu\in\irr{W}$ such that $P_\lambda$ is filtered by $\widetilde{K}$, we have
            \begin{align*}
                (P_\lambda:\widetilde{K}_\mu)_q = [K_\mu:L_\lambda]_q.
            \end{align*}
        \end{prop}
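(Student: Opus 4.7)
The plan is to exploit the orthogonality hypothesis \eqref{eq:orthogonality} to show that, for each fixed $\lambda \in \irr{W}$ and $d \in \zz$, the dimension $\dim \Hom_A(M, K_\lambda^*)_{-d}$ records exactly the number of factors isomorphic to $\widetilde{K}_\lambda\gs{d}$ in any $\widetilde{K}$-filtration of $M$. Because $\dim \Hom_A(M, K_\lambda^*)_{-d}$ is intrinsic to $M$, this will simultaneously establish the stated multiplicity formula and its independence of the chosen filtration. The second claim then follows by specialization to $M = P_\lambda$.

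The key step is an inductive argument on the filtration. Given a $\widetilde{K}$-filtration $M = M^0 \supseteq M^1 \supseteq \cdots$ with $i$-th factor $M^{i-1}/M^i \cong \widetilde{K}_{\lambda_i}\gs{d_i}$, I would apply $\Hom_A(-, K_\lambda^*)$ to each short exact sequence $0 \to M^i \to M^{i-1} \to \widetilde{K}_{\lambda_i}\gs{d_i} \to 0$. The vanishing $\Ext_A^1(\widetilde{K}_{\lambda_i}\gs{d_i}, K_\lambda^*) = 0$ supplied by \eqref{eq:orthogonality} truncates the resulting long exact sequence into a short exact sequence of graded Hom spaces, yielding
\[
\dim \Hom_A(M^{i-1}, K_\lambda^*)_{-d} = \dim \Hom_A(\widetilde{K}_{\lambda_i}\gs{d_i}, K_\lambda^*)_{-d} + \dim \Hom_A(M^i, K_\lambda^*)_{-d}.
\]
The $\Hom$-part of the orthogonality hypothesis identifies the first summand with $\delta_{\lambda_i,\lambda}\delta_{d_i,d}$, the indicator that the $i$-th factor is exactly $\widetilde{K}_\lambda\gs{d}$.

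Telescoping this identity gives the claim once I justify $\dim \Hom_A(M^i, K_\lambda^*)_{-d} \to 0$ as $i \to \infty$, which I expect to be the main technical point. This vanishing should follow from the finite-dimensionality of $K_\lambda$ (hence of $K_\lambda^*$), observed just before Definition \ref{dfn:BHR-conditions}: since $M$ is locally finite and bounded below, the separability of $\{M^i\}_{i=0}^\infty$ forces, for every fixed degree $n$, the existence of $i_n$ with $(M^i)_n = 0$ for $i \geq i_n$; once $M^i$ lives in degrees strictly greater than $\max\{n : (K_\lambda^*)_n \neq 0\} + d$, there can be no nonzero graded map $M^i \to K_\lambda^*$ of degree $-d$.

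For the second assertion, I would apply the formula just established with $M = P_\lambda$, obtaining $(P_\lambda : \widetilde{K}_\mu)_q = \sum_{d\in\zz} q^d \dim \Hom_A(P_\lambda, K_\mu^*)_{-d}$. Proposition \ref{prop:ghom-multi} gives $\gdim \Hom_A(P_\lambda, K_\mu^*) = [K_\mu^* : L_\lambda]_q$, so the previous sum equals $[K_\mu^* : L_\lambda]_{q^{-1}}$. Finally, the self-duality of every $W$-representation (Section \ref{sec:reciprocity}) gives $[(K_\mu^*)_n : \lambda] = [(K_\mu)_{-n} : \lambda]$ for all $n$, and the substitution $n \mapsto -n$ converts $[K_\mu^* : L_\lambda]_{q^{-1}}$ into $[K_\mu : L_\lambda]_q$, yielding the reciprocity.
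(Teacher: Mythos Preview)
Your proposal is correct and follows essentially the same argument as the paper. The only cosmetic difference is that the paper works with the quotients $M/M^i$ (so the subquotient $\widetilde{K}_{\lambda_i}\gs{d_i}$ sits in the kernel position of the short exact sequence) and then truncates at a fixed index $l$ with $M^l=(M^l)_{>d}$, whereas you work directly with the submodules $M^i$ (placing $\widetilde{K}_{\lambda_i}\gs{d_i}$ in the cokernel position) and let $i\to\infty$; both routes invoke the same $\Ext^1$-vanishing from \eqref{eq:orthogonality} and the same degree-bound argument for the vanishing of $\Hom_A(M^i,K_\lambda^*)_{-d}$, and the treatment of the second assertion is identical.
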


        \begin{proof}
            Given a $\widetilde{K}$-filtration of $M$ 
            \[
                M = M^0 \supseteq M^1 \supseteq M^2 \supseteq \cdots \supseteq M^i \supseteq \cdots,
            \]
            it suffices to show that the number of factors isomorphic to $\widetilde{K}_\mu\gs{d}$ is equal to $\dim\Hom_{A}(M,K_\mu^*)_{-d}$ for each $d\in\zz$ and $\mu\in\irr{W}$.
            Since $M$ is locally finite and bounded below, there is a number $l$ such that $M^l$ have no homogeneous part of degree less than or equal to $d$; that is, $M^l=(M^l)_{>d}$.
            Clearly, the degree $d$ part of $\widetilde{K}_\mu\gs{d}$ is nonzero, and hence $i$-th factor $M^{i-1}/M^i$ is not isomorphic to $\widetilde{K}_\mu\gs{d}$ for any $i>l$.
            In other word, every factor which isomorphic to $\widetilde{K}_\mu\gs{d}$ appears in the first $l$ factors.
            Then, a $\widetilde{K}$-filtration  $\{M^i\}_{i=0}^\infty$ of $M$ and a finite $\widetilde{K}-$filtration $\{M^i/M^l\}_{i=0}^l$ of $M/M_l$ have the same number of factors isomorphic to $\widetilde{K}_\mu\gs{d}$.
            Since the homogeneous part of $K_\mu^*\gs{d}$ of degree greater than $d$ is zero, then we have $\Hom_{A\text{-gmod}}(M^l,K_\mu^*\gs{d}) \simeq \Hom_{A}(M^l,K_\mu^*)_{-d} = 0$. 
             
            For each $i\geq 1$, we have a short exact sequence
            \[\xymatrix{ 0 \ar[r] & M^{i-1}/M^i  \ar[r] & M/M^i \ar[r] &M/M^{i-1} \ar[r] &0.}\]
            This sequence induces a long exact sequence
            \begin{align}\label{eq:exseq}
                \vcenter{\xymatrix@R=10pt{
                 0 \ar[r] &\Hom_{A}(M/M^{i-1},K_\mu^*)_{-d} \ar[r] &\Hom_{A}(M/M^i ,K_\mu^*)_{-d}\\
                \ar[r] &\Hom_{A}(M^{i-1}/M^i,K_\mu^*)_{-d} \ar[r]   &\Ext^1_{A}(M/M^{i-1},K_\mu^*)_{-d} \\
                \ar[r] &\Ext^1_{A}(M/M^i ,K_\mu^*)_{-d} \ar[r]      &\Ext^1_{A}(M^{i-1}/M^i,K_\mu^*)_{-d} .}}
            \end{align}
            Since $M^{i-1}/M^i$ is isomorphic to zero or $\widetilde{K}_{\alpha_i}\gs{d_i}$ for some $\alpha_i\in\irr{W}$ and $d_i\in\zz$, we have 
            \[\Ext^1_{A}(M^{i-1}/M^i,K_\mu^*)_{-d} = 0\] 
            by \eqref{eq:orthogonality}.
            By \eqref{eq:exseq}, we have 
            \begin{align}
                \dim \Ext^1_A(M/M^i,K_\mu^*)_{-d} &\leq \dim \Ext^1_A(M/M^{i-1},K_\mu^*)_{-d} + \dim\Ext^1_{A}(M^{i-1}/M^i,K_\mu^*)_{-d}\\
                    &= \dim\Ext^1_A(M/M^{i-1},K_\mu^*)_{-d}.
            \end{align}
            From this, we obtain 
            \[\dim \Ext^1_A(M/M^i,K_\mu^*)_{-d} \leq \dim \Ext^1_A(M/M^0,K_\mu^*)_{-d} = 0\]
            by induction on $i$.
            By \eqref{eq:orthogonality}, we have $\dim_\cc \Hom_{A}(M^{i-1}/M^i,K_\mu^*)_{-d} = \delta_{\alpha_i,\mu}\delta_{d_i,d}$.
            Therefore, \eqref{eq:exseq} yields 
            \begin{align}
                \dim_\cc &\Hom_{A}(M/M^i F,K_\mu^*)_{-d} \\
                    &= \dim_\cc \Hom_{A}(M/M^{i-1},K_\mu^*)_{-d} + \dim_\cc \Hom_{A}(M^{i-1}/M^i,K_\mu^*)_{-d}\\
                    &=\dim_\cc \Hom_{A}(M/M^{i-1},K_\mu^*)_{-d}+\delta_{\alpha_i,\mu}\delta_{d_i,d}.
            \end{align}
            Hence, we can show inductively that $\dim_\cc \Hom_{A}(M/M^i,K_\mu^*)_{-d}$ is equal to the number of indices $j$ such that $1\leq j \leq i$ and $j$-th factor $M^j/M^{j-1}$ is isomorphic to $\widetilde{K}_{\mu}\gs{d}$.
            In particular, $\dim_\cc \Hom_{A}(M/M^l,K_\mu^*)_{-d}$ is equal to the number of factors in a filtration $\{M^i/M^l\}_{i=0}^l$ isomorphic to $\widetilde{K}_\mu\gs{d}$.
            Applying $\Hom_{A}(-,K_\mu^*)_{-d}$ to the following short exact sequence: 
            \[\xymatrix{ 0 \ar[r] & M^l  \ar[r] & M \ar[r] &M/M^l \ar[r] &0,}\]
            we obtain a long exact sequence
            \[\xymatrix{ 0 \ar[r] & \Hom_A(M/M^l,K_\mu^*)_{-d}  \ar[r] & \Hom_A(M,K_\mu^*)_{-d} \ar[r] & \Hom_A(M^l,K_\mu^*)_{-d}=0.}\]
            Consequently, we have $\dim_\cc \Hom_{A}(M/M^l,K_\mu^*)_{-d} = \dim \Hom_A(M,K_\mu^*)_{-d}$.
            This completes the proof of the first assertion.

            In the case $M=P_\lambda$, we obtain
            \[
                (P_\lambda:\widetilde{K}_\mu)_q = \sum_{d\in\zz} q^d\dim\Hom_{A}(P_\lambda,K_\mu^*)_{-d}.
            \]
            By Proposition \ref{prop:ghom-multi}, we have
            \[
                \sum_{d\in\zz} q^d\dim\Hom_{A}(P_\lambda,K_\mu^*)_{-d} = \sum_{d\in\zz} q^d [(K_\mu^*)_{-d}:\lambda].\\
            \]
            Since every $W$-representation is self-dual, we have
            \begin{align}
                \sum_{d\in\zz} q^d [(K_\mu^*)_{-d}:\lambda] &= \sum_{d\in\zz} q^d [(K_\mu)_d:\lambda]= [K_\mu:L_\lambda]_q.
            \end{align}
            Consequently, we obtain 
            \[(P_\lambda:\widetilde{K}_\mu)_q = [K_\mu:L_\lambda]_q\]
            as desired.
        \end{proof}
        Thanks to Proposition \ref{prop:reciprocity}, the multiplicity $(P_\lambda:\widetilde{K}_\mu)_q$ is independent of the choice of the $\widetilde{K}$-filtration of $P_\lambda$.
        An equation of multiplicities $(P_\lambda:\widetilde{K}_\mu)_q = [K_\mu:L_\lambda]_q$ of this type is found in many settings and called Brauer-Humphreys type reciprocity (see e.g.~\cite{Cline1988,HOLMES1991117,IRVING1990363}).
        We will show later that the converse of Proposition \ref{prop:reciprocity} also holds under proper conditions.
        
        Let $\mathrm{Mat} (\irr{W},\zz[\![q]\!])$ be the ring of square matrices with entries in $\zz[\![q]\!]$ whose rows and columns are indexed by the finite set $\irr{W}$.
        We define matrices in $\mathrm{Mat} (\irr{W},\zz[\![q]\!])$ as follows:
        \[
            [P:L] \coloneqq \biggl([P_\lambda:L_\mu]_q\biggr)_{\lambda,\mu\in\irr{W}},\quad[\widetilde{K}:L] \coloneqq \biggl([\widetilde{K}_\lambda:L_\mu]_q\biggr)_{\lambda,\mu\in\irr{W}},
        \]
        \[
            [K:L] \coloneqq \biggl([K_\lambda:L_\mu]_q\biggr)_{\lambda,\mu\in\irr{W}},\quad \textrm{and}\quad (P:\widetilde{K}) \coloneqq \biggl((P_\lambda:\widetilde{K}_\mu)_q\biggr)_{\lambda,\mu\in\irr{W}}.
        \] 
        \begin{dfn}\label{dfn:Ktilde-K-mult}
            Assume that the three conditions in Definition \ref{dfn:BHR-conditions} are satisfied.
            For each $\lambda,\mu\in\irr{W}$, we fix a $K$-filtration of $\widetilde{K}_\lambda$ and denote the multiplicity of $K_\mu$ in a $K$-filtration of $\widetilde{K}_\lambda$ by $(\widetilde{K}_\lambda:K_\mu)_q$.
            We define a matrix $(\widetilde{K}:K)$ by
            \[
                (\widetilde{K}:K)\coloneqq \biggl( (\widetilde{K}_\lambda:K_\mu)_q \biggr)_{\lambda,\mu\in\irr{W}}.
            \]
        \end{dfn}
        \begin{prop}\label{prop:Ktilde-K-indep}
            The matrix $(\widetilde{K}:K)$ is independent of the choice of $K$-filtration.
        \end{prop}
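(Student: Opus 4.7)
The plan is to extract from any $K$-filtration of $\widetilde{K}_\lambda$ a matrix identity relating the multiplicities of $K_\mu$'s to the intrinsic character multiplicities $[\widetilde{K}_\lambda:L_\nu]_q$ and $[K_\mu:L_\nu]_q$, and then to invert that matrix to recover $(\widetilde{K}_\lambda:K_\mu)_q$ in a filtration-free manner.

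Concretely, given a $K$-filtration $\widetilde{K}_\lambda\filt\sum_\mu(\widetilde{K}_\lambda:K_\mu)_q\cdot[K_\mu]$, I would concatenate it with the (finite) $\{L_\nu\}$-filtrations of each $K_\mu$-factor via Lemma \ref{lem:cnctfilts}\eqref{item:findimfacts} --- applicable because every $K_\mu$ is finite dimensional --- to obtain an $\{L_\nu\}$-filtration of $\widetilde{K}_\lambda$ whose total multiplicities must read
\[
[\widetilde{K}_\lambda:L_\nu]_q=\sum_{\mu\in\irr{W}}(\widetilde{K}_\lambda:K_\mu)_q\cdot[K_\mu:L_\nu]_q
\]
for every $\nu\in\irr{W}$. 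Writing this as the matrix identity $[\widetilde{K}:L]=(\widetilde{K}:K)\cdot[K:L]$, I observe that both $[\widetilde{K}:L]$ and $[K:L]$ are intrinsic invariants of the modules involved, independent of any choice of filtration, so uniqueness of $(\widetilde{K}:K)$ will follow once $[K:L]$ is shown to be invertible.

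For invertibility, I would use that $(K_\lambda)_0=\lambda$ --- since $I_\lambda$ is by definition generated in strictly positive degrees --- to conclude $[K_\lambda:L_\lambda]_q\in 1+q\zz[\![q]\!]$ and $[K_\lambda:L_\mu]_q\in q\zz[\![q]\!]$ for $\mu\neq\lambda$. Thus $[K:L]$ reduces to the identity modulo $q$, and hence admits an inverse in $\mathrm{Mat}(\irr{W},\zz[\![q]\!])$ via the geometric series $\sum_{k\geq 0}(I-[K:L])^k$, which converges in the $q$-adic topology. Consequently $(\widetilde{K}:K)=[\widetilde{K}:L]\cdot[K:L]^{-1}$ is uniquely determined.

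The main (though minor) technical point to verify will be that the graded character adds correctly across the given $K$-filtration even when it is infinite: since the filtration is separable and $\widetilde{K}_\lambda$ is bounded below and locally finite, only finitely many factors have nonzero contribution in any fixed degree, and the concatenation argument of Lemma \ref{lem:cnctfilts} applies to yield an $\{L_\nu\}$-filtration whose multiplicities, as formal Laurent series, match the displayed identity term-by-term.
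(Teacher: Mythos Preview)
Your argument is correct, and in fact more direct than the paper's. You use only the identity $[\widetilde{K}:L]=(\widetilde{K}:K)\cdot[K:L]$ together with the invertibility of $[K:L]$ (which you establish exactly as you describe, since $(K_\lambda)_0=\lambda$). The paper instead passes through the projectives: it writes $[P:L]=(P:\widetilde{K})\cdot(\widetilde{K}:K)\cdot[K:L]$, invokes the reciprocity $(P:\widetilde{K})=[K:L]^t$ from Proposition~\ref{prop:reciprocity}, and then inverts using the invertibility of $[P:L]$, arriving at $(\widetilde{K}:K)=([K:L]^t)^{-1}\cdot[P:L]\cdot[K:L]^{-1}$.

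Your route requires only the existence of a $K$-filtration of each $\widetilde{K}_\lambda$, whereas the paper's route consumes all three conditions of Definition~\ref{dfn:BHR-conditions}. The trade-off is that the paper's formula makes the symmetry of $(\widetilde{K}:K)$ transparent (since $[P:L]$ is symmetric by Corollary~\ref{cor:proj-sym}), which is exactly what the very next result, Corollary~\ref{cor:Ktilde-K-sym}, asserts; your formula $(\widetilde{K}:K)=[\widetilde{K}:L]\cdot[K:L]^{-1}$ does not exhibit this symmetry directly. So for the purpose of the present proposition your argument is cleaner, but the paper's detour is what feeds the subsequent corollary.
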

        \begin{proof}
        We have a following equation over $\mathrm{Mat}(\irr{W},\zz[\![q]\!])$:
        \begin{align}
            [P:L]   &= (P:\widetilde{K}) \cdot [\widetilde{K}:L] \\
                    &= (P:\widetilde{K}) \cdot (\widetilde{K}:K) \cdot [K:L].
        \end{align}
        By Proposition \ref{prop:reciprocity}, we have 
        \[
            (P:\widetilde{K}) = [K:L]^t.
        \]
        Hence, we have
        \begin{align}
           \label{eq:dec-gch}
           [P:L] = [K:L]^t \cdot (\widetilde{K}:K) \cdot [K:L].
        \end{align}
        Since $(P_\lambda)_0\simeq \lambda$, we have 
        \begin{align}
            [P_\lambda:L_\mu ]_q \in
            \begin{cases}
                1+q\zz[\![q]\!] &(\lambda=\mu)\\
                q\zz[\![q]\!] &(\lambda\neq\mu)
            \end{cases}
        \end{align}
        for $\lambda,\mu\in\irr{W}$.
        Therefore, $\det [P:L]\in 1+q\zz[\![q]\!] \subset \zz[\![q]\!]^\times $.
        It follows that all matrices appearing in \eqref{eq:dec-gch} are invertible.
        Hence, we obtain 
        \[
            (\widetilde{K}:K) = ([K:L]^t)^{-1} \cdot [P:L] \cdot [K:L]^{-1}.
        \]
        Since the right hand side of this equality is independent of the choice of $K$-filtrations, then $(\widetilde{K}:K)$ is also independent.
    \end{proof}
    \begin{cor}
        \label{cor:Ktilde-K-sym}
        The matrix $(\widetilde{K}:K)$ is symmetric.
    \end{cor}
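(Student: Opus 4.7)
The plan is to leverage the explicit formula
\[(\widetilde{K}:K) = ([K:L]^t)^{-1} \cdot [P:L] \cdot [K:L]^{-1}\]
established inside the proof of Proposition \ref{prop:Ktilde-K-indep}, combined with the symmetry of the matrix $[P:L]$ which is an immediate consequence of Corollary \ref{cor:proj-sym}.

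First I would observe that Corollary \ref{cor:proj-sym} precisely says $[P_\lambda:L_\mu]_q = [P_\mu:L_\lambda]_q$ for all $\lambda, \mu \in \irr{W}$, so the matrix $[P:L]\in\mathrm{Mat}(\irr{W},\zz[\![q]\!])$ satisfies $[P:L]^t = [P:L]$. Then I would take the transpose of the displayed identity above, using that transposition reverses products and commutes with inversion, to obtain
\[(\widetilde{K}:K)^t = ([K:L]^{-1})^t \cdot [P:L]^t \cdot (([K:L]^t)^{-1})^t = ([K:L]^t)^{-1}\cdot [P:L] \cdot [K:L]^{-1} = (\widetilde{K}:K),\]
which is exactly the assertion.

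There is no real obstacle here: all the heavy lifting (invertibility of the relevant matrices in $\mathrm{Mat}(\irr{W},\zz[\![q]\!])$ and the factorization $[P:L] = [K:L]^t \cdot (\widetilde{K}:K) \cdot [K:L]$) has already been carried out in Proposition \ref{prop:Ktilde-K-indep}, and the symmetry of $[P:L]$ is self-duality of $W$-representations expressed combinatorially via Proposition \ref{prop:adj-sym}. Thus the corollary is a one-line consequence of the preceding results, to be presented as a short deduction immediately after the proof of Proposition \ref{prop:Ktilde-K-indep}.
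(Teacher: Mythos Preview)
Your proposal is correct and follows essentially the same approach as the paper's own proof: both use the formula $(\widetilde{K}:K) = ([K:L]^t)^{-1} \cdot [P:L] \cdot [K:L]^{-1}$ from the proof of Proposition~\ref{prop:Ktilde-K-indep} together with the symmetry of $[P:L]$ from Corollary~\ref{cor:proj-sym}, and then take the transpose.
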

    \begin{proof}
        We have already seen in the proof of Proposition \ref{prop:Ktilde-K-indep} that
        \[
            (\widetilde{K}:K) = ([K:L]^t)^{-1} \cdot [P:L] \cdot [K:L]^{-1}.
        \]
        Then, we obtain
        \begin{align}
            (\widetilde{K}:K)^t = ([K:L]^{-1})^t \cdot [P:L]^t \cdot [K:L]^{-1}.
        \end{align}
        By Corollary \ref{cor:proj-sym}, the matrix $[P:L]$ is symmetric.
        Therefore, we obtain
        \[
            (\widetilde{K}:K)^t = ([K:L]^{-1})^t \cdot [P:L] \cdot [K:L]^{-1} = (\widetilde{K}:K).
        \]
        This completes the proof.
        \end{proof}

        We continue to assume that the three conditions in Definition \ref{dfn:BHR-conditions} are satisfied.
        Since $K$ be a family of finite dimensional graded modules, there exists a $K$-filtration of $P_\lambda$ with the multiplicity of factors
        \[
            P_\lambda\filt \sum_{\mu,\nu \in\irr{W}} (P_\lambda:\widetilde{K}_\nu )_q (\widetilde{K}_\nu:K_\mu)_q\cdot[K_\mu]  
        \]
        by Lemma \ref{lem:cnctfilts}. 
        We show that every $K$-filtration of $P_\lambda$ has the same multiplicity of factors.
        We fix a $K$-filtration of $P_\lambda$ for each $\lambda\in\irr{W}$ and denote the multiplicity of $K_\mu$ in filtration of $P_\lambda$ by $(P_\lambda:K_\mu)_q$.
        We define the matrix $(P:K)$ by 
        \[
            (P:K) \coloneqq \biggl((P_\lambda:K_\mu)_q\biggr)_{\lambda,\mu\in\irr{W}}.
        \]
        \begin{prop}
            \label{prop:reciprocity-K}
            Suppose that the three conditions in Definition \ref{dfn:BHR-conditions} are satisfied.
            We have
            \[(P:K) = [\widetilde{K}:L]^t.\]
            In particular, $(P:K)$ is independent of the choice of $K$-filtration.
        \end{prop}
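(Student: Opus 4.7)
The plan is to compute the matrix $(P:K)$ in two ways and identify the results. First I note that any $K$-filtration of $P_\lambda$ can be refined to a simple composition filtration: each factor $K_\mu\gs{d}$ is finite dimensional, hence admits a finite filtration by simples whose graded multiplicities are recorded by $[K_\mu:L_\nu]_q$. Applying Lemma \ref{lem:cnctfilts}\eqref{item:findimfacts} to merge these refinements into a single $\{L_\nu\}$-filtration of $P_\lambda$ and comparing with the graded character, I obtain the matrix equation
\[
    [P:L] = (P:K)\cdot [K:L].
\]
Because $(K_\lambda)_0 \simeq \lambda$, the matrix $[K:L]$ has diagonal entries in $1+q\zz[\![q]\!]$ and off-diagonal entries in $q\zz[\![q]\!]$, hence lies in $\mathrm{Mat}(\irr{W},\zz[\![q]\!])^\times$. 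Consequently $(P:K) = [P:L]\cdot [K:L]^{-1}$ is determined by the equation, which already proves that $(P_\lambda:K_\mu)_q$ is independent of the chosen $K$-filtration.

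Second, I build a particular $K$-filtration of $P_\lambda$ by merging. By hypothesis $P_\lambda$ admits a $\widetilde{K}$-filtration, and each $\widetilde{K}_\mu$ admits a $K$-filtration. Since the $K_\nu$ are finite dimensional, Lemma \ref{lem:cnctfilts}\eqref{item:findimfacts} lets me concatenate these into a $K$-filtration of $P_\lambda$ whose multiplicity matrix equals the product
\[
    (P:K) = (P:\widetilde{K})\cdot (\widetilde{K}:K).
\]
By Proposition \ref{prop:reciprocity} we have $(P:\widetilde{K}) = [K:L]^t$, so $(P:K) = [K:L]^t \cdot (\widetilde{K}:K)$.

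To finish, I apply the same refinement argument to a $K$-filtration of $\widetilde{K}_\lambda$ (again via Lemma \ref{lem:cnctfilts}\eqref{item:findimfacts}), obtaining $[\widetilde{K}:L] = (\widetilde{K}:K)\cdot [K:L]$; taking transposes gives $[\widetilde{K}:L]^t = [K:L]^t \cdot (\widetilde{K}:K)^t$. The crucial input is Corollary \ref{cor:Ktilde-K-sym}: the matrix $(\widetilde{K}:K)$ is symmetric, so $(\widetilde{K}:K)^t = (\widetilde{K}:K)$. Combining gives
\[
    (P:K) = [K:L]^t\cdot (\widetilde{K}:K) = [\widetilde{K}:L]^t,
\]
as desired.

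The substantive step is the symmetry of $(\widetilde{K}:K)$, which has already been established via Corollary \ref{cor:proj-sym} in the previous subsection; without it, the equality $(P:K) = [\widetilde{K}:L]^t$ does not follow from the matrix manipulations alone. The only other delicate point is justifying that the merging of filtrations in step two is permissible, but the hypotheses place us directly in case \eqref{item:findimfacts} of Lemma \ref{lem:cnctfilts}, so no additional care is required.
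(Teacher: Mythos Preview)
Your proof is correct and follows essentially the same approach as the paper: establish $[P:L]=(P:K)\cdot[K:L]$ to get independence, obtain $(P:K)=[K:L]^t\cdot(\widetilde{K}:K)$ via the reciprocity $(P:\widetilde{K})=[K:L]^t$, and then invoke the symmetry of $(\widetilde{K}:K)$ from Corollary~\ref{cor:Ktilde-K-sym} together with $[\widetilde{K}:L]=(\widetilde{K}:K)\cdot[K:L]$ to conclude. The only cosmetic difference is that the paper reaches $(P:K)=[K:L]^t\cdot(\widetilde{K}:K)$ by quoting the already-derived equation $[P:L]=[K:L]^t\cdot(\widetilde{K}:K)\cdot[K:L]$ rather than re-merging filtrations, but the content is identical.
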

        \begin{proof}
            We have an equation
            \begin{align}
                [P:L] = (P:K)\cdot [K:L].
            \end{align}
            By \eqref{eq:dec-gch}, we have
            \begin{align}
                (P:K) = [P:L]\cdot [K:L]^{-1} = [K:L]^t \cdot (\widetilde{K}:K).
            \end{align}
            By Corollary \ref{cor:Ktilde-K-sym}, the matrix $(\widetilde{K}:K)$ is symmetric.
            Therefore, we have $[K:L]^t \cdot (\widetilde{K}:K) =  \bigl((\widetilde{K}:K)\cdot [K:L]\bigr)^t$.
            Consequently, we obtain
            \begin{align}
                (P:K) = \bigl((\widetilde{K}:K)\cdot [K:L]\bigr)^t = [\widetilde{K}:L]^t
            \end{align} 
            as desired.
        \end{proof}

        Our purpose is to classify the structures of $\widetilde{K}$ and $K$ arising from a preorder relation.
        In some case different preorders have the same trace quotient modules and strict trace quotient modules.
        We will show that there is the preorder which have the minimum number of relations in such preorders if they satisfy the Brauer-Humphreys type reciprocity relation.
        
        Let $\precsim$ be a preorder on $\irr{W}$.
        Let $\{\widetilde{K}_\lambda\}_{\lambda\in\irr{W}}$ and $\{K_\lambda\}_{\lambda\in\irr{W}}$ be the strict trace quotient modules and the trace quotient modules with respect to $\precsim$, respectively.
        We define a preorder $\precsim_K$ on $\irr{W}$ as follows.
        \begin{dfn}\label{dfn:precsimK}
            For $\lambda,\lambda'\in\irr{W}$, we write as $\lambda\precsim_K \lambda'$ if and only if there exists a sequence $\lambda = \lambda_0,\lambda_1,\ldots,\lambda_l=\lambda'$ for some $l\geq 0$ such that
            \[
                \Ext^1_A(K_{\lambda_{i-1}},L_{\lambda_i})\neq 0
            \]
            for $1\leq i\leq l$.
        \end{dfn}
        It is straightforward to see the relation $\precsim_K$ is reflexive and transitive.
        In particular, this binary relation is a preorder.
        We remark that there exists a surjective morphism of graded $A$-modules
        \[
            \bigoplus_{i=1}^l P_{\mu_i}\gs{d_i} \longrightarrow \sum_{\mu\succsim\lambda,f\in\Hom_{A}(P_\mu,P_\lambda)_{>0}}\mathrm{Im}\,f = I_\lambda
        \]
        where $\mu_i\succsim\lambda$ and $d_i>0$ for each $i$.
        Since we have an exact sequence
        \[
            \xymatrix{
                \Hom_A(I_\lambda,L_\mu) \ar[r] &\Ext_A^1(K_\lambda,L_\mu) \ar[r] & \Ext_A^1(P_\lambda,L_\mu)=0,
            }
        \]
        then $\Ext^1_A(K_\lambda,L_\mu)\neq 0$ implies $\Hom_A(I_\lambda,L_\mu)\neq 0$ and there exists a nonzero morphism from $\bigoplus_{i=1}^l P_{\mu_i}\gs{d_i}$ to a grading shift of $L_\mu$.
        Hence, we have $\mu = \mu_i$ for some $i$.
        Consequently, we have $\lambda \precsim \mu$ if $\Ext^1_A(K_\lambda,L_\mu)\neq 0$.
        Similarly, we can show that $\Ext^1_A(\widetilde{K}_\lambda,L_\mu)\neq 0$ implies $\lambda\prec\mu$. 

        \begin{prop}\label{prop:lessrelations}
            Let $\precsim_K$ be the preorder defined in Definition \ref{dfn:precsimK}.
            We have $\lambda\precsim\lambda'$ if $\lambda\precsim_K \lambda'$.
        \end{prop}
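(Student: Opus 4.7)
The proof will be essentially immediate from the observation recorded in the paragraph immediately preceding the proposition. The plan is to combine that single-step implication with transitivity of $\precsim$ along a chain witnessing $\lambda \precsim_K \lambda'$.

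First I would recall the key fact already established: given any $\lambda,\mu \in \irr{W}$, the surjection $\bigoplus_{i=1}^{l} P_{\mu_i}\gs{d_i} \twoheadrightarrow I_\lambda$ with $\mu_i \succsim \lambda$ and $d_i > 0$, combined with the exact sequence
\[
\xymatrix{\Hom_A(I_\lambda, L_\mu) \ar[r] & \Ext^1_A(K_\lambda, L_\mu) \ar[r] & \Ext^1_A(P_\lambda, L_\mu) = 0,}
\]
forces a nonzero map from some $P_{\mu_i}\gs{d_i}$ to a grading shift of $L_\mu$. By Proposition \ref{prop:ghom-multi}, this forces $\mu = \mu_i$ for some $i$, and hence $\lambda \precsim \mu$. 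Thus $\Ext^1_A(K_\lambda, L_\mu) \neq 0$ implies $\lambda \precsim \mu$ in a single step.

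Second, I would take a sequence $\lambda = \lambda_0, \lambda_1, \ldots, \lambda_l = \lambda'$ witnessing $\lambda \precsim_K \lambda'$, so that $\Ext^1_A(K_{\lambda_{i-1}}, L_{\lambda_i}) \neq 0$ for $1 \le i \le l$. Applying the previous step to each consecutive pair yields
\[
\lambda = \lambda_0 \precsim \lambda_1 \precsim \lambda_2 \precsim \cdots \precsim \lambda_l = \lambda'.
\]
Since $\precsim$ is transitive (as it is a preorder), we conclude $\lambda \precsim \lambda'$, as desired. There is no real obstacle here; the entire content of the proposition is absorbed into the single-step implication already derived in the discussion preceding it, and the role of the definition of $\precsim_K$ is only to package the iteration of this implication via transitivity.
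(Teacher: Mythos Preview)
Your proposal is correct and follows essentially the same approach as the paper's own proof: unpack the defining chain $\lambda=\lambda_0,\lambda_1,\ldots,\lambda_l=\lambda'$ for $\precsim_K$, apply the already-established implication $\Ext^1_A(K_{\lambda_{i-1}},L_{\lambda_i})\neq 0 \Rightarrow \lambda_{i-1}\precsim\lambda_i$ at each step, and conclude by transitivity of $\precsim$.
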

        \begin{proof}
            By the definition of the preorder $\precsim_K$, the relation $\lambda\precsim_K \lambda'$ implies that there exists a sequence $\lambda = \lambda_0,\lambda_1,\ldots,\lambda_l=\lambda'$ for some $l\geq 0$ such that
            \[
                \Ext^1_A(K_{\lambda_{i-1}},L_{\lambda_i})\neq 0
            \]
            for $1\leq i\leq l$.
            Therefore, we obtain
            \[
                \lambda = \lambda_0\precsim \lambda_1\precsim \lambda_2 \precsim \cdots \precsim \lambda_l=\lambda'
            \]
            as required.
        \end{proof}

        \begin{lem}
            \label{lem:extpath}
            Let $F=\{F_\lambda\}_{\lambda\in\irr{W}}$ be a family of graded $A$-modules such that $F_\lambda$ is a nonzero graded quotient module of $P_\lambda$ for each $\lambda\in\irr{W}$.
            Let $d_1, d_2, \ldots, d_l$ be integers and let $\lambda_1, \lambda_2, \ldots, \lambda_l \in \irr{W}$.
            Assume $M\in A\hgmod$ has a $F$-filtration
            \[
                M = M^0 \supset M^1 \supset M^2 \supset \cdots \supset M^{l-1} \supset M^l \supset \cdots
            \]
            such that the $i$-th factor $M^{i-1}/M^i$ is isomorphic to $F_{\lambda_i}\gs{d_i}$ for $1\leq i \leq l$.
            There exists an integer $k\geq 0$ and an increasing sequence $1\leq a_0 < a_1 < a_2 \cdots < a_{k-1} < a_k=l$ with the following properties:
            \begin{itemize}
                \item $\Hom_A(M,L_{\lambda_{a_0}})\neq 0$,
                \item $\Ext_A^1(F_{\lambda_{a_{j-1}}},L_{\lambda_{a_j}})$ for $j = 1, 2, \ldots, k$.
            \end{itemize}
        \end{lem}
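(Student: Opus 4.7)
The plan is to proceed by induction on $l$. For the base case $l=1$: since $F_{\lambda_1}$ is a nonzero graded quotient of the projective cover $P_{\lambda_1}$ of $L_{\lambda_1}$, it surjects onto $L_{\lambda_1}$; composing with the quotient $M \twoheadrightarrow M/M^1 \cong F_{\lambda_1}\gs{d_1}$ yields a nonzero graded morphism $M \to L_{\lambda_1}\gs{d_1}$, so $\Hom_A(M,L_{\lambda_1}) \ne 0$ and one takes $k=0$, $a_0 = 1$.

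For the inductive step, define
\[ i^* := \min\{\, i : 0 \le i \le l-1,\ \Hom_A(M^i, L_{\lambda_l}) \ne 0 \,\}. \]
This set is nonempty because $M^{l-1}$ surjects onto $F_{\lambda_l}\gs{d_l}$, which in turn surjects onto $L_{\lambda_l}\gs{d_l}$. If $i^* = 0$, one finishes immediately with $k=0$, $a_0 = l$. Otherwise $i^* \ge 1$; applying $\Hom_A(-, L_{\lambda_l})$ to the short exact sequence
\[ 0 \to M^{i^*} \to M^{i^*-1} \to F_{\lambda_{i^*}}\gs{d_{i^*}} \to 0 \]
produces, using the minimality condition $\Hom_A(M^{i^*-1}, L_{\lambda_l}) = 0$, an injection
\[ 0 \ne \Hom_A(M^{i^*}, L_{\lambda_l}) \hookrightarrow \Ext^1_A(F_{\lambda_{i^*}}\gs{d_{i^*}}, L_{\lambda_l}), \]
and in particular $\Ext^1_A(F_{\lambda_{i^*}}, L_{\lambda_l}) \ne 0$.

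The quotient $M/M^{i^*}$ inherits an $F$-filtration of length $i^*$ whose factors are exactly $F_{\lambda_j}\gs{d_j}$ for $1 \le j \le i^*$. Since $i^* < l$, the inductive hypothesis furnishes $k' \ge 0$ together with $1 \le a_0 < a_1 < \cdots < a_{k'} = i^*$ satisfying $\Hom_A(M/M^{i^*}, L_{\lambda_{a_0}}) \ne 0$ and $\Ext^1_A(F_{\lambda_{a_{j-1}}}, L_{\lambda_{a_j}}) \ne 0$ for $1 \le j \le k'$. Composing the first morphism with the projection $M \twoheadrightarrow M/M^{i^*}$ gives $\Hom_A(M, L_{\lambda_{a_0}}) \ne 0$; appending $a_{k'+1} := l$ and setting $k := k'+1$ then produces the required chain, since the newly added $\Ext^1$ condition was just verified.

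The principal subtlety is the choice of reduction: one must induct through the quotient $M/M^{i^*}$ rather than a submodule, because the $\Hom$ condition is required to hold on $M$ itself and such morphisms automatically lift only through quotient maps. Pinpointing the jump index $i^*$ via minimality of $\Hom$-vanishing is the device that locates where the $\Ext^1$-obstruction must live.
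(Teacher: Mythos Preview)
Your proof is correct, but the route differs from the paper's. The paper inducts on the \emph{submodule} $M^1$: applying the hypothesis to the filtration $M^1 \supset M^2 \supset \cdots$ yields a chain $2 \le a_1 < \cdots < a_k = l$ with $\Hom_A(M^1, L_{\lambda_{a_1}}) \ne 0$; if this $\Hom$ already lifts to $M$ one is done after relabelling, and if not, the short exact sequence $0 \to M^1 \to M \to F_{\lambda_1}\gs{d_1} \to 0$ forces $\Ext^1_A(F_{\lambda_1}, L_{\lambda_{a_1}}) \ne 0$, so one \emph{prepends} $a_0 = 1$. Thus the paper always reduces $l$ by exactly one and grows the chain at the front, whereas you locate the minimal jump index $i^*$, reduce $l$ to $i^*$ in one step via the \emph{quotient} $M/M^{i^*}$, and grow the chain at the back by appending $l$.

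Each approach has its virtue: the paper's is slightly more uniform (one layer at a time, no search for $i^*$), while yours identifies the obstruction more directly and makes the role of the connecting map transparent. Your closing remark that ``one must induct through the quotient'' is a little too strong, however---the paper's argument shows that submodule induction works equally well, provided one is willing to prepend rather than append and handle the dichotomy on whether the $\Hom$ extends to $M$.
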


        \begin{proof}
            We have surjections $M\rightarrow F_{\lambda_1}\gs{d_1} \rightarrow L_{\lambda_1}\gs{d_1}$.
            In particular, we have $\Hom_A(M,L_{\lambda_1})\neq 0$.
            We will prove the lemma by induction on the length $l$ of sequence.
            If $l=1$, we set $k=0$ and $a_0 = 1 = l$.
            It is clear that there is no $j<k$, and the properties of the lemma are satisfied.
            Next, we assume $l>1$.
            By applying the induction hypothesis to $M^1$, we obtain an increasing sequence $2 \leq a_1 < a_2 < \ldots < a_k = l$ such that $\Hom_A(M^1,L_{\lambda_{a_1}})\neq 0$ and $\Ext_A^1(F_{\lambda_{a_{j-1}}},L_{\lambda_{a_j}})$ for $2 \leq j \leq k$.
            If $\Hom_A(M,L_{\lambda_{a_1}})\neq 0$, then this sequence satisfies the properties of the lemma.
            We assume $\Hom_A(M,L_{\lambda_{a_1}}) = 0$ and set $a_0 = 1$.
            It suffices to show that $\Ext_A^1(F_{\lambda_{a_0}},L_{\lambda_{a_1}})\neq 0$.
            A short exact sequence
            \[\xymatrix{
                0 \ar[r] &M^{1} \ar[r] & M \ar[r] & F_{\lambda_1}\gs{d_1} \ar[r] &0
            }\]
            induces a long exact sequence
            \[\xymatrix{
                \Hom_A(M,L_{\lambda_{a_1}})\ar[r]&\Hom_A(M^1,L_{\lambda_{a_1}})\ar[r]&\Ext_A^1(F_{\lambda_1}\gs{d_1},L_{\lambda_{a_1}}).
            }\]
            Since we have $\Hom_A(M^1,L_{\lambda_{a_1}})\neq 0$ and $\Hom_A(M,L_{\lambda_{a_1}}) = 0$, we obtain $\Ext_A^1( F_{\lambda_1}\gs{d_1},L_{\lambda_{a_1}})\neq 0$.
            This completes the proof.
        \end{proof}

        \begin{prop}
            \label{prop:KtildeK-precsimK}
            Suppose that the third condition in Definition \ref{dfn:BHR-conditions} is satisfied; that is, for each $\lambda \in\irr{W}$, a graded $A$-module $\widetilde{K}_\lambda$ admits a $K$-filtration.
            For $\lambda\in\irr{W}$, the first nonzero factor in a $K$-filtration of $\widetilde{K}_\lambda$ is isomorphic to $K_\lambda$.
            The other nonzero factors are isomorphic to $K_\mu\gs{d}$ for some $\mu\succsim_K\lambda$ and $d>0$. 
        \end{prop}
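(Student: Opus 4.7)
The plan is to verify three assertions about a given $K$-filtration $\widetilde{K}_\lambda = V^0 \supseteq V^1 \supseteq V^2 \supseteq \cdots$ whose nonzero factors have the form $V^{i-1}/V^i \cong K_{\mu_i}\gs{d_i}$: first, that the initial nonzero factor is $K_\lambda$ (grading shift $0$); second, that every later nonzero factor satisfies $d_i > 0$; and third, that its label obeys $\mu_i \succsim_K \lambda$.

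For the first nonzero factor, let $i_0$ be the smallest index for which $V^{i_0-1}/V^{i_0} \neq 0$. Then $\widetilde{K}_\lambda = V^{i_0-1}$, and composing this identity with the canonical surjection $K_{\mu_{i_0}}\gs{d_{i_0}} \twoheadrightarrow L_{\mu_{i_0}}\gs{d_{i_0}}$ produces a nonzero element of $\Hom_A(\widetilde{K}_\lambda, L_{\mu_{i_0}})_{d_{i_0}}$. The $\Hom$-dimension formula recorded just before Definition \ref{dfn:BHR-conditions} then forces $\mu_{i_0} = \lambda$ and $d_{i_0} = 0$, identifying the first nonzero factor as $K_\lambda$.

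For the grading shifts of subsequent nonzero factors, note that $\widetilde{K}_\lambda$ is nonnegatively graded (as a quotient of the nonnegatively graded $P_\lambda$), so each $K_{\mu_i}\gs{d_i}$ is too, and hence $d_i \geq 0$. The first nonzero factor $K_\lambda$ already fills the entire degree-$0$ component $(\widetilde{K}_\lambda)_0 = \lambda$, forcing $(V^{i_0})_0 = 0$ and therefore $(V^j)_0 = 0$ for all $j \geq i_0$. Since $(K_{\mu_i})_0 = \mu_i \neq 0$, any later factor with $d_i = 0$ would contribute nonzero content to $(V^{i-1}/V^i)_0$, which is a contradiction; hence $d_i > 0$ for every nonzero factor beyond the first.

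For the relation $\mu_i \succsim_K \lambda$, fix a later nonzero factor at position $i$ and apply Lemma \ref{lem:extpath} to the truncation of the filtration up to position $i$ (reindexed to skip zero factors so that the lemma's hypotheses apply), taking $F_\mu = K_\mu$. This yields a subsequence $a_0 < a_1 < \cdots < a_k$ ending at $i$ such that $\Hom_A(\widetilde{K}_\lambda, L_{\mu_{a_0}}) \neq 0$ (which again pins $\mu_{a_0} = \lambda$ by the same $\Hom$-formula) and $\Ext^1_A(K_{\mu_{a_{j-1}}}, L_{\mu_{a_j}}) \neq 0$ for each $1 \leq j \leq k$. By Definition \ref{dfn:precsimK}, this is precisely a $\precsim_K$-chain $\lambda = \mu_{a_0} \precsim_K \mu_{a_1} \precsim_K \cdots \precsim_K \mu_{a_k} = \mu_i$, so $\mu_i \succsim_K \lambda$. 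The proof is essentially bookkeeping, with no substantive obstacle; the point most in need of care is handling the possible presence of zero factors in the filtration and consistently pinning down the initial label via the $\Hom$-calculation.
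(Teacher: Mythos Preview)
Your proof is correct and follows essentially the same route as the paper's: identify the first nonzero factor via the $\Hom$-formula for $\widetilde{K}_\lambda$, deduce $d_i>0$ for later factors from the vanishing of $(\ker(\widetilde{K}_\lambda\to K_\lambda))_0$, and then invoke Lemma~\ref{lem:extpath} with $F_\mu=K_\mu$ to obtain a chain establishing $\lambda\precsim_K\mu_i$. Your treatment is a bit more explicit about reindexing past zero factors, but the argument is the same.
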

        \begin{proof}
            We have $\Hom_A(\widetilde{K}_\lambda,L_{\lambda'}\gs{d})_0$ is nonzero if and only if $\lambda=\lambda'$ and $d=0$.
            In particular, the first nonzero factor is isomorphic to $K_\lambda$.
            Let $f:\widetilde{K}_\lambda\rightarrow K_\lambda$ be a quotient map.
            Then we have $(\ker f)_d=0$ for $d\leq 0$.
            Hence for $i\geq 2$ the $i$-th factor is of the form $K_{\mu_i}\gs{d_i}$ for $d_i\geq 1$ and $\mu_i\in\irr{W}$.
            By Lemma \ref{lem:extpath}, we obtain $\lambda\precsim_K \mu_i$.
            This completes the proof. 
        \end{proof}

        \begin{cor}
            \label{cor:Ktilde-relation}
            Suppose that the third condition in Definition \ref{dfn:BHR-conditions} is satisfied.
            For $\lambda,\mu\in\irr{W}$, $\Ext^1_A(\widetilde{K}_\lambda,L_\mu)\neq 0$ implies $\lambda\precsim_K\mu$.
        \end{cor}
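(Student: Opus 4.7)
The strategy is to exploit the $K$-filtration of $\widetilde{K}_\lambda$ provided by the third condition of Definition \ref{dfn:BHR-conditions}. Fix such a filtration $\widetilde{K}_\lambda = M^0 \supseteq M^1 \supseteq M^2 \supseteq \cdots$ with nonzero factors $M^{i-1}/M^i \simeq K_{\mu_i}\gs{d_i}$; by Proposition \ref{prop:KtildeK-precsimK} we may take $\mu_1 = \lambda$, $d_1 = 0$, while for $i \geq 2$ we have $\lambda \precsim_K \mu_i$ and $d_i > 0$. The goal then reduces to locating a single index $j$ with $\Ext^1_A(K_{\mu_j}, L_\mu) \neq 0$, for then $\mu_j \precsim_K \mu$ by the very definition of $\precsim_K$, and combining with $\lambda \precsim_K \mu_j$ together with transitivity of $\precsim_K$ gives $\lambda \precsim_K \mu$.

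To locate such a $j$, I fix a degree $d \in \zz$ with $\Ext^1_A(\widetilde{K}_\lambda, L_\mu)_d \neq 0$ and argue by contradiction, assuming $\Ext^1_A(K_{\mu_i}\gs{d_i}, L_\mu\gs{-d}) = 0$ for every $i \geq 1$. Applying $\Hom_{A\hgmod}(-, L_\mu\gs{-d})$ to each short exact sequence $0 \to M^i \to M^{i-1} \to K_{\mu_i}\gs{d_i} \to 0$ produces a long exact sequence in which the vanishing hypothesis makes the induced map $\Ext^1_A(M^{i-1}, L_\mu\gs{-d}) \to \Ext^1_A(M^i, L_\mu\gs{-d})$ injective. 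Iterating these injections yields an inclusion $\Ext^1_A(\widetilde{K}_\lambda, L_\mu\gs{-d}) \hookrightarrow \Ext^1_A(M^i, L_\mu\gs{-d})$ for every $i \geq 0$.

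The main obstacle is to verify that $\Ext^1_A(M^i, L_\mu\gs{-d}) = 0$ for $i$ sufficiently large, which will furnish the required contradiction. Since $\widetilde{K}_\lambda$ is bounded below and locally finite and the filtration $\{M^i\}$ is separable, for each fixed degree $e$ the subspaces $(M^i)_e$ form a decreasing chain of finite-dimensional spaces whose intersection is trivial, hence $(M^i)_e = 0$ for $i \gg 0$; applied to the finitely many degrees $e \leq -d$ lying above the lower bound of $\widetilde{K}_\lambda$, this produces an index $i_0$ with $(M^{i_0})_{\leq -d} = 0$. Choosing a minimal graded projective resolution $\cdots \to Q_1 \to Q_0 \to M^{i_0} \to 0$, both $Q_0$ and $Q_1$ decompose as direct sums of $P_\tau\gs{e}$ with $e > -d$, since the generators of $M^{i_0}$ all lie in degrees $> -d$ and the first syzygy $\ker(Q_0 \to M^{i_0}) \subseteq Q_0$ inherits this lower bound from $Q_0$. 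By Proposition \ref{prop:ghom-multi}, we then have $\Hom_{A\hgmod}(Q_0, L_\mu\gs{-d}) = \Hom_{A\hgmod}(Q_1, L_\mu\gs{-d}) = 0$, forcing $\Ext^1_A(M^{i_0}, L_\mu\gs{-d}) = 0$. This contradicts the injection from the previous paragraph, so some $j$ satisfies $\Ext^1_A(K_{\mu_j}\gs{d_j}, L_\mu\gs{-d}) \neq 0$, whence $\Ext^1_A(K_{\mu_j}, L_\mu) \neq 0$, and the transitivity argument above closes out the proof.
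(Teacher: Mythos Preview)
Your proof is correct and follows essentially the same approach as the paper: both fix a graded degree where $\Ext^1$ is nonzero, use separability of the $K$-filtration to reach an $M^{i_0}$ concentrated in high enough degrees that $\Ext^1_A(M^{i_0},L_\mu\gs{-d})=0$, and then deduce that some factor $K_{\mu_j}$ must contribute a nonzero $\Ext^1$. The only cosmetic difference is that the paper tracks dimension inequalities on the quotients $M^0/M^i$, whereas you phrase the same step as a chain of injections $\Ext^1_A(M^{i-1},-)\hookrightarrow\Ext^1_A(M^i,-)$.
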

        \begin{proof}
            Fix $d\in\zz$ such that $\Ext^1_{A}(\widetilde{K}_\lambda,L_\mu\gs{d})_0 \neq 0$.
            A short exact sequence
            \[
                \xymatrix{ 0 \ar[r] & \widetilde{I}_\lambda \ar[r] & P_\lambda \ar[r] & \widetilde{K}_\lambda \ar[r] &0}
            \]
            induces a long exact sequence
            \[
                \xymatrix{ \Hom_{A}(\widetilde{I}_\lambda,L_\mu\gs{d})_0 \ar[r] & \Ext^1_{A}(\widetilde{K}_\lambda,L_\mu\gs{d})_0 \ar[r] & \Ext^1_{A}(P_\lambda,L_\mu\gs{d})_0=0.}
            \]
            We have $d>0$ because $\dim \Hom_{A}(\widetilde{I}_\lambda,L_\mu\gs{d})_0\geq \dim \Ext^1_{A}(\widetilde{K}_\lambda,L_\mu\gs{d})_0 > 0$.
            
            Fix a $K$-filtration of $\widetilde{K}_\lambda$
            \[
                \widetilde{K}_\lambda = M^0 \supset M^1 \supset M^2 \supset \cdots \supset M^{i-1} \supset M^i \supset \cdots.
            \]
            Since $\{M^i\}$ is a separable filtration, there is $i>0$ such that $M^i\subset (\widetilde{K}_\lambda)_{>d}$.
            Then we have $\Ext^1_{A}(M^i,L_\mu\gs{d})_0 = 0$ because $A$ is non-negatively graded.
            Without loss of generality, we can assume that $j$-th factor $M^{j-1}/M^j$ is nonzero for $1\leq j\leq i$.
            For each $j$, we have an isomorphism $M^{j-1}/M^j\simeq K_{\mu_j}\gs{d_j}$ for some $\mu_j\in\irr{W}$ and $d_j\in\zz$.
            By Proposition \ref{prop:KtildeK-precsimK}, we have $\mu_j\succsim_K \lambda$ and $d_j\geq 0$.  
            Since there is an exact sequence of graded $A$-modules
            \[
                \xymatrix{ 0\ar[r] &K_{\mu_j}\gs{d_j} \ar[r] &M^0/M^j \ar[r] & M^0/M^{j-1} \ar[r] &0 }  
            \]
            for each $j$, then we have 
            \begin{multline}
                \dim \Ext^1_{A}(M^0/M^j,L_{\mu}\gs{d})_0\\
                    \leq \dim \Ext^1_{A}(M^0/M^{j-1},L_{\mu}\gs{d})_0+\dim \Ext^1_{A}(K_{\mu_j}\gs{d_j},L_{\mu}\gs{d})_0.
            \end{multline}
            Therefore, we obtain
            \begin{align}
                \dim \Ext^1_{A}&(M^0/M^{i},L_{\mu}\gs{d})_0\\
                    &= \sum_{j=1}^{i} \dim \Ext^1_{A}(M^0/M^j,L_{\mu}\gs{d})_0 - \dim \Ext^1_{A}(M^0/M^{j-1},L_{\mu}\gs{d})_0\\
                    &\leq \sum_{j=1}^{i} \dim \Ext^1_{A}(K_{\mu_j}\gs{d_j},L_{\mu}\gs{d})_0.
            \end{align}
            We have an exact sequence
            \[
                \xymatrix{\Ext^1_{A}(\widetilde{K}_\lambda/M^i,L_\mu\gs{d})_0\ar[r] & \Ext^1_{A}(\widetilde{K}_\lambda,L_\mu\gs{d})_0   \ar[r] &\Ext^1_{A}(M^i,L_\mu\gs{d})_0 = 0,}
            \]
            and hence, we obtain 
            \[\dim \Ext^1_{A}(\widetilde{K}_\lambda/M^i,L_\mu\gs{d})_0\geq\dim \Ext^1_{A}(\widetilde{K}_\lambda,L_\mu\gs{d})_0 >0 .\]
            Therefore, we obtain $\sum_{j=1}^{i} \dim \Ext^1_{A}(K_{\mu_j}\gs{d_j},L_{\mu}\gs{d})_0\neq 0$.
            In particular, there is some $j$ such that $\Ext^1_{A}(K_{\mu_j}\gs{d_j},L_{\mu}\gs{d})_0\neq 0$.
            Consequently, we obtain $\lambda \precsim_K \mu_j \precsim_K \mu$ as desired.
        \end{proof}

        \begin{cor}
            \label{cor:KtildeK-equiv}
            Suppose that the third condition in Definition \ref{dfn:BHR-conditions} is satisfied.
            For each $\lambda\in\irr{W}$, all nonzero factors of a $K$-filtration of $\widetilde{K}_\lambda$ is isomorphic to $K_\mu\gs{d}$ for some $\mu\sim\lambda$ and $d\geq 0$. 
        \end{cor}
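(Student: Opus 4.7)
The plan is to derive this corollary as a short refinement of Proposition \ref{prop:KtildeK-precsimK}, with one extra observation that upgrades $\mu\succsim_K\lambda$ to $\mu\sim\lambda$. From Proposition \ref{prop:KtildeK-precsimK} we already know that the first nonzero factor is $K_\lambda$ (so the case $\mu=\lambda$, $d=0$ is trivially covered), and that every other nonzero factor $K_\mu\gs{d}$ satisfies $\mu\succsim_K\lambda$ with $d>0$. In particular $d\ge 0$ in all cases, and by Proposition \ref{prop:lessrelations} we deduce $\mu\succsim\lambda$. Hence the only remaining task is to rule out the possibility $\mu\succ\lambda$.

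The key step is the following direct observation about $\widetilde{K}_\lambda$: for any $\nu\succ\lambda$ and any $d>0$, the $\nu$-isotypic component of $(P_\lambda)_d$, regarded as a $W$-submodule, lies inside $(\widetilde{I}_\lambda)_d$. Indeed, an embedding of $L_\nu$ into $(P_\lambda)_d$ corresponds via Proposition \ref{prop:ghom-multi} to an element of $\Hom_A(P_\nu,P_\lambda)_d$, and the image of such a morphism is contained in $\widetilde{I}_\lambda$ by its very definition. Passing to the quotient $\widetilde{K}_\lambda=P_\lambda/\widetilde{I}_\lambda$, we conclude that the $q^d$-coefficient of $[\widetilde{K}_\lambda:L_\nu]_q$ vanishes for every $\nu\succ\lambda$ and $d>0$.

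To finish: if $K_\mu\gs{d}$ is a nonzero factor of a $K$-filtration of $\widetilde{K}_\lambda$ with $d>0$, then its unique graded head $L_\mu\gs{d}$ occurs as a composition factor of $\widetilde{K}_\lambda$ in degree $d$, so the observation above forces $\mu\not\succ\lambda$. Combined with $\mu\succsim\lambda$ this yields $\mu\sim\lambda$, completing the proof. I do not anticipate any significant obstacle here: the hard work has already been carried out in Proposition \ref{prop:KtildeK-precsimK}, and what remains is essentially a bookkeeping step based on the explicit definition of $\widetilde{I}_\lambda$ together with the isotypic-component description of $\Hom_A(P_\nu,P_\lambda)_d$.
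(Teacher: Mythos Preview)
Your proposal is correct and follows essentially the same route as the paper: both use Proposition \ref{prop:KtildeK-precsimK} together with Proposition \ref{prop:lessrelations} to get $\mu\succsim\lambda$, and then rule out $\mu\succ\lambda$ by noting that $[(\widetilde{K}_\lambda)_d:\mu]=0$ for $\mu\succ\lambda$ and $d>0$, which follows directly from the definition of $\widetilde{I}_\lambda$. The paper states this last vanishing more tersely (``by the definition of $\widetilde{K}_\lambda$''), whereas you spell out the isotypic-component argument via Proposition \ref{prop:ghom-multi}, but the content is the same.
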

        \begin{proof}
            By Proposition \ref{prop:KtildeK-precsimK} and Proposition \ref{prop:lessrelations}, every nonzero factor of a $K$-filtration of $\widetilde{K}_\lambda$ is isomorphic to $K_\lambda$ or isomorphic to $K_\mu\gs{d}$ for some $\mu\succsim \lambda$ and $d>0$.
            If there is a factor isomorphic to $K_\mu\gs{d}$ where $\mu\succ \lambda$ and $d>0$, then we have $[(\widetilde{K}_\lambda)_d:\mu]\neq 0$.
            However, we have $[(\widetilde{K}_\lambda)_d:\mu] = 0$ by the definition of $\widetilde{K}_\lambda$.
            This is a contradiction.
            Consequently, the all nonzero factors are isomorphic to $K_\mu\gs{d}$ for some $\mu\sim\lambda$ and $d\geq 0$.
        \end{proof}

        \begin{prop}
            \label{prop:PKtilde-precsimK}
            Suppose that the second and the third conditions in Definition \ref{dfn:BHR-conditions} are satisfied.
            For $\lambda\in\irr{W}$, the first nonzero factor in $\widetilde{K}$-filtration of $P_\lambda$ is isomorphic to $\widetilde{K}_\lambda$.
            The other nonzero factors are isomorphic to $\widetilde{K}_\mu\gs{d}$ for some $\mu\succsim_K\lambda$ and $d>0$. 
        \end{prop}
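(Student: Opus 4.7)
The plan is to mimic the argument of Proposition \ref{prop:KtildeK-precsimK}, with $P_\lambda$ in place of $\widetilde{K}_\lambda$, $\widetilde{K}$-filtrations in place of $K$-filtrations, and Corollary \ref{cor:Ktilde-relation} playing the role that the bare definition of $\precsim_K$ did in the original proof. I would fix a $\widetilde{K}$-filtration $P_\lambda = M^0 \supset M^1 \supset \cdots$ and, after discarding vanishing factors, assume $M^{i-1}/M^i \simeq \widetilde{K}_{\mu_i}\gs{d_i}$ for all $i \geq 1$, with $\mu_i \in \irr{W}$ and $d_i \in \zz$.

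For the first factor, I would observe that the composition $P_\lambda \twoheadrightarrow \widetilde{K}_{\mu_1}\gs{d_1} \twoheadrightarrow L_{\mu_1}\gs{d_1}$ is a nonzero homomorphism of degree $0$. Since Proposition \ref{prop:ghom-multi} gives $\dim_\cc \Hom_A(P_\lambda, L_\mu\gs{d})_0 = \delta_{\lambda,\mu}\delta_{d,0}$, this forces $\mu_1 = \lambda$ and $d_1 = 0$. Then $P_\lambda/M^1 \simeq \widetilde{K}_\lambda$ accounts for the whole degree-$0$ piece $(P_\lambda)_0 = \lambda$, so the graded submodule $M^1$ has trivial degree-$0$ part, i.e.\ $M^1 \subset (P_\lambda)_{>0}$. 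Every subsequent factor $\widetilde{K}_{\mu_i}\gs{d_i}$ ($i \geq 2$) is a subquotient of $M^1$, hence supported in strictly positive degrees; since $\widetilde{K}_{\mu_i}\gs{d_i}$ has lowest nonzero degree $d_i$, I conclude $d_i > 0$.

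To obtain the order constraint $\mu_i \succsim_K \lambda$ for $i \geq 2$, the plan is to apply Lemma \ref{lem:extpath} to the truncated filtration $P_\lambda = M^0 \supset M^1 \supset \cdots \supset M^i$ with $F_\nu \coloneqq \widetilde{K}_\nu$ (which is a nonzero graded quotient of $P_\nu$ by Definition \ref{dfn:K-Ktilde}, satisfying the hypothesis of the lemma). The lemma yields indices $1 \leq a_0 < a_1 < \cdots < a_k = i$ with $\Hom_A(P_\lambda, L_{\mu_{a_0}}) \neq 0$ and $\Ext^1_A(\widetilde{K}_{\mu_{a_{j-1}}}, L_{\mu_{a_j}}) \neq 0$ for $1 \leq j \leq k$. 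The first condition, combined again with Proposition \ref{prop:ghom-multi}, forces $\mu_{a_0} = \lambda$; each instance of the second condition gives $\mu_{a_{j-1}} \precsim_K \mu_{a_j}$ by Corollary \ref{cor:Ktilde-relation} (which requires only the third condition of Definition \ref{dfn:BHR-conditions}, as assumed). Concatenating along the subsequence yields $\lambda = \mu_{a_0} \precsim_K \mu_{a_1} \precsim_K \cdots \precsim_K \mu_{a_k} = \mu_i$, which is the desired inequality.

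The main obstacle I anticipate is purely bookkeeping: verifying that the two invoked results fit together without modification. Specifically, Lemma \ref{lem:extpath} demands that each $\widetilde{K}_\nu$ be a nonzero graded quotient of $P_\nu$, and Corollary \ref{cor:Ktilde-relation} packages precisely the $\Ext^1$-to-$\precsim_K$ implication needed to close the chain. Once these fit cleanly, the argument is essentially an organizational transcription of the proof of Proposition \ref{prop:KtildeK-precsimK}, with $\widetilde{K}$ substituted for $K$ throughout.
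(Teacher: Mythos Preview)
Your proposal is correct and follows essentially the same approach as the paper: identify the first factor via $\Hom_A(P_\lambda,L_\mu\gs{d})_0$ and Proposition~\ref{prop:ghom-multi}, then invoke Lemma~\ref{lem:extpath} together with Corollary~\ref{cor:Ktilde-relation} to chain up the $\precsim_K$-relations for the remaining factors. You spell out the $d_i>0$ step and the $\mu_{a_0}=\lambda$ identification more explicitly than the paper does, but the argument is the same.
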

        \begin{proof}
            By Proposition \ref{prop:ghom-multi}, $\Hom_A(P_\lambda,L_{\lambda'}\gs{d})_0 \neq 0$ if and only if $\lambda = \lambda'$ and $d=0$.
            Therefore, the first nonzero factor is isomorphic to $\widetilde{K}_\lambda$.
            By Lemma \ref{lem:extpath} and Corollary \ref{cor:Ktilde-relation}, the other factors are of the form $\widetilde{K}_\mu\gs{d}$ for some $\mu\succsim_K \lambda$ and $d\in\zz$.
        \end{proof}

        \begin{cor}\label{cor:PK-precsimK}
            Suppose that the second and the third conditions in Definition \ref{dfn:BHR-conditions} are satisfied.
            For $\lambda\in\irr{W}$, a projective module $P_\lambda$ is filtered by $\{K_\mu\}_{\mu\succsim_K\lambda}$.    
        \end{cor}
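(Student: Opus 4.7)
The plan is to stack the two filtration existence statements already proved in this section. By Proposition \ref{prop:PKtilde-precsimK}, under the hypotheses we have a separable $\widetilde{K}$-filtration
\[
P_\lambda = M^0 \supseteq M^1 \supseteq M^2 \supseteq \cdots
\]
whose $i$-th factor $M^{i-1}/M^i$ is either zero or isomorphic to $\widetilde{K}_{\mu_i}\gs{d_i}$ with $\mu_i \succsim_K \lambda$ and $d_i \geq 0$. By the third condition in Definition \ref{dfn:BHR-conditions}, each such $\widetilde{K}_{\mu_i}\gs{d_i}$ itself admits a $K$-filtration, and by Proposition \ref{prop:KtildeK-precsimK} every nonzero factor of that inner filtration has the form $K_{\nu}\gs{e}$ with $\nu \succsim_K \mu_i$.

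The second step is to merge these two layers into a single $K$-filtration of $P_\lambda$ by invoking Lemma \ref{lem:cnctfilts} with $F = K$. The applicable case is \eqref{item:findimfacts}: as recorded earlier, $\dim_\cc K_\nu \leq \dim_\cc \bigl(S/(R_{>0}\cdot S)\bigr)\cdot\dim_\cc \nu < \infty$ for every $\nu \in \irr{W}$, so $\{K_\nu\}_{\nu \in \irr{W}}$ is pointwise finite dimensional as required by the lemma. Once the merged filtration is produced, transitivity of $\precsim_K$ applied to $\nu \succsim_K \mu_i \succsim_K \lambda$ shows that only indices from $\{\nu \in \irr{W} : \nu \succsim_K \lambda\}$ appear among its factors, which is precisely the claim.

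The only delicate point is that both the outer $\widetilde{K}$-filtration of $P_\lambda$ and the inner $K$-filtrations of the various $\widetilde{K}_{\mu_i}\gs{d_i}$ may be infinite, so one genuinely needs case \eqref{item:findimfacts} of Lemma \ref{lem:cnctfilts} rather than the easier finite case \eqref{item:fin-filts}; it is at exactly this step that the finite-dimensionality of the trace quotients (guaranteed by the standing assumption $\dim_\cc S/(R_{>0}\cdot S)<\infty$) is essential. Beyond verifying this hypothesis, no further obstacle appears, since all the bookkeeping regarding which $\nu$ may show up has already been handled by Propositions \ref{prop:PKtilde-precsimK} and \ref{prop:KtildeK-precsimK}.
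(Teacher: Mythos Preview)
Your proof is correct and follows exactly the same route as the paper, which simply says ``Combine Propositions \ref{prop:KtildeK-precsimK}, \ref{prop:PKtilde-precsimK}, and Lemma \ref{lem:cnctfilts}.'' Your write-up is in fact more explicit than the paper's one-line version: you correctly identify that case \eqref{item:findimfacts} of Lemma \ref{lem:cnctfilts} is the relevant one and justify the finite-dimensionality hypothesis on the $K_\nu$.
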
         
        \begin{proof}
            Combine Propositions \ref{prop:KtildeK-precsimK}, \ref{prop:PKtilde-precsimK}, and Lemma \ref{lem:cnctfilts}.
        \end{proof}

        \begin{cor}
            \label{cor:higherK-relation}
            Suppose that the second and the third conditions in Definition \ref{dfn:BHR-conditions} are satisfied.
            Let $l$ be a positive integer and $\lambda,\mu\in\irr{W}$.
            Then $\Ext_A^l(K_\lambda,L_\mu)\neq 0$ implies $\lambda\precsim_K \mu$.
        \end{cor}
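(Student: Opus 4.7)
The plan is to argue by induction on $l \ge 1$. The base case $l=1$ is built directly into the definition of $\precsim_K$: if $\Ext^1_A(K_\lambda, L_\mu)\neq 0$, the chain $\lambda_0 = \lambda,\lambda_1 = \mu$ already witnesses $\lambda\precsim_K\mu$.

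For the inductive step $l\ge 2$, I begin with a dimension shift. Applying $\Hom_A(-,L_\mu)$ to
\[
0\to I_\lambda\to P_\lambda\to K_\lambda\to 0
\]
and using projectivity of $P_\lambda$ yields $\Ext^{l}_A(K_\lambda,L_\mu)\simeq \Ext^{l-1}_A(I_\lambda,L_\mu)$, so it suffices to show that $\Ext^{l-1}_A(I_\lambda,L_\mu)\neq 0$ implies $\lambda\precsim_K\mu$. By Corollary \ref{cor:PK-precsimK} together with Propositions \ref{prop:PKtilde-precsimK} and \ref{prop:KtildeK-precsimK}, $P_\lambda$ admits a $K$-filtration whose first nonzero factor is $K_\lambda$ and whose remaining factors have the form $K_{\nu_j}\gs{d_j}$ with $\nu_j\succsim_K\lambda$. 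Truncating away the top factor, $I_\lambda$ inherits a (possibly infinite) $K$-filtration $I_\lambda = F^0\supset F^1\supset F^2\supset\cdots$ with $F^{j-1}/F^j\simeq K_{\nu_j}\gs{d_j}$.

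Fix $d\in\zz$ with $\Ext^{l-1}_A(I_\lambda,L_\mu\gs{d})_0\neq 0$. The key technical step is a graded-degree truncation: by separability of the filtration, for $j$ sufficiently large one has $F^j\subset(I_\lambda)_{>d}$, and any minimal graded projective resolution of such an $F^j$ is built from summands $P_\nu\gs{e}$ with $e>d$. By Proposition \ref{prop:ghom-multi} each $\Hom_A(P_\nu\gs{e},L_\mu\gs{d})_0$ vanishes unless $e=d$, so the entire complex computing $\Ext^{\bullet}_A(F^j,L_\mu\gs{d})_0$ is zero. The long exact sequence attached to $0\to F^j\to I_\lambda\to I_\lambda/F^j\to 0$ then forces $\Ext^{l-1}_A(I_\lambda/F^j,L_\mu\gs{d})_0\neq 0$. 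Since $I_\lambda/F^j$ carries a finite $K$-filtration with factors $K_{\nu_k}\gs{d_k}$ ($k=1,\dots,j$), a routine dévissage along this finite filtration produces an index $k$ with $\Ext^{l-1}_A(K_{\nu_k},L_\mu)_{d-d_k}\neq 0$. The induction hypothesis gives $\nu_k\precsim_K\mu$; combined with $\lambda\precsim_K\nu_k$ coming from the filtration, transitivity of $\precsim_K$ closes the induction. The main obstacle is precisely this passage from the infinite $K$-filtration of $I_\lambda$ to a finite sub-filtration amenable to ordinary dévissage, and it is the graded-degree truncation via Proposition \ref{prop:ghom-multi} that carries most of the technical weight.
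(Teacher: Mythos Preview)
Your argument is correct and follows essentially the same route as the paper: induction on $l$, a graded-degree truncation of the $K$-filtration of $P_\lambda$ to kill the relevant $\Ext$-groups of the tail, and then a finite d\'evissage. The only cosmetic difference is that you perform the dimension shift $\Ext^l_A(K_\lambda,-)\simeq\Ext^{l-1}_A(I_\lambda,-)$ up front, whereas the paper carries the same information through the sequence $0\to M^1/M^k\to P_\lambda/M^k\to K_\lambda\to 0$ together with $\Ext^l_A(P_\lambda/M^k,-)=0$. One small point: when you say ``truncating away the top factor, $I_\lambda$ inherits a $K$-filtration'', you are tacitly identifying $I_\lambda$ with the first step $M^1$ of the chosen $K$-filtration of $P_\lambda$; this is true (Lemma~\ref{lem:factorgenerating} applied to $M^1$ gives $M^1\subset I_\lambda$, and $\gch M^1=\gch I_\lambda$ forces equality), but it is worth a sentence of justification, or you can sidestep it entirely by running the argument with $M^1$ in place of $I_\lambda$.
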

        \begin{proof}
            We prove this by induction on $l$.
            When $l=1$, by definition of the preorder $\precsim_K$, we have $\lambda\precsim_K \mu$ if $\Ext_A^1(K_\lambda,L_\mu)\neq 0$.
            When $l>2$, we can fix an integer $d\in\zz$ such that $\Ext^{l}_{A}(K_\lambda,L_\mu\gs{d})_0\neq 0$.
            By Corollary \ref{cor:PK-precsimK}, $P_\lambda$  admits a $\{K_\mu\}_{\mu\succsim_K \lambda}$-filtration
            \[
                P_\lambda = M^0 \supset M^1 \supset M^2 \supset \cdots \supset M^{i-1} \supset M^i \supset \cdots.
            \]
            Without loss of generality, we can assume that $M^0/M^1\simeq K_\lambda$. 
            Since $\{M^i\}_i$ is a separable filtration, there exists a number $k>0$ such that $M^k\subset (P_\lambda)_{>d}$.
            Since $A$ is non-negatively graded, a graded module $M^k$ admits a projective resolution such that its projective terms does not have the homogeneous part of degree $d$. 
            Hence, we have $\Ext_{A}^{l-1}(M^k,L_\mu\gs{d})_0=0$.
            There is an exact sequence
            \[
                \xymatrix{ 0 \ar[r] & M^k \ar[r] &P_\lambda \ar[r] &P_\lambda/M^k \ar[r] &0},
            \]
            and it induces a long exact sequence 
            \[
                \xymatrix{ 0=\Ext_{A}^{l-1}(M^k,L_\mu\gs{d})_0 \ar[r] & \Ext_{A}^l(P_\lambda/M^k,L_\mu\gs{d})_0 \ar[r] &\Ext_{A}^{l}(P_\lambda,L_\mu\gs{d})_0.  }
            \]
            Since $P_\lambda$ is projective, we have $\Ext_{A}^{l}(P_\lambda,L_\mu\gs{d})_0=0$, and hence $\Ext_{A}^l(P_\lambda/M^k,L_\mu\gs{d})_0 = 0$.
            For each $1\leq i\leq k$, we have an exact sequence
            \[
                \xymatrix{ 0 \ar[r] & M^i/M^k \ar[r] &M^{i-1}/M^k \ar[r] &M^{i-1}/M^i \ar[r] &0}.
            \] 
            Then, we obtain long exact sequence
            \[\xymatrix@=10pt{
                &\Ext_{A}^{l-1}(M^{i-1}/M^i,L_\mu\gs{d})_0 \ar[r] &\Ext_{A}^{l-1}(M^{i-1}/M^k,L_\mu\gs{d})_0 \ar[r] &\Ext_{A}^{l-1}(M^i/M^k,L_\mu\gs{d})_0 \\
                \ar[r] &\Ext_{A}^l    (M^{i-1}/M^i,L_\mu\gs{d})_0 \ar[r] &\Ext_{A}^l (M^{i-1}/M^k,L_\mu\gs{d})_0 .
            }\]
            Hence we obtain inequalities
            \begin{multline}
                \dim \Ext_{A}^l(M^0/M^1,L_\mu\gs{d})_0 \\
                \leq \dim \Ext_{A}^{l-1}(M^1/M^k,L_\mu\gs{d})_0 + \dim\Ext_{A}^l(M^0/M^k,L_\mu\gs{d})_0
            \end{multline}
            and 
            \begin{multline}
                \dim \Ext_{A}^{l-1}(M^{i-1}/M^k,L_\mu\gs{d})_0 \\
                    \leq \dim \Ext_{A}^{l-1}(M^{i-1}/M^i,L_\mu\gs{d})_0 + \dim \Ext_{A}^{l-1}(M^i/M^k,L_\mu\gs{d})_0 \qquad(2\leq i\leq k).
            \end{multline}
            
            Since we have $M^0/M^1\simeq K_\lambda$ and $\Ext_{A}^l(M^0/M^k,L_\mu\gs{d})_0 = \Ext_{A}^l(P_\lambda/M^k,L_\mu\gs{d})_0=0$, we obtain
            \begin{align}
                0 &< \dim \Ext_{A}^l(K_\lambda,L_\mu\gs{d})_0 \\
                &\leq \dim \Ext_{A}^{l-1}(M^1/M^k,L_\mu\gs{d})_0\\
                    &=  \sum_{i=2}^k \dim \Ext_{A}^{l-1}(M^{i-1}/M^k,L_\mu\gs{d})_0 - \dim \Ext_{A}^{l-1}(M^i/M^k,\L_\mu\gs{d})_0\\
                    &\leq \sum_{i=2}^k \dim \Ext_{A}^{l-1}(M^{i-1}/M^i,L_\mu\gs{d})_0.
            \end{align}
            Therefore, there is some integer $i\geq 2$ such that $\Ext_{A}^{l-1}(M^{i-1}/M^i,L_\mu\gs{d})_0\neq 0$.
            Since the filtration $\{M^i\}_i$ is given by Corollary \ref{cor:PK-precsimK}, the $i$-th factor $M^{i-1}/M^i$ is isomorphic to a grading shift of $K_{\mu'}$ for some $\mu'\succsim_K \lambda$.
            By the induction hypothesis, we have $\mu'\precsim_K \mu$.
            Consequently, we obtain $\lambda \precsim_K \mu' \precsim_K \mu$ as desired.
        \end{proof}

        \begin{cor}
            \label{cor:ext-eliminated}
            Suppose that the second and the third conditions in Definition \ref{dfn:BHR-conditions} are satisfied.
            Let $\lambda,\mu \in\irr{W}$.
            If $\Ext^{>0}_A(K_\lambda,L_\mu)\neq 0$, then we have $[(K_\lambda)_{>0}:L_\mu]_q =0$.
        \end{cor}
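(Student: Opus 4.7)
The plan is to assemble this as a direct consequence of Corollary \ref{cor:higherK-relation}, Proposition \ref{prop:lessrelations}, and the definition of $K_\lambda$. First I would apply Corollary \ref{cor:higherK-relation}: the hypothesis $\Ext^{>0}_A(K_\lambda,L_\mu)\neq 0$ means $\Ext^l_A(K_\lambda,L_\mu)\neq 0$ for some positive integer $l$, and the corollary then gives $\lambda\precsim_K\mu$. Feeding this into Proposition \ref{prop:lessrelations} upgrades the relation to $\lambda\precsim\mu$, i.e.\ $\mu\succsim\lambda$.

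The remaining step is to extract $[(K_\lambda)_{>0}:L_\mu]_q=0$ from the fact that $\mu\succsim\lambda$. For this I would invoke the remark immediately after Definition \ref{dfn:K-Ktilde}: by Proposition \ref{prop:ghom-multi}, a graded homomorphism $P_\mu\gs{d}\to P_\lambda$ is the same datum as a $W$-equivariant embedding of $L_\mu$ into the degree $d$ part of $P_\lambda$, i.e.\ as the choice of a vector inside the $\mu$-isotypic component of $(P_\lambda)_d$. Hence the submodule $I_\lambda$, being the sum of images of such morphisms for all $\mu\succsim\lambda$ and $d>0$, contains the entire $\mu$-isotypic component of $(P_\lambda)_d$ for every $d>0$. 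Taking the quotient $K_\lambda=P_\lambda/I_\lambda$ thus wipes out every $\mu$-isotypic piece in positive degree, so $[(K_\lambda)_d:L_\mu]=0$ for each $d>0$, which is exactly $[(K_\lambda)_{>0}:L_\mu]_q=0$.

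There is no real obstacle: the corollary is essentially a bookkeeping statement translating Ext non-vanishing (via $\precsim_K$, then via $\precsim$) into a structural feature already built into the definition of $K_\lambda$. The only point that needs a moment's care is confirming that the $S*W$-submodule generated by the positive-degree $\mu$-isotypic components of $P_\lambda$ already contains those components as vector subspaces, which is immediate.
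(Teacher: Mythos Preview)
Your proposal is correct and follows essentially the same route as the paper: the paper's proof is the one-liner ``This is immediate from Corollary \ref{cor:higherK-relation},'' and your argument simply unpacks that immediacy by passing through Proposition \ref{prop:lessrelations} and the remark after Definition \ref{dfn:K-Ktilde} to conclude that the $\mu$-isotypic component of $(P_\lambda)_{>0}$ lies in $I_\lambda$.
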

        \begin{proof}
            This is immediate from Corollary \ref{cor:higherK-relation}.
        \end{proof}

        \begin{cor}
            \label{cor:K-socle-sgn-relation}
            Suppose that the second and the third conditions in Definition \ref{dfn:BHR-conditions} are satisfied.
            Let $\lambda,\alpha,\beta\in\irr{W}$ and $a>0$.
            If $(K_\lambda)_{>0}$ admits a graded simple submodule $L_\alpha\gs{a}$ and $\Ext_A^n(L_\alpha,L_\beta) \neq 0$ holds, we have $\lambda\precsim_K\beta$.
        \end{cor}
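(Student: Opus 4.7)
The plan is to promote the nonvanishing of $\Ext_A^n(L_\alpha, L_\beta)$ to the nonvanishing of $\Ext_A^n(K_\lambda, L_\beta)$, and then invoke Corollary \ref{cor:higherK-relation}. Concretely, the inclusion $L_\alpha\gs{a} \hookrightarrow K_\lambda$ gives a short exact sequence of graded $A$-modules
\[
\xymatrix{ 0 \ar[r] & L_\alpha\gs{a} \ar[r] & K_\lambda \ar[r] & Q \ar[r] & 0 }
\]
where $Q \coloneqq K_\lambda / L_\alpha\gs{a}$. I would apply $\Hom_A(-,L_\beta)$ and look at the long exact sequence around degree $n$.

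The key observation is that by Proposition \ref{prop:gldim-skewgrpalg}, the algebra $A = S*W$ has global dimension $n$ (the number of polynomial variables), so $\Ext_A^{n+1}(Q, L_\beta) = 0$. Thus the segment of the long exact sequence
\[
\xymatrix{ \Ext_A^n(K_\lambda, L_\beta) \ar[r] & \Ext_A^n(L_\alpha\gs{a}, L_\beta) \ar[r] & \Ext_A^{n+1}(Q, L_\beta)=0 }
\]
shows that the left map is surjective. Since a graded shift is an autoequivalence of $A\hgmod$, we have $\Ext_A^n(L_\alpha\gs{a}, L_\beta) \cong \Ext_A^n(L_\alpha, L_\beta)\gs{a} \neq 0$ by hypothesis, and therefore $\Ext_A^n(K_\lambda, L_\beta) \neq 0$.

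Finally, Corollary \ref{cor:higherK-relation} (applied with $l = n > 0$) yields $\lambda \precsim_K \beta$. No serious obstacle is anticipated: the argument is a one-step dimension-shift, relying only on the global dimension bound and the already-established Corollary \ref{cor:higherK-relation}.
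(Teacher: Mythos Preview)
Your proposal is correct and follows essentially the same argument as the paper: form the short exact sequence from the inclusion $L_\alpha\gs{a}\hookrightarrow K_\lambda$, use the global dimension bound to kill $\Ext_A^{n+1}$ of the quotient, deduce $\Ext_A^n(K_\lambda,L_\beta)\neq 0$, and apply Corollary~\ref{cor:higherK-relation}. (A trivial remark: the grading shift on $\Ext$ should be $\gs{-a}$ rather than $\gs{a}$ under the paper's conventions, but this is irrelevant to the nonvanishing conclusion.)
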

        \begin{proof}
            Since the global dimension of $A$ equals $n$, an injective homomorphism $L_\alpha\gs{a}\rightarrow K_\lambda$ induces a surjection
            \[
                \xymatrix{
                    \Ext_A^n(K_\lambda,L_\beta) \ar[r] &\Ext_A^n(L_\alpha\gs{a},L_\beta) \ar[r] &\Ext_A^{n+1}(K_\lambda/L_\alpha\gs{a},L_\beta) = 0.
                }  
            \]
            Therefore, we obtain $\Ext_A^n(K_\lambda,L_\beta)\neq 0$.
            By Corollary \ref{cor:higherK-relation}, we have $\lambda\precsim_K \beta$.
        \end{proof}
        
        \begin{cor}
            \label{cor:appear-gep-vanish}
            Suppose that the second and the third conditions in Definition \ref{dfn:BHR-conditions} are satisfied.
            Let $\lambda,\mu \in\irr{W}$.
            If $[(K_\lambda)_{>0}:L_\mu]_q\neq 0$, then we have $\gep{K_\lambda}{L_\mu} = 0$.
        \end{cor}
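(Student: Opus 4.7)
The plan is to reduce $\gep{K_\lambda}{L_\mu}$ to a single term and then show that term vanishes under the hypothesis. Recall that by definition
\[
\gep{K_\lambda}{L_\mu} = \sum_{i=0}^{n} (-1)^i \gdim \Ext_A^i(K_\lambda,L_\mu).
\]
First, I would invoke Corollary \ref{cor:ext-eliminated}: from $[(K_\lambda)_{>0}:L_\mu]_q\neq 0$ it follows that $\Ext_A^{>0}(K_\lambda,L_\mu)=0$. Hence the alternating sum collapses to
\[
\gep{K_\lambda}{L_\mu} = \gdim \Hom_A(K_\lambda,L_\mu).
\]

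Next, I would pin down $\gdim \Hom_A(K_\lambda,L_\mu)$ using the vanishing statements established immediately after Definition \ref{dfn:K-Ktilde}: one has $\dim \Hom_{A\text{-gmod}}(K_\lambda\gs{d},L_\mu) = 1$ exactly when $\lambda=\mu$ and $d=0$, and $0$ otherwise. Consequently $\gdim\Hom_A(K_\lambda,L_\mu)$ equals $1$ if $\lambda=\mu$ and $0$ otherwise. So it suffices to rule out the case $\lambda=\mu$ under the hypothesis $[(K_\lambda)_{>0}:L_\mu]_q\neq 0$.

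This is where the definition of $I_\lambda$ does the work. Since $\precsim$ is reflexive we have $\lambda\succsim\lambda$, and the remark following Definition \ref{dfn:K-Ktilde} says that $I_\lambda$ contains the full $\lambda$-isotypic component of $(P_\lambda)_{>0}$. Therefore $[(K_\lambda)_{>0}:L_\lambda]_q = 0$. If $\lambda=\mu$ were the case, this would contradict $[(K_\lambda)_{>0}:L_\mu]_q\neq 0$, so necessarily $\lambda\neq\mu$, hence $\gdim\Hom_A(K_\lambda,L_\mu)=0$ and $\gep{K_\lambda}{L_\mu}=0$.

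I do not expect any real obstacle: the whole argument is a two-step bookkeeping exercise combining Corollary \ref{cor:ext-eliminated} (to kill higher Ext) with the definition of $K_\lambda$ via $I_\lambda$ (to kill $\Hom$). The only mild subtlety is remembering that reflexivity $\lambda\succsim\lambda$ enters through $I_\lambda$ rather than through $\widetilde I_\lambda$ — it is precisely this distinction that forces the degree-zero contribution to $\Hom$ to disappear in the case $\lambda=\mu$.
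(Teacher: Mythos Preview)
Your argument is correct and essentially identical to the paper's proof: both kill $\Ext^{>0}$ via Corollary \ref{cor:ext-eliminated} and kill $\Hom$ by showing $\lambda\neq\mu$ from the definition of $K_\lambda$. The only cosmetic difference is that the paper first observes the slightly stronger fact $\lambda\not\precsim\mu$ (from which $\lambda\neq\mu$ follows by reflexivity), whereas you go straight to $\lambda\neq\mu$ using reflexivity; either way the argument is the same two-line bookkeeping.
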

        \begin{proof}
            By the construction of $K_\lambda$ in Definition \ref{dfn:K-Ktilde}, we have $\lambda \not \precsim \mu$.
            In particular, we have $\lambda \neq \mu$. 
            Hence, $\Hom_A(K_\lambda,L_{\mu}) = 0$.
            By Corollary \ref{cor:ext-eliminated}, we have $\Ext_A^{>0}(K_\lambda,L_{\mu}) =  0$.
            Therefore, we obtain 
            \[
                \gep{K_\lambda}{L_{\mu}} =  \gdim \Hom_A(K_\lambda,L_{\mu}) + \sum_{i=1}^n (-1)^i \gdim \Ext_A^i(K_\lambda,L_\mu) = 0.
            \]
        \end{proof}

        \begin{lem}\label{lem:factorgenerating}
            Let $D$ be a subset of $\irr{W}$. 
            Let $F_\lambda$ be a quotient module of $P_\lambda$ for $\lambda\in D$.
            If $M\in A\textrm{-gmod}$ is filtered by $\{F_\lambda\}_{\lambda\in D}$, then $M$ is generated by the isotypic components corresponding to $\lambda\in D$. 
        \end{lem}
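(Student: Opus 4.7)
The plan is to introduce the $A$-submodule $N\subseteq M$ generated by the $\lambda$-isotypic components of $M$ (taken degree by degree, using that $M$ is locally finite since $A$ is) for all $\lambda\in D$, and to prove $N=M$ by induction along a chosen $\{F_\lambda\}_{\lambda\in D}$-filtration $M=M^0\supseteq M^1\supseteq\cdots$ whose nonzero factors satisfy $M^{i-1}/M^i\simeq F_{\lambda_i}\gs{d_i}$ for some $\lambda_i\in D$ and $d_i\in\zz$.

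The first ingredient I would record is that every $F_\lambda$ is generated by its degree-$0$ component. Indeed, $P_\lambda=S\otimes_\cc\lambda$ is generated by $(P_\lambda)_0\simeq\lambda$, so any graded quotient $F_\lambda$ is generated by $(F_\lambda)_0$, which is either zero or, as a nonzero quotient of the simple $W$-module $\lambda$, isomorphic to $\lambda$. Consequently $F_\lambda\gs{d}$ is generated by its degree-$d$ part, and this generating subspace sits inside the $\lambda$-isotypic component.

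Next I would prove by induction on $i\geq 0$ that $N+M^i=M$. The base case is immediate. For the inductive step, the short exact sequence of finite-dimensional $W$-representations
\[
0\longrightarrow M^{i+1}_{d_{i+1}}\longrightarrow M^i_{d_{i+1}}\longrightarrow (F_{\lambda_{i+1}}\gs{d_{i+1}})_{d_{i+1}}\longrightarrow 0
\]
splits into isotypic summands by Maschke's theorem, so the $\lambda_{i+1}$-isotypic part of $M^i_{d_{i+1}}$ surjects onto the generating piece of $M^i/M^{i+1}$. Since these lifts lie in $N$ by definition, we conclude $M^i\subseteq N+M^{i+1}$, which combined with the inductive hypothesis $N+M^i=M$ yields $N+M^{i+1}=M$.

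Finally, to pass from this family of equalities to $N=M$, I would fix a homogeneous element $m\in M_d$ and invoke separability and local finiteness of the filtration: the descending chain of finite-dimensional subspaces $\{M^i_d\}_i$ with $\bigcap_i M^i_d=0$ must stabilize at $0$, so $M^i_d=0$ for $i$ sufficiently large, and the equation $N+M^i=M$ then forces $m\in N_d$. I do not foresee a serious obstacle; the only subtle point is handling a possibly infinite filtration, which is precisely what the separability hypothesis is designed to resolve.
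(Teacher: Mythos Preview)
Your proof is correct and follows essentially the same strategy as the paper's: induct along the filtration, and handle the possibly infinite case by truncating degree by degree using separability and local finiteness. The only packaging difference is that the paper phrases the inductive step as constructing an explicit surjection $\bigoplus P_{\lambda_i}\gs{d_i}\twoheadrightarrow M/M^l$ (lifting via projectivity of $P_{\lambda}$), whereas you argue directly that $N+M^{i}=M$ by lifting the generating isotypic piece of each factor using Maschke's theorem. Both arguments rest on the same observation that each $F_\lambda\gs{d}$ is generated by its $\lambda$-isotypic degree-$d$ part, and both reduce the infinite filtration to the finite case via the vanishing of $M^i_d$ for $i\gg 0$.
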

        \begin{proof}
            By Proposition \ref{prop:ghom-multi}, the submodule of $M$ generated by the isotypic components corresponding to $\lambda\in D$ coincides with
            \[
                \sum_{\lambda \in D,f\in\Hom_A(P_\lambda,M)} \im\, f.  
            \]
            Suppose first that $M$ admits a finite $\{F_\lambda\}_{\lambda\in D}$-filtration
            \[
                M=M^0\supset M^1 \supset M^2 \supset \cdots \supset M^l =0.
            \]
            For each $i$, we have an isomorphism $M^{i-1}/M^i\simeq F_{\lambda_i}\gs{d_i}$ for $\lambda_i\in D$ and $d_i\in \zz$. 
            It suffices to show that there exists a surjection $P\rightarrow M$, where $P$ is a direct sum of projective modules of the form $P_\lambda\gs{d}$ with $\lambda\in D$ and $d\in \zz$. 
            We proceed by induction on $l$.
            When $l=1$, we have a surjection $P_{\lambda_1}\gs{d_1}\rightarrow F_{\lambda_1}\gs{d_1}\simeq M^0/M^1 =M$ , as required.
            If $l>1$, we obtain a surjection $p:P\rightarrow M^1$ by applying the induction hypothesis.
            Since $P_{\lambda_1}\gs{d_1}$ is projective, the natural quotient morphism $\pi:M\rightarrow M^0/M^1\simeq P_{\lambda_1}\gs{d_1}$ can be lifted to a homomorphism $p':P_{\lambda_1}\gs{d_1} \rightarrow M$.
            A lifted homomorphism $p'$ satisfies $\im\, p' + M^1 = M$.
            Then $(p,p'):P \oplus P_{\lambda_1}\gs{d_1}\rightarrow M$ is surjective, and so $(p,p')$ is the required surjection.

            To complete the proof, we consider the case where $M$ admits an infinite $\{F_\lambda\}_{\lambda\in D}$-filtration $\{M^i\}_{i=0}^\infty$.
            Given $d\in\zz$, it is possible to fix $l$ such that $(M^l)_d=0$ because $\{M^i\}_{i=0}^\infty$ is separable.
            Since $M/M^l$ is finitely filtered by $\{F_\lambda\}_{\lambda\in D}$, then $M/M^l$ is generated by the isotypic components of $M/M^l$ associated to $\lambda \in D$.
            We have $M_d = (M/M^l)_d$, and so $M_d$ is contained in the submodule of $M$ generated by the isotypic components of type $\lambda \in D$.
            Since $d$ is arbitrary, we conclude the isotypic components of type $\lambda \in D$ generate the whole of $M$.

        \end{proof}

        \begin{prop}\label{prop:recoverpreorder}
            Suppose that the three conditions in Definition \ref{dfn:BHR-conditions} are satisfied.
            For each $\lambda \in \irr{W}$, we have 
            \[
                I_\lambda = \sum_{\mu\succsim_K\lambda,f\in\Hom_{A}(P_\mu,P_\lambda)_{>0}}\mathrm{Im}\,f \quad \textrm{ and } \quad \widetilde{I}_\lambda = \sum_{\mu\succ_K\lambda,f\in\Hom_{A}(P_\mu,P_\lambda)_{>0}}\mathrm{Im}\,f.
            \]
            In particular, we recover $K=\{K_\lambda\}$ and $\widetilde{K}=\{\widetilde{K}_\lambda\}$ by applying the construction of the Definition \ref{dfn:K-Ktilde} to the preorder $\precsim_K$ determined by $K$. 
        \end{prop}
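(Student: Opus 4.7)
The plan is to prove the two equalities separately and then extract the ``in particular'' statement as a direct consequence. Write $I'_\lambda$ and $\widetilde{I}'_\lambda$ for the candidate right-hand sides. Since $\precsim_K$ is contained in $\precsim$ by Proposition \ref{prop:lessrelations}, the inclusion $I'_\lambda\subseteq I_\lambda$ is automatic, and likewise the reverse inclusion $\widetilde{I}_\lambda\subseteq\widetilde{I}'_\lambda$ reduces (via Lemma \ref{lem:factorgenerating}) to a refinement of the statement about which $\widetilde{K}_\mu\gs{d}$ appear in the $\widetilde{K}$-filtration of $P_\lambda$.

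First I would treat $I_\lambda\subseteq I'_\lambda$. By Corollary \ref{cor:PK-precsimK}, $P_\lambda$ admits a $K$-filtration whose factors are $K_\mu\gs{d}$ with $\mu\succsim_K\lambda$; the first factor is $K_\lambda$ and every subsequent factor necessarily has $d>0$ because it sits inside $(P_\lambda)_{>0}$. Thus $I_\lambda=\ker(P_\lambda\twoheadrightarrow K_\lambda)$ inherits such a $K$-filtration, and Lemma \ref{lem:factorgenerating} implies that $I_\lambda$ is $A$-generated by isotypic components of type $\mu$ in $(P_\lambda)_{>0}$ with $\mu\succsim_K\lambda$; this is the desired inclusion.

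For $\widetilde{I}_\lambda\subseteq\widetilde{I}'_\lambda$ the strategy is parallel, but requires sharpening Proposition \ref{prop:PKtilde-precsimK}. The key input is that $\Ext^1_A(\widetilde{K}_\lambda,L_\mu)\neq 0$ implies $\lambda\prec_K\mu$: combine the implication $\lambda\prec\mu$ recorded before Corollary \ref{cor:Ktilde-relation}, Corollary \ref{cor:Ktilde-relation} itself (which gives $\lambda\precsim_K\mu$), and the contrapositive of Proposition \ref{prop:lessrelations} (which upgrades $\mu\not\precsim\lambda$ to $\mu\not\precsim_K\lambda$). Applying Lemma \ref{lem:extpath} to the $\widetilde{K}$-filtration of $P_\lambda$, each non-top position $l$ yields a sub-chain beginning at $\lambda$ (since $\Hom_A(P_\lambda,L_\nu)\neq 0\Leftrightarrow\nu=\lambda$) and ending at $\mu_l$ with $\Ext^1$'s nonzero along it, so $\lambda\prec_K\mu_l$. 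Lemma \ref{lem:factorgenerating} then gives $\widetilde{I}_\lambda\subseteq\widetilde{I}'_\lambda$.

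The main obstacle is the reverse inclusion $\widetilde{I}'_\lambda\subseteq\widetilde{I}_\lambda$: it demands that the $\mu$-isotypic component of $(\widetilde{K}_\lambda)_{>0}$ vanishes for every $\mu\succ_K\lambda$. The case $\mu\succ\lambda$ is immediate from the definition of $\widetilde{I}_\lambda$, and $\mu\not\succsim\lambda$ is ruled out by Proposition \ref{prop:lessrelations} under the hypothesis $\mu\succ_K\lambda$. The subtle case is $\mu\sim\lambda$ with $\mu\neq\lambda$, which I would dispatch by contradiction. If the $\mu$-isotypic of $(\widetilde{K}_\lambda)_{>0}$ were nonzero, then $[\widetilde{K}_\lambda:L_\mu]_q\neq 0$; using the $K$-filtration of $\widetilde{K}_\lambda$ (factors $K_{\nu'}\gs{d}$ with $\nu'\sim\lambda$ by Corollary \ref{cor:KtildeK-equiv}) together with the identity $[K_{\nu'}:L_\mu]_q=\delta_{\nu',\mu}$ (valid whenever $\mu\succsim\nu'$, which holds here since $\mu\sim\lambda\sim\nu'$), this forces $(\widetilde{K}_\lambda:K_\mu)_q\neq 0$. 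Lemma \ref{lem:extpath} applied to the $K$-filtration of $\widetilde{K}_\lambda$ then produces $\mu\succ_K\lambda$ (consistent with the hypothesis), but the symmetry $(\widetilde{K}_\lambda:K_\mu)_q=(\widetilde{K}_\mu:K_\lambda)_q$ from Corollary \ref{cor:Ktilde-K-sym} allows the same reasoning applied to $\widetilde{K}_\mu$ to produce the opposite strict relation $\lambda\succ_K\mu$; these two are incompatible. The ``in particular'' assertion is immediate: substituting $\precsim_K$ into Definition \ref{dfn:K-Ktilde} reproduces $I'_\lambda$ and $\widetilde{I}'_\lambda$, hence the same trace quotient modules $K_\lambda$ and $\widetilde{K}_\lambda$.
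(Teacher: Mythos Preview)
Your overall strategy is sound and largely parallels the paper's argument, but there is one genuine gap in the direction $\widetilde{I}_\lambda\subseteq\widetilde{I}'_\lambda$.

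When you apply Lemma \ref{lem:extpath} to the $\widetilde{K}$-filtration of $P_\lambda$ at a non-top position $l$, the lemma permits $k=0$: the chain may be trivial, consisting only of $a_0=l$, with no $\Ext^1$'s at all and only the condition $\Hom_A(P_\lambda,L_{\mu_l})\neq0$, which forces $\mu_l=\lambda$. Your phrase ``with $\Ext^1$'s nonzero along it, so $\lambda\prec_K\mu_l$'' implicitly assumes $k\geq1$, i.e., that no factor $\widetilde{K}_\lambda\gs{d}$ with $d>0$ occurs. That is true, but it does not follow from Proposition \ref{prop:PKtilde-precsimK}; you must invoke the orthogonality through Proposition \ref{prop:reciprocity}, which gives $(P_\lambda:\widetilde{K}_\lambda)_q=[K_\lambda:L_\lambda]_q=1$ (so only the top copy survives). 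Once this is supplied, your chain argument goes through.

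The paper sidesteps this issue by a different route: instead of strengthening Corollary \ref{cor:Ktilde-relation} and chaining strict $\Ext^1$-relations, it applies Proposition \ref{prop:reciprocity} directly to each non-top factor to read off $[(K_{\mu_l})_{d_l}:L_\lambda]\neq0$, whence $\mu_l\not\precsim\lambda$ and therefore $\mu_l\succ\lambda$ and $\mu_l\succ_K\lambda$ simultaneously. For the reverse inclusion $\widetilde{I}'_\lambda\subseteq\widetilde{I}_\lambda$, your argument via Corollary \ref{cor:Ktilde-K-sym} and the paper's via Proposition \ref{prop:reciprocity-K} are essentially the same symmetry; one small imprecision: the ``same reasoning'' applied to $\widetilde{K}_\mu$ yields only $\mu\precsim_K\lambda$, not the strict $\lambda\succ_K\mu$ you state, but that already contradicts the hypothesis $\mu\succ_K\lambda$, so the conclusion is unaffected.
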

        \begin{proof}
            By Corollary \ref{cor:PK-precsimK}, $P_\lambda$ admits a $\{K_\mu\}_{\mu\succsim_K\lambda}$-filtration
            \[
                P_\lambda = M^0 \supset M^1 \supset M^2 \supset \cdots \supset M^{i-1} \supset M^i \cdots. 
            \]
            Without loss of generality, we can assume that $M^0 \supsetneq M^1$.
            Then the first factor $M^0/M^1$ is isomorphic to $K_\lambda$.
            By Lemma \ref{lem:factorgenerating}, a graded module $M^1$ is generated by its isotypic components associated to $\mu\succsim\lambda$.
            Since we have $M^1\subset (P_\lambda)_{>0}$, we obtain 
            \[
                \sum_{\mu\succsim_K\lambda,f\in\Hom_{A}(P_\mu,P_\lambda)_{>0}}\im\,f \supset M^1.  
            \]
            By Proposition \ref{prop:lessrelations}, we have 
            \begin{align}
                I_\lambda = \sum_{\mu\succsim\lambda,f\in\Hom_{A}(P_\mu,P_\lambda)_{>0}}\im\,f &\supset \sum_{\mu\succsim_K\lambda,f\in\Hom_{A}(P_\mu,P_\lambda)_{>0}}\im\,f \supset M^1.
            \end{align}
            Since we have $\gdim I_\lambda = \gdim P_\lambda - \gdim K_\lambda = \gdim M^1$, we obtain
            \[
                I_\lambda = \sum_{\mu\succsim_K\lambda,f\in\Hom_{A}(P_\mu,P_\lambda)_{>0}}\im\,f = M^1.
            \]

            Next we consider the module $\widetilde{I}_\lambda$.
            By Proposition \ref{prop:PKtilde-precsimK}, we have a $\{\widetilde{K}_\mu\}_{\mu\succsim_K \lambda}$-filtration of $P_\lambda$
            \[
                P_\lambda = N^0 \supset N^1 \supset N^2 \supset \cdots  
            \]
            with $M^0/M^1\simeq \widetilde{K}_\lambda$.
            Given $i>1$, if the $i$-th factor $N^{i-1}/N^i$ is nonzero, then $N^{i-1}/N^i$ is isomorphic to $\widetilde{K}_{\mu_i}\gs{d_i}$ for some $\mu_i\succsim_K \lambda$ and $d_i>0$ by Proposition \ref{prop:PKtilde-precsimK}.
            Then we have $\mu_i\succsim \lambda$ by Proposition \ref{prop:lessrelations}.
            By Proposition \ref{prop:reciprocity}, we have $[(K_{\mu_i})_{d_i}:L_\lambda] \neq 0$, and hence we obtain $\mu_i \not\precsim \lambda$.
            By Proposition \ref{prop:lessrelations}, it follows that $\mu_i \not\precsim_K \lambda$.
            Therefore, we have $\mu_i\succ_K \lambda$ and $\mu_i\succ\lambda$.
            Consequently, we have that $N^1$ is filtered by $\{\widetilde{K}_\mu\}_{\mu\succ_K \lambda}$ and also filtered by $\{\widetilde{K}_\mu\}_{\mu\succ \lambda}$.
            Thus, we obtain
            \[
                \sum_{\mu\succ\lambda,f\in\Hom_{A}(P_\mu,N^1)}\im\,f = \sum_{\mu\succ_K\lambda,f\in\Hom_{A}(P_\mu,N^1)}\im\,f = N^1  
            \]
            by Lemma \ref{lem:factorgenerating}.
            Since we have $N^1\subset (P_\lambda)_{>0}$, then we obtain 
            \[
                \sum_{\mu\succ_K\lambda,f\in\Hom_{A}(P_\mu,P_\lambda)_{>0}}\im\,f \supset \sum_{\mu\succ_K\lambda,f\in\Hom_{A}(P_\mu,N^1)}\im\,f =N^1
            \]
            and
            \[
                \widetilde{I}_\lambda = \sum_{\mu\succ\lambda,f\in\Hom_{A}(P_\mu,P_\lambda)_{>0}}\im\,f \supset  N^1.
            \]
            
            Since we have $\gdim \widetilde{I}_\lambda = \gdim P_\lambda - \gdim \widetilde{K}_\lambda = \gdim N^1$, so we conclude that 
            \[
                \widetilde{I}_\lambda = N^1 \subset \sum_{\mu\succ_K\lambda,f\in\Hom_{A}(P_\mu,P_\lambda)_{>0}}\im\,f.
            \]
            Finally, we will show that 
            \[
                \sum_{\mu\succ_K\lambda,f\in\Hom_{A}(P_\mu,P_\lambda)_{>0}}\im\,f \subset \widetilde{I}_\lambda.
            \]
            Assume to the contrary to deduce contradiction.
            Namely, we assume 
            \[
                \sum_{\mu\succ_K\lambda,f\in\Hom_{A}(P_\mu,P_\lambda)_{>0}}\im\,f \not\subset \widetilde{I}_\lambda.
            \]
            Then, there exists $f\in\Hom_{A}(P_\mu,P_\lambda)_{>0}$ for some $\mu\succ_K\lambda$ such that
            \[
                \im\,f \not\subset \widetilde{I}_\lambda.
            \]
            It follows that $[(\widetilde{K}_\lambda)_{>0}:L_\mu]_q\neq 0$ for some $\mu\succ_K\lambda$.
            By Proposition \ref{prop:reciprocity-K}, we obtain $(P_\mu:K_\lambda)_q\neq 0$.
            By Corollary \ref{cor:PK-precsimK}, we have $\mu\precsim_K\lambda$.
            This contradicts the fact $\mu\succ_K\lambda$.
            Therefore, we obtain
            \[
                \sum_{\mu\succ_K\lambda,f\in\Hom_{A}(P_\mu,P_\lambda)_{>0}}\im\,f \subset \widetilde{I}_\lambda
            \]
            as desired.
        \end{proof}

        \begin{prop}
            \label{prop:I-Ext1}
            Let $\lambda\in\irr{W}$. We set 
            \[
                D=\{\mu\in\irr{W}\mid\Ext_A^1(K_\lambda,L_\mu)\neq 0\}.
            \]
            We have
            \[
                I_\lambda = \sum_{\mu\in D,f\in\Hom_{A}(P_\mu,P_\lambda)_{>0}}\mathrm{Im}\,f.
            \]
        \end{prop}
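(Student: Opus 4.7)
The plan is to identify $\Ext^1_A(K_\lambda, L_\mu)$ with $\Hom_A(I_\lambda, L_\mu)$ and then run a graded Nakayama-type lifting argument on the submodule $I_\lambda \subseteq P_\lambda$.

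First, I would apply $\Hom_A(-, L_\mu)$ to the short exact sequence $0 \to I_\lambda \to P_\lambda \to K_\lambda \to 0$. Since $P_\lambda$ is projective, $\Ext^1_A(P_\lambda, L_\mu) = 0$. Moreover, since $L_\mu\gs{d}$ is killed by $A_{>0}$ for every $d$, any graded $A$-homomorphism $P_\lambda \to L_\mu\gs{d}$ factors through $P_\lambda/(P_\lambda)_{>0}$ and therefore vanishes on $I_\lambda \subseteq (P_\lambda)_{>0}$. Hence the restriction map $\Hom_A(P_\lambda, L_\mu) \to \Hom_A(I_\lambda, L_\mu)$ is identically zero and the long exact sequence collapses to an isomorphism $\Hom_A(I_\lambda, L_\mu) \cong \Ext^1_A(K_\lambda, L_\mu)$. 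In particular, $\mu \in D$ if and only if $\Hom_A(I_\lambda, L_\mu) \neq 0$.

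Next, set $J := \sum_{\mu \in D,\, f \in \Hom_A(P_\mu, P_\lambda)_{>0}} \im f$. The inclusion $J \subseteq I_\lambda$ is immediate, because every $\mu \in D$ satisfies $\lambda \precsim \mu$ (as noted just after Definition~\ref{dfn:precsimK}), so every summand of $J$ is already a summand in the definition of $I_\lambda$. For the reverse inclusion, suppose for contradiction that $J \subsetneq I_\lambda$. Then the nonzero, bounded-below, locally finite graded $A$-module $I_\lambda/J$ admits a simple graded quotient $L_\nu\gs{d}$, and the inclusion $I_\lambda \subseteq (P_\lambda)_{>0}$ forces $d > 0$. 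The composition $I_\lambda \twoheadrightarrow I_\lambda/J \twoheadrightarrow L_\nu\gs{d}$ certifies $\Hom_A(I_\lambda, L_\nu)_{-d} \neq 0$, hence $\Ext^1_A(K_\lambda, L_\nu) \neq 0$ and $\nu \in D$ by the first step. Pick a $W$-equivariant lift $U \subseteq (I_\lambda)_d$ of the generating copy of $\nu$ inside $L_\nu\gs{d}$; by Proposition~\ref{prop:ghom-multi} the inclusion $U \hookrightarrow (P_\lambda)_d$ corresponds to a morphism $\phi \in \Hom_A(P_\nu, P_\lambda)_d \subseteq \Hom_A(P_\nu, P_\lambda)_{>0}$. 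Since $\nu \in D$ we conclude $U \subseteq \im \phi \subseteq J$, contradicting the fact that $U$ surjects onto the generating $\nu$-isotypic part of $L_\nu\gs{d}$ inside $I_\lambda/J$.

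The only delicate point, and thus the main obstacle, is the grading bookkeeping in the third paragraph: one must verify that $d > 0$, so that $\phi$ truly lies in $\Hom_A(P_\nu, P_\lambda)_{>0}$ rather than in degree zero, and that the chosen lift $U$ recovers the generating copy of $\nu$ in $L_\nu\gs{d}$ modulo $J$. Both rest squarely on the inclusion $I_\lambda \subseteq (P_\lambda)_{>0}$ that is built into the definition of $I_\lambda$.
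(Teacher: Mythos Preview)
Your argument is correct and follows essentially the same route as the paper's proof: both use the long exact sequence coming from $0\to I_\lambda\to P_\lambda\to K_\lambda\to 0$, then argue by contradiction via a simple graded quotient of $I_\lambda/J$. One small point where your version is actually cleaner: you establish the full isomorphism $\Hom_A(I_\lambda,L_\mu)\cong\Ext^1_A(K_\lambda,L_\mu)$ by observing that the restriction map $\Hom_A(P_\lambda,L_\mu)\to\Hom_A(I_\lambda,L_\mu)$ vanishes, whereas the paper writes $\Hom_A(P_\lambda,L_\mu)=0$ in the long exact sequence, which tacitly needs the auxiliary claim $\mu\neq\lambda$ that your approach makes unnecessary.
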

        \begin{proof}
            We have $\lambda\precsim_K \mu$ for $\mu\in D$.
            By Proposition \ref{prop:lessrelations}, it is clear that
            \[
                I_\lambda \supseteq \sum_{\mu\in D,f\in\Hom_{A}(P_\mu,P_\lambda)_{>0}}\mathrm{Im}\,f.    
            \]
            Assume to the contrary that 
            \[
                I_\lambda \supsetneq \sum_{\mu\in D,f\in\Hom_{A}(P_\mu,P_\lambda)_{>0}}\mathrm{Im}\,f.
            \]
            Then we have 
            \[
                M\coloneqq I_\lambda \left/ \sum_{\mu\in D,f\in\Hom_{A}(P_\mu,P_\lambda)_{>0}}\mathrm{Im}\,f \right.     
            \]
            is nonzero $A$-module.
            By the construction of $M$, we have $M=M_{>0}$.
            There exists a nonzero morphism $f:M\rightarrow L_\mu\gs{d}$ for some positive integer $d$ and $\mu\in\irr{W}$.
            Fix such $\mu$.
            Clearly, $\mu$ is not belongs to $D$ and $\lambda\neq\mu$.
            We have $\Hom_A(M,L_\mu)\neq 0$, and hence $\Hom_A(I_\lambda,L_\mu)\neq 0$.
            Applying $\Hom_A(-,L_\mu)$ to a short exact sequence
            \[
                    \xymatrix{ 0\ar[r] &I_\lambda \ar[r] &P_\lambda \ar[r] &K_\lambda \ar[r] &0}
            \]
            we obtain a long exact sequence
            \[
                \xymatrix{ \ar[r] &\Hom_A(P_\lambda,L_\mu)=0\ar[r] &\Hom_A(I_\lambda,L_\mu) \ar[r] &\Ext^1_A(K_\lambda,L_\mu) \ar[r] &\cdots.}  
            \]
            Since $\Hom_A(I_\lambda,L_\mu)\neq 0$ , we deduce $\Ext^1_A(K_\lambda,L_\mu)\neq 0$.
            Thus, we conclude that $\mu$ belongs to $D$ which is a contradiction.
            Consequently, we obtain
            \[
                I_\lambda = \sum_{\mu\in D,f\in\Hom_{A}(P_\mu,P_\lambda)_{>0}}\mathrm{Im}\,f
            \]
            as required.
        \end{proof}

        We prove the converse of Proposition \ref{prop:reciprocity}.
        \begin{prop}
            \label{prop:verify-orth}
            Let $\precsim$ be a preorder on $\irr{W}$.
            Let $\{\widetilde{K}_\lambda\}_{\lambda\in\irr{W}}$ and $\{K_\lambda\}_{\lambda\in\irr{W}}$ be the strict trace quotient modules and the trace quotient modules with respect to $\precsim$ respectively.
            Suppose that the second and the third conditions in Definition \ref{dfn:BHR-conditions} are satisfied.
            If a projective module $P_\lambda$ has a $\widetilde{K}$-filtration with multiplicity $P_\lambda\filt \sum_{\mu\in\irr{W}} [K_\mu:L_\lambda]_q[\widetilde{K}_\mu]$ for each $\lambda\in\irr{W}$, then we have
            \begin{equation}
                \gdim \Ext_{A}^i(\widetilde{K}_\lambda,K_\mu^*) = \begin{cases}
                    1 &\text{if } i=0 \text{ and } \lambda=\mu,\\
                    0 & otherwise.
                \end{cases}
            \end{equation}
        \end{prop}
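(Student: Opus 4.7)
The plan is a descending strong induction on the preorder $\succ_K$ from Definition \ref{dfn:precsimK}, which is well-founded on the finite set $\irr{W}$. The starting observation is that the hypothesis refines Proposition \ref{prop:PKtilde-precsimK}: apart from the initial factor $\widetilde{K}_\lambda$, every other factor of the $\widetilde{K}$-filtration of $P_\lambda$ is of the form $\widetilde{K}_\nu\gs{d}$ with $\nu\succ_K\lambda$ \emph{strict} and $d>0$. Indeed, if $\nu\sim_K\lambda$ with $\nu\neq\lambda$, Proposition \ref{prop:lessrelations} gives $\nu\sim\lambda$, so $\nu\succsim\lambda$ and $[K_\nu:L_\lambda]_q=0$ by the construction of $K_\nu$; the hypothesis then forces the corresponding multiplicity to vanish. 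Hence $\widetilde{I}_\lambda$ is filtered by $\widetilde{K}_\nu\gs{d}$ with $\nu\succ_K\lambda$ strict and $d>0$.

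The base case is $\lambda$ maximal in $\succ_K$: the refinement forces $\widetilde{I}_\lambda=0$, hence $P_\lambda=\widetilde{K}_\lambda$, and the hypothesis collapses to $[K_\mu:L_\lambda]_q=\delta_{\lambda\mu}$. The orthogonality is then immediate from the projectivity of $P_\lambda$ combined with Proposition \ref{prop:ghom-multi} and the identity $[K_\mu^*:L_\lambda]_q=[K_\mu:L_\lambda]_{q^{-1}}$ coming from the self-duality of $W$-representations.

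For the inductive step, iterated long exact sequences along the filtration of $\widetilde{I}_\lambda$, together with the inductive hypothesis applied factor-wise (and a degree-wise truncation to handle possibly infinite filtrations), yield $\Ext_A^i(\widetilde{I}_\lambda,K_\mu^*)=0$ for $i\ge 1$ and $\gdim\Hom_A(\widetilde{I}_\lambda,K_\mu^*)=[K_\mu:L_\lambda]_{q^{-1}}-\delta_{\lambda\mu}$. Plugging this into the long exact sequence for $0\to\widetilde{I}_\lambda\to P_\lambda\to\widetilde{K}_\lambda\to 0$ gives $\Ext_A^{\ge 2}(\widetilde{K}_\lambda,K_\mu^*)=0$ and the four-term exact sequence
\[0\to\Hom_A(\widetilde{K}_\lambda,K_\mu^*)\to\Hom_A(P_\lambda,K_\mu^*)\xrightarrow{\alpha}\Hom_A(\widetilde{I}_\lambda,K_\mu^*)\to\Ext^1_A(\widetilde{K}_\lambda,K_\mu^*)\to 0.\]
When $\lambda=\mu$, the target of $\alpha$ is zero, so $\Hom_A(\widetilde{K}_\lambda,K_\lambda^*)\simeq\Hom_A(P_\lambda,K_\lambda^*)\simeq\cc$ concentrated in degree $0$ and $\Ext^1_A(\widetilde{K}_\lambda,K_\lambda^*)=0$. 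When $\lambda\neq\mu$, both sides of $\alpha$ have graded dimension $[K_\mu:L_\lambda]_{q^{-1}}$ --- in fact the same dimension in each degree, using Proposition \ref{prop:ghom-multi} with self-duality on one side and the explicit filtration computation on the other --- and it remains to show that $\alpha$ is an isomorphism.

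The main obstacle is precisely this last step. The bound $\Hom_A(\widetilde{K}_\lambda,K_\mu^*)\hookrightarrow\Hom_A(P_\lambda,K_\mu^*)$ is not tight in negative degrees, so showing $\alpha$ is an isomorphism (equivalently $\Hom_A(\widetilde{K}_\lambda,K_\mu^*)=0$ for $\lambda\neq\mu$) requires finer input. I would attempt this by analyzing degree-$d$ elements of $\Hom_A(\widetilde{K}_\lambda,K_\mu^*)$ as $S$-linear extensions of $W$-maps $\lambda\to(K_\mu^*)_d$ that annihilate $\widetilde{I}_\lambda$, using the vanishing $[K_\mu:L_\nu]_q=0$ for $\nu\succsim\mu$, $\nu\neq\mu$, together with the structural information from the $K$-filtration of $\widetilde{K}_\lambda$ supplied by Corollary \ref{cor:KtildeK-equiv}; the duality $\Hom_A(M,N^*)\simeq\Hom_A(N,M^*)$ enabled by Proposition \ref{prop:opiso} may offer a cleaner alternative route.
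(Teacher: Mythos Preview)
Your inductive skeleton and the long-exact-sequence bookkeeping match the paper's argument essentially line for line; the paper inducts along a linear refinement of $\precsim$ rather than along $\succ_K$, but your refinement of Proposition~\ref{prop:PKtilde-precsimK} to strict inequalities is exactly what the paper establishes, and the finiteness of $K_\mu$ makes the filtrations finite, so your degree-wise truncation is unneeded but harmless.

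The gap you flag---showing $\Hom_A(\widetilde{K}_\lambda,K_\mu^*)=0$ for $\lambda\neq\mu$---is precisely where the paper does extra work, and the resolution is the one you gesture at, plus one structural observation you are missing. First split further: if $\mu\precsim\lambda$ with $\mu\neq\lambda$, then $[K_\mu:L_\lambda]_q=0$ by the construction of $K_\mu$, so both source and target of your map $\alpha$ vanish and there is nothing to prove. The only case with content is $\mu\not\precsim\lambda$. Here one first shows $[\widetilde{K}_\lambda:L_\mu]_q=0$: by Proposition~\ref{prop:PKtilde-precsimK} the filtration of $P_\mu$ has no $\widetilde{K}_{\lambda'}$-factor for any $\lambda'\sim\lambda$, so the hypothesis gives $[K_{\lambda'}:L_\mu]_q=0$ for all such $\lambda'$, and Corollary~\ref{cor:KtildeK-equiv} transfers this to $\widetilde{K}_\lambda$. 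The missing ingredient is then the observation that \emph{every nonzero graded $A$-submodule $N\subset K_\mu^*$ has $N_0\neq 0$}. Indeed, the orthogonal $N^\perp\subset K_\mu$ is again a graded left $A$-submodule (using the left action via the anti-isomorphism of Proposition~\ref{prop:opiso}); if $N_0=0$ then $N^\perp\supset(K_\mu)_0$, and since $(K_\mu)_0$ generates $K_\mu$ this forces $N^\perp=K_\mu$, i.e.\ $N=0$. Because $(K_\mu^*)_0\simeq\mu$ and $\widetilde{K}_\lambda$ has no $\mu$-isotypic component, the image of any $f\in\Hom_A(\widetilde{K}_\lambda,K_\mu^*)$ has zero degree-$0$ part, hence is zero. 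This yields $\ker\alpha=0$, and since source and target of $\alpha$ have the same finite dimension in each degree, $\alpha$ is an isomorphism and $\Ext^1_A(\widetilde{K}_\lambda,K_\mu^*)=0$.
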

        \begin{proof} 
            We denote the number of elements in $\irr{W}$ by $N$. 
            For $\lambda \in \irr{W}$, we denote by $c(\lambda)$ the number of $\mu\in\irr{W}$ such that $\lambda \precsim \mu$.
            We relabel the elements of $\irr{W}$ by $\lambda_1,\lambda_2,\ldots,\lambda_{N-1},\lambda_N$ to be $c(\lambda_1)\geq c(\lambda_2)\geq \ldots \geq c(\lambda_{N-1})\geq c(\lambda_N)$.
            Since $\lambda_i \succ \lambda_j$ implies 
            \[
                \lambda_j \not\in \left\{\mu\in\irr{W}\mid \lambda_i \precsim \mu\right\} \subsetneq \left\{\mu\in\irr{W}\mid \lambda_j \precsim \mu\right\},
            \]
            then we have $\lambda_i \not\succ \lambda_j$ if $i\leq j$.
            We show the equality
                \begin{equation}
                    \label{eq:induction-prop}
                    \gdim \Ext_{A}^i(\widetilde{K}_\lambda,K_\mu^*) = \begin{cases}
                        1 &i=0 \text{ and } \lambda=\mu\\
                        0 & otherwise.
                    \end{cases}  
                \end{equation}
            by induction on $\lambda$.
            
            First, we consider the case $\lambda = \lambda_N$.
            For every $\lambda\in\irr{W}$, we have $\lambda \not\succ \lambda_N$.
            Then $\widetilde{K}_{\lambda_N} = P_{\lambda_N}$ is projective, and hence $\Ext_{A}^{>0}(\widetilde{K}_{\lambda_N},K_\mu^*)=0$ for all $\mu \in \irr{W}$.
            We fix a $\widetilde{K}$-filtration of $P_{\lambda_n}$ with multiplicity $P_{\lambda_N}\filt \sum_{\mu\in\irr{W}} [K_\mu:L_{\lambda_N}]_q[\widetilde{K}_\mu]$.
            By Proposition \ref{prop:PKtilde-precsimK}, the first nonzero factor of this filtration is $\widetilde{K}_{\lambda_N}$,
            and the $k$-th nonzero factor for $k>1$ is isomorphic to $\widetilde{K}_{\alpha_k}\gs{d_k}$ where $\alpha_k \succsim_K \lambda_N $ and $d_k >0$.
            By Proposition \ref{prop:lessrelations}, $\alpha_k \succsim_K \lambda_N $ implies $\alpha_k \succsim \lambda_N $.
            For each $k>1$, the number of factors isomorphic to $\widetilde{K}_{\alpha_k}\gs{d_k}$ is equal to $[(K_{\alpha_k})_{d_k}:{\lambda_N}]$.
            Since we have $d_k>0$, $[(K_{\alpha_k})_{d_k}:{\lambda_N}]\neq 0$ implies $\alpha_k\not\precsim \lambda_N$.
            Then we obtain $\alpha_k \succ \lambda_N$, but this cannot be happen.
            Therefore, we conclude that there is a only one nonzero factor $\widetilde{K}_{\lambda_N}$, and hence,$[K_\mu:L_{\lambda_N}]_q = \delta_{\lambda_N,\mu}$ for $\mu\in\irr{W}$.
            By Proposition \ref{prop:ghom-multi}, we obtain
                \[
                    \gdim \Hom_{A}(\widetilde{K}_{\lambda_N},K_\mu^*) = [K_\mu^*:L_{\lambda_N}]_q  = \delta_{\lambda_N,\mu}
                \]
            as desired.
            Next, we show the case $\lambda = \lambda_i$ for $i < N$ by using the induction hypothesis: the equality \eqref{eq:induction-prop} holds for $\lambda = \lambda_j$ where $i<j\leq N$.
            Let 
            \[
                P_{\lambda_i} = M^0 \supset M^1 \supset M^2 \supset \cdots \supset M^{m-1} \supset M^m = 0
            \]
            be a filtration of $P_{\lambda_i}$ with multiplicity $P_{\lambda_i}\filt \sum_{\mu\in\irr{W}} [K_\mu:L_{\lambda_i}]_q[\widetilde{K}_\mu]$.
            Remark that $K_\mu$ is finite dimensional for every $\mu\in\irr{W}$, and hence the filtration above is finite. 
            Without loss of generality, we can suppose the $k$-th factor $M^{k-1}/M^k$ is nonzero for $k=1,2,\ldots, m$.
            By Proposition \ref{prop:PKtilde-precsimK}, the first factor $M^0/M^1 \simeq \widetilde{K}_{\lambda_i}$ and the $k$-th factor $M^k/M^{k-1}$ for $k>1$ is isomorphic to $\widetilde{K}_{\alpha_k}\gs{d_k}$ for some $\alpha_k \succsim_K \lambda_i$ and $d_k>0$.
            By Proposition \ref{prop:lessrelations}, $\alpha_k \succsim_K \lambda_i $ implies $\alpha_k \succsim \lambda_i $.
            For each $k>1$ we have $[(K_{\alpha_k})_{d_k}:{\lambda_i}]\neq 0$ because the number of factors isomorphic to $\widetilde{K}_{\alpha_k}\gs{d_k}$ is equal to $[(K_{\alpha_k})_{d_k}:{\lambda_i}]$.
            Since we have $d_k>0$, $[(K_{\alpha_k})_{d_k}:{\lambda_i}]\neq 0$ implies $\alpha_k\not\precsim \lambda_N$.
            Therefore, we obtain $\alpha_k \succ \lambda_i$ for $k>1$.
            In particular, for each $k>1$ we have $\alpha_k \in \{\lambda_{i+1},\lambda_{i+2},\ldots,\lambda_{N-1},\lambda_{N}\}$.
            We have a short exact sequence
            \[\xymatrix{
                0\ar[r] &M^k \ar[r] &M^{k-1} \ar[r] &\widetilde{K}_{\alpha_k}\gs{d_k} \ar[r] &0.
            }\]
            For each $\mu\in\irr{W}$, a functor $\Hom_{A}(-,K_\mu^*)$ induces a long exact sequence 
            \[\xymatrix@R=10pt{
                0\ar[r] &\Hom_{A}(\widetilde{K}_{\alpha_k}\gs{d_k},K_\mu^*) \ar[r] &\Hom_{A}(M^{k-1},K_\mu^*) \ar[r] &\Hom_{A}(M^k,K_\mu^*)\\
                \ar[r] &\Ext^1_{A}(\widetilde{K}_{\alpha_k}\gs{d_k},K_\mu^*) \ar[r] &\Ext^1_{A}(M^{k-1},K_\mu^*) \ar[r] &\Ext^1_{A}(M^k,K_\mu^*)\\
                \ar[r] &\Ext^2_{A}(\widetilde{K}_{\alpha_k}\gs{d_k},K_\mu^*) \ar[r] &\cdots.
            }\] 
            For each $k>1$, we have
            \[\gdim \Hom_{A}(\widetilde{K}_{\alpha_k}\gs{d_k},K_\mu^*) = q^{-d_k}\delta_{\alpha_k,\mu} \textrm{ and } \Ext^{>0}_{A}(\widetilde{K}_{\alpha_k}\gs{d_k},K_\mu^*) = 0 \]
            by induction hypothesis.
            Therefore, we obtain
            \[\gdim \Hom_{A}(M^{k-1},K_\mu^*) = \gdim \Hom_{A}(M^k,K_\mu^*) + q^{-d_k}\delta_{\alpha_k,\mu}\]
            and
            \[\Ext^{>0}_{A}(M^{k-1},K_\mu^*) \simeq \Ext^{>0}_{A}(M^k,K_\mu^*).\]
            Repeat this procedure, we get that
            \begin{align*}
                \gdim \Hom_{A}(M^1,K_\mu^*) &= \sum_{k=2}^{m} q^{-d_k}\delta_{\alpha_k,\mu}
            \end{align*}
            and
            \begin{equation}
                \label{eq:ext-M^1-banish}
                \Ext^{>0}_{A}(M^1,K_\mu^*) \simeq \Ext^{>0}_{A}(M^m,K_\mu^*)=0.    
            \end{equation}
            
            Since the number of factors isomorphic to $\widetilde{K}_{\mu}\gs{d}$ is equal to $[(K_{\mu})_{d}:\lambda_i]$ for each  integer $d$, and the first factor is isomorphic to $\widetilde{K}_{\lambda_i}$, then we have
            \begin{align}
                \sum_{k=2}^{m} q^{-d_k}\delta_{\alpha_k,\mu} &= \sum_{d\in\zz} q^{-d}[(K_\mu)_d:\lambda_i] - \delta_{\lambda_i,\mu} = [(K_\mu)^*:L_{\lambda_i}]_q - \delta_{\lambda_i,\mu}.
            \end{align}

            A short exact sequence
            \[\xymatrix{
                0\ar[r] &M^1 \ar[r] &M^0 = P_{\lambda_i} \ar[r] &M^0/M^1\simeq\widetilde{K}_{\lambda_i} \ar[r] &0
            }\]
            induces a long exact sequence

            \[\xymatrix@R=10pt{
                0\ar[r] &\Hom_{A}(\widetilde{K}_{\lambda_i},K_\mu^*) \ar[r] &\Hom_{A}(P_{\lambda_i},K_\mu^*) \ar[r] &\Hom_{A}(M^1,K_\mu^*)\\
                \ar[r] &\Ext^1_{A}(\widetilde{K}_{\lambda_i},K_\mu^*) \ar[r] &\Ext^1_{A}(P_{\lambda_i},K_\mu^*) \ar[r] &\Ext^1_{A}(M^1,K_\mu^*)\\
                \ar[r] &\Ext^2_{A}(\widetilde{K}_{\lambda_i},K_\mu^*) \ar[r] &\Ext^2_{A}(P_{\lambda_i},K_\mu^*)
            }\]
            Since $P_{\lambda_i}$ is projective, we have $\Ext^1_{A}(P_{\lambda_i},K_\mu^*) = \Ext^2_{A}(P_{\lambda_i},K_\mu^*) = 0$.
            Then we obtain
            \[
                \Ext^2_{A}(\widetilde{K}_{\lambda_i},K_\mu^*)\simeq\Ext^1_{A}(M^1,K_\mu^*) = 0
            \]
            by \eqref{eq:ext-M^1-banish}.

            If $\mu \precsim \lambda_i$, then we have
            \begin{align}
                \gdim \Hom_{A}(M^1,K_\mu^*)&= [(K_\mu)^*:L_{\lambda_i}]_q - \delta_{\lambda_i,\mu} =\delta_{\lambda_i,\mu} - \delta_{\lambda_i,\mu}=0.        
            \end{align}
            Therefore, we obtain $\gdim \Ext^1_{A}(\widetilde{K}_{\lambda_i},K_\mu^*) =0$ and
            \[
                \gdim \Hom_{A}(\widetilde{K}_{\lambda_i},K_\mu^*) = \gdim\Hom_{A}(P_{\lambda_i},K_\mu^*) = \delta_{\lambda_i,\mu}.
            \]

            If $\mu \not\precsim \lambda_i$, a $\widetilde{K}$-filtration of $P_\mu$ has no factors isomorphic to a grading shift of $\widetilde{K}_{\lambda_i}$ by Proposition \ref{prop:PKtilde-precsimK} and Proposition \ref{prop:lessrelations}.
            Therefore, we have $[K_{\lambda_i}:L_\mu]_q=0$.
            For every $\lambda'\in \irr{W}$ such that $\lambda'\sim \lambda_i$, we can show $[K_{\lambda'}:L_\mu]_q=0$ in a similar fashion.
            By Corollary \ref{cor:KtildeK-equiv}, $\widetilde{K}_{\lambda_i}$ is filtered by $\{K_{\lambda'}\}_{\lambda'\sim\lambda_i}$.
            Therefore, we obtain $[\widetilde{K}_{\lambda_i}:L_\mu]_q = 0$.
            For any homogeneous morphism $f\in \Hom_{A}(\widetilde{K}_{\lambda_i},K_\mu^*)$, the image $\im f$ is a graded submodule of $K_\mu^*$.
            Since $(K_\mu^*)_0$ is isomorphic to $\mu$, we have $(\im f)_0 = 0$.
            Then we have $\im f\cdot (K_\mu)_0 = 0$.
            Since $(K_\mu)_0$ generates $K_\mu$, we have $\im f\cdot K_\mu = 0$.
            Therefore, we obtain $\im f = 0$.
            It follows that $\Hom_{A}(\widetilde{K}_{\lambda_i},K_\mu^*) = 0$.
            Since there is an exact sequence
            \[
                \xymatrix@R=10pt{
                    &0=\Hom_{A}(\widetilde{K}_{\lambda_i},K_\mu^*) \ar[r] &\Hom_{A}(P_{\lambda_i},K_\mu^*) \ar[r] &\Hom_{A}(M^1,K_\mu^*)\\
                    \ar[r] &\Ext^1_{A}(\widetilde{K}_{\lambda_i},K_\mu^*) \ar[r] &\Ext^1_{A}(P_{\lambda_i},K_\mu^*)=0,}
            \]
            we obtain
            \begin{align}
                \gdim \Ext^1_{A}(\widetilde{K}_{\lambda_i},K_\mu^*) &= \gdim \Hom_{A}(P_{\lambda_i},K_\mu^*) -\gdim \Hom_{A}(M^1,K_\mu^*)\\
                    &= \gdim \Hom_{A}(P_{\lambda_i},K_\mu^*) -([(K_\mu)^*:L_{\lambda_i}]_q - \delta_{\lambda_i,\mu})\\
                    &= \gdim \Hom_{A}(P_{\lambda_i},K_\mu^*) -[(K_\mu)^*:L_{\lambda_i}]_q.
            \end{align}
            By Proposition \ref{prop:ghom-multi}, we have
            \[
                \gdim \Ext^1_{A}(\widetilde{K}_{\lambda_i},K_\mu^*)= [(K_\mu)^*:L_{\lambda_i}]_q-[(K_\mu)^*:L_{\lambda_i}]_q = 0. 
            \]

            Consequently, we completes the inductive step, and the result follows by induction.
        \end{proof}




    \section{Dihedral group $D_n$}\label{sec:dihedralgroup}
    Let $n\geq 3$ be an odd integer and $\zeta$ be a fixed primitive $n$-th root of unity.
    The dihedral group $D_n$ of order $2n$ is generated by two elements $r,s$ subject to the relations:
    \[
        r^n = s^2 = e, srs^{-1} = r^{-1}.   
    \]
    In the below, we set $W \coloneqq D_n$. We have a set of explicit representatives:
    \[
        W = \{r^0,r^1,r^2,\ldots,r^{n-2},r^{n-1},r^0s,r^1s,r^2s,\ldots,r^{n-2}s,r^{n-1}s\}.
    \]  
    For each $i \in \zz$, we have a $2$-dimensional $W$-representation $\chi_i$ with its basis $\{b_i^+,b_i^-\}$ such that:
    \[
        r\cdot b_i^\pm \coloneqq \zeta^{\pm i} b_i^\pm, \quad s\cdot b_i^\pm \coloneqq b_i^\mp,
    \]
    where multiple signs correspond to each other.
    Since we have $\zeta^n = 1$, we understand that $\chi_i$ is indexed by $i \in \zz/n\zz$.
    We define two $1$-dimensional $W$-representations $\triv$ and $\sgn$ with their bases $b_\triv$ and $b_\sgn$ such that their $W$-actions are given by
    \[    
        r\cdot b_\triv \coloneqq b_\triv,\, s\cdot b_\triv \coloneqq b_\triv,\, r\cdot b_\sgn  \coloneqq b_\sgn,\, \textrm{and}\,   s\cdot b_\sgn  \coloneqq -b_\sgn
    \]
    respectively.
    For convenience, we use the following notations:
    \[
        \chi_{(1,1)} \coloneqq \triv,\, b_{(1,1)} \coloneqq b_\triv,\, \chi_{(1,-1)} \coloneqq \sgn,\,  b_{(1,-1)} \coloneqq b_\sgn.\\
    \]
    In particular, we have
    \[
        r\cdot b_{(1,u)} = b_{(1,u)} \quad\textrm{and}\quad  s\cdot b_{(1,u)} = ub_{(1,u)} 
    \]
    for $u = 1,-1$.
 
    \begin{prop}\label{prop:relationofreps} 
        There are isomorphisms of $W$-representations
            \[\chi_i \simeq \chi_{n-i},\, \chi_0 \simeq \triv\oplus\sgn,\,\textrm{ and}\quad \cc W  \simeq \bigoplus_{i\in \zz/n\zz} \chi_i\simeq \triv \oplus \sgn \oplus \bigoplus_{i=1}^{\frac{n-1}{2}}\chi_i^{\oplus 2}. \]
                
    \end{prop}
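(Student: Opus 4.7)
The plan is to verify each of the three displayed isomorphisms in turn, the first two by writing down explicit intertwiners and the third by a character calculation.

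First I would establish $\chi_i \simeq \chi_{n-i}$. On the bases $\{b_i^+, b_i^-\}$ and $\{b_{n-i}^+, b_{n-i}^-\}$, consider the linear map $\varphi \colon b_i^\pm \mapsto b_{n-i}^\mp$. Since $\zeta^n = 1$, we have $\zeta^{\pm(n-i)} = \zeta^{\mp i}$, so $\varphi$ intertwines the $r$-action; the action of $s$ swaps $+$ and $-$ on both sides, so $\varphi$ also intertwines $s$. For $\chi_0 \simeq \triv \oplus \sgn$, note that $r$ acts as the identity on all of $\chi_0$, and the two vectors $b_0^+ + b_0^-$ and $b_0^+ - b_0^-$ are eigenvectors for $s$ with eigenvalues $+1$ and $-1$ respectively, giving one-dimensional subrepresentations isomorphic to $\triv$ and $\sgn$.

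Next I would prove $\cc W \simeq \bigoplus_{i \in \zz/n\zz} \chi_i$ by matching characters. The character of $\chi_i$ satisfies $\mathrm{tr}\,\chi_i(r^k) = \zeta^{ik} + \zeta^{-ik}$ and $\mathrm{tr}\,\chi_i(r^k s) = 0$, since $s$ permutes the basis $\{b_i^+, b_i^-\}$ without fixing either. Summing over $i \in \zz/n\zz$ gives $2n$ at the identity, $\sum_{i} \zeta^{ik} + \zeta^{-ik} = 0$ at $r^k$ for $k \not\equiv 0$, and $0$ on the reflections. This matches the character of the regular representation $\cc W$, so by the standard theory of characters over $\cc$ the two representations are isomorphic.

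Combining the three pieces yields the last isomorphism: replace $\chi_0$ by $\triv \oplus \sgn$, and for $1 \leq i \leq \frac{n-1}{2}$ pair $\chi_i$ with $\chi_{n-i}$ using the first isomorphism to obtain $\chi_i^{\oplus 2}$. Since $n$ is odd, the indices $\{1, \ldots, \frac{n-1}{2}\}$ and $\{\frac{n+1}{2}, \ldots, n-1\}$ are disjoint and cover $\zz/n\zz \setminus \{0\}$, so the regrouping is complete. I do not anticipate a serious obstacle; the only mild point to watch is that the map $\varphi$ above interchanges the roles of $+$ and $-$ rather than preserving them, which must be done consistently on both the $r$- and $s$-actions.
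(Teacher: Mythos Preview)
Your proof is correct and matches the paper's approach: the paper's proof is the one-liner ``these results can be easily checked by inspecting the character table'' with a reference to Serre, and your argument carries out exactly that check, supplementing it with explicit intertwiners for the first two isomorphisms. No gap.
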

    \begin{prop}
        A $W$-representation $V$ is isomorphic to its dual $V^*$.
    \end{prop}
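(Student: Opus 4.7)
The plan is to reduce to the case of irreducible representations and then check the criterion that the character is real-valued, which the paper has already invoked right before Proposition \ref{prop:adj-sym}.

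First, by Maschke's theorem (Theorem 6.1 in \cite{Lam2001}, already used above), any finite-dimensional $W$-representation is a direct sum of irreducibles, and dualization commutes with direct sums; so it suffices to verify $\lambda \simeq \lambda^*$ for each $\lambda \in \irr{W}$. By Proposition \ref{prop:relationofreps}, the complete list of irreducibles of $W = D_n$ (for odd $n$) is
\[
\triv,\ \sgn,\ \chi_1,\ \chi_2,\ \ldots,\ \chi_{(n-1)/2}.
\]
By the cited criterion, each of these is self-dual iff its character takes only real values.

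Next I would compute the characters on a set of representatives. For the one-dimensional representations $\triv$ and $\sgn$, the character values are $\pm 1$, obviously real. For $\chi_i$ with $1 \leq i \leq (n-1)/2$, using the defining basis $\{b_i^+,b_i^-\}$: the matrix of $r^k$ is diagonal with eigenvalues $\zeta^{ik}$ and $\zeta^{-ik}$, so $\tr \chi_i(r^k) = \zeta^{ik}+\zeta^{-ik} = 2\cos(2\pi ik/n) \in \mathbb{R}$; the matrix of $s$ swaps $b_i^+$ and $b_i^-$, so $r^k s$ acts with an off-diagonal matrix of trace zero, giving $\tr \chi_i(r^k s) = 0$. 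Thus the character of $\chi_i$ is real-valued.

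There is no real obstacle here: the whole argument is a direct character computation on the two conjugacy-class types of $D_n$. The only minor point is that the stated criterion (self-duality $\iff$ real character) relies on the fact that two representations with equal character are isomorphic, which is standard. Alternatively, one could argue more concretely that the linear map $b_i^\pm \mapsto (b_i^\mp)^*$ gives an explicit $W$-isomorphism $\chi_i \xrightarrow{\sim} \chi_i^*$, bypassing characters entirely; this would make the proof fully elementary but is essentially the same content.
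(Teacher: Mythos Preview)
Your proof is correct and takes essentially the same approach as the paper: the paper simply says the result can be checked by inspecting the character table (citing Serre), and you carry out exactly that inspection, verifying that each irreducible character is real-valued. Your write-up is just a more explicit version of what the paper leaves to the reference.
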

    \begin{proof}
        These results can be easily checked by inspecting the character table (cf. ~\cite[Section 5.3]{Serre1977}).
    \end{proof}
    \begin{cor}

       We have $\irr{W} = \{\triv,\sgn\}\sqcup \{\chi_i\}_{i=1}^{\frac{n-1}{2}}$.
    \end{cor}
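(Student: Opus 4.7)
The plan is to combine the decomposition of the regular representation given in the third isomorphism of Proposition \ref{prop:relationofreps} with the standard Artin--Wedderburn fact that
\[
\cc W \simeq \bigoplus_{\lambda \in \irr{W}} \lambda^{\oplus \dim_\cc \lambda}.
\]
Once I verify that the representations $\triv$, $\sgn$, $\chi_1, \ldots, \chi_{(n-1)/2}$ are pairwise non-isomorphic and individually irreducible, the multiplicities in
\[
\cc W \simeq \triv \oplus \sgn \oplus \bigoplus_{i=1}^{(n-1)/2} \chi_i^{\oplus 2}
\]
match the dimensions of the summands, which forces the displayed list to exhaust $\irr{W}$.

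First I would dispose of irreducibility. The representations $\triv$ and $\sgn$ are one-dimensional, hence trivially irreducible. For $\chi_i$ with $1 \leq i \leq (n-1)/2$, the key observation is that, because $n$ is odd and $0 < i < n$, one has $2i \not\equiv 0 \pmod n$, so $\zeta^i \neq \zeta^{-i}$. Therefore $r$ acts on $\chi_i$ with two distinct eigenvalues whose eigenlines are exactly $\cc b_i^+$ and $\cc b_i^-$. Any $W$-stable subspace of $\chi_i$ is in particular $r$-stable, and hence must be a sum of these two eigenlines; but neither $\cc b_i^+$ nor $\cc b_i^-$ is preserved by $s$, so the only $W$-stable subspaces are $0$ and $\chi_i$ itself.

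Next I would check pairwise non-isomorphism. The one-dimensional $\triv$ and $\sgn$ are distinguished by the trace of $s$ (namely $+1$ and $-1$), and they cannot coincide with any $\chi_i$ on dimensional grounds. For $1 \leq i < j \leq (n-1)/2$, the multiset of eigenvalues of $r$ is an isomorphism invariant; since the eigenvalues of $r$ on $\chi_i$ and $\chi_j$ are $\{\zeta^i,\zeta^{-i}\}$ and $\{\zeta^j,\zeta^{-j}\}$ respectively, equality would force $j \equiv \pm i \pmod n$, which is impossible when $i$ and $j$ are distinct elements of $\{1,\ldots,(n-1)/2\}$.

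There is no real obstacle in the argument; all of the nontrivial content is already encoded in Proposition \ref{prop:relationofreps}. The only subtle point is the use of the hypothesis that $n$ is odd, which is precisely what guarantees $\zeta^i \neq \zeta^{-i}$ for $i$ in the relevant range and thereby forces the two $r$-eigenspaces of $\chi_i$ to be one-dimensional and interchanged by $s$.
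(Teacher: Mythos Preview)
Your proposal is correct and follows essentially the same approach as the paper: invoke the decomposition of $\cc W$ from Proposition~\ref{prop:relationofreps} together with the Artin--Wedderburn/Peter--Weyl decomposition $\cc W \simeq \bigoplus_{\lambda \in \irr{W}} \lambda^{\oplus \dim_\cc \lambda}$. The only difference is that you spell out the irreducibility and pairwise non-isomorphism checks explicitly via eigenvalue analysis, whereas the paper absorbs these into the character-table reference in Serre.
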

    \begin{proof}
        It follows from Proposition \ref{prop:relationofreps} and the algebraic Peter-Weyl theorem (see e.g.~\cite[Section 2.4]{Serre1977}).
    \end{proof}
    We set $\C \coloneq (n-1)/2$, that is the number of isomorphism classes of two-dimensional simple representations.
    
    \begin{prop}\label{prop:tensorofreps}       
        We have isomorphisms between $W$-representations
        \[
            \triv\otimes\triv \simeq\triv,\quad \triv\otimes\sgn \simeq \sgn,\quad \sgn\otimes\sgn \simeq \triv,  
        \]
        \[
            \triv\otimes\chi_i \simeq \chi_i,\quad \sgn\otimes\chi_i \simeq \chi_i,\quad \textrm{and} \quad\chi_i\otimes\chi_j \simeq \chi_{i+j}\oplus\chi_{i-j}.
        \]
    \end{prop}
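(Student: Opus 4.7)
My plan is to verify each isomorphism by exhibiting explicit basis vectors and checking the $W$-actions against the defining relations of the target representations stated at the beginning of Section \ref{sec:dihedralgroup}. All of the identities are essentially character-theoretic, so character-table manipulation (as in \cite[Section 5.3]{Serre1977}) would give a uniform proof, but since the module structures will be used later it seems cleaner to produce explicit intertwiners.

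The first three identities $\triv \otimes \triv \simeq \triv$, $\triv \otimes \sgn \simeq \sgn$, $\sgn \otimes \sgn \simeq \triv$ are immediate from $\dim = 1$ and the definitions of the signs on the generators $r,s$. Similarly $\triv \otimes \chi_i \simeq \chi_i$ is immediate via $b_\triv \otimes b_i^\pm \mapsto b_i^\pm$. For $\sgn \otimes \chi_i$, the plan is to set $v^+ \coloneqq b_\sgn \otimes b_i^+$ and $v^- \coloneqq -b_\sgn \otimes b_i^-$ and verify $r\cdot v^\pm = \zeta^{\pm i} v^\pm$ and $s\cdot v^\pm = v^\mp$, which are exactly the defining relations of $\chi_i$; this gives the isomorphism $\sgn \otimes \chi_i \simeq \chi_i$.

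For the main identity $\chi_i \otimes \chi_j \simeq \chi_{i+j} \oplus \chi_{i-j}$, I would work in the tensor product basis $\{ b_i^\epsilon \otimes b_j^\eta \mid \epsilon,\eta \in \{+,-\} \}$. A direct computation gives $r \cdot (b_i^\epsilon \otimes b_j^\eta) = \zeta^{\epsilon i + \eta j}(b_i^\epsilon \otimes b_j^\eta)$ and $s\cdot (b_i^\epsilon \otimes b_j^\eta) = b_i^{-\epsilon} \otimes b_j^{-\eta}$. Hence the subspace $\langle b_i^+ \otimes b_j^+, b_i^- \otimes b_j^-\rangle$ is stable, and mapping $b_i^+ \otimes b_j^+ \mapsto b_{i+j}^+$, $b_i^- \otimes b_j^- \mapsto b_{i+j}^-$ gives a copy of $\chi_{i+j}$. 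Likewise $\langle b_i^+ \otimes b_j^-, b_i^- \otimes b_j^+\rangle$ is stable and isomorphic to $\chi_{i-j}$ via $b_i^+ \otimes b_j^- \mapsto b_{i-j}^+$, $b_i^- \otimes b_j^+ \mapsto b_{i-j}^-$. Summing these yields the claimed decomposition.

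The only subtle point is the interpretation when $i+j \equiv 0$ or $i-j \equiv 0 \pmod n$: in that case the corresponding summand is $\chi_0$, which by Proposition \ref{prop:relationofreps} equals $\triv \oplus \sgn$, so the stated formula remains valid when parsed modulo $n$ together with the convention $\chi_{n-i} \simeq \chi_i$ already recorded. No step here requires more than linear algebra on a four-dimensional space, so I do not anticipate a genuine obstacle; the mild bookkeeping is only to make sure the index conventions are consistent with Proposition \ref{prop:relationofreps}.
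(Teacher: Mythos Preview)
Your proof is correct. The paper's own proof is the one-line remark ``this can be checked by calculating the characters (see e.g.~\cite[Section 5.3]{Serre1977})'', whereas you construct explicit intertwiners on bases. Both approaches are standard and equivalent here; your explicit maps have the mild advantage that they make the submodule decompositions concrete (and in particular match the basis conventions $b_i^\pm$ used elsewhere in the paper), while the character argument is quicker to state. You even anticipate the paper's route by noting that character-table manipulation would suffice, so there is no real divergence in content, only in the level of detail displayed.
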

    \begin{proof}
        This can be checked by calculating the characters (see e.g.~\cite[Section 5.3]{Serre1977}).
    \end{proof}

    Let $S$ be a polynomial ring $\cc\left[ X,Y\right]$ in two variables $X,Y$.
    We equip $S$ with a grading by setting $\deg X = \deg Y = 1$.
    We define the $W$-action on $S$ by 
    \[ 
        r\cdot X\coloneqq \zeta X, \quad r\cdot Y\coloneqq \zeta^{-1} Y, \quad s\cdot X \coloneqq Y, \textrm{ and }  s\cdot Y \coloneqq X
    \]
    extended to the whole $S$.
    We have a graded $W$-representation isomorphism $S_1\simeq \chi_1\gs{1}$.
    
    By Proposition \ref{prop:gldim-skewgrpalg}, a skew group algebra $S*W$ has global dimension two.
    Set $A \coloneqq S*W$ and $R\coloneqq S^W$.
    \begin{prop}
        \label{prop:gch-S}
        We have 
        \[
            \gch P_\triv = \gch S = \frac{1}{(1-q^2)(1-q^n)}\cdot\left([L_\triv]+q^n[L_\sgn]+\sum_{i=1}^C(q^i+q^{n-i})[L_{\chi_i}]\right).
        \]
    \end{prop}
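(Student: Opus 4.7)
The plan is to first observe that, since $\triv$ is one-dimensional, $P_\triv = S \otimes_\cc \triv \simeq S$ as graded $(S*W)$-modules, so both graded characters coincide with $\gch S$. It then remains to decompose $S$ into its $W$-isotypic components in each degree.

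My next step would be to identify the invariant ring $R = S^W$. The elements $XY$ and $X^n+Y^n$ are manifestly $W$-invariant, of degrees $2$ and $n$, and one checks the reverse inclusion by noting that any $W$-invariant $f$ must be fixed by $r$ (forcing $f$ to be a polynomial in monomials $X^aY^b$ with $n\mid a-b$) and symmetric under the swap $s$; a short argument then shows $R = \cc[XY,\, X^n+Y^n]$, a polynomial ring, with $\gdim R = 1/((1-q^2)(1-q^n))$. Since the involutions $r^i s$ are all reflections on $V = \cc X\oplus\cc Y$, $W$ acts as a complex reflection group, so the Chevalley--Shephard--Todd theorem applies: $S$ is a free $R$-module, and the coinvariant algebra $H := S/R_{>0}S$ is isomorphic as a $W$-module to the regular representation $\cc W$. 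Consequently $S\simeq R\otimes_\cc H$ as graded $W$-modules (with $W$ acting trivially on $R$), and for every $\chi\in\irr{W}$ we have $[S:L_\chi]_q = f_\chi(q)/((1-q^2)(1-q^n))$, where $f_\chi(q) := [H:L_\chi]_q$ is the fake degree.

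The remaining task is to compute $f_\triv$, $f_\sgn$, and $f_{\chi_i}$ by exhibiting explicit representatives of the isotypic components of $H$. The constant $1\in S_0$ gives $f_\triv(q) = 1$; the polynomial $X^n - Y^n\in S_n$ is $\sgn$-isotypic, is nonzero in $H$, and spans the one-dimensional $\sgn$-isotypic component there, yielding $f_\sgn(q) = q^n$. For each $1\le i\le \C$, the pairs $\{X^i,Y^i\}\subset S_i$ and $\{X^{n-i},Y^{n-i}\}\subset S_{n-i}$ each span a subspace of $S$ isomorphic to $\chi_i$; modulo $R_{>0}S$ they remain linearly independent, giving $f_{\chi_i}(q) = q^i + q^{n-i}$.

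The main technical point is verifying that these four classes of elements really do exhaust the $\chi_i$-isotypic component of $H$, and similarly for $\sgn$. This is forced by a Hilbert-series sanity check: since $H\simeq\cc W$ as $W$-modules, we have $\sum_{\chi}(\dim\chi)\, f_\chi(q) = \gdim H = (1+q)(1+q+\cdots+q^{n-1})$, and the proposed values satisfy
\[
1 + q^n + 2\sum_{i=1}^{\C}(q^i + q^{n-i}) = (1+q)(1+q+\cdots+q^{n-1}),
\]
so each $f_\chi$ must be exactly as claimed, since its $q=1$ value equals $\dim\chi$. Assembling the pieces yields the stated expression for $\gch P_\triv = \gch S$.
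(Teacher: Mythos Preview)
Your argument is correct, but it proceeds quite differently from the paper. The paper never invokes Chevalley--Shephard--Todd or the coinvariant algebra. Instead it uses the filtration machinery of Section~\ref{sec:filtofgradedmodules}: multiplication by $XY\in S_2$ gives an injective graded map $S\gs{2}\to S$, so Lemma~\ref{lem:repeatingfilt} yields $S\filt\frac{1}{1-q^2}[M]$ with $M=S/(XY)$. The quotient $M$ is then analyzed directly: for $d\ge 1$ the classes $\overline{X^d},\overline{Y^d}$ form a basis of $M_d$, on which $W$ acts as $\chi_d$, and the $n$-periodicity $\chi_d\simeq\chi_{d\bmod n}$ produces the factor $1/(1-q^n)$ and the explicit isotypic pieces. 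This is entirely elementary and self-contained, fitting the paper's internal toolkit. Your route is more structural---it identifies $R=S^W$, appeals to the freeness of $S$ over $R$, and reads off fake degrees---which is conceptually cleaner and connects the result to standard reflection-group theory, at the cost of importing a nontrivial external theorem. Both approaches ultimately rest on the same concrete observation that $\{X^d,Y^d\}$ carries $\chi_d$; the paper gets there by one quotient, you by two.
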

    \begin{proof}
        Since $S$ is an integral domain, there exists an injective homomorphism $S\gs{2}\rightarrow S$ given by the multiplication by $XY\in S_2$.
        By Lemma \ref{lem:repeatingfilt}, we have 
        \[
            S\filt \frac{1}{1-q^2} \cdot [M],
        \]
        where $M = S/(S\cdot XY)$, and hence $\gch S = \frac{1}{1-q^2}\cdot\gch M$.
        For each $d\geq 2$, a quotient vector space $M_d = S_d/(S_{d-2}\cdot XY)$ has a basis $\{\overline {X^d}, \overline {Y^d}\}$.
        We have an isomorphism of $W$-representation:
        \[\begin{matrix}
            \chi_d &\simeq &M_d\\
            b_d^+  &\longmapsto & \overline{X^d}\\
            b_d^-  &\longmapsto & \overline{Y^d}.
        \end{matrix}\]
        It is straightforward to see that this isomorphism preserves $W$-action by inspection.
        We have $M_0\simeq \triv$ and $M_1\simeq\chi_1$.
        Therefore, we have
        \[
            M_d \simeq \begin{cases}
                \triv &(d=0)\\
                \triv\oplus\sgn &(d>0 \textrm{ and } d\equiv 0 \textrm{ mod }n)\\
                \chi_i  &(d\equiv i \textrm{ or } d\equiv n-i \textrm{ mod }n \textrm{ for }1\leq i \leq \C).
            \end{cases}                
        \]
        We obtain
        \[
            \gch M = \frac{1}{1-q^n}\left([L_\triv]+q^n[L_\sgn] + \sum_{i=1}^C (q^i+q^{n-i})[L_{\chi_i}]\right).
        \]
        Consequently, we have
        \[
            \gch S = \frac{1}{1-q^2}\gch M = \frac{1}{(1-q^2)(1-q^n)}\left([L_\triv]+q^n[L_\sgn] + \sum_{i=1}^C (q^i+q^{n-i})[L_{\chi_i}]\right).  
        \]
    \end{proof}

    \begin{cor}
        \label{cor:gch-Psgn}
        We have 
        \[
            \gch P_\sgn = \frac{1}{(1-q^2)(1-q^n)}\cdot\left([L_\sgn]+q^n[L_\triv]+\sum_{j=1}^{n-1}q^j[L_{\chi_j}]\right).
        \]

    \end{cor}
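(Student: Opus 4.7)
The plan is to obtain $\gch P_\sgn$ from $\gch P_\triv$ by twisting by the sign character. Recall that for any $W$-representation $V$, the tensor product $M\otimes_\cc V$ carries an $S*W$-module structure as defined in Section \ref{sec:skewgrpalg}, and in particular $P_\sgn \simeq S\otimes_\cc \sgn = P_\triv\otimes \sgn$ as graded $S*W$-modules. Since every $W$-representation is self-dual, the multiplicity of $\lambda \in \irr{W}$ in $S_d\otimes \sgn$ equals the multiplicity of $\lambda\otimes\sgn$ in $S_d$.

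Next I would read off the required twists from Proposition \ref{prop:tensorofreps}: tensoring with $\sgn$ swaps $\triv$ and $\sgn$ while fixing each $\chi_i$. Applying this to the formula for $\gch S$ established in Proposition \ref{prop:gch-S} yields immediately
\[
\gch P_\sgn = \frac{1}{(1-q^2)(1-q^n)}\cdot\left([L_\sgn]+q^n[L_\triv]+\sum_{i=1}^C(q^i+q^{n-i})[L_{\chi_i}]\right).
\]

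The remaining step is a purely notational rewrite. Using the isomorphism $\chi_j \simeq \chi_{n-j}$ from Proposition \ref{prop:relationofreps}, the substitution $i = n-j$ transforms $\sum_{i=1}^{C} q^{n-i}[L_{\chi_i}]$ into $\sum_{j=C+1}^{n-1} q^j [L_{\chi_j}]$, so that
\[
\sum_{i=1}^C(q^i+q^{n-i})[L_{\chi_i}] = \sum_{j=1}^{n-1} q^j[L_{\chi_j}],
\]
which gives the stated form. There is no substantive obstacle here; the only point requiring attention is bookkeeping the two parametrizations $\{\chi_i\}_{i=1}^C$ versus $\{\chi_j\}_{j=1}^{n-1}$ correctly, and verifying that the tensor-with-$\sgn$ operation indeed commutes with the graded decomposition of $P_\triv$, both of which follow directly from the definitions in Section \ref{sec:skewgrpalg} and Proposition \ref{prop:tensorofreps}.
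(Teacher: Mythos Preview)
Your proof is correct and follows the same approach as the paper, which simply says the result is immediate from Proposition \ref{prop:tensorofreps} and Proposition \ref{prop:gch-S}. Your added detail on the notational rewrite via $\chi_j\simeq\chi_{n-j}$ matches the remark the paper makes just after the proof.
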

    \begin{proof}
        This is immediate from Proposition  \ref{prop:tensorofreps} and \ref{prop:gch-S}. 
    \end{proof}
        Note that $L_{\chi_j}$ is isomorphic to $L_{\chi_{n-j}}$ for $1\leq j\leq C$ by Proposition \ref{prop:relationofreps}.
        Hence, we have $[P_\triv:L_{\chi_j}]_q = [P_\sgn:L_{\chi_j}]_q = (q^j+q^{n-j})/(1-q^2)(1-q^n)$.

    \begin{cor}
        \label{cor:gch-Pi}
        For each $1\leq i\leq \C$, we have
        \[
            \gch P_{\chi_i} = \frac{1}{(1-q^2)(1-q^n)}\cdot\left(\sum_{j=0}^{n-1} q^j\gch L_{\chi_{i+j}}+ \sum_{j=1}^{n}q^{j}\gch L_{\chi_{i-j}}\right).
        \]
    \end{cor}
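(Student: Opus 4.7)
My plan is to mimic the argument of Proposition \ref{prop:gch-S} with $P_{\chi_i}$ in place of $P_\triv$. The key observation is that $XY\in S_2$ is $W$-invariant: one checks directly that $r\cdot XY = \zeta X\cdot\zeta^{-1}Y = XY$ and $s\cdot XY = YX = XY$. Consequently, multiplication by $XY$ defines an injective graded $(S*W)$-module homomorphism $P_{\chi_i}\gs{2}\hookrightarrow P_{\chi_i}$ whose cokernel is naturally isomorphic to $M\otimes\chi_i$, where $M\coloneqq S/(S\cdot XY)$.

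By Lemma \ref{lem:repeatingfilt} this yields $\gch P_{\chi_i} = \frac{1}{1-q^2}\cdot\gch(M\otimes\chi_i)$, so it suffices to compute $\gch(M\otimes\chi_i)$. I would reuse the $W$-representation description of $M_d$ established in the proof of Proposition \ref{prop:gch-S}: namely $M_0\simeq\triv$, $M_{kn}\simeq\triv\oplus\sgn$ for $k\geq 1$, and $M_d\simeq\chi_e$ whenever $d\equiv e\pmod n$ with $1\leq e\leq n-1$ (using the isomorphism $\chi_e\simeq\chi_{n-e}$). Applying the tensor formulas of Proposition \ref{prop:tensorofreps}, each graded piece $M_d\otimes\chi_i$ decomposes explicitly: two copies of $\chi_i$ when $d\equiv 0\pmod n$ with $d>0$, and $\chi_{i+e}\oplus\chi_{i-e}$ when $d\equiv e\not\equiv 0\pmod n$.

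Summing the geometric series $\sum_{k\geq 0}q^{kn}$ over these contributions yields
\[
\gch(M\otimes\chi_i) = \frac{1}{1-q^n}\left((1+q^n)[L_{\chi_i}] + \sum_{e=1}^{n-1}q^e\bigl([L_{\chi_{i+e}}]+[L_{\chi_{i-e}}]\bigr)\right).
\]
The final step is to rewrite this in the form stated in the corollary by absorbing the extra $q^n[L_{\chi_i}]$ as the $j=n$ term of the second sum via the identification $\chi_{i-n}\simeq\chi_i$, while the $j=0$ term of the first sum supplies the remaining $[L_{\chi_i}]$. Dividing by $1-q^2$ then gives the desired formula. The only delicate point is the index bookkeeping modulo $n$ and the isomorphism $\chi_k\simeq\chi_{n-k}$; no genuine obstacle is expected.
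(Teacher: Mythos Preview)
Your proposal is correct and uses the same underlying ingredients as the paper (the quotient $M=S/(S\cdot XY)$, the description of $M_d$ as a $W$-representation, and the tensor rules of Proposition~\ref{prop:tensorofreps}). The only difference is organizational: the paper simply writes $P_{\chi_i}\simeq P_\triv\otimes\chi_i$ and tensors the already-computed formula $\gch P_\triv$ from Proposition~\ref{prop:gch-S} with $\chi_i$, arriving at the result in one line, whereas you re-run the argument of Proposition~\ref{prop:gch-S} with $\chi_i$ tensored in from the start. One small point: since $\chi_{i\pm e}$ can hit $\chi_0\simeq\triv\oplus\sgn$, you should write $\gch L_{\chi_{i\pm e}}$ rather than $[L_{\chi_{i\pm e}}]$ in your displayed formula, matching the statement of the corollary.
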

    \begin{proof}
        By Proposition \ref{prop:tensorofreps}, we have
        \begin{align}
            \gch P_{\chi_i} &= \gch P_\triv\otimes\chi_i \\
            &= \frac{1}{(1-q^2)(1-q^n)}\cdot\left([L_{\chi_i}]+q^n[L_{\chi_i}]+\sum_{j=1}^{n-1}q^j(\gch L_{\chi_{i+j}}+\gch L_{\chi_{i-j}})\right)\\
            &= \frac{1}{(1-q^2)(1-q^n)}\cdot\left(\sum_{j=0}^{n-1} q^j\gch L_{\chi_{i+j}}+ \sum_{j=1}^{n}q^{j}\gch L_{\chi_{i-j}}\right).
        \end{align}
    \end{proof}
    Note that $L_{\chi_0}$ is not simple, and we used here that $\gch L_{\chi_0}=\gch L_\triv + L_\sgn$.

    \begin{lem}
        \label{lem:resofsimple}
        For a $W$-representation $V$, a graded semisimple module $L_V$ admits a graded projective resolution of the form
        \[\xymatrix{
            0 \ar[r] &P_{\sgn\otimes V}\gs{2} \ar[r] &P_{\chi_1\otimes V}\gs{1} 
            \ar[r]  &P_{\triv\otimes V} \ar[r] &L_V \ar[r] &0.  
        }
        \]
    \end{lem}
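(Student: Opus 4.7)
\textit{Proof proposal.} The plan is to first construct an explicit Koszul-type resolution of $L_\triv$ in $(S*W)\hgmod$, and then to tensor this resolution by $V$ over $\cc$ to obtain the desired resolution for arbitrary $V$. The key observation is that $S=\cc[X,Y]$ has a classical Koszul resolution of its residue field $\cc$ by $(S,S^{\oplus 2}\gs{1},S\gs{2})$, and we upgrade this to an $(S*W)$-equivariant resolution by identifying the two-variable Koszul data with the $W$-representations $\chi_1$ and $\sgn$.

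Concretely, I would first use the $W$-equivariant isomorphism $S_1\simeq \chi_1$ sending $X\leftrightarrow b_1^+$, $Y\leftrightarrow b_1^-$ (both sides carry $r$-eigenvalues $\zeta,\zeta^{-1}$, and $s$ exchanges the two eigenvectors) to define a graded $(S*W)$-linear map
\[
    d_0\colon P_{\chi_1}\gs{1}\longrightarrow P_\triv,\qquad s\otimes b_1^+\mapsto sX,\quad s\otimes b_1^-\mapsto sY.
\]
A direct calculation then shows that the element $\omega := X\otimes b_1^- - Y\otimes b_1^+ \in (P_{\chi_1})_1$ is $r$-fixed and $s$-antifixed, hence spans a copy of $\sgn$ inside $(P_{\chi_1}\gs{1})_2$. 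This provides a well-defined graded $(S*W)$-homomorphism
\[
    d_1\colon P_\sgn\gs{2}\longrightarrow P_{\chi_1}\gs{1},\qquad 1\otimes b_\sgn\mapsto \omega,
\]
and $d_0\circ d_1 = XY-YX = 0$.

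Next I would verify exactness of
\[
    0 \longrightarrow P_\sgn\gs{2}\stackrel{d_1}{\longrightarrow} P_{\chi_1}\gs{1}\stackrel{d_0}{\longrightarrow} P_\triv \longrightarrow L_\triv \longrightarrow 0.
\]
Forgetting the $W$-action, we have $P_\triv=S$, $P_{\chi_1}\simeq S^{\oplus 2}$ (using the $\cc$-basis $b_1^+,b_1^-$), and $P_\sgn\simeq S$; the sequence becomes precisely the classical Koszul complex of the regular sequence $(X,Y)\subset S$, which is exact with cokernel $S/(X,Y)=\cc=L_\triv$. Exactness of the underlying $S$-module complex is equivalent to exactness of the $(S*W)$-module complex, so we are done at this stage.

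Finally, I would apply the functor $(-)\otimes_\cc V$ to this resolution. Since $V$ is finite-dimensional over $\cc$, this functor is exact. Under the natural isomorphisms $P_\lambda\gs{k}\otimes_\cc V \simeq P_{\lambda\otimes V}\gs{k}$ (which respect both the grading and the $(S*W)$-action, since $(S\otimes \lambda)\otimes V = S\otimes(\lambda\otimes V)$) and $L_\triv\otimes_\cc V\simeq L_V$, the tensored complex is exactly the required projective resolution of $L_V$. The only mildly nontrivial step is the $W$-equivariance check in the construction of $d_1$, which amounts to confirming $\bigwedge^{\!2}\chi_1\simeq \sgn$; everything else is a direct transplantation of the standard Koszul resolution together with the exactness of $\cc$-linear tensor products.
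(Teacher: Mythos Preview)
Your proposal is correct and follows essentially the same approach as the paper: you build the $W$-equivariant Koszul resolution of $L_\triv$ with the explicit maps $b_1^\pm\mapsto X,Y$ and $b_\sgn\mapsto \pm(Y\otimes b_1^+ - X\otimes b_1^-)$, then tensor by $V$ over $\cc$. The paper is terser (it simply cites the Koszul resolution from \cite{weibel_1994} rather than verifying $W$-equivariance and exactness by hand), but the argument is identical.
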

    \begin{proof}
        We have the Koszul resolution:
        \begin{align}\label{eq:res-Ltriv}\vcenter{\xymatrix{
            0\ar[r] &P_\sgn\gs{2} \ar[rrr]^{\{1\otimes b_\sgn\mapsto Y\otimes b_1^+ - X\otimes b_1^-\}} &&&P_{\chi_1}\gs{1} \\
            \ar[rrr]^{\left\{\begin{matrix}1\otimes b_1^+ \mapsto X\otimes b_\triv\\1\otimes b_1^- \mapsto Y\otimes b_\triv  \end{matrix}\right\}} &&&P_\triv \ar[r] &L_\triv \ar[r] &0  
        }}\end{align}
        (see e.g.~\cite{weibel_1994}).
        By applying a functor $(-\otimes_\cc L_V)$ to \eqref{eq:res-Ltriv}, we obtain an exact sequence
        \[\xymatrix{
            0 \ar[r] &P_{\sgn\otimes V}\gs{2} \ar[r] &P_{\chi_1\otimes V}\gs{1} 
            \ar[r]  &P_{\triv\otimes V} \ar[r] &L_{\triv\otimes V}\simeq L_V \ar[r] &0 
        }\]
        as desired.

    \end{proof}

    \begin{cor}
        \label{cor:Ext-LL}
        Let $d$ be an integer and $i$ be a non-negative integer.
        For $\lambda,\mu\in\irr{W}$, we have
        \[
            \dim\Ext_{A\mathrm{\mathchar`-gmod}}^i(L_{\lambda},L_\mu\gs{d}) = 
                \begin{cases}
                    [\triv\otimes\lambda:\mu]       &(i=d=0),\\
                    [\chi_1\otimes\lambda:\mu]      &(i=d=1),\\
                    [\sgn\otimes\lambda:\mu]      &(i=d=2),\\
                    0 &otherwise.
                \end{cases} 
        \]

    \end{cor}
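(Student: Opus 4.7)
The plan is to read off the extension groups from the length-two graded projective resolution of $L_\lambda$ constructed in Lemma \ref{lem:resofsimple}:
\[
\xymatrix{
0 \ar[r] & P_{\sgn\otimes\lambda}\gs{2} \ar[r] & P_{\chi_1\otimes\lambda}\gs{1} \ar[r] & P_{\triv\otimes\lambda} \ar[r] & L_\lambda \ar[r] & 0.
}
\]
I would apply the functor $\Hom_{A\text{-gmod}}(-,L_\mu\gs{d})$ to the truncation of this resolution and compute each term. By Proposition \ref{prop:ghom-multi}, for any $W$-representation $V$ and any $e\in\zz$ we have natural identifications
\[
\Hom_{A\text{-gmod}}(P_V\gs{e},L_\mu\gs{d}) \simeq \Hom_{\cc W}(V,(L_\mu)_{d-e}),
\]
and because $L_\mu$ is concentrated in degree $0$ the right hand side is nonzero only when $e=d$, in which case it equals $\Hom_{\cc W}(V,\mu)$ of dimension $[V:\mu]$.

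Applying this observation termwise, the Hom complex
\[
\Hom_{A\text{-gmod}}(P_{\triv\otimes\lambda},L_\mu\gs{d}) \to \Hom_{A\text{-gmod}}(P_{\chi_1\otimes\lambda}\gs{1},L_\mu\gs{d}) \to \Hom_{A\text{-gmod}}(P_{\sgn\otimes\lambda}\gs{2},L_\mu\gs{d})
\]
has at most one nonzero term for any fixed $d$: namely the $i$-th term, of dimension $[\triv\otimes\lambda:\mu]$, $[\chi_1\otimes\lambda:\mu]$, or $[\sgn\otimes\lambda:\mu]$, only when $d=i\in\{0,1,2\}$. Consequently both differentials in this complex are forced to vanish, and $\Ext^i_{A\text{-gmod}}(L_\lambda,L_\mu\gs{d})$ coincides with the unique nonzero Hom space, which gives precisely the values listed in the statement for $i\in\{0,1,2\}$, and $0$ otherwise. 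For $i>2$ the vanishing is immediate from the fact that the chosen resolution has length $2$ (equivalently, from Proposition \ref{prop:gldim-skewgrpalg}).

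There is essentially no obstacle: the only thing that needs verification is that the internal grading shifts in Lemma \ref{lem:resofsimple} force the $i$-th term of the Hom complex to live in homological degree matched with internal shift $d=i$, which is built into the Koszul-type resolution. Once this alignment is noted, the dimension count falls out of Proposition \ref{prop:ghom-multi}.
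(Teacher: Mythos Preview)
Your proposal is correct and is essentially the same argument as the paper's: both use the length-two graded projective resolution of Lemma \ref{lem:resofsimple}, apply $\Hom_{A\text{-gmod}}(-,L_\mu\gs{d})$, and invoke Proposition \ref{prop:ghom-multi} to see that at most one term of the resulting complex is nonzero (forcing the differentials to vanish). The paper compresses this into the observation that the resolution is minimal, but the content is identical to what you wrote out.
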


    \begin{proof}
        Since we have the projective resolutions in Lemma \ref{lem:resofsimple}, which are minimal resolutions, the assertion can be easily checked.

    \end{proof}

    \begin{lem}
        \label{lem:gch-diff}
        Let $i$ be an integer.
        Then we have
        \[
            \gch P_{\chi_i} -\gch P_{\chi_{i+1}}\gs{1} = \frac{1}{1-q^n}\sum_{k=0}^{n-1} q^k\cdot\gch L_{\chi_{i-k}}.
        \]
    \end{lem}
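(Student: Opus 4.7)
The plan is to prove this identity by a direct computation using the closed formula for $\gch P_{\chi_i}$ from Corollary \ref{cor:gch-Pi}, exploiting the periodicity $\chi_{i+n}\simeq \chi_i$ supplied by Proposition \ref{prop:relationofreps}. Concretely, I would first write
\[
\gch P_{\chi_i} - \gch P_{\chi_{i+1}}\gs{1} = \frac{1}{(1-q^2)(1-q^n)}\,\bigl(A_i - q\,A_{i+1}\bigr),
\]
where $A_i \coloneqq \sum_{j=0}^{n-1}q^j\gch L_{\chi_{i+j}} + \sum_{j=1}^{n}q^{j}\gch L_{\chi_{i-j}}$ is the numerator appearing in Corollary \ref{cor:gch-Pi}, the extra factor of $q$ on $A_{i+1}$ accounting for the grading shift $\gs{1}$.

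Next, I would reindex both sums in $q\,A_{i+1}$ to match the $\chi$-indices appearing in $A_i$. The increasing sum becomes $\sum_{k=1}^{n} q^k \gch L_{\chi_{i+k}}$ after the substitution $k=j+1$, while the decreasing sum becomes $q^{2}\sum_{k=0}^{n-1}q^k \gch L_{\chi_{i-k}}$ after $k=j-1$. Subtracting term by term, nearly everything cancels: the increasing contributions telescope to $(1-q^n)\gch L_{\chi_i}$ after applying $\chi_{i+n}\simeq \chi_i$ to the boundary term, while the decreasing contributions, after separating the $j=n$ boundary term via $\chi_{i-n}\simeq \chi_i$, collapse to $(q^n - q^2)\gch L_{\chi_i} + (1-q^2)\sum_{k=1}^{n-1}q^k \gch L_{\chi_{i-k}}$.

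Adding these two pieces, the $\gch L_{\chi_i}$ contributions combine into $(1-q^2)\gch L_{\chi_i}$, so the bracket collapses to $(1-q^2)\sum_{k=0}^{n-1}q^k\gch L_{\chi_{i-k}}$; dividing by $(1-q^2)(1-q^n)$ then yields the stated identity. This is entirely elementary bookkeeping; the only point requiring attention is to line up the reindexing so that the boundary terms are handled symmetrically through $\chi_{i\pm n}\simeq \chi_i$, but there is no substantive obstacle.
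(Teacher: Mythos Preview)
Your computation is correct. Starting from Corollary~\ref{cor:gch-Pi} and subtracting the shifted version, the reindexing you describe does produce
\[
A_i - qA_{i+1} = (1-q^2)\sum_{k=0}^{n-1} q^k\,\gch L_{\chi_{i-k}},
\]
and division by $(1-q^2)(1-q^n)$ gives the claim. There is no gap.

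The paper argues differently. Rather than using the closed form of Corollary~\ref{cor:gch-Pi}, it applies the Koszul resolution of $L_{\chi_j}$ from Lemma~\ref{lem:resofsimple} (tensored with $\chi_j$) to obtain the recursion
\[
\gch P_{\chi_j} - q\,\gch P_{\chi_{j+1}} = q\bigl(\gch P_{\chi_{j-1}} - q\,\gch P_{\chi_j}\bigr) + \gch L_{\chi_j},
\]
then iterates this $n$ times and solves for $\gch P_{\chi_i} - q\,\gch P_{\chi_{i+1}}$ using $P_{\chi_{i-n}}\simeq P_{\chi_i}$. Your route is purely arithmetic in $\zz[\![q]\!]$ and is arguably the more direct one once Corollary~\ref{cor:gch-Pi} is in hand; the paper's route is more structural, giving an argument independent of the explicit numerator of $\gch P_{\chi_i}$ and thus providing a cross-check on that corollary. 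Both are short and neither has any real advantage over the other here.
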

    \begin{proof}
        Let $j$ be an integer.
        By Lemma \ref{lem:resofsimple} and Proposition \ref{prop:tensorofreps}, we have an exact sequence
        \[\xymatrix{
            0 \ar[r] &P_{\chi_j}\gs{2} \ar[r] &P_{\chi_{j+1}}\gs{1} \oplus P_{\chi_{j-1}}\gs{1} 
            \ar[r]  &P_{\chi_j} \ar[r] &L_{\chi_j} \ar[r] &0.
        }
        \]
        Then, we have
        \begin{align}
            \gch P_{\chi_j} -q^1\gch P_{\chi_{j+1}} &= \gch L_{\chi_j} + q^1\gch P_{\chi_{j-1}} -q^2\gch P_{\chi_j}\\
                &=  q(\gch P_{\chi_{j-1}} -q^1\gch P_{\chi_j}) + \gch L_{\chi_j}
        \end{align}
        Applying this equality repeatedly, we obtain
        \begin{align}
            \gch P_{\chi_i} -q^1\gch P_{\chi_{i+1}} &= q^1(\gch P_{\chi_{i-1}} -q^1\gch P_{\chi_i})+ \gch L_{\chi_i} \\
            &= q^2 (\gch P_{\chi_{i-2}} -q^1\gch P_{\chi_{i-1}})+\gch L_{\chi_i} + q^1\gch L_{\chi_{i-1}} \\
            &\qquad \vdots\\
            &= q^n (\gch P_{\chi_{i-n}} -q^1\gch P_{\chi_{i-(n-1)}})+ \sum_{k=0}^{n-1} q^k \gch L_{\chi_{i-k}}.
        \end{align}
        By Proposition \ref{prop:relationofreps}, we have $P_{\chi_{i-n}} \simeq P_{\chi_{i}}$ and $P_{\chi_{i-(n-1)}}\simeq P_{\chi_{i+1}}$.
        Consequently, we obtain
        \[
            \gch P_{\chi_i} -q^1\gch P_{\chi_{i+1}} = \frac{1}{1-q^n}\sum_{k=0}^{n-1} q^k\gch L_{\chi_{i-k}}.
        \]
    \end{proof}

    \begin{prop}
        \label{prop:gEP-gch-Lmu}
        Let $M$ be a finite dimensional graded $A$-module.
        For $\mu\in\irr{W}$, we have
        \[
            \gep{M}{L_\mu} = \sum_{d\in \zz}q^{-d}\left([M_d:\mu]+[M_{d-2}:\sgn\otimes\mu] -\sum_{\lambda\in\irr{W}}[\chi_1\otimes\mu:\lambda][M_{d-1}:\lambda]\right). 
        \] 
    \end{prop}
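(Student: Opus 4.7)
The plan is to reduce everything to the known extension groups between simple $A$-modules computed in Corollary \ref{cor:Ext-LL}, using the fact that a finite-dimensional graded $A$-module admits a finite composition series by grading shifts of the $L_\lambda$.

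First I would decompose the left-hand side over the composition factors of $M$. Since $M$ is finite dimensional and $(S*W)$ is non-negatively graded, $M$ admits a finite $\{L_\lambda\}_{\lambda\in\irr W}$-filtration whose factor isomorphic to $L_\lambda\gs{d}$ occurs with multiplicity $[M_d:\lambda]$. Applying Lemma \ref{lem:sumofgep} iteratively along this filtration (exactly as in the first half of the proof of Corollary \ref{cor:epdectosimple}) and using the obvious identity $\gep{L_\lambda\gs{d}}{L_\mu}=q^{-d}\gep{L_\lambda}{L_\mu}$ gives
\[
   \gep{M}{L_\mu}=\sum_{\lambda\in\irr W}\sum_{d\in\zz}q^{-d}[M_d:\lambda]\,\gep{L_\lambda}{L_\mu}.
\]

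Next I would evaluate $\gep{L_\lambda}{L_\mu}$ explicitly. By Corollary \ref{cor:Ext-LL} the group $\Ext_{A}^i(L_\lambda,L_\mu)_e$ vanishes unless $(i,e)\in\{(0,0),(1,-1),(2,-2)\}$, in which case its dimension equals $[\triv\otimes\lambda:\mu]$, $[\chi_1\otimes\lambda:\mu]$, $[\sgn\otimes\lambda:\mu]$ respectively. Summing with the alternating sign then gives
\[
   \gep{L_\lambda}{L_\mu}=[\triv\otimes\lambda:\mu]-q^{-1}[\chi_1\otimes\lambda:\mu]+q^{-2}[\sgn\otimes\lambda:\mu].
\]
Since $\triv\otimes\lambda\simeq\lambda$, the first coefficient equals $\delta_{\lambda,\mu}$; using Proposition \ref{prop:adj-sym} I swap the roles of $\lambda$ and $\mu$ in the other two terms, so that the multiplicity numbers appearing inside the $\lambda$-sum only depend on $\lambda$ through $[M_d:\lambda]$ times quantities like $[\chi_1\otimes\mu:\lambda]$ and $[\sgn\otimes\mu:\lambda]$.

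Finally I would substitute back and tidy up the summations. The $\delta_{\lambda,\mu}$ term collapses the $\lambda$-sum to $\sum_d q^{-d}[M_d:\mu]$; for the $\sgn$ term, because $\sgn\otimes\mu\in\irr W$ is a single isomorphism class (Proposition \ref{prop:tensorofreps}), $\sum_\lambda [\sgn\otimes\mu:\lambda][M_d:\lambda]=[M_d:\sgn\otimes\mu]$, and a reindexing $d\mapsto d-2$ in this summand and $d\mapsto d-1$ in the $\chi_1$ summand delivers exactly the claimed formula. I do not foresee a genuine obstacle: the only points requiring care are the conventions for graded shifts in $\Ext$ (so that $\gdim\Ext^i$ picks up the correct powers of $q$) and the index bookkeeping after the shifts.
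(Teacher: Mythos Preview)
Your proposal is correct and follows essentially the same route as the paper: decompose $\gep{M}{L_\mu}$ over the composition factors of $M$ (the paper simply cites Corollary~\ref{cor:epdectosimple} for this, which you re-derive), plug in the values of $\gep{L_\lambda}{L_\mu}$ from Corollary~\ref{cor:Ext-LL}, and then simplify using Proposition~\ref{prop:adj-sym} and the fact that $\sgn\otimes\mu$ is irreducible. The only cosmetic difference is the order in which the swap via Proposition~\ref{prop:adj-sym} is applied versus the collapse of the $\triv$- and $\sgn$-terms.
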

    \begin{proof}
        Note that $\sgn\otimes\mu$ is an irreducible representation by Proposition \ref{prop:tensorofreps}.
        We have
        \[
            \gep{M}{L_\mu} = \sum_{\lambda\in\irr{W}}\sum_{d\in\zz} q^{-d}[M_d:\lambda]\gep{L_\lambda}{L_\mu}
        \]
        by Corollary \ref{cor:epdectosimple}.
        By Corollary \ref{cor:Ext-LL}, we have 
        \[
            \gep{L_\lambda}{L_\mu}=  [\triv\otimes\lambda:\mu]-q^{-1}[\chi_1\otimes\lambda:\mu]+q^{-2}[\sgn\otimes\lambda:\mu].
        \]
        Thus, we obtain
        \begin{align}
            &\gep{M}{L_\mu} \\
            &= \sum_{\lambda\in\irr{W}}\sum_{d\in\zz} q^{-d}[M_d:\lambda]([\triv\otimes\lambda:\mu]-q^{-1}[\chi_1\otimes\lambda:\mu]+q^{-2}[\sgn\otimes\lambda:\mu])\\
            &=\sum_{d\in\zz} \sum_{\lambda\in\irr{W}}q^{-d}([M_d:\lambda][\triv\otimes\lambda:\mu]-[M_{d-1}:\lambda][\chi_1\otimes\lambda:\mu]+[M_{d-2}:\lambda][\sgn\otimes\lambda:\mu]).
        \end{align}
        Since $[\triv\otimes\lambda:\mu]\neq 0$ implies $\lambda = \mu$, and $[\sgn\otimes\lambda:\mu]\neq 0$ implies $\lambda = \sgn\otimes\mu$, we have
        \[
            \gep{L_\lambda}{L_\mu} = \sum_{d\in\zz} q^{-d}\left([M_d:\mu]+[M_{d-2}:\sgn\otimes\mu]-\sum_{\lambda\in\irr{W}}[M_{d-1}:\lambda][\chi_1\otimes\lambda:\mu]\right).    
        \]
        By Proposition \ref{prop:adj-sym}, we have $[\chi_1\otimes\lambda:\mu] = [\chi_1\otimes\mu:\lambda]$.
        These complete the proof.
    \end{proof}

    \begin{prop}
        \label{prop:Ext1-gch}
        Let $M$ be a finitely generated graded $A$-module.
        Let $\mu\in\irr{W}$ and let $d$ be an integer.
        If we have $\Ext^1_{A\hgmod}(M,L_\mu\gs{d})\neq 0$, then there exists $\lambda \in \irr{W}$ such that $[\mu\otimes\chi_1:\lambda]\neq 0$ and $[M_{d-1}:\lambda]\neq 0$.
    \end{prop}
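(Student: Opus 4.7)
The plan is to compute $\Ext^1_{A\hgmod}(M,L_\mu\gs{d})$ via a minimal graded projective cover of $M$ and to exploit the fact that, since $A$ is generated in degrees $0$ and $1$ with $A_1\supseteq S_1\cong\chi_1$ as a $\cc W$-module, the first syzygies of $M$ are controlled by multiplication by $\chi_1$.

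First I would take a minimal graded projective cover $\pi\colon P^0\twoheadrightarrow M$ of the form $P^0=\bigoplus_{e\in\zz}P_{V_e}\gs{e}$, with $V_e\simeq M_e/A_1M_{e-1}$ as $\cc W$-modules, and set $N:=\ker\pi\subseteq A_{>0}P^0$. Applying $\Hom_A(-,L_\mu\gs{d})_0$ to $0\to N\to P^0\to M\to 0$ and using projectivity of $P^0$ together with the minimality condition $N\subseteq A_{>0}P^0$ (which forces every degree $0$ graded homomorphism $P^0\to L_\mu\gs{d}$ to vanish on $N$), I would identify $\Ext^1_{A\hgmod}(M,L_\mu\gs{d})\simeq\Hom_A(N,L_\mu\gs{d})_0$. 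Because $L_\mu\gs{d}$ is concentrated in degree $d$ with $A_{>0}$ acting as zero, any such homomorphism factors through the top $N_d/A_1N_{d-1}$, so this Ext is nonzero precisely when $[N_d/A_1N_{d-1}:\mu]>0$ as a $\cc W$-module.

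Next I would show $N_d\subseteq A_1P^0_{d-1}$. This follows from the $\cc W$-module decomposition $P^0_d=(S_0\otimes V_d)\oplus A_1P^0_{d-1}$, the injectivity of $\pi_d$ on $S_0\otimes V_d$ with image a chosen $\cc W$-complement of $A_1M_{d-1}$ in $M_d$, and the equality $\pi_d(A_1P^0_{d-1})=A_1M_{d-1}$: writing $y=y_0+y_1\in N_d$ accordingly, the components $\pi_d(y_0)$ and $\pi_d(y_1)$ lie in complementary subspaces of $M_d$ and sum to zero, so $y_0=0$. Combined with $A_1N_{d-1}\subseteq N_d$, this gives a $\cc W$-equivariant inclusion $N_d/A_1N_{d-1}\hookrightarrow A_1P^0_{d-1}/A_1N_{d-1}$.

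Finally I would observe that the multiplication map $S_1\otimes P^0_{d-1}\to P^0_d$ is $\cc W$-equivariant with image $A_1P^0_{d-1}$, and similarly restricts to a $\cc W$-surjection $S_1\otimes N_{d-1}\twoheadrightarrow A_1N_{d-1}$. Tensoring $0\to N_{d-1}\to P^0_{d-1}\to M_{d-1}\to 0$ with $S_1\cong\chi_1$ over $\cc$ and chasing the resulting diagram produces a $\cc W$-surjection $\chi_1\otimes M_{d-1}\twoheadrightarrow A_1P^0_{d-1}/A_1N_{d-1}$. Consequently every irreducible constituent $\mu$ of $N_d/A_1N_{d-1}$ appears in $\chi_1\otimes M_{d-1}\simeq\bigoplus_\lambda(\chi_1\otimes\lambda)^{[M_{d-1}:\lambda]}$, and Proposition \ref{prop:adj-sym} converts the resulting $[\chi_1\otimes\lambda:\mu]\neq 0$ into the required $[\chi_1\otimes\mu:\lambda]\neq 0$. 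The main obstacle will be the careful bookkeeping of the graded $\cc W$-structure in the syzygy computation; once the decomposition $P^0_d=(S_0\otimes V_d)\oplus A_1P^0_{d-1}$ and its behavior under $\pi_d$ are in place, the Koszul-flavored conclusion is conceptually straightforward.
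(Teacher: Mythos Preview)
Your argument is correct and takes a genuinely different route from the paper. The paper truncates $M$ via the degree filtration $\{M_{\geq i}\}$, uses that $\Ext^1_{A\hgmod}(M_{\geq d+1},L_\mu\gs{d})=0$ to pass to the finite-dimensional quotient $M/M_{\geq d+1}$, and then bounds $\dim\Ext^1$ above by summing over the semisimple layers $M_{\geq i}/M_{\geq i+1}$, invoking Corollary~\ref{cor:Ext-LL} (the explicit Koszul computation) to see that only the layer $i=d-1$ contributes, giving $[M_{d-1}\otimes\chi_1:\mu]>0$. You instead build a minimal projective cover, identify $\Ext^1$ with the head of the first syzygy in degree $d$, and show that this head embeds into a $\cc W$-quotient of $\chi_1\otimes M_{d-1}$ via the multiplication map $S_1\otimes P^0_{d-1}\to P^0_d$.

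Your approach is more structural: it never appeals to Corollary~\ref{cor:Ext-LL} or to the explicit Koszul resolution, relying only on the fact that $A$ is generated over $A_0=\cc W$ by $A_1\cong\chi_1$, so it would transport to other skew group algebras generated in degrees $0$ and $1$. The paper's approach is shorter once Corollary~\ref{cor:Ext-LL} is in hand and avoids having to justify the existence and precise shape of a minimal projective cover (which you use implicitly when writing $P^0_d=(S_0\otimes V_d)\oplus A_1P^0_{d-1}$ and arguing that $\pi_d$ is injective on the first summand). Both arrive at the same inequality $[\chi_1\otimes M_{d-1}:\mu]>0$ before the final appeal to Proposition~\ref{prop:adj-sym}.
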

    \begin{proof}
        Since $M$ is bounded below, we can suppose $M=M_{\geq 0}$ with out loss of generality.
        We have a short exact sequence
        \[
            \xymatrix{0 \ar[r] & M_{\geq d+1} \ar[r] &M \ar[r] & M/M_{\geq d+1} \ar[r] &0.}    
        \]
        Therefore, we obtain
        \begin{multline}
            \dim \Ext^1_{A\hgmod}(M,L_\mu\gs{d})\\
                 \leq \dim \Ext^1_{A\hgmod}(M_{\geq d+1},L_\mu\gs{d}) + \dim \Ext^1_{A\hgmod}(M/M_{\geq d+1},L_\mu\gs{d}).     
        \end{multline}
        
        Since $A$ is non-negatively graded, we have $\Ext^1_{A\hgmod}(M_{\geq d+1},L_\mu\gs{d}) = 0$.
        Hence, we have
        \[
            0 < \dim \Ext^1_{A\hgmod}(M,L_\mu\gs{d})\leq \dim \Ext^1_{A\hgmod}(M/M_{\geq d+1},L_\mu\gs{d}).
        \]
        For each non-negative integer $i$, a quotient module $M_{\geq i}/M_{\geq i+1}$ is a semisimple graded $A$-module.
        In particular, we have a direct sum decomposition
        \[
            M_{\geq i}/M_{\geq i+1} \simeq \bigoplus_{\lambda\in\irr{W}} L_\lambda\gs{i}^{\oplus[M_i:\lambda]}.
        \]
        By Corollary \ref{cor:Ext-LL}, we have
        \[
            \dim \Ext^1_{A\hgmod}(M_{\geq i}/M_{\geq i+1},L_\mu\gs{d}) = \begin{cases}
                    [M_{d-1}\otimes \chi_1:\mu] &(i=d-1)\\
                    0       &(i\neq d-1).
            \end{cases}
        \]
        We have a short exact sequence
        \[
            \xymatrix{0 \ar[r] & M_{\geq i}/M_{\geq i+1} \ar[r] &M/M_{\geq i+1} \ar[r] & M/M_{\geq i} \ar[r] &0}.
        \]
        From this, we obtain an inequality
        \begin{multline}
            \dim \Ext^1_{A\hgmod}(M/M_{\geq i+1},L_\mu\gs{d})\\
                \leq \dim \Ext^1_{A\hgmod}(M_{\geq i}/M_{\geq i+1},L_\mu\gs{d}) + \dim \Ext^1_{A\hgmod}(M/M_{\geq i},L_\mu\gs{d}).
        \end{multline}
        
        Consequently, we have
        \begin{align}
            0 &< \dim \Ext^1_{A\hgmod}(M/M_{\geq d+1},L_\mu\gs{d})\\
                 &= \dim \Ext^1_{A\hgmod}(M/M_{\geq d+1},L_\mu\gs{d}) -\Ext^1_{A\hgmod}(M/M_{\geq 0},L_\mu\gs{d})\\
                &= \sum_{i=0}^d \dim \Ext^1_{A\hgmod}(M/M_{\geq i+1},L_\mu\gs{d}) -\dim \Ext^1_{A\hgmod}(M/M_{\geq i},L_\mu\gs{d})\\
                &\leq \sum_{i=0}^d \dim \Ext^1_{A\hgmod}(M_{\geq i}/M_{\geq i+1},L_\mu\gs{d})\\
                &= [M_{d-1}\otimes \chi_1:\mu].
        \end{align}
        Therefore, there exists $\lambda\in\irr{W}$ such that $[M_{d-1}:\lambda]\neq 0$ and $[\lambda\otimes \chi_1:\mu]\neq 0$.
        By Proposition \ref{prop:adj-sym}, we have $[\mu\otimes \chi_1:\lambda]=[\lambda\otimes \chi_1:\mu]\neq 0$.
    \end{proof}

    The main goal of this paper is to classify the families of modules $K=\{K_\lambda\}_{\lambda\in\irr{W}}$ and $\widetilde{K} = \{\widetilde{K}_\lambda\}_{\lambda\in\irr{W}}$ which satisfy following conditions:
        \begin{dfn}\label{dfn:setting-problem}\mbox{}
        \begin{enumerate}[\upshape(i)\itshape]
            \item \label{item:preorder-compatibility}
                There exists a preorder $\precsim$ on $\irr{W}$ which satisfies 
                \[
                    \widetilde{K}_\lambda \simeq P_\lambda \left/ \sum_{\mu\succ\lambda,f\in\Hom_{A}(P_\mu,P_\lambda)_{>0}}\mathrm{Im}\,f \right.,
                \]
                and
                \[
                    K_\lambda \simeq P_\lambda \left/ \sum_{\mu\succsim\lambda,f\in\Hom_{A}(P_\mu,P_\lambda)_{>0}}\mathrm{Im}\,f \right. .  
                \]
                In other word, $\widetilde{K}=\{\widetilde{K}_\lambda\}_{\lambda\in\irr{W}}$ is the trace quotiet modules and $K=\{K_\lambda\}_{\lambda\in\irr{W}}$ is the trace quotient modules with respect to $\precsim$.
            \item \label{item:orthogonality}
                For each $\lambda,\mu\in\irr{W}$, it holds that
                \begin{equation}\Ext_{A}^i(\widetilde{K}_\lambda,K_\mu^*)_d = \cc^{\delta_{i,0}\delta_{\lambda,\mu}\delta_{d,0}}.\end{equation}
            \item \label{item:PKtildefilter}
                For each $\lambda\in\irr{W}$, a projective module $P_\lambda$ is filtered by $\{\widetilde{K}_\mu\}_{\mu\in\irr{W}}$. 
            \item \label{item:KtildeKfilter}
                For each $\lambda\in\irr{W}$, a graded $A$-module $\widetilde{K}_\lambda$ is filtered by $\{K_\mu\}_{\mu\in\irr{W}}$.
        \end{enumerate}
        \end{dfn}

        By the conditions \eqref{item:preorder-compatibility},\eqref{item:orthogonality},\eqref{item:PKtildefilter}, and \eqref{item:KtildeKfilter}, a preorder $\precsim$ satisfies the setting of Section \ref{sec:reciprocity}.
        Due to Proposition \ref{prop:recoverpreorder}, we can replace the condition \eqref{item:preorder-compatibility} with following:\\
        \begin{enumerate}[(i)$'$]
            \item \label{item:precsimK-compatibility}
                For the preorder $\precsim_K$ defined in Definition \ref{dfn:precsimK}, it holds that
        \end{enumerate}
        \begin{align}
            \widetilde{K}_\lambda \simeq P_\lambda \left/ \sum_{\mu\succ_K\lambda,f\in\Hom_{A}(P_\mu,P_\lambda)_{>0}}\mathrm{Im}\,f \right. \textrm{ and}\\ 
            K_\lambda \simeq P_\lambda \left/ \sum_{\mu\succsim_K\lambda,f\in\Hom_{A}(P_\mu,P_\lambda)_{>0}}\mathrm{Im}\,f \right. .  
        \end{align}

        We call a pair of families of modules $K=\{K_\lambda\}_{\lambda\in\irr{W}}$ and $\widetilde{K} = \{\widetilde{K}_\lambda\}_{\lambda\in\irr{W}}$ a Springer correspondence if it satisfies the conditions \eqref{item:preorder-compatibility}, \eqref{item:orthogonality},\eqref{item:PKtildefilter}, and \eqref{item:KtildeKfilter} (or equivalently \eqref{item:precsimK-compatibility}$'$,\eqref{item:orthogonality},\eqref{item:PKtildefilter}, and \eqref{item:KtildeKfilter}).
        By abuse of terminology, we also say $K$ is a Springer correspondence if there exists $\widetilde{K}$ such that the pair $(K,\widetilde{K})$ is a Springer correspondence.
        By the condition \eqref{item:precsimK-compatibility}$'$, one can recover $\widetilde{K}$ from a Springer correspondence $K$.
        
        We define $D_\lambda \coloneqq \{\mu \mid \Ext^1_A(K_\lambda,L_\mu)\neq 0\}$ for each $\lambda\in\irr{W}$.
        The preorder $\precsim_K$ defined in Definition \ref{dfn:precsimK} is determined by the datum of $\{D_\lambda\}$.

        Let $\lambda\in \irr{W}$ and let $D=\{\lambda_1,\lambda_2,\ldots,\lambda_l\}$ be a subset of $\irr{W}$.  
        We define a graded $A$-module $F_\lambda^D$ and $F_\lambda^{\lambda_1,\lambda_2,\ldots,\lambda_l}$ by
        \[
            F_\lambda^D = F_\lambda^{\lambda_1,\lambda_2,\ldots,\lambda_l}\coloneqq P_\lambda \left/ \sum_{\mu \in\{\lambda_1,\lambda_2,\ldots,\lambda_l\}, f\in\Hom_A(P_\mu,P_\lambda)_{>0}} \im\,f\right. .
        \]
        If $K=\{K_\lambda\}_{\lambda\in\irr{W}}$ is a Springer correspondence, then $K_\lambda=F_\lambda^{D_\lambda}$ by Proposition \ref{prop:I-Ext1}.
        Since a preorder is a reflexive relation, we have $[(K_\lambda)_{>0}:L_\lambda]_q = 0$ for $\lambda\in\irr{W}$. 

        \begin{lem}
            \label{lem:F-gch-identify}
            Let $\lambda\in \irr{W}$ and $D \subset \irr{W}$.
            Let $M$ be a graded quotient module of $P_\lambda$.
            If $\gch M = \gch F_\lambda^D$ holds, then $M\simeq F_\lambda^D$.
        \end{lem}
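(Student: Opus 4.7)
The goal is to show that if $M = P_\lambda/J$ is a graded quotient with $\gch M = \gch F_\lambda^D$, then $J = I$, where $I := \sum_{\mu \in D,\, f \in \Hom_A(P_\mu, P_\lambda)_{>0}} \im f$ so that $F_\lambda^D = P_\lambda/I$.

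The first step is to rewrite $I$ in a more transparent way using Proposition \ref{prop:ghom-multi}. A graded homomorphism $f \colon P_\mu \to P_\lambda$ of positive degree $d$ corresponds, via the isomorphism $\Hom_A(P_\mu, P_\lambda)_d \simeq \Hom_{\cc W}(L_\mu, (P_\lambda)_d)$, to a $W$-equivariant map $\mu \to (P_\lambda)_d$; moreover the images of $(P_\mu)_0$ under all such $f$ (as $f$ varies) together span the $\mu$-isotypic component of $(P_\lambda)_d$. Thus $I$ is the $A$-submodule of $P_\lambda$ generated by the subspace $\bigoplus_{\mu \in D,\, d>0} (P_\lambda)_d[\mu]$, and in particular $[(F_\lambda^D)_d : L_\mu]_q = 0$ for every $\mu \in D$ and every $d > 0$.

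Next, the hypothesis $\gch M = \gch F_\lambda^D$ forces the same vanishing for $M$: namely $[M_d : L_\mu]_q = 0$ for $\mu \in D$ and $d > 0$. Letting $q \colon P_\lambda \twoheadrightarrow M$ be the quotient map, this means $q$ sends every $\mu$-isotypic component of $(P_\lambda)_{>0}$ (with $\mu \in D$) to zero, so $J = \ker q$ contains all the generators of $I$ identified in the previous step. Since $J$ is an $A$-submodule, we conclude $J \supseteq I$, which yields a canonical graded surjection $F_\lambda^D \twoheadrightarrow M$. Because $F_\lambda^D$ and $M$ are locally finite with equal graded characters, a graded surjection between them must be an isomorphism, completing the argument.

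The proof is essentially a direct unwinding of the definitions, so I do not expect a substantive obstacle. The one place that requires care is the initial identification of $I$ with the $A$-module generated by the prescribed isotypic components; once Proposition \ref{prop:ghom-multi} (combined with Schur's lemma) supplies the dictionary between graded homs $P_\mu \to P_\lambda$ of positive degree and $W$-equivariant maps $\mu \to (P_\lambda)_{>0}$, the remaining steps are formal.
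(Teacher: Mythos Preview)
Your proof is correct and follows essentially the same approach as the paper's: both identify $I$ with the $A$-submodule generated by the $\mu$-isotypic components of $(P_\lambda)_{>0}$ for $\mu\in D$, use $\gch M=\gch F_\lambda^D$ to force $J\supseteq I$, and then finish by comparing graded characters. The only cosmetic difference is that the paper concludes $J=I$ directly from $\gch J=\gch I$, whereas you phrase the last step as a graded surjection $F_\lambda^D\twoheadrightarrow M$ between modules of equal graded character; these are equivalent.
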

        \begin{proof}
            Let $N$ be a graded submodule of $P_\lambda$ such that $M\simeq P_\lambda/ N$.
            It suffices to show that 
            \[
                \sum_{\mu \in D, f\in\Hom_A(P_\mu,P_\lambda)_{>0}} \im\,f = N.
            \]
            Since $M$ is a nonzero module, then $N$ is contained in $(P_\lambda)_{>0}$.
            For every $\mu \in D$, we have $[M_{>0}:L_\mu]_q = [(F_\lambda^D)_{>0}:L_\mu]_q = 0$.
            Therefore, $N$ contains the isotypic component of $(P_\lambda)_{>0}$ of type $\mu$ for every $\mu\in\irr{W}$.
            Hence, we obtain
            \[
                \sum_{\mu \in D, f\in\Hom_A(P_\mu,P_\lambda)_{>0}} \im\,f \subset N.
            \]
            By the definition of $F_\lambda^D$, we have
            \[
                \gch F_\lambda^D = \gch P_\lambda -\gch\biggl(\,\sum_{\mu \in D, f\in\Hom_A(P_\mu,P_\lambda)_{>0}} \im\,f\biggr).
            \]
            Consequently, we have
            \begin{align}
                \gch N = \gch P_\lambda - \gch M = \gch P_\lambda - \gch F_\lambda^D \\
                    = \gch(\sum_{\mu \in D, f\in\Hom_A(P_\mu,P_\lambda)_{>0}} \im\,f).
            \end{align}
            Hence, we obtain
            \[
                \sum_{\mu \in D, f\in\Hom_A(P_\mu,P_\lambda)_{>0}} \im\,f = N.
            \]
            as desired.
        \end{proof}

        \begin{lem}
            \label{lem:F_triv^sgn-resolution}
            A graded module $F_\triv^\sgn$ has a following projective resolution:
            \[    \xymatrix{
                    0 \ar[r] &P_\sgn\gs{n} \ar[rrr]^{\{b_\sgn \mapsto (X^n-Y^n)\otimes b_\triv \}} &&&P_\triv \ar[r] & F_\triv^\sgn \ar[r] &0.
                }
            \]
            In particular, we have
            \[
                \gch F_\triv^\sgn = \gch P_\triv -q^n\gch P_\sgn.
            \]
        \end{lem}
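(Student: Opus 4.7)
The plan is to exhibit the resolution directly by constructing the map and identifying its image with the submodule defining $F_\triv^\sgn$. First I will verify that $X^n-Y^n \in S_n$ transforms by the sign character of $W$: since $\zeta^n = 1$, one has $r \cdot X^n = X^n$ and $r \cdot Y^n = Y^n$, while $s$ interchanges $X^n$ and $Y^n$, producing a factor of $-1$. Consequently the assignment $1 \otimes b_\sgn \mapsto (X^n-Y^n) \otimes b_\triv$ extends to a well-defined graded $A$-module homomorphism $f : P_\sgn\gs{n} \to P_\triv$. Moreover $f$ is injective, because after forgetting the $W$-action it is multiplication by the nonzero element $X^n-Y^n$ on the domain $S$.

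Next I will identify $\im f$ with $\widetilde{I} \coloneqq \sum_{g \in \Hom_A(P_\sgn,P_\triv)_{>0}} \im g$, which by definition is the submodule whose quotient is $F_\triv^\sgn$. By Proposition \ref{prop:ghom-multi}, $\Hom_A(P_\sgn,P_\triv)$ is canonically the $\sgn$-isotypic component $S^\sgn$ of $S$ (via $g \mapsto g(1 \otimes b_\sgn)$), and Proposition \ref{prop:gch-S} gives $\gdim S^\sgn = q^n/((1-q^2)(1-q^n))$. The elements $XY$ and $X^n + Y^n$ are algebraically independent homogeneous $W$-invariants of degrees $2$ and $n$, so the subalgebra $\tilde R \coloneqq \cc[XY, X^n + Y^n] \subseteq R = S^W$ has graded dimension $1/((1-q^2)(1-q^n))$. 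Multiplication by $X^n-Y^n$ sends $\tilde R$ injectively into $S^\sgn$ (again using that $S$ is a domain), and a graded dimension comparison forces $\tilde R \cdot (X^n-Y^n) = S^\sgn$. Since $S^\sgn$ lives entirely in degrees $\geq n > 0$, every element contributing to $\widetilde{I}$ is an $\tilde R$-multiple of $X^n-Y^n$, and the $A$-submodule of $P_\triv$ it generates is exactly $S \cdot (X^n-Y^n) = \im f$.

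Combining injectivity of $f$ with the identification $\im f = \widetilde{I}$ produces the short exact sequence
\[
0 \longrightarrow P_\sgn\gs{n} \xrightarrow{f} P_\triv \longrightarrow F_\triv^\sgn \longrightarrow 0,
\]
which is the claimed length-one projective resolution. The graded character formula follows by additivity along this sequence together with $\gch P_\sgn\gs{n} = q^n \gch P_\sgn$. The only step requiring genuine attention is the graded dimension comparison identifying $S^\sgn$ as a rank-one free $\tilde R$-module generated by $X^n-Y^n$; all other verifications are routine.
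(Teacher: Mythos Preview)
Your proof is correct. The paper's argument runs along the same lines but packages the key graded-dimension comparison differently. Rather than introducing the invariant subalgebra $\tilde R = \cc[XY, X^n+Y^n]$ and showing $S^\sgn = \tilde R\cdot(X^n-Y^n)$, the paper simply observes that $\im f \subset \widetilde I$ (since the generator lies in the $\sgn$-isotypic part), then computes directly from Proposition~\ref{prop:gch-S} and Corollary~\ref{cor:gch-Psgn} that $[P_\triv/\im f : L_\sgn]_q = [P_\triv:L_\sgn]_q - q^n[P_\sgn:L_\sgn]_q = 0$, forcing $\im f$ to contain the full $\sgn$-isotypic component of $(P_\triv)_{>0}$ and hence $\widetilde I \subset \im f$. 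This avoids checking algebraic independence of $XY$ and $X^n+Y^n$, so it is marginally more economical; your route through $\tilde R$ is a bit more structural and makes explicit why the single element $X^n-Y^n$ generates everything. Either way the substance is the same numerical identity $\gdim S^\sgn = q^n/((1-q^2)(1-q^n))$.
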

        \begin{proof}
            Let $M$ be the submodule of $P_\triv$ generated by $(X^n-Y^n)\otimes b_\triv$.
            Since $(X^n-Y^n)\otimes b_\triv$ is a homogeneous element of degree $n$ which belongs to the isotypic component of type $\sgn$, then $M$ is contained in $\sum_{f\in\Hom_A(P_\sgn,P_\triv)_{>0}}\im\, f$.
            Then, we have a following projective resolution:
            \begin{equation} \label{eq:res-F_triv^sgn}
                \xymatrix{
                    0 \ar[r] &P_\sgn\gs{n} \ar[r]^{p_1} &P_\triv \ar[r] & P_\triv/M \ar[r] &0,
                }
            \end{equation}
            where $p_0$ is a graded $A$-module homomorphism sending $b_\sgn \in P_\sgn\gs{n}$ to $(X^n-Y^n)\otimes b_\triv \in P_\triv$.
            Indeed, the image of $p_1$ is generated by $(X^n-Y^n)\otimes b_\triv$, and thus $\im\,p_1 = M$.
            Therefore, the cokernel of $p_1$ is $P_\triv/M$.
            An $A$-module homomorphism $p_1$ is also $S$-module homomorphism between $P_\sgn\gs{n}$ and $P_\triv$, which are free $S$-modules of rank $1$.
            Since $S$ is an integral domain, any $S$-moudule homomorphism from $S$ to itself is injective or zero.
            Thus $p_1$ is injective.
            Consequently, a chain complex \eqref{eq:res-F_triv^sgn} is exact.
            Therefore, we obtain $\gch P_\triv /M = \gch P_\triv - q^n \gch P_\sgn$.
            In particular, we have $[P_\triv/M:L_\sgn]_q = [P_\triv:L_\sgn]_q - q^n[P_\sgn:L_\sgn]_q$.
            By Proposition \ref{prop:gch-S} and Corollary \ref{cor:gch-Psgn}, we have $[P_\triv:L_\sgn]_q = q^n/(1-q^2)(1-q^n)$ and $[P_\sgn:L_\sgn]_q = 1/(1-q^2)(1-q^n)$.
            Hence, we obtain $[P_\triv/M:L_\sgn]_q = 0$.
            It follows that $M$ contains the isotypic component of $P_\triv$ corresponding to $\sgn$.
            Thus, we have 
            \[
                M = \sum_{f\in\Hom_A(P_\sgn,P_\triv)_{>0}}\im\, f. 
            \] 
            Hence, $P_\triv/M\simeq F_\triv^\sgn$.
            This completes the proof.
        \end{proof}

        \begin{lem}
            \label{lem:F_triv^j-resolution}
            Let $j$ be a positive integer such that $1\leq j\leq C$.
            There exists an exact sequence
            \[
                \xymatrix{
                    0 \ar[r] &P_\sgn\gs{2j} \ar[rrr]^{\{b_\sgn \mapsto Y^j\otimes b_j^+ -X^j\otimes b_j^- \}} &&&P_{\chi_j}\gs{j}\\
                    && \ar[rrr]^{\left\{\begin{matrix}b_j^+ \mapsto X^j\otimes b_\triv\\ b_j^- \mapsto Y^j\otimes b_\triv \end{matrix}\right\}}  &&&P_\triv \ar[r] & F_\triv^{\chi_j} \ar[r] &0.
                }  
            \]
            In particular, we have \[
                \gch F_\triv^{\chi_j} = \gch P_\triv -q^j \gch P_{\chi_j} + q^{2j} \gch P_\sgn. 
            \]  
        \end{lem}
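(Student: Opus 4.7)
The plan is to parallel the argument of Lemma \ref{lem:F_triv^sgn-resolution}: explicitly construct the two maps, recognize the resulting sequence as a Koszul complex on a regular sequence, and identify the cokernel with $F_\triv^{\chi_j}$ by analyzing the $\chi_j$-isotypic content of $P_\triv$.

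First, I would verify that the prescribed assignments define graded $(S*W)$-module homomorphisms $p_2\colon P_\sgn\gs{2j}\to P_{\chi_j}\gs{j}$ and $p_1\colon P_{\chi_j}\gs{j}\to P_\triv$. This is a routine check on generators using $r\cdot X=\zeta X$, $r\cdot Y=\zeta^{-1}Y$, $s\cdot X=Y$, $s\cdot Y=X$, together with the $W$-actions on $b_j^\pm$, $b_\sgn$, and $b_\triv$ described earlier. The relation $p_1\circ p_2=0$ is immediate from the commutativity of $S$.

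Next, I would identify the underlying complex of $S$-modules with the Koszul complex of the pair $(X^j,Y^j)$. Under the $S$-linear isomorphisms $P_\sgn\gs{2j}\simeq S\gs{2j}$, $P_{\chi_j}\gs{j}\simeq S\gs{j}^{\oplus 2}$, and $P_\triv\simeq S$ (using the bases $b_\sgn$, $\{b_j^+,b_j^-\}$, $b_\triv$), the map $p_2$ becomes $h\mapsto(hY^j,-hX^j)$ and $p_1$ becomes $(a,b)\mapsto aX^j+bY^j$. Since $S=\cc[X,Y]$ is a polynomial ring and $X^j,Y^j$ is a regular sequence in $S$ (as $S$ is a domain and $Y^j$ acts without zero-divisors on $S/(X^j)\simeq\cc[X,Y]/(X^j)$), the Koszul complex is exact. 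In particular, $p_2$ is injective and the sequence is exact at the middle, with cokernel $P_\triv/\im p_1\simeq S/(X^j,Y^j)$ as a graded $(S*W)$-module.

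To finish, I would identify this cokernel with $F_\triv^{\chi_j}$. The elements $X^j\otimes b_\triv$ and $Y^j\otimes b_\triv$ span a $\chi_j$-isotypic subspace of $(P_\triv)_j$, so $\im p_1\subseteq\sum_{f\in\Hom_A(P_{\chi_j},P_\triv)_{>0}}\im f$. Conversely, any monomial $X^aY^b\in S$ of positive degree whose two-dimensional span $\langle X^aY^b,X^bY^a\rangle$ is of type $\chi_j$ satisfies $a-b\equiv\pm j\pmod n$; enumerating the possibilities under $a,b\geq 0$ and the assumption $1\leq j\leq C=(n-1)/2$, one checks that either $a\geq j$ or $b\geq j$, hence $X^aY^b\in(X^j,Y^j)$. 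Consequently $\im p_1$ equals the full $\chi_j$-trace in $(P_\triv)_{>0}$, and the quotient is $F_\triv^{\chi_j}$. The graded-character equality $\gch F_\triv^{\chi_j}=\gch P_\triv-q^j\gch P_{\chi_j}+q^{2j}\gch P_\sgn$ then follows immediately from the exactness of the resolution.

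The main obstacle is the $\chi_j$-trace identification in the last step, which requires careful bookkeeping of the $W$-isotypic type of monomials modulo the identification $\chi_i\simeq\chi_{n-i}$; the restriction $j\leq(n-1)/2$ is precisely what prevents wrap-around from creating extra $\chi_j$-isotypic elements outside $(X^j,Y^j)$.
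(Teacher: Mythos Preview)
Your proposal is correct. The exactness argument is essentially the same as the paper's---you name it as the Koszul complex on the regular sequence $(X^j,Y^j)$, while the paper does the equivalent bare-hands computation (injectivity of $p_2$ via Lemma~\ref{lem:detinjective}, and $\ker p_1=\im p_2$ by factoring $aX^j+bY^j=0$).

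The genuine difference is in how you identify the cokernel with $F_\triv^{\chi_j}$. The paper first uses the resolution to compute $\gch(P_\triv/M)$, then plugs in the explicit formulas for $[P_\triv:L_{\chi_j}]_q$, $[P_{\chi_j}:L_{\chi_j}]_q$, $[P_\sgn:L_{\chi_j}]_q$ from Proposition~\ref{prop:gch-S} and Corollaries~\ref{cor:gch-Psgn}, \ref{cor:gch-Pi} to conclude $[P_\triv/M:L_{\chi_j}]_q=0$, whence $M$ is the full $\chi_j$-trace. You instead argue directly at the level of monomials: the $\chi_j$-isotypic part of $S$ is spanned by $X^aY^b$ with $a-b\equiv\pm j\pmod n$, and the inequality $1\le j\le (n-1)/2$ forces $a\ge j$ or $b\ge j$, so every such monomial lies in $(X^j,Y^j)$. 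Your route is more elementary and self-contained (no character computations needed), while the paper's route is the uniform template reused in Lemmas~\ref{lem:F_i^sgn-resolution}--\ref{lem:F_C^sgn^C-resolution}, where the analogous monomial bookkeeping would be messier.
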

        \begin{proof}
            Let $M$ be the submodule of $P_\triv$ generated by $X^j\otimes b_\triv$ and $Y^j\otimes b_\triv$.
            It is straightforward to see that these generators belong to the isotypic component of type $\chi_j$.
            Therefore, we have $M\subset  \sum_{f\in\Hom_A(P_{\chi_j},P_\triv)_{>0}}\im\, f$.
            We show that $P_\triv/M$ is isomorphic to $F_\triv^{\chi_j}$.
            A graded $A$-module $P_\triv/M$ has a following projective resolution:
            \begin{equation}
                \label{eq:res-F_triv^j}
                 \xymatrix{
                0 \ar[r] &P_\sgn\gs{2j} \ar[r]^{p_2} &P_{\chi_j}\gs{j} \ar[r]^{p_1}  &P_\triv \ar[r] & P_\triv/M \ar[r] &0,
                }
            \end{equation} 
            where $p_1$ sending $b_j^+$ and $b_j^-$ to $X^j\otimes b_\triv$ and $Y^j\otimes b_\triv$ respectively, and $p_2$ sending $b_\sgn$ to $Y^j\otimes b_j^+ -X^j\otimes b_j^-$.
            Since the image of $p_1$ is $M$, a chain complex
            \[
                \xymatrix{P_{\chi_j}\gs{j} \ar[r]^{p_1}  &P_\triv \ar[r] & P_\triv/M \ar[r] &0}
            \]
            is exact.
            We have bases of free $S$-modules $\{1\otimes b_\sgn\}\subset P_\sgn\gs{2j}$, $\{1\otimes b_j^+,1\otimes b_j^-\}\subset P_{\chi_j}\gs{j}$, and $\{1\otimes b_\triv\}\subset P_\triv$.
            Hence, $p_1$ and $p_2$ admit matrix representations with respect to the above bases as follows:
            \[  \xymatrix{
                P_\sgn\gs{2j} \ar[r]^{\begin{pmatrix}Y^j\\-X^j\end{pmatrix}} &P_{\chi_j}\gs{j} \ar[rr]^{\begin{pmatrix}X^j &Y^j\end{pmatrix}}  &&P_\triv.
            } 
            \]
            By Lemma \ref{lem:detinjective}, $p_2\simeq \begin{pmatrix}Y^j\\-X^j\end{pmatrix}$ is injective.
            Let $a$ and $b$ be polynomials such that $aX^j+ bY^j = 0$.
            Since $X^j$ does not have irreducible factor $Y$, thus $a$ is divisible by $Y^j$.
            Hence, we have $a = uY^j$ for some $u\in S$.
            Then we obtain $b=-uX^j$.
            Therefore, the kernel of $p_1\simeq \begin{pmatrix}X^j&Y^j\end{pmatrix}$ is generated by $(Y^j\otimes b_j^+ - X^j \otimes b_j^-)$.
            We obtain $\im\,p_2 = \ker\,p_1$, and hence, a chain complex \eqref{eq:res-F_triv^j} is exact.
            Therefore, we obtain 
            \[
                \gch P_\triv /M = \gch P_\triv - q^j \gch P_{\chi_j}+ q^{2j}\gch P_\sgn.
            \]
            By Proposition \ref{prop:gch-S} and Corollary \ref{cor:gch-Psgn}, we have $[P_\triv:L_{\chi_j}]_q = [P_\sgn:L_{\chi_j}]_q =(q^j+q^{n-j})/(1-q^2)(1-q^n)$. 
            By Corollary \ref{cor:gch-Pi}, we also have $[P_{\chi_j}:L_{\chi_j}]_q =(1+q^{n-2j}+q^{2j}+q^n)/(1-q^2)(1-q^n)$.
            Therefore, we have $[P_\triv/M:L_{\chi_j}]_q = 0$.
            It follows that $M$ contains the isotypic component of $P_\triv$ corresponding to $\chi_j$.
            Hence, we obtain 
            \[
                M = \sum_{f\in\Hom_A(P_{\chi_j},P_\triv)_{>0}}\im\, f.  
            \] 
            Therefore, we conclude $P_\triv/M\simeq F_\triv^{\chi_j}$.
            A projective resolution of $P_\triv/M$ is given as \eqref{eq:res-F_triv^j}.
        \end{proof}

        \begin{lem}\label{lem:F_sgn^sgn-resolution}
            A graded module $F_\sgn^\sgn$ has a following projective resolution:
            \[\xymatrix{
                0 \ar[r] &P_\sgn\gs{n+2} \ar[rrrr]^{\left\{b_\sgn\mapsto \begin{pmatrix}(X^n+Y^n)\otimes b_\sgn\\ -XY\otimes b_\sgn\end{pmatrix}\right\}} &&&&{\begin{matrix} P_\sgn\gs{2}\\ \oplus\\ P_\sgn\gs{n} \end{matrix}}\\ 
                \ar[rrrr]^{\begin{pmatrix}\{b_\sgn \mapsto XY\otimes b_\sgn\}\\ \{b_\sgn \mapsto (X^n+Y^n)\otimes b_\sgn\}\end{pmatrix}} &&&&P_\sgn \ar[r] &F_\sgn^\sgn \ar[r] &0.
            }\]
        \end{lem}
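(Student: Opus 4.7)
The plan mirrors the strategy of Lemmas \ref{lem:F_triv^sgn-resolution} and \ref{lem:F_triv^j-resolution}. Let $M\subseteq P_\sgn$ denote the $A$-submodule generated by $XY\otimes b_\sgn$ and $(X^n+Y^n)\otimes b_\sgn$. Since $XY$ and $X^n+Y^n$ are $W$-invariant elements of $S_{>0}$, both generators belong to the $\sgn$-isotypic component of $(P_\sgn)_{>0}$ as a graded $W$-representation, so $M\subseteq \sum_{f\in\Hom_A(P_\sgn,P_\sgn)_{>0}}\im f$ by Proposition \ref{prop:ghom-multi}. This gives a surjection $P_\sgn/M\twoheadrightarrow F_\sgn^\sgn$, and the task reduces to promoting the proposed sequence to a projective resolution of $P_\sgn/M$ and then identifying $P_\sgn/M$ with $F_\sgn^\sgn$.

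Next I would verify exactness of the complex. Both maps are $A$-linear because $XY$ and $X^n+Y^n$ are $W$-invariant, and the composition vanishes by direct computation: $(X^n+Y^n)\cdot XY+(-XY)\cdot(X^n+Y^n)=0$. Injectivity of the leftmost map follows from Lemma \ref{lem:detinjective} applied to the matrix representation $\begin{pmatrix}X^n+Y^n\\-XY\end{pmatrix}$, whose $1\times 1$ minor $X^n+Y^n$ is nonzero. For exactness in the middle, suppose $aXY+b(X^n+Y^n)=0$ in $S$. Since $X^n+Y^n$ specializes to $Y^n$ at $X=0$ and to $X^n$ at $Y=0$, neither $X$ nor $Y$ divides $X^n+Y^n$; hence $\gcd(XY,X^n+Y^n)=1$ in the UFD $\cc[X,Y]$. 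Therefore $X^n+Y^n \mid a$, say $a=c(X^n+Y^n)$, and then $b=-cXY$, which places $(a,b)$ in the image of the leftmost map.

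Finally, to upgrade the surjection $P_\sgn/M\twoheadrightarrow F_\sgn^\sgn$ to an isomorphism, I would show $M$ contains the full positive-degree $\sgn$-isotypic component of $P_\sgn$. From the exact sequence, $\gch(P_\sgn/M)=(1-q^2)(1-q^n)\cdot \gch P_\sgn$, and combining with $[P_\sgn:L_\sgn]_q=1/((1-q^2)(1-q^n))$ from Corollary \ref{cor:gch-Psgn} yields $[P_\sgn/M:L_\sgn]_q=1$, concentrated in degree $0$. Hence $M$ absorbs the positive-degree $\sgn$-isotypic part of $P_\sgn$, so $M=\sum_f\im f$ and $P_\sgn/M\simeq F_\sgn^\sgn$, formally certified via Lemma \ref{lem:F-gch-identify}. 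The main obstacle is the middle-exactness step; once the coprimality of $XY$ and $X^n+Y^n$ in $\cc[X,Y]$ is secured, the remaining steps are bookkeeping parallel to the earlier resolution lemmas.
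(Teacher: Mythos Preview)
Your proposal is correct and follows essentially the same approach as the paper: define $M$ as the submodule generated by $XY\otimes b_\sgn$ and $(X^n+Y^n)\otimes b_\sgn$, verify exactness of the complex (injectivity via Lemma \ref{lem:detinjective}, middle exactness via coprimality of $XY$ and $X^n+Y^n$ in the UFD $S$), compute $\gch(P_\sgn/M)$ from the resolution, and use $[P_\sgn/M:L_\sgn]_q=1$ to conclude $M$ equals the full positive-degree $\sgn$-isotypic submodule. Your justification of coprimality is slightly more explicit than the paper's, and the appeal to Lemma \ref{lem:F-gch-identify} at the end is unnecessary since you have already shown $M=\sum_f\im f$ directly.
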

        \begin{proof}
            We have a following chain complex:            
            \begin{equation}\label{eq:res-F_sgn^sgn}\vcenter{\xymatrix{
                0 \ar[r] &P_\sgn\gs{n+2} \ar[rrr]^{\left\{b_\sgn\mapsto \begin{pmatrix}(X^n+Y^n)\otimes b_\sgn\\ -XY\otimes b_\sgn\end{pmatrix}\right\}} &&&{\begin{matrix} P_\sgn\gs{2}\\ \oplus\\ P_\sgn\gs{n} \end{matrix}}\\ 
                \ar[rrr]^{\begin{pmatrix}\{b_\sgn \mapsto XY\otimes b_\sgn\}\\ \{b_\sgn \mapsto (X^n+Y^n)\otimes b_\sgn\}\end{pmatrix}} &&&P_\sgn \ar[r] &P_\sgn/ \langle XY\otimes b_\sgn,(X^n+Y^n)\otimes b_\sgn \rangle_A\\
                \ar[r] &0.
            }}
            \end{equation}
            We show that this complex is exact.
            Clearly, the kernel of the quotient map $P_\sgn\rightarrow P_\sgn/ \langle XY\otimes b_\sgn,(X^n+Y^n)\otimes b_\sgn \rangle_A$ is $\langle XY\otimes b_\sgn,(X^n+Y^n)\otimes b_\sgn \rangle_A$.
            It is straightforward to see that the left map of the second row surjects onto $\langle XY\otimes b_\sgn,(X^n+Y^n)\otimes b_\sgn \rangle_A$.
            The map of the first row is a $S$-module homomorphism between free $S$-modules $P_\sgn\gs{n+2}$ and $P_\sgn\gs{2}\oplus P_\sgn\gs{n}$,
            and it admits a matrix representation
            \[
                \begin{pmatrix}
                    X^n+Y^n\\
                    -XY
                \end{pmatrix}.
            \]
            By Lemma \ref{lem:detinjective}, this morphism is injective.
            Next, we verify the exactness of following diagram:
            \[
                \xymatrix{P_\sgn\gs{n+2} \ar[rrrrr]^{\left\{b_\sgn\mapsto \begin{pmatrix}(X^n+Y^n)\otimes b_\sgn\\ -XY\otimes b_\sgn\end{pmatrix}\right\}} &&&&&{\begin{matrix} P_\sgn\gs{2}\\ \oplus\\ P_\sgn\gs{n} \end{matrix}}
                \ar[rrrrr]^{\begin{pmatrix}\{b_\sgn \mapsto XY\otimes b_\sgn\}\\ \{b_\sgn \mapsto (X^n+Y^n)\otimes b_\sgn\}\end{pmatrix}} &&&&&P_\sgn}.
            \]
            Since $S$ is an unique factorization domain and $XY,X^n+Y^n$ are coprime, then the kernel of the right map is generated by
            \[
                \begin{pmatrix}(X^n+Y^n)\otimes b_\sgn\\ -XY\otimes b_\sgn\end{pmatrix}.
            \]
            Consequently, a chain complex \eqref{eq:res-F_sgn^sgn} is exact.
            Therefore, we have
            \begin{multline}
                \gch P_\sgn/ \langle XY\otimes b_\sgn,(X^n+Y^n)\otimes b_\sgn \rangle_A\\
                    =\gch P_\sgn\gs{n+2} - \gch (P_\sgn\gs{2} \oplus P_\sgn\gs{n}) + \gch P_\sgn.
            \end{multline}
            By Proposition \ref{prop:tensorofreps} and Proposition \ref{prop:gch-S}, we obtain
            \begin{align}
                &\gch P_\sgn\gs{n+2} - \gch (P_\sgn\gs{2} \oplus P_\sgn\gs{n}) + \gch P_\sgn\\
                    &=(q^{n+2}-q^2-q^n+1)\gch P_\sgn\\
                    &=\frac{q^{n+2}-q^2-q^n+1}{(1-q^2)(1-q^n)}\cdot\left([L_\sgn]+q^n[L_\triv]+\sum_{i=1}^C(q^i+q^{n-i})[L_{\chi_i}]\right)\\
                    &=[L_\sgn]+q^n[L_\triv]+\sum_{i=1}^C(q^i+q^{n-i})[L_{\chi_i}].
            \end{align}
            Then, we have $[(P_\sgn/ \langle XY\otimes b_\sgn,(X^n+Y^n)\otimes b_\sgn \rangle_A)_{>0}:L_\sgn]_q = 0$.
            The isotypic component of $P_\sgn$ corresponding to $\sgn$ is contained in $\langle XY\otimes b_\sgn,(X^n+Y^n)\otimes b_\sgn\rangle_A$.
            Since $XY\otimes b_\sgn$ and $(X^n+Y^n)\otimes b_\sgn$ belong to the isotypic component of type $\sgn$, we obtain
            \[
                \langle XY\otimes b_\sgn,(X^n+Y^n)\otimes b_\sgn \rangle_A = \sum_{f\in\Hom_{A}(P_\sgn,P_\sgn)_{>0}}\mathrm{Im}\,f.
            \]
            Hence, we conclude that $P_\sgn/ \langle XY\otimes b_\sgn,(X^n+Y^n)\otimes b_\sgn \rangle_A = F_\sgn^\sgn$.
            Its projective resolution is given as \eqref{eq:res-F_sgn^sgn}.
        \end{proof}

        \begin{lem}
            \label{lem:F_i^sgn-resolution}
            For $1\leq i \leq C$, a graded module $F_{\chi_i}^\sgn$ has a following projective resolution: 
            \[
                \xymatrix{
                    0 \ar[r] &{\begin{matrix}P_\sgn\gs{i}\\ \oplus\\ P_\sgn\gs{n-i} \end{matrix}} \ar[rrrrrr]^{\begin{pmatrix}\{b_\sgn \mapsto Y^i\otimes b_i^+ - X^i\otimes b_i^- \}\\ \{b_\sgn \mapsto X^{n-i}\otimes b_i^+ - Y^{n-i}\otimes b_i^- \}\end{pmatrix}} &&&&&&P_{\chi_i} \ar[r] & F_{\chi_i}^\sgn \ar[r] &0.
                }   
            \]
            In particular, we have
            \[
                \gch F_{\chi_i}^\sgn = \gch P_{\chi_i} -(q^i+q^{n-i})\gch P_\sgn \qquad(1\leq i\leq C).
            \]
                
        \end{lem}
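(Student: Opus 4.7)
The plan is to follow exactly the template set by Lemmas \ref{lem:F_triv^sgn-resolution}, \ref{lem:F_triv^j-resolution}, and \ref{lem:F_sgn^sgn-resolution}: write down explicit candidate generators of the positive-degree $\sgn$-isotypic component, recognize their $W$-equivariant lift from copies of $P_\sgn$, check exactness of the resulting Koszul-style complex on the nose via a determinant trick, and then verify multiplicity bookkeeping to conclude that these explicit generators already give everything.

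First, set $u_1 := Y^i\otimes b_i^+ - X^i\otimes b_i^-$ and $u_2 := X^{n-i}\otimes b_i^+ - Y^{n-i}\otimes b_i^-$ inside $P_{\chi_i}$. A direct computation using $r\cdot b_i^\pm = \zeta^{\pm i} b_i^\pm$, $s\cdot b_i^\pm = b_i^\mp$, and $\zeta^n = 1$ shows that $r$ fixes both $u_1$ and $u_2$ while $s$ acts by $-1$; hence $u_1\in (P_{\chi_i})_i$ and $u_2\in(P_{\chi_i})_{n-i}$ sit in the $\sgn$-isotypic component. Letting $M$ be the $A$-submodule generated by $u_1,u_2$, this gives $M\subset\sum_{f\in\Hom_A(P_\sgn,P_{\chi_i})_{>0}}\im f$, and a morphism $\phi:P_\sgn\gs{i}\oplus P_\sgn\gs{n-i}\to P_{\chi_i}$ surjecting onto $M$ is defined by sending the two generators $b_\sgn$ to $u_1, u_2$ respectively.

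Second, for injectivity of $\phi$, use Lemma \ref{lem:detinjective}: with respect to the free $S$-bases $\{1\otimes b_\sgn\}$ on each summand of the source and $\{1\otimes b_i^+,1\otimes b_i^-\}$ on the target, $\phi$ has matrix $\begin{pmatrix}Y^i & X^{n-i}\\ -X^i & -Y^{n-i}\end{pmatrix}$, whose determinant is $X^n-Y^n\neq 0$. Therefore
\[
\xymatrix{0\ar[r] & P_\sgn\gs{i}\oplus P_\sgn\gs{n-i}\ar[r]^-{\phi} & P_{\chi_i}\ar[r] & P_{\chi_i}/M\ar[r] & 0}
\]
is exact, giving $\gch P_{\chi_i}/M = \gch P_{\chi_i}-(q^i+q^{n-i})\gch P_\sgn$.

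Third, verify that $[(P_{\chi_i}/M)_{>0}:L_\sgn]_q=0$. By Corollary \ref{cor:gch-Pi}, the only contributions of $L_\sgn$ to $\gch P_{\chi_i}$ come from the summands $q^{n-i}\gch L_{\chi_n}$ and $q^i\gch L_{\chi_{n-(n-i)}}=q^i\gch L_{\chi_0}$ (using $\gch L_{\chi_0}=\gch L_\triv+\gch L_\sgn$), yielding $[P_{\chi_i}:L_\sgn]_q=(q^i+q^{n-i})/((1-q^2)(1-q^n))$; by Corollary \ref{cor:gch-Psgn}, $[P_\sgn:L_\sgn]_q=1/((1-q^2)(1-q^n))$. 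The difference vanishes.

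Finally, combine: $M$ contains the entire $\sgn$-isotypic component of $(P_{\chi_i})_{>0}$, so $M\supset \sum_{f\in\Hom_A(P_\sgn,P_{\chi_i})_{>0}}\im f$, and together with the reverse inclusion from step one this gives $P_{\chi_i}/M\simeq F_{\chi_i}^\sgn$, with the displayed sequence as its projective resolution. The graded character formula follows from the short exact sequence. No step presents real difficulty; the only mild subtlety is confirming the $\sgn$-coefficient cancellation in step three, which is immediate once the two multiplicities are tabulated from Corollaries \ref{cor:gch-Psgn} and \ref{cor:gch-Pi}.
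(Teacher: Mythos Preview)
Your proof is correct and follows essentially the same approach as the paper's: exhibit explicit $\sgn$-type generators, verify injectivity via the determinant criterion of Lemma~\ref{lem:detinjective}, and confirm the $\sgn$-multiplicity cancellation using Corollaries~\ref{cor:gch-Psgn} and~\ref{cor:gch-Pi}. One small slip: in step three you write $q^i\gch L_{\chi_{n-(n-i)}}$, but $n-(n-i)=i$, not $0$; the correct term from the second sum in Corollary~\ref{cor:gch-Pi} is $q^i\gch L_{\chi_{i-i}}=q^i\gch L_{\chi_0}$, which is what you then use, so the conclusion is unaffected.
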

        \begin{proof}
            Let $M$ be the submodule of $P_{\chi_i}$ generated by $(Y^i\otimes b_i^+ - X^i\otimes b_i^-)$ and $(X^{n-i}\otimes b_i^+ - Y^{n-i}\otimes b_i^-)$.
            These generators belong to the isotypic component of type $\sgn$.
            Thus $M$ is contained in $\sum_{f\in\Hom_A(P_{\sgn},P_{\chi_i})_{>0}}\im\, f$.
            We show that $P_{\chi_i}/M$ is isomorphic to $F_{\chi_i}^{\sgn}$.
            A graded $A$-module $P_\triv/M$ has a following projective resolution:
            \begin{equation}  \label{eq:res-F_i^sgn}
                \xymatrix{
                    0 \ar[r] &{\begin{matrix}P_\sgn\gs{i}\\ \oplus\\ P_\sgn\gs{n-i} \end{matrix}} \ar[r]^{p_1} &P_{\chi_i} \ar[r] & F_{\chi_i}^\sgn \ar[r] &0.
                }   
            \end{equation}
            where $p_1$ sending $1\otimes b_\sgn \in P_\sgn\gs{i}$ to $Y^i\otimes b_i^+ - X^i\otimes b_i^-$, and $1\otimes b_\sgn \in P_\sgn\gs{n-i}$ to $X^{n-i}\otimes b_i^+ - Y^{n-i}\otimes b_i^-$.
            Since the image of $p_1$ is $M$, a complex
            \[
                \xymatrix{
                    {\begin{matrix}P_\sgn\gs{i}\\ \oplus\\ P_\sgn\gs{n-i} \end{matrix}} \ar[r]^{p_1}  &P_\triv \ar[r] &P_\triv/M \ar[r] &0
                }
            \]
            is exact.
            We prove that  $p_1$ is injective.
            We have bases of free $S$-modules $\{(1\otimes b_\sgn,0),(0,1\otimes b_\sgn)\}\subset P_\sgn\gs{i} \oplus P_\sgn\gs{n-i}$ and  $\{1\otimes b_i^+,1\otimes b_i^-\}\subset P_{\chi_i}$.
            Then, $p_1$ have a matrix representation $\begin{pmatrix}Y^i & X^{n-i}\\-X^i &Y^{n-i}\end{pmatrix}$ with respect to the above bases.
            Since the determinant of this matrix is $X^n+Y^n\neq 0$, a homomorphism $p_1$ is injective by Lemma \ref{lem:detinjective}.
            Consequently, a chain complex \eqref{eq:res-F_i^sgn} is exact.
            Therefore, we obtain $[P_{\chi_i} /M:L_\sgn]_q = [P_{\chi_i}:L_\sgn]_q - (q^i+q^{n-i}) [P_\sgn:L_\sgn]_q$.
            By Corollary \ref{cor:gch-Psgn}, we have $[P_\sgn:L_\sgn]_q = 1/(1-q^2)(1-q^n)$. 
            By Corollary \ref{cor:gch-Pi}, we also have $[P_{\chi_i}:L_\sgn]_q =(q^i+q^{n-i})/(1-q^2)(1-q^n)$.
            Therefore, we have $[P_{\chi_i}/M:L_\sgn]_q = 0$.
            It follows that $M$ contains the isotypic component of $P_{\chi_i}$ corresponding to $\sgn$.
            Hence, we obtain 
            \[
                M = \sum_{f\in\Hom_A(P_{\sgn},P_{\chi_i})_{>0}}\im\, f. 
            \] 
            Therefore $P_{\chi_i}/M\simeq F_{\chi_i}^{\sgn}$.
            Its projective resolution is given as \eqref{eq:res-F_i^sgn}.    
        \end{proof}

        \begin{lem}
            \label{lem:F_i^sgn^j-resolution}
            Let $i,j$ be positive integers such that $1\leq i < j \leq C$.
            Then, a graded module $F_{\chi_i}^{\sgn,\chi_j}$ has a following projective resolution:
            \[
                \xymatrix{
                    0 \ar[r] &P_\sgn\gs{2j-i} \ar[rrrrr]^{\left\{b_\sgn \mapsto \begin{pmatrix}(XY)^{j-i}\otimes b_\sgn\\ -Y^j\otimes b_j^+ + X^j \otimes b_j^-\end{pmatrix}\right\}} &&&&&{\begin{matrix}P_\sgn\gs{i}\\ \oplus\\ P_{\chi_j}\gs{j-i} \end{matrix}}\\ 
                    \ar[rrrrr]^{\begin{pmatrix}\{b_\sgn \mapsto Y^i\otimes b_i^+ - X^i\otimes b_i^- \}\\ \left\{\begin{matrix} b_j^+ \mapsto X^{j-i}\otimes b_i^+\\ b_j^- \mapsto Y^{j-i}\otimes b_i^- \end{matrix}\right\}\end{pmatrix}} &&&&&P_{\chi_i} \ar[r] & F_{\chi_i}^{\sgn,\chi_j} \ar[r] &0.
                }   
            \]
            In particular, we have
            \[
                \gch F_{\chi_i}^{\sgn, \chi_j} = \gch P_{\chi_i} - q^i \gch P_\sgn -q^{j-i}\gch P_{\chi_j} + q^{2j-i} \gch P_\sgn.
            \]
        \end{lem}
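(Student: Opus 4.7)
The strategy mirrors the proofs of Lemmas~\ref{lem:F_triv^j-resolution}, \ref{lem:F_sgn^sgn-resolution}, and \ref{lem:F_i^sgn-resolution}. I let $M\subset P_{\chi_i}$ be the $A$-submodule generated by $Y^i\otimes b_i^+ - X^i\otimes b_i^-$ (in degree $i$) together with $X^{j-i}\otimes b_i^+$ and $Y^{j-i}\otimes b_i^-$ (in degree $j-i$). A direct inspection of the $r$- and $s$-actions on these elements (using the formulas for the action of $W$ on $X,Y$ and $b_i^\pm$, and noting $\chi_i\otimes\chi_i\simeq\triv\oplus\sgn\oplus\chi_{2i}$ etc.\ from Proposition~\ref{prop:tensorofreps}) shows that the first element spans a copy of $\sgn$ and the latter two span a copy of $\chi_j$; both sit in strictly positive degrees, so
\[
M \subset \sum_{\mu\in\{\sgn,\chi_j\},\,f\in\Hom_A(P_\mu,P_{\chi_i})_{>0}}\mathrm{Im}\,f.
\]

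Next I verify that the proposed sequence is an exact complex. Exactness at $F_{\chi_i}^{\sgn,\chi_j}$ and at $P_{\chi_i}$ is immediate: the middle map surjects onto $M$ by construction. For the leftmost map, the matrix with respect to the obvious $S$-bases is the $3\times 1$ column $((XY)^{j-i},\,-Y^j,\,X^j)^{\mathtt{t}}$, which admits the nonzero $1\times 1$ minor $(XY)^{j-i}$, so Lemma~\ref{lem:detinjective} supplies injectivity. The substantive step is exactness at the middle term: an element $(a\otimes b_\sgn,\,b\otimes b_j^+ + c\otimes b_j^-)$ lies in the kernel of the middle map iff $aY^i + bX^{j-i}=0$ and $-aX^i + cY^{j-i}=0$. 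Using unique factorization in $S=\cc[X,Y]$ together with the coprimality of $X^{j-i}$ with $Y^i$ (respectively $Y^{j-i}$ with $X^j$), these equations force $a=(XY)^{j-i}a''$, $b=-Y^j a''$, $c=X^j a''$ for a common $a''\in S$, which is precisely the image of $a''\otimes b_\sgn$ under the leftmost map.

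From the exactness of this length-three resolution I read off $\gch(P_{\chi_i}/M) = \gch P_{\chi_i} - q^i\gch P_\sgn - q^{j-i}\gch P_{\chi_j} + q^{2j-i}\gch P_\sgn$. Substituting the multiplicities $[P_{\chi_i}:L_\sgn]_q=(q^i+q^{n-i})/(1-q^2)(1-q^n)$, $[P_\sgn:L_\sgn]_q=1/(1-q^2)(1-q^n)$, and $[P_{\chi_j}:L_\sgn]_q=(q^j+q^{n-j})/(1-q^2)(1-q^n)$ from Proposition~\ref{prop:gch-S} and Corollary~\ref{cor:gch-Psgn}, the identity $(q^i+q^{n-i}) - q^i - q^{j-i}(q^j+q^{n-j}) + q^{2j-i} = 0$ gives $[P_{\chi_i}/M:L_\sgn]_q = 0$. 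An analogous cancellation, this time feeding in the four terms of $[P_{\chi_j}:L_{\chi_j}]_q$ and $[P_{\chi_i}:L_{\chi_j}]_q$ from Corollary~\ref{cor:gch-Pi}, yields $[P_{\chi_i}/M:L_{\chi_j}]_q = 0$. Since $(P_{\chi_i})_0\simeq\chi_i$ contains neither $\sgn$ nor $\chi_j$, these vanishings occur entirely in positive degrees. Therefore $M$ contains the isotypic components of $(P_{\chi_i})_{>0}$ of types $\sgn$ and $\chi_j$, which upgrades the earlier inclusion to equality $M = \sum_{\mu\in\{\sgn,\chi_j\},f}\mathrm{Im}\,f$, and hence $P_{\chi_i}/M\simeq F_{\chi_i}^{\sgn,\chi_j}$. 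The principal obstacle is the middle-term exactness argument; the remaining bookkeeping is a routine extension of the previous lemmas.
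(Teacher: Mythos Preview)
Your proof is correct and follows essentially the same route as the paper: define $M$ via the three explicit generators, verify exactness of the complex (surjectivity onto $M$, injectivity via Lemma~\ref{lem:detinjective}, and the UFD divisibility argument for the middle term), then use the resulting $\gch$ formula to check that the $\sgn$- and $\chi_j$-multiplicities in $P_{\chi_i}/M$ vanish, forcing $M$ to equal the full trace submodule. The only cosmetic difference is that the paper obtains $[P_{\chi_i}/M:L_\sgn]_q=0$ by invoking Lemma~\ref{lem:F_i^sgn-resolution} for both $i$ and $j$, whereas you substitute the explicit rational functions directly; both computations are equivalent.
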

        \begin{proof}
            Let $M$ be the submodule of $P_{\chi_i}$ generated by $X^{j-i}\otimes b_i^+, Y^{j-i}\otimes b_i^-$, and $Y^i\otimes b_i^+ - X^i\otimes b_i^-$.
            The vectors $X^{j-i}\otimes b_i^+$ and $Y^{j-i}\otimes b_i^-$ belong to the isotypic component of type $\chi_j$, and $(Y^i\otimes b_i^+ - X^i\otimes b_i^-)$ belongs to the component of type $\sgn$.
            Thus, we have $M$ is contained in 
            \[\sum_{f\in\Hom_A(P_{\sgn},P_{\chi_i})_{>0}}\im\, f+\sum_{f\in\Hom_A(P_{\chi_j},P_{\chi_i})_{>0}}\im\, f.\]
            We show that $P_{\chi_i}/M$ is isomorphic to $F_{\chi_i}^{\sgn,\chi_j}$.
            There exists a following chain complex:
            \begin{equation}\label{eq:res-F_i^sgn^j}
            \xymatrix{
                0 \ar[r] &P_\sgn\gs{2j-i} \ar[r]^{p_2} &{\begin{matrix}P_\sgn\gs{i}\\ \oplus\\ P_{\chi_j}\gs{j-i} \end{matrix}} \ar[r]^{p_1} &P_{\chi_i} \ar[r] & P_{\chi_i}/M \ar[r] &0,
            }   
            \end{equation}
            where $p_1$ and $p_2$ given by
            \begin{align}
                p_1 :\begin{matrix} b_\sgn &\mapsto &Y^i\otimes b_i^+ - X^i\otimes b_i^- \\
                                    b_j^+  & \mapsto &X^{j-i}\otimes b_i^+\\ 
                                    b_j^-  & \mapsto &Y^{j-i}\otimes b_i^-
                                \end{matrix} \textrm{ and } 
                p_2: b_\sgn \mapsto \begin{pmatrix}(XY)^{j-i}\otimes b_\sgn\\ -Y^j\otimes b_j^+ + X^j \otimes b_j^-\end{pmatrix}.
              \end{align}
              Since the image of $p_1$ is $M$, a chain complex
            \[
                \xymatrix{{\begin{matrix}P_\sgn\gs{i}\\ \oplus\\ P_{\chi_j}\gs{j-i} \end{matrix}} \ar[r]^{p_1} &P_{\chi_i} \ar[r] & P_{\chi_i}/M \ar[r] &0,
                }
            \]
            is exact.
            We have bases of free $S$-modules $\{1\otimes b_\sgn\}\subset P_\sgn\gs{2j-i}$, $\{1\otimes b_\sgn, 1\otimes b_j^+,1\otimes b_j^-\}\subset P_\sgn\gs{i}\oplus P_{\chi_j}\gs{j-i}$, and $\{1\otimes b_i^+,1\otimes b_i^- \}\subset P_{\chi_i}$.
            Then, $p_1$ and $p_2$ admit matrix representations with respect to the above bases as follows:
            \[  \xymatrix{
                P_\sgn\gs{2j-i} \ar[rr]^{\begin{pmatrix}(XY)^{j-i}\\ -Y^j\\X^j \end{pmatrix}} &&{\begin{matrix}P_\sgn\gs{i}\\ \oplus\\ P_{\chi_j}\gs{j-i} \end{matrix}} \ar[rrrr]^{\begin{pmatrix}Y^i &X^{j-i} & 0\\ -X^i &0 & Y^{j-i}\end{pmatrix}} &&&&P_{\chi_i}
            }\]
            By Lemma \ref{lem:detinjective}, $p_2\simeq \begin{pmatrix}Y^j\\-X^j\end{pmatrix}$ is injective.
            Let $a,b$ and $c$ be polynomials such that $aY^i+bX^{j-i}= -aX^i+cY^{j-i}=0$.
            Since we have $aY^i=-bX^{j-i}$ and $Y^i$ does not have irreducible factor $X$, thus $a$ is divisible by $X^{j-i}$.
            Since we also have $aX^i=cY^{j-i}$, $a$ is divisible by $Y^{j-i}$.
            Hence, we have $a = u(XY)^{j-i}$ for some $u\in S$.
            Then we obtain $b=-uY^j$ and $c=uX^j$.
            Therefore, the kernel of $p_1\simeq \begin{pmatrix}Y^i &X^{j-i} & 0\\ -X^i &0 & Y^{j-i}\end{pmatrix}$ is generated by $\begin{pmatrix}(XY)^{j-i}\otimes b_\sgn\\ -Y^j\otimes b_j^+ + X^j \otimes b_j^-\end{pmatrix}$.
            We obtain $\im\,p_2 = \ker\,p_1$, and hence, a chain complex \eqref{eq:res-F_i^sgn^j} is exact.
            Therefore, we obtain 
            \[
                \gch P_{\chi_i} /M = \gch P_{\chi_i} + (q^{2j-i}-q^i)\gch P_\sgn  -q^{j-i} \gch P_{\chi_j}.
            \] 
            
            By Lemma \ref{lem:F_i^sgn-resolution}, we have $[P_{\chi_i}:L_\sgn]_q-(q^i+q^{n-i})[P_\sgn:L_\sgn]_q = [P_{\chi_j}:L_\sgn]_q-(q^j+q^{n-j})[P_\sgn:L_\sgn]_q=0$.
            Therefore, we obtain 
            \begin{align}
                [P_{\chi_i} /M:L_{\sgn}]_q&=[P_{\chi_i}:L_\sgn]_q + (q^{2j-i}-q^i)[P_\sgn:L_\sgn]_q  -q^{j-i} [P_{\chi_j}:L_\sgn]_q\\
                    &= [P_{\chi_i}:L_\sgn]_q-(q^i+q^{n-i})[P_\sgn:L_\sgn]_q\\
                    &\qquad -q^{j-i}([P_{\chi_j}:L_\sgn]_q-(q^j+q^{n-j})[P_\sgn:L_\sgn]_q)\\
                    &=0.
            \end{align}
            By Corollary \ref{cor:gch-Psgn}, we have $[P_\sgn:L_{\chi_j}]_q =(q^j+q^{n-j})/(1-q^2)(1-q^n)$. 
            By Corollary \ref{cor:gch-Pi}, we also have $[P_{\chi_i}:L_{\chi_j}]_q =(q^{j-i}+q^{n-i-j}+q^{i+j}+q^{n+i-j})/(1-q^2)(1-q^n)$ and $[P_{\chi_j}:L_{\chi_j}]_q =(1+q^{n-2j}+q^{2j}+q^n)/(1-q^2)(1-q^n)$.
            Therefore, we have 
            \begin{align}
                [P_\triv/M:L_{\chi_j}]_q &= [P_{\chi_i}:L_{\chi_j}]_q + (q^{2j-i}-q^i)[P_\sgn:L_{\chi_j}]_q  -q^{j-i} [P_{\chi_j}:L_{\chi_j}]_q\\
                    &=\frac{(q^{j-i}+q^{n-i-j}+q^{i+j}+q^{n+i-j})}{(1-q^2)(1-q^n)}+\frac{(q^{2j-i}-q^i)(q^j+q^{n-j})}{(1-q^2)(1-q^n)}\\
                        &\qquad - \frac{q^{j-i}(1+q^{n-2j}+q^{2j}+q^n)}{(1-q^2)(1-q^n)}\\
                    &=0.
                \end{align}
            It follows that $M$ contains the isotypic components of $P_{\chi_i}$ corresponding to $\chi_j$ and $\sgn$.
            Hence, we obtain 
            \[
                M = \sum_{f\in\Hom_A(P_{\sgn},P_{\chi_i})_{>0}}\im\, f+\sum_{f\in\Hom_A(P_{\chi_j},P_{\chi_i})_{>0}}\im\, f.  
            \] 
            Therefore $P_\triv/M\simeq F_\triv^{\sgn,\chi_j}$.
            A projective resolution of $P_\triv/M$ is given as \eqref{eq:res-F_i^sgn^j}.
        \end{proof}
    
        \begin{lem}
            \label{lem:F_C^sgn^C-resolution}
            A graded $A$-module $F_{\chi_C}^{\sgn,\chi_C}$ has a following projective resolution:
            \[
                \xymatrix{
                    0 \ar[r] &P_\sgn\gs{C+2} \ar[rrrrr]^{\left\{b_\sgn \mapsto \begin{pmatrix}XY\otimes b_\sgn\\ X^{C+1}\otimes b_C^+ - Y^{C+1} \otimes b_C^-\end{pmatrix}\right\}} &&&&&{\begin{matrix}P_\sgn\gs{C}\\ \oplus\\ P_{\chi_C}\gs{1} \end{matrix}}\\ 
                    \ar[rrrrr]^{\begin{pmatrix}\{b_\sgn \mapsto Y^C\otimes b_C^+ - X^C\otimes b_C^- \}\\ \left\{\begin{matrix} b_C^+ \mapsto Y\otimes b_C^-\\ b_C^- \mapsto X\otimes b_C^+ \end{matrix}\right\}\end{pmatrix}} &&&&&P_{\chi_C} \ar[r] & F_{\chi_C}^{\sgn,\chi_C} \ar[r] &0.
                }   
            \]
            In particular, we have 
            \[
                \gch F_{\chi_C}^{\sgn,\chi_C} = (1-q)\gch P_{\chi_C} + (-q^C+q^{C+2})\gch P_\sgn.
            \]
        \end{lem}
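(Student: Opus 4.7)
The plan is to mimic the strategy of Lemma \ref{lem:F_i^sgn^j-resolution}, adapting it to the self-intersection case $i = j = C$ where $\chi_C \otimes \chi_C$ acquires a nontrivial structure coming from $2C = n-1$.

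First I would check that the two displayed maps $p_2: P_\sgn\gs{C+2}\to P_\sgn\gs{C}\oplus P_{\chi_C}\gs{1}$ and $p_1:P_\sgn\gs{C}\oplus P_{\chi_C}\gs{1}\to P_{\chi_C}$ are well-defined $A$-module homomorphisms by verifying the relations. The only subtle point is the map $P_{\chi_C}\gs{1}\to P_{\chi_C}$ given by $b_C^+\mapsto Y\otimes b_C^-$, $b_C^-\mapsto X\otimes b_C^+$; to check $W$-equivariance one uses $\zeta^{2C+1}=\zeta^n=1$, which is exactly the input that makes $\chi_C$ special in odd dihedral type. I would then verify $p_1\circ p_2=0$ by direct computation: each of the two pieces contributes $\pm XY^{C+1}\otimes b_C^+\mp X^{C+1}Y\otimes b_C^-$ and these cancel.

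Next, let $M=\im p_1$; by construction $M\subseteq I:=\sum_{f\in\Hom_A(P_\sgn,P_{\chi_C})_{>0}}\im f+\sum_{f\in\Hom_A(P_{\chi_C},P_{\chi_C})_{>0}}\im f$, since the three generators $Y^C\otimes b_C^+-X^C\otimes b_C^-$, $Y\otimes b_C^-$, and $X\otimes b_C^+$ all lie in the appropriate isotypic components in positive degree. To show $p_2$ is injective I invoke Lemma \ref{lem:detinjective} with the $1\times 1$ submatrix $(XY)$. The core computation is the equality $\ker p_1=\im p_2$: writing a kernel element as $(a,b,c)\in S^3$ with $aY^C+cX=0$ and $-aX^C+bY=0$, the coprimality of $Y^C$ and $X$ (resp. $X^{C+1}$ and $Y$) in the UFD $S$ forces $a=XYa''$, $b=X^{C+1}a''$, $c=-Y^{C+1}a''$ for some $a''\in S$, which is precisely the image of $p_2$. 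This establishes exactness of the proposed complex, and consequently yields the Hilbert-series identity
\[
\gch(\coker p_1)=(1-q)\gch P_{\chi_C}+(q^{C+2}-q^C)\gch P_\sgn.
\]

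Finally, to identify $\coker p_1$ with $F_{\chi_C}^{\sgn,\chi_C}$ via Lemma \ref{lem:F-gch-identify}, I need to verify that $M$ already contains the full isotypic components of $(P_{\chi_C})_{>0}$ of types $\sgn$ and $\chi_C$; equivalently, $[(\coker p_1)_{>0}:L_\sgn]_q=0$ and $[(\coker p_1)_{>0}:L_{\chi_C}]_q=0$. Using $[P_\sgn:L_\sgn]_q=\tfrac{1}{(1-q^2)(1-q^n)}$, $[P_{\chi_C}:L_\sgn]_q=\tfrac{q^C+q^{C+1}}{(1-q^2)(1-q^n)}$, and $[P_{\chi_C}:L_{\chi_C}]_q=\tfrac{1+q+q^{n-1}+q^n}{(1-q^2)(1-q^n)}$ (from Corollary \ref{cor:gch-Pi}, noting $\chi_{C+1}\simeq\chi_C$ because $n-C-1=C$), both multiplicities collapse to the desired values after using $2C+1=n$. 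The main obstacle is precisely this last arithmetic identification: unlike the case $i<j$ of Lemma \ref{lem:F_i^sgn^j-resolution}, here the two indices coincide and the multiplicity $[P_{\chi_C}:L_{\chi_C}]_q$ picks up extra terms from the relation $\chi_C\otimes\chi_C\simeq\chi_1\oplus\triv\oplus\sgn$ (implicit in the degree-$1$ generators $X\otimes b_C^+$ and $Y\otimes b_C^-$), so one has to keep careful track of these contributions when verifying that the cokernel multiplicities vanish in the correct range.
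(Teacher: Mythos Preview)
Your proposal is correct and follows essentially the same strategy as the paper: establish exactness of the displayed complex, then verify that the cokernel has no $\sgn$- or $\chi_C$-isotypic component in positive degree so that Lemma~\ref{lem:F-gch-identify} identifies it with $F_{\chi_C}^{\sgn,\chi_C}$. The paper is slightly more economical: rather than redoing the kernel computation, it simply sets $i=C$, $j=C+1$ in Lemma~\ref{lem:F_i^sgn^j-resolution} (using $\chi_{C+1}\simeq\chi_C$ from Proposition~\ref{prop:relationofreps}) and observes that the resolution there carries over verbatim; for the multiplicity check it invokes Lemma~\ref{lem:gch-diff} to compute $\gch P_{\chi_C}-q\,\gch P_{\chi_{C+1}}$ rather than extracting coefficients directly from Corollary~\ref{cor:gch-Pi} as you do. Your direct computation of $\ker p_1$ and of $[P_{\chi_C}:L_{\chi_C}]_q=\tfrac{1+q+q^{n-1}+q^n}{(1-q^2)(1-q^n)}$ is a perfectly valid alternative, just marginally less efficient.
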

        \begin{proof}
            We set $i = C=(n-1)/2$ and $j= C+1=(n+1)/2$. 
            Let $M$ be the submodule of $P_{\chi_C}$ generated by $X\otimes b_C^+, Y\otimes b_C^-$, and $(Y^C\otimes b_C^+ - X^C\otimes b_C^-)$.
            Then $M$ has a projective resolution of the same form to \eqref{eq:res-F_i^sgn^j} in the proof of Lemma \ref{lem:F_i^sgn^j-resolution}.
            Hence, we obtain $\gch P_{\chi_C}/M = \gch P_{\chi_C} + (-q^C+q^{C+2})\,\gch P_\sgn - q\,\gch P_{\chi_{C+1}}$.
            By Corollary \ref{cor:gch-Psgn}, we have $[P_\sgn:L_\sgn]_q = 1/(1-q^2)(1-q^n)$ and $[P_\sgn:L_{\chi_C}]_q = (q^C+q^{C+1})/(1-q^2)(1-q^n)$.
            By Lemma \ref{lem:gch-diff}, we have $\gch P_{\chi_C}- q\,\gch P_{\chi_{C+1}} = 1/(1-q^n)\sum_{k=0}^{n-1} q^k\,\gch L_{\chi_{i-k}}$.
            Therefore, we obtain $[P_{\chi_C}:L_\sgn]_q- q\,[P_{\chi_{C+1}}:L_\sgn]_q = q^C/(1-q^n)$ and $[P_{\chi_C}:L_{\chi_C}]_q- q\,[P_{\chi_{C+1}}:L_{\chi_C}]_q = (1+q^{2C})/(1-q^n)$.
            Consequently, we have 
            \[
                [P_{\chi_C}/M:L_\sgn]_q= q^C/(1-q^n)+ (-q^C+q^{C+2})/(1-q^2)(1-q^n) = 0
            \] 
            and
            \begin{align}
                [P_{\chi_C}/M:L_{\chi_C}]_q &=  (1+q^{2C})/(1-q^n) + (-q^C+q^{C+2})(q^C+q^{C+1})/(1-q^2)(1-q^n)\\ 
                &= (1+q^{2C})/(1-q^n) + (-q^{2C} - q^{2C+1})/(1-q^n) \\
                &= (1-q^n)/(1-q^n) =1.    
            \end{align}
            Therefore, $M$ contains the isotypic components of $(P_{\chi_C})_{>0}$ corresponding to $\sgn$ and $\chi_C$.
            It follows that 
            \[
                M= \sum_{f\in\Hom_A(P_{\sgn},P_{\chi_C})_{>0}}\im\, f+\sum_{f\in\Hom_A(P_{\chi_C},P_{\chi_C})_{>0}}\im\, f,
            \]
            and hence, $P_{\chi_C}/M$ is isomorphic to $F_{\chi_C}^{\sgn,\chi_C}$.
            Its projective resolution is given as \eqref{eq:res-F_i^sgn^j}.
        \end{proof}

        \begin{cor}
            \label{cor:gch-standard-K_i}
            Let $i$ be an integer such that $1\leq i\leq C$.
            Then we have
            \[
                \gch F_{\chi_i}^{\sgn,\chi_{i+1}} = q^i[L_\triv] +\sum_{j=1}^i q^{i-j}[L_{\chi_j}].
            \]
        \end{cor}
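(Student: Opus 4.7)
The plan is to carry out a direct character computation that combines the explicit projective resolutions obtained in the preceding lemmas with the general formulae already available for $\gch P_\lambda$.

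First, I would unify the cases $1 \leq i < C$ and $i = C$ into a single formula. For $i < C$, Lemma~\ref{lem:F_i^sgn^j-resolution} applied with $j = i+1$ directly yields
\[
\gch F_{\chi_i}^{\sgn,\chi_{i+1}} = \gch P_{\chi_i} - q\,\gch P_{\chi_{i+1}} - q^i(1-q^2)\,\gch P_\sgn.
\]
For $i = C$, Proposition~\ref{prop:relationofreps} gives $\chi_{C+1} \simeq \chi_{n-C-1} = \chi_C$, so $F_{\chi_C}^{\sgn,\chi_{C+1}} = F_{\chi_C}^{\sgn,\chi_C}$ and Lemma~\ref{lem:F_C^sgn^C-resolution} produces the same expression once $P_{\chi_{C+1}}$ is identified with $P_{\chi_C}$.

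Next I would apply Lemma~\ref{lem:gch-diff} to rewrite $\gch P_{\chi_i} - q\,\gch P_{\chi_{i+1}}$ as $\frac{1}{1-q^n}\sum_{k=0}^{n-1}q^k\,\gch L_{\chi_{i-k}}$, and invoke Corollary~\ref{cor:gch-Psgn} to expand
\[
(1-q^2)\,\gch P_\sgn = \frac{1}{1-q^n}\bigl([L_\sgn] + q^n[L_\triv] + \sum_{j=1}^{n-1}q^j[L_{\chi_j}]\bigr).
\]
Clearing the common denominator $1-q^n$, the desired identity reduces to an explicit equality of formal $\mathbb{Z}[q,q^{-1}]$-linear combinations of $[L_\triv]$, $[L_\sgn]$, and $[L_{\chi_j}]$.

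The remaining step is a term-by-term matching. Splitting the sum $\sum_{k=0}^{n-1}q^k\,\gch L_{\chi_{i-k}}$ into the ranges $0 \leq k < i$, $k = i$, and $i < k \leq n-1$, and using $\gch L_{\chi_0} = \gch L_\triv + \gch L_\sgn$ together with the identification $\chi_{-m} \simeq \chi_m$ coming from Proposition~\ref{prop:relationofreps}, one sees that the $\gch L_\sgn$-piece cancels against $-q^i[L_\sgn]$, the lower half of the $\chi_j$-contributions cancels against the $\chi_j$-terms from $(1-q^2)\,\gch P_\sgn$, and the upper half reindexes (again via $\chi_j \simeq \chi_{n-j}$) to produce exactly the factor $1-q^n$ that cancels the denominator. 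An analogous cancellation in the $[L_\triv]$-column yields the $q^i(1-q^n)[L_\triv]$ contribution. The main obstacle is purely bookkeeping: one has to keep track carefully of the $q^n$ factors introduced by the cyclic identification $\chi_j \simeq \chi_{n-j}$ so that they cancel precisely with the $q^n$ appearing in Corollary~\ref{cor:gch-Psgn}. Once this matching is clear, dividing both sides of the resulting identity by $1-q^n$ completes the proof.
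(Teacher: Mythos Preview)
Your proposal is correct and follows essentially the same approach as the paper: unify the two cases via Lemmas~\ref{lem:F_i^sgn^j-resolution} and~\ref{lem:F_C^sgn^C-resolution} to obtain $\gch F_{\chi_i}^{\sgn,\chi_{i+1}} = \gch P_{\chi_i} - q\,\gch P_{\chi_{i+1}} - q^i(1-q^2)\,\gch P_\sgn$, then expand using Lemma~\ref{lem:gch-diff} and Corollary~\ref{cor:gch-Psgn}, and finish with a direct term-by-term cancellation. The paper's proof is identical in structure, differing only in the notation $(-q^i+q^{i+2})$ for your $-q^i(1-q^2)$ and in the bookkeeping details of the final simplification.
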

        \begin{proof}
            When $i<C$, we have $\gch F_{\chi_i}^{\sgn, \chi_{i+1}} = \gch P_{\chi_i} - q^i \gch P_\sgn -q\,\gch P_{\chi_{i+1}} + q^{i+2} \gch P_\sgn$ by Lemma \ref{lem:F_i^sgn^j-resolution}.
            When $i=C$, we have $\chi_C\simeq \chi_{C+1}$ by Proposition \ref{prop:relationofreps}.
            Therefore, we have $F_{\chi_C}^{\sgn,\chi_{C+1}} = F_{\chi_C}^{\sgn,\chi_C}$ and $\gch P_{\chi_C} = \gch P_{\chi_{C+1}}$.
            By Lemma \ref{lem:F_C^sgn^C-resolution}, we obtain 
            \begin{align}
                \gch F_{\chi_C}^{\sgn,\chi_C} &= (1-q^1)\gch P_{\chi_C} + (-q^C-q^{C+2})\gch P_\sgn\\
                &=\gch P_{\chi_C} -q^1 \gch P_{\chi_{C+1}} + (-q^C-q^{C+2})\gch P_\sgn.
            \end{align}
            Hence, regardless $i=C$ or not, we have 
            \begin{equation}\label{eq:gch-F_i^sgn^i+1}
                \gch F_{\chi_i}^{\sgn, \chi_{i+1}} = \gch P_{\chi_i} -q\,\gch P_{\chi_{i+1}}  + (-q^i+q^{i+2})\,\gch P_\sgn.    
            \end{equation}

            By Proposition \ref{prop:relationofreps} and Lemma \ref{lem:gch-diff}, we obtain 
            \begin{align}
                \gch& P_{\chi_i}-q\,\gch P_{\chi_{i+1}} = \frac{1}{1-q^n}\sum_{j=0}^{n-1} q^j\cdot\gch L_{\chi_{i-j}}\\
                &=\frac{1}{1-q^n} \left( \sum_{j=0}^{i-1} q^j\cdot[L_{\chi_{i-j}}]+ q^i[L_\triv]+ q^i[L_\sgn] + \sum_{j=1}^{n-i-1} q^{i+j}\cdot[ L_{\chi_{j}}] \right).
            \end{align}
            By Corollary \ref{cor:gch-Psgn}, we have 
            \begin{align}
                - q^i &\gch P_\sgn +  q^{i+2} \gch P_\sgn = \frac{-q^i}{(1-q^n)}\cdot\left([L_\sgn]+q^n[L_\triv]+\sum_{j=1}^{n-1}q^j[L_{\chi_j}]\right)\\
                    &=-\frac{1}{(1-q^n)}\cdot\left(q^i[L_\sgn]+q^{n+i}[L_\triv]+\sum_{j=1}^{n-1}q^{i+j}[L_{\chi_j}]\right)\\
                    &=-\frac{1}{(1-q^n)}\cdot\left(q^i[L_\sgn]+q^{n+i}[L_\triv]+\sum_{j=1}^{n-i-1}q^{i+j}[L_{\chi_j}]+\sum_{j=0}^{i-1}q^{n+j}[L_{\chi_{j-i}}]\right).\\
            \end{align}
            By \eqref{eq:gch-F_i^sgn^i+1}, we obtain
            \begin{align}
                \gch &F_{\chi_i}^{\sgn, \chi_{i+1}} = \gch P_{\chi_i} -q\,\gch P_{\chi_{i+1}}  + (-q^i+q^{i+2})\,\gch P_\sgn\\
                    &= \frac{1}{1-q^n} \left( \sum_{j=0}^{i-1} q^j\cdot[L_{\chi_{i-j}}]+ q^i[L_\triv]+ q^i[L_\sgn] + \sum_{j=1}^{n-i-1} q^{i+j}\cdot[ L_{\chi_{j}}] \right)\\
                    &\qquad -\frac{1}{1-q^n}\left(q^i[L_\sgn]+q^{n+i}[L_\triv]+\sum_{j=1}^{n-i-1}q^{i+j}[L_{\chi_j}]+\sum_{j=0}^{i-1}q^{n+j}[L_{\chi_{j-i}}]\right)\\
                    &=\frac{1}{1-q^n}\cdot\left((q^i-q^{n+i})[L_\triv]+\sum_{j=0}^{i-1} (q^j-q^{n+j})\cdot[L_{\chi_{i-j}}]\right)\\
                    &= q^i[L_\triv]+\sum_{j=0}^{i-1} q^j\cdot[L_{\chi_{i-j}}] = q^i[L_\triv]+\sum_{j=1}^{i} q^{i-j}\cdot[L_{\chi_{j}}].
            \end{align}
        \end{proof}

        \begin{prop}
            \label{prop:F_triv^sgn-filt-F_triv^j-F_j-sgn}
            Let $j$ be a positive integer such that $1\leq j\leq C$.
            We have
            \[
                F_\triv^\sgn \filt [F_\triv^{\chi_j}] + q^j [F_{\chi_j}^\sgn].    
            \]
        \end{prop}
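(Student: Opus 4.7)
The plan is to produce an explicit short exact sequence
\[
0 \longrightarrow F_{\chi_j}^{\sgn}\gs{j} \longrightarrow F_{\triv}^{\sgn} \longrightarrow F_{\triv}^{\chi_j} \longrightarrow 0
\]
realizing the claimed filtration, with the bottom piece as the submodule $M^{1}$ of $F_{\triv}^{\sgn}$ and the top piece as the quotient. To start, I would verify the numerical identity $\gch F_{\triv}^{\sgn}=\gch F_{\triv}^{\chi_j}+q^{j}\gch F_{\chi_j}^{\sgn}$ by plugging in the formulas from Lemmas \ref{lem:F_triv^sgn-resolution}, \ref{lem:F_triv^j-resolution}, and \ref{lem:F_i^sgn-resolution}: the right-hand side becomes $(\gch P_{\triv}-q^{j}\gch P_{\chi_j}+q^{2j}\gch P_{\sgn})+q^{j}(\gch P_{\chi_j}-(q^{j}+q^{n-j})\gch P_{\sgn})=\gch P_{\triv}-q^{n}\gch P_{\sgn}$, which matches.

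Next, I would define $M^{1}\subset F_{\triv}^{\sgn}$ as the $A$-submodule generated by the images $\overline{X^{j}\otimes b_{\triv}}$ and $\overline{Y^{j}\otimes b_{\triv}}$ under the quotient map $P_{\triv}\twoheadrightarrow F_{\triv}^{\sgn}$. Because $j\le C<n$, the element $(X^{n}-Y^{n})\otimes b_{\triv}=X^{n-j}(X^{j}\otimes b_{\triv})-Y^{n-j}(Y^{j}\otimes b_{\triv})$ already lies in $\langle X^{j}\otimes b_{\triv},Y^{j}\otimes b_{\triv}\rangle_{A}$, so
\[
F_{\triv}^{\sgn}/M^{1}=P_{\triv}/\langle X^{j}\otimes b_{\triv},Y^{j}\otimes b_{\triv}\rangle_{A}=F_{\triv}^{\chi_j}
\]
by the explicit presentation in Lemma \ref{lem:F_triv^j-resolution}.

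To identify $M^{1}$ with $F_{\chi_j}^{\sgn}\gs{j}$, I would build the graded $A$-module homomorphism $\varphi:P_{\chi_j}\gs{j}\to F_{\triv}^{\sgn}$ sending $b_{j}^{+}\mapsto \overline{X^{j}\otimes b_{\triv}}$ and $b_{j}^{-}\mapsto \overline{Y^{j}\otimes b_{\triv}}$ (this is the reduction of the map $p_{1}$ appearing in the resolution of $F_{\triv}^{\chi_j}$), whose image is exactly $M^{1}$. I then check that $\varphi$ kills the two generators of the kernel in Lemma \ref{lem:F_i^sgn-resolution}: $\varphi(Y^{j}\otimes b_{j}^{+}-X^{j}\otimes b_{j}^{-})=Y^{j}X^{j}\otimes b_{\triv}-X^{j}Y^{j}\otimes b_{\triv}=0$ and $\varphi(X^{n-j}\otimes b_{j}^{+}-Y^{n-j}\otimes b_{j}^{-})=(X^{n}-Y^{n})\otimes b_{\triv}=0$ in $F_{\triv}^{\sgn}$. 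Hence $\varphi$ factors through a surjection $F_{\chi_j}^{\sgn}\gs{j}\twoheadrightarrow M^{1}$.

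Finally, from the short exact sequence $0\to M^{1}\to F_{\triv}^{\sgn}\to F_{\triv}^{\chi_j}\to 0$ and the character identity above, $\gch M^{1}=q^{j}\gch F_{\chi_j}^{\sgn}$, so the surjection $F_{\chi_j}^{\sgn}\gs{j}\twoheadrightarrow M^{1}$ matches graded characters and is thus an isomorphism. The filtration $F_{\triv}^{\sgn}\supset M^{1}\supset 0$ with quotients $F_{\triv}^{\chi_j}$ and $F_{\chi_j}^{\sgn}\gs{j}$ then gives $F_{\triv}^{\sgn}\filt[F_{\triv}^{\chi_j}]+q^{j}[F_{\chi_j}^{\sgn}]$ as required. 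The only delicate point is ensuring the second relation evaluates to $(X^{n}-Y^{n})\otimes b_{\triv}$, which vanishes in $F_{\triv}^{\sgn}$ precisely because we quotient by this generator; no deeper obstacle arises.
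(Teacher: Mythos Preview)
Your proof is correct and follows essentially the same approach as the paper: both define the same map $P_{\chi_j}\gs{j}\to F_\triv^\sgn$ via $b_j^\pm\mapsto X^j,Y^j$, identify the cokernel with $F_\triv^{\chi_j}$ using the containment $(X^n-Y^n)\in\langle X^j,Y^j\rangle$, and then identify the image with $F_{\chi_j}^\sgn\gs{j}$ by a graded character comparison. The only cosmetic difference is that you explicitly verify the two relations from Lemma~\ref{lem:F_i^sgn-resolution} vanish under $\varphi$ to obtain the factorization, whereas the paper invokes Lemma~\ref{lem:F-gch-identify} directly on $\im f$ as a quotient of $P_{\chi_j}\gs{j}$; both routes amount to the same thing.
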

        \begin{proof}
            By Lemma \ref{lem:F_triv^sgn-resolution}, we have $F_\triv^\sgn = P_\triv/\langle (X^n-Y^n)\otimes b_\triv\rangle_A$ and $\gch F_\triv^\sgn = \gch P_\triv -q^n \gch P_\sgn$.
            Let $f$ be a graded $A$-module homomorphism from $P_{\chi_j}\gs{j}$ to $F_\triv^\sgn$ given by
            \[  
            \begin{matrix}
                f: &P_{\chi_j}\gs{j} &\rightarrow &F_\triv^\sgn\\
                    &1\otimes b_j^+ &\mapsto & X^j \otimes b_\triv\\
                    &1\otimes b_j^- &\mapsto & Y^j \otimes b_\triv
            \end{matrix}.
            \]
            Then we have $F_\triv^\sgn/\im\, f = P_\triv/\langle (X^n-Y^n)\otimes b_\triv, X^j\otimes b_\triv, Y^j \otimes b_\triv\rangle_A$.
            Clearly, $(X^n-Y^n)\otimes b_\triv = X^{n-j}\cdot X^j\otimes b_\triv + Y^{n-j}\cdot Y^j\otimes b_\triv$ belongs to $\langle X^j\otimes b_\triv, Y^j \otimes b_\triv\rangle_A$.
            Therefore, we obtain $F_\triv^\sgn/\im\, f \simeq P_\triv/\langle X^j\otimes b_\triv, Y^j \otimes b_\triv\rangle_A$.
            By Lemma \ref{lem:F_triv^j-resolution}, we have $P_\triv/\langle X^j\otimes b_\triv, Y^j \otimes b_\triv\rangle_A \simeq F_\triv^{\chi_j}$, and 
            \[
                \gch F_\triv^{\chi_j} = \gch P_\triv -q^j \gch P_{\chi_j} + q^{2j}\gch P_\sgn.
            \]
            Hence, we obtain
            \begin{align}
                \gch \im\, f &= \gch F_\triv^\sgn -\gch F_\triv^\sgn/\im\, f\\
                    &= \gch F_\triv^\sgn -\gch \gch F_\triv^{\chi_j}\\
                    &= (\gch P_\triv -q^n \gch P_\sgn) - (\gch P_\triv -q^j \gch P_{\chi_j} + q^{2j}\gch P_\sgn)\\
                    &=  q^j(\gch P_{\chi_j} -(q^j+q^{n-j})\gch P_\sgn). 
            \end{align}
            By Lemma \ref{lem:F_i^sgn-resolution}, we have $\gch F_{\chi_j}^\sgn\gs{j} = q^j(\gch P_{\chi_j} -(q^j+q^{n-j})\gch P_\sgn)$.
            Since $\im\, f $ is a graded quotient module of $P_{\chi_j}\gs{j}$, we obtain $\im\, f \simeq \gch F_{\chi_j}^\sgn\gs{j}$ by Lemma \ref{lem:F-gch-identify}.
            Consequently, there exists a filtration
            \[
                F_\triv^\sgn \supset \im\,f \supset 0
            \]
            which have two factors $\im\,f \simeq \gch F_{\chi_j}^\sgn\gs{j}$ and $F_\triv^\sgn/\im\, f \simeq F_\triv^{\chi_j}$ as desired.
        \end{proof}

        \begin{prop}
            \label{prop:F_i^sgn-filt-F_i^sgn^j-F_j^sgn}
            Let $i,j$ be positive integers such that $1\leq i \leq j \leq C$.
            Then we have
            \[
                F_{\chi_i}^\sgn \filt [F_{\chi_i}^{\sgn,\chi_j}] + q^{j-i} [F_{\chi_j}^{\sgn}].    
            \]
        \end{prop}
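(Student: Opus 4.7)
The plan is to mimic Proposition \ref{prop:F_triv^sgn-filt-F_triv^j-F_j-sgn}: construct a graded homomorphism $f : P_{\chi_j}\gs{j-i} \to F_{\chi_i}^\sgn$ whose image realizes $F_{\chi_j}^\sgn\gs{j-i}$ and whose cokernel is $F_{\chi_i}^{\sgn,\chi_j}$, then assemble the required two-step filtration $F_{\chi_i}^\sgn \supset \im f \supset 0$.

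Concretely, I would first lift the map from the projective resolution of $F_{\chi_i}^{\sgn,\chi_j}$ appearing in Lemma \ref{lem:F_i^sgn^j-resolution}: define $f$ to be the composition of the $A$-module map $P_{\chi_j}\gs{j-i} \to P_{\chi_i}$ sending $b_j^+ \mapsto X^{j-i}\otimes b_i^+$ and $b_j^- \mapsto Y^{j-i}\otimes b_i^-$ with the quotient $P_{\chi_i}\twoheadrightarrow F_{\chi_i}^\sgn$. Since the generators $X^{j-i}\otimes b_i^+$ and $Y^{j-i}\otimes b_i^-$ generate the isotypic component of $(P_{\chi_i})_{>0}$ of type $\chi_j$ modulo lower (namely $\sgn$-type) terms, the cokernel of $f$ is the quotient of $P_{\chi_i}$ by the submodule generated jointly by $\Hom_A(P_\sgn,P_{\chi_i})_{>0}$ and $\Hom_A(P_{\chi_j},P_{\chi_i})_{>0}$; that is, $F_{\chi_i}^\sgn/\im f \simeq F_{\chi_i}^{\sgn,\chi_j}$.

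Next, I would compute graded characters. Using Lemma \ref{lem:F_i^sgn-resolution} and Lemma \ref{lem:F_i^sgn^j-resolution}, the identity
\[
\gch F_{\chi_i}^\sgn - \gch F_{\chi_i}^{\sgn,\chi_j} = q^{j-i}\bigl(\gch P_{\chi_j} - (q^j+q^{n-j})\gch P_\sgn\bigr) = q^{j-i}\gch F_{\chi_j}^\sgn
\]
follows by direct cancellation. Therefore $\gch \im f = q^{j-i}\gch F_{\chi_j}^\sgn$, and since $\im f$ is a graded quotient of $P_{\chi_j}\gs{j-i}$ with the correct character, Lemma \ref{lem:F-gch-identify} yields $\im f \simeq F_{\chi_j}^\sgn\gs{j-i}$. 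Combining with the isomorphism for the cokernel gives the desired filtration with factors $F_{\chi_i}^{\sgn,\chi_j}$ and $F_{\chi_j}^\sgn\gs{j-i}$.

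The main technical obstacle is verifying the identification $F_{\chi_i}^\sgn/\im f \simeq F_{\chi_i}^{\sgn,\chi_j}$ rigorously. The quick way is to observe that the submodule of $P_{\chi_i}$ generated by the lift of $\im f$ together with the $\sgn$-isotypic component of $(P_{\chi_i})_{>0}$ is clearly contained in the corresponding submodule defining $F_{\chi_i}^{\sgn,\chi_j}$, and the reverse containment then follows on the graded character level via Lemma \ref{lem:F-gch-identify} applied to the cokernel. A secondary subtlety is the degenerate case $i=j$, where the map $f$ becomes (the lift of) the identity on $P_{\chi_i}$; in that regime the character identity degenerates, and one must interpret the statement accordingly or treat $i<j$ as the substantive case with the equality case corresponding to the trivial filtration.
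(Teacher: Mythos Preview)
Your approach is essentially the paper's: the same map $f:P_{\chi_j}\gs{j-i}\to F_{\chi_i}^\sgn$, the same character computation, and the same appeal to Lemma~\ref{lem:F-gch-identify} to identify $\im f\simeq F_{\chi_j}^\sgn\gs{j-i}$. The only real difference is how the cokernel is pinned down. You argue by containment plus a character comparison via Lemma~\ref{lem:F-gch-identify}; the paper instead writes the quotient explicitly as
\[
P_{\chi_i}\big/\bigl\langle Y^i\otimes b_i^+ - X^i\otimes b_i^-,\ X^{n-i}\otimes b_i^+ - Y^{n-i}\otimes b_i^-,\ X^{j-i}\otimes b_i^+,\ Y^{j-i}\otimes b_i^-\bigr\rangle_A
\]
and observes that $X^{n-i}\otimes b_i^+ - Y^{n-i}\otimes b_i^-\in\langle X^{j-i}\otimes b_i^+,\ Y^{j-i}\otimes b_i^-\rangle_A$ (since $n-i\ge j-i$), so the quotient coincides on the nose with the module from Lemma~\ref{lem:F_i^sgn^j-resolution}. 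Your route works too, but the paper's element-level redundancy check is shorter and avoids a second invocation of Lemma~\ref{lem:F-gch-identify}.

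Your concern about $i=j$ is well placed: the argument (both yours and the paper's) only goes through for $i<j$, since Lemma~\ref{lem:F_i^sgn^j-resolution} requires $i<j$ and the character identity would force $\gch F_{\chi_i}^{\sgn,\chi_i}=0$, which is false. The proposition is only ever applied in the paper with strict inequality (in the proof of Theorem~\ref{thm:mainthm}, with $j=a_k>i$), so the hypothesis should read $1\le i<j\le C$.
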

        \begin{proof}
            By Lemma \ref{lem:F_i^sgn-resolution}, we have $F_{\chi_i}^\sgn = P_{\chi_i}/\langle Y^i\otimes b_i^+ - X^i \otimes b_i^-,X^{n-i}\otimes b_i^+ - Y^{n-i}\otimes b_i^- \rangle_A$.
            Let $f$ be a graded $A$-module homomorphism given by
            \[
            \begin{matrix}
                f: &P_{\chi_j}\gs{j-i} &\rightarrow &F_{\chi_i}^\sgn\\
                    &1\otimes b_j^+ &\mapsto & X^{j-i} \otimes b_i^+\\
                    &1\otimes b_j^- &\mapsto & Y^{j-i} \otimes b_i^-
            \end{matrix}.
            \]
            Then we have 
            \[
                F_{\chi_i}^\sgn / \im\,f \simeq P_{\chi_i} /\langle Y^i\otimes b_i^+ - X^i \otimes b_i^-,X^{n-i}\otimes b_i^+ - Y^{n-i}\otimes b_i^- ,X^{j-i} \otimes b_i^+,Y^{j-i} \otimes b_i^-\rangle_A.
            \]
            Since $X^{n-i}\otimes b_i^+ - Y^{n-i}\otimes b_i^-$ belongs to $\langle X^{j-i} \otimes b_i^+,Y^{j-i} \otimes b_i^- \rangle_A$, we obtain $ F_{\chi_i}^\sgn / \im\,f \simeq \langle Y^i\otimes b_i^+ - X^i \otimes b_i^-,X^{j-i} \otimes b_i^+,Y^{j-i} \otimes b_i^-\rangle_A$.
            By Lemma \ref{lem:F_i^sgn^j-resolution}, we find $P_{\chi_i}/\langle Y^i\otimes b_i^+ - X^i \otimes b_i^-,X^{j-i} \otimes b_i^+,Y^{j-i} \otimes b_i^-\rangle_A$ is isomorphic to $F_{\chi_i}^{\sgn,\chi_j}$.
            Therefore, we have $\gch \im\, f = \gch F_{\chi_i}^\sgn - \gch F_{\chi_i}^{\sgn,\chi_j}$.
            Again by Lemma \ref{lem:F_i^sgn^j-resolution}, we have 
            \[
                \gch F_{\chi_i}^{\sgn, \chi_j} = \gch P_{\chi_i} - q^i \gch P_\sgn -q^{j-i}\gch P_{\chi_j} + q^{2j-i} \gch P_\sgn.
            \]
            By Lemma \ref{lem:F_i^sgn-resolution}, we also have 
            \[    
                \gch F_{\chi_i}^\sgn = \gch P_{\chi_i} -(q^i+q^{n-i})\gch P_\sgn.
            \]
            Hence, we obtain
            \begin{align}
                \gch \im\, f &= (\gch P_{\chi_i} -(q^i+q^{n-i})\gch P_\sgn) \\
                            &\qquad- (\gch P_{\chi_i} - q^i \gch P_\sgn -q^{j-i}\gch P_{\chi_j} + q^{2j-i} \gch P_\sgn)\\
                &= q^{j-i}\gch P_{\chi_j} - q^{2j-i} \gch P_\sgn -q^{n-i}\gch P_\sgn\\
                &= q^{j-i}(\gch P_{\chi_j} - q^{j} \gch P_\sgn -q^{n-j}\gch P_\sgn ).
            \end{align}
            Since $\gch F_{\chi_j}^\sgn = \gch P_{\chi_j} - q^{j} \gch P_\sgn -q^{n-j}\gch P_\sgn $ by Lemma \ref{lem:F_i^sgn-resolution}, we obtain $\gch \im\, f = \gch F_{\chi_j}^\sgn\gs{j-i}$.
            Clearly, $\im\, f$ is a graded quotient module of $P_{\chi_j}\gs{j-i}$.
            Thus we have $\im\, f \simeq F_{\chi_j}^\sgn\gs{j-i}$ by Lemma \ref{lem:F-gch-identify}.
            Consequently, there exists a filtration
            \[
                F_{\chi_i}^\sgn \supset \im\,f \supset 0 
            \]
            with desired factors.
        \end{proof}

        \begin{prop}
            \label{prop:F_C^sgn-filt-F_C^sgn^C}
            We have
            \[
                F_{\chi_C}^\sgn \filt \frac{1}{1-q} [F_{\chi_C}^{\sgn,\chi_C}].
            \]
        \end{prop}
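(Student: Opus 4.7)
The plan is to apply Lemma \ref{lem:repeatingfilt} with shift parameter $n=1$: I will construct an injective graded $(S*W)$-module homomorphism $\bar\phi\colon F_{\chi_C}^\sgn\gs{1}\to F_{\chi_C}^\sgn$ whose cokernel is isomorphic to $F_{\chi_C}^{\sgn,\chi_C}$, at which point Lemma \ref{lem:repeatingfilt} yields $F_{\chi_C}^\sgn \filt \frac{1}{1-q}[F_{\chi_C}^{\sgn,\chi_C}]$ immediately.

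Concretely, I would define $\phi\colon P_{\chi_C}\gs{1}\to P_{\chi_C}$ by $b_C^+\mapsto Y\otimes b_C^-$ and $b_C^-\mapsto X\otimes b_C^+$ (this is the same map appearing as the right-hand differential in the resolution of Lemma \ref{lem:F_C^sgn^C-resolution}); $W$-equivariance follows from the identity $2C+1=n$, and $S$-linearity is automatic. Next I would show that $\phi$ descends to a map $\bar\phi$ on $F_{\chi_C}^\sgn = P_{\chi_C}/\langle a,b\rangle$, where $a = Y^C\otimes b_C^+ - X^C\otimes b_C^-$ and $b = X^{C+1}\otimes b_C^+ - Y^{C+1}\otimes b_C^-$ are the generators from Lemma \ref{lem:F_i^sgn-resolution}. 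A direct calculation gives $\phi(a)=-b$ and $\phi(b)=-XY\cdot a$, so the shifted ideal $\langle a,b\rangle\gs{1}$ is sent into $\langle a,b\rangle$ and $\bar\phi$ is well-defined.

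The image of $\bar\phi$ in $F_{\chi_C}^\sgn$ is the submodule generated by the classes of $X\otimes b_C^+$ and $Y\otimes b_C^-$, which span the $\chi_C$-isotypic component of $S_1\otimes\chi_C$. Hence
\[
F_{\chi_C}^\sgn/\im\bar\phi \simeq P_{\chi_C}/\langle a,\, b,\, X\otimes b_C^+,\, Y\otimes b_C^-\rangle.
\]
Because $b$ already lies in $\langle X\otimes b_C^+, Y\otimes b_C^-\rangle$, this quotient coincides with the presentation of $F_{\chi_C}^{\sgn,\chi_C}$ appearing in Lemma \ref{lem:F_C^sgn^C-resolution}. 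Finally, injectivity of $\bar\phi$ is forced by a graded-character count: combining Lemma \ref{lem:F_i^sgn-resolution} with Lemma \ref{lem:F_C^sgn^C-resolution} gives the identity $(1-q)\gch F_{\chi_C}^\sgn = \gch F_{\chi_C}^{\sgn,\chi_C}$, so $\gch\im\bar\phi = \gch F_{\chi_C}^\sgn - \gch F_{\chi_C}^{\sgn,\chi_C} = q\gch F_{\chi_C}^\sgn = \gch F_{\chi_C}^\sgn\gs{1}$; since $\im\bar\phi$ is a quotient of $F_{\chi_C}^\sgn\gs{1}$ with the same graded dimension, $\ker\bar\phi=0$. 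Lemma \ref{lem:repeatingfilt} then completes the proof.

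The only non-mechanical step is guessing the correct map $\phi$; once the map is in hand, descent, identification of the cokernel, and injectivity all reduce to the character identity already packaged by Lemmas \ref{lem:F_i^sgn-resolution} and \ref{lem:F_C^sgn^C-resolution}. Thus the main obstacle is simply recognising that the first differential in the resolution of $F_{\chi_C}^{\sgn,\chi_C}$ is precisely what produces a self-embedding of $F_{\chi_C}^\sgn$ of degree $1$.
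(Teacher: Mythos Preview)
Your proof is correct and follows essentially the same route as the paper: the same map $b_C^+\mapsto Y\otimes b_C^-$, $b_C^-\mapsto X\otimes b_C^+$ is used, the cokernel is identified with $F_{\chi_C}^{\sgn,\chi_C}$ via the presentation in Lemma~\ref{lem:F_C^sgn^C-resolution}, the image is matched with $F_{\chi_C}^\sgn\gs{1}$ by the graded-character identity, and Lemma~\ref{lem:repeatingfilt} concludes. The only cosmetic difference is that the paper defines the map directly into the quotient $F_{\chi_C}^\sgn$ and invokes Lemma~\ref{lem:F-gch-identify} to recognise the image, whereas you lift to $P_{\chi_C}$ and verify descent by the explicit computations $\phi(a)=-b$, $\phi(b)=-XY\cdot a$; both are equivalent packagings of the same argument.
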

        \begin{proof}
            By Lemma \ref{lem:F_i^sgn-resolution}, we have $F_{\chi_C}^\sgn = P_{\chi_C}/\langle Y^C\otimes b_C^+ - X^C \otimes b_C^-,X^{n-C}\otimes b_C^+ - Y^{n-C}\otimes b_C^- \rangle_A$ and $\gch F_{\chi_C}^\sgn = \gch P_{\chi_C} -(q^C+q^{n-C})\gch P_\sgn$.
            Let $f$ be a graded $A$-module homomorphism given by
            \[
            \begin{matrix}
                f: &P_{\chi_C}\gs{1} &\rightarrow &F_{\chi_C}^\sgn\\
                    &1\otimes b_C^+ &\mapsto & Y \otimes b_C^-\\
                    &1\otimes b_C^- &\mapsto & X \otimes b_C^+
            \end{matrix}.
            \]
            Then we have 
            \[F_{\chi_C}^\sgn /\im\,f \simeq P_{\chi_C}/\bigl(\langle Y^C\otimes b_C^+ - X^C \otimes b_C^-,X^{n-C}\otimes b_C^+ - Y^{n-C}\otimes b_C^-\rangle_A + \langle Y \otimes b_C^-, X \otimes b_C^+ \rangle_A \bigr).\]
            Since $C=(n-1)/2$, we have $n-C = (n+1)/2 \geq 1$.
            Thus, $X^{n-C}\otimes b_C^+ - Y^{n-C}\otimes b_C^-$ belongs to $\langle Y \otimes b_C^-, X \otimes b_C^+ \rangle_A$.
            Therefore, we have $F_{\chi_C}^\sgn /\im\,f \simeq P_{\chi_C}/\langle Y^C\otimes b_C^+ - X^C \otimes b_C^-, Y \otimes b_C^-, X \otimes b_C^+ \rangle_A$.
            By Lemma \ref{lem:F_C^sgn^C-resolution}, we have $P_{\chi_C}/\langle Y^C\otimes b_C^+ - X^C \otimes b_C^-, Y \otimes b_C^-, X \otimes b_C^+ \rangle_A \simeq F_{\chi_C}^{\sgn,\chi_C}$ and 
            \begin{align}
                \gch F_{\chi_C}^{\sgn,C} = (1-q)\gch P_{\chi_C} + (-q^C+-q^{C+2})\gch P_\sgn.
            \end{align}
            Hence, we obtain
            \begin{align}
                \gch \im\,f &= \gch F_{\chi_C}^\sgn - \gch F_{\chi_C}^\sgn/\im\,f\\ 
                    &=\bigl(\gch P_{\chi_C} -(q^C+q^{n-C})\gch P_\sgn\bigr) -\bigl((1-q)\gch P_{\chi_C} + (-q^C+-q^{C+2})\gch P_\sgn\bigr)\\
                    &= q\,\gch P_{\chi_C}-q^{\frac{n+1}{2}}\gch P_\sgn +q^{\frac{n+3}{2}} \gch P_\sgn\\
                    &=q(\gch P_{\chi_C} -(q^C+q^{n-C})\gch P_\sgn)=\gch F_{\chi_C}^\sgn\gs{1}.
            \end{align}
            Clearly, $\im\,f$ is a quotient module of $P_{\chi_C}\gs{1}$, and hence, we have $\im\,f \simeq F_{\chi_C}^\sgn\gs{1}$.
            Hence, we have 
            \[
                F_{\chi_C}^\sgn \filt \frac{1}{1-q} [F_{\chi_C}^{\sgn,\chi_C}]
            \]
            by Lemma \ref{lem:repeatingfilt}.
        \end{proof}

        \begin{lem}
            \label{lem:triv-socle}
            Let $K$ be a Springer correspondence.
            Let $\lambda\in\irr{W}$ and $V$ be the maximal semisimple submodule of $(K_\lambda)_{>0}$.
            A representation $V$ is isotypic semisimple of type $\triv$ or $\sgn$ if $V\neq 0$.
        \end{lem}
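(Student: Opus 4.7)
The plan is to analyze each graded simple summand $L_\alpha\gs{a}$ of $V$. Since $V \subseteq (K_\lambda)_{>0}$ and $V$ is graded, any such summand has $a > 0$, so the hypotheses of Corollary \ref{cor:K-socle-sgn-relation} apply to every simple submodule of $V$. The strategy is to extract enough preorder information from the global-dimension-$2$ Ext computations to restrict $\alpha$ to $\{\triv,\sgn\}$, and then to forbid the two types from coexisting in $V$.

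First I would invoke Corollary \ref{cor:K-socle-sgn-relation}: for any $\beta \in \irr{W}$ with $\Ext_A^2(L_\alpha,L_\beta) \neq 0$ one obtains $\lambda \precsim_K \beta$. By Corollary \ref{cor:Ext-LL}, the only such $\beta$ is the (irreducible) representation $\sgn\otimes\alpha$, and Proposition \ref{prop:lessrelations} upgrades this to $\lambda \precsim \sgn\otimes\alpha$, i.e.\ $\sgn\otimes\alpha \succsim \lambda$. On the other hand, because $L_\alpha\gs{a}$ embeds into $(K_\lambda)_{>0}$ we have $[(K_\lambda)_{>0} : L_\alpha]_q \neq 0$, and Definition \ref{dfn:K-Ktilde} forces $\alpha \not\succsim \lambda$. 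Now if $\alpha = \chi_i$ for some $i$, Proposition \ref{prop:tensorofreps} gives $\sgn\otimes\chi_i \simeq \chi_i$, so the first conclusion reads $\chi_i \succsim \lambda$ while the second says the opposite; this contradiction forces $\alpha \in \{\triv,\sgn\}$.

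To conclude isotypicity, I will suppose that $V$ contains both a graded summand $L_\triv\gs{a}$ and a graded summand $L_\sgn\gs{b}$. Applying the argument above to the $\triv$-summand and using $\sgn\otimes\triv \simeq \sgn$ produces $\sgn \succsim \lambda$; but $L_\sgn\gs{b} \subseteq (K_\lambda)_{>0}$ together with Definition \ref{dfn:K-Ktilde} yields the opposite inequality $\sgn \not\succsim \lambda$, a contradiction. Hence $V$ must be isotypic of type $\triv$ or of type $\sgn$, which is exactly the claim. I do not expect a serious obstacle: the entire argument rests on the socle-versus-$\Ext^2$ compatibility already established in Corollary \ref{cor:K-socle-sgn-relation} together with the self-invariance $\sgn\otimes\chi_i \simeq \chi_i$ peculiar to the odd dihedral setting, both of which are in place before this lemma is invoked.
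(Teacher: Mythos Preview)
Your argument is correct and follows essentially the same route as the paper: both exploit Corollary~\ref{cor:K-socle-sgn-relation} together with Corollary~\ref{cor:Ext-LL} to conclude that if $L_\alpha\gs{a}\subset V$ then $\lambda\precsim_K\sgn\otimes\alpha$, and then use $\sgn\otimes\chi_i\simeq\chi_i$ to rule out two-dimensional summands and forbid $\triv$ and $\sgn$ from coexisting. The only cosmetic difference is that you pass explicitly to the preorder $\precsim$ via Proposition~\ref{prop:lessrelations} before invoking Definition~\ref{dfn:K-Ktilde}, whereas the paper phrases the same step as ``the isotypic component of $(K_\lambda)_{>0}$ associated to $\beta$ is zero''; these are equivalent formulations of the same observation.
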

        \begin{proof}
            Since $(K_\lambda)_{>0}$ is finite dimensional, we have $V=0$ if and only if $(K_\lambda)_{>0}=0$.
            Suppose that $V$ admits a direct summand $L_\alpha\gs{a}$ with $\alpha\in\irr{W}$ and $a>0$.
            By Corollary \ref{cor:K-socle-sgn-relation} and Corollary \ref{cor:Ext-LL}, for a simple representation $\beta$ which is a direct summand of $\alpha\otimes  \sgn$, we have $\lambda\precsim_K\beta$.
            In particular, the isotypic component of $(K_\lambda)_{>0}$ associated to $\beta$ is zero.
            Then, at least one of the isotypic components of $V$ associated to $\triv$ or $\sgn$ is zero.
            We have $\chi_i\otimes \sgn \simeq \chi_i$ for $1\leq i \leq C$ by Proposition\ref{prop:tensorofreps}, and so the isotypic component of $V$ associated to a two dimensional irreducible representation $\chi_i$ is also zero.
            Therefore, a representation $V$ is isotypic semisimple of type $\triv$ or $\sgn$ if $V\neq 0$.
        \end{proof}

        \begin{cor}
            \label{cor:modified-springerorder}
            Let $K$ be a Springer correspondence.
            We have at least one of the followings:
                \begin{itemize}
                    \item  $[(K_\triv)_{>0}:\lambda]_q = [(K_\lambda)_{>0}:\sgn]_q=0$ holds for all $\lambda$,
                    \item  $[(K_\sgn)_{>0}:\lambda]_q = [(K_\lambda)_{>0}:\triv]_q=0$ holds for all $\lambda$.
                \end{itemize}

        \end{cor}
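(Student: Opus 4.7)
My plan is to proceed by contradiction: assume that both conditions of the corollary fail, and derive a contradiction via a socle analysis plus chain chasing in the preorder $\precsim_K$.

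I begin by reducing the hypothesis to the existence of $\mu_0, \mu_1 \in \irr{W}$ with $[(K_{\mu_0})_{>0}:\sgn]_q \neq 0$ and $[(K_{\mu_1})_{>0}:\triv]_q \neq 0$, and determining the socle types of $(K_{\mu_0})_{>0}$ and $(K_{\mu_1})_{>0}$. If the first condition fails because $K_\triv \neq L_\triv$, then $(K_\triv)_{>0}$ is a nonzero finite-dimensional module with trivially vanishing $\triv$-isotypic part, so by Lemma \ref{lem:triv-socle} its socle is forced to be $\sgn$-isotypic, giving $[(K_\triv)_{>0}:\sgn]_q \neq 0$ and allowing me to take $\mu_0 = \triv$; any other failure mode of the first condition directly provides such $\mu_0$. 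Symmetric reasoning yields $\mu_1$. Now applying Lemma \ref{lem:triv-socle} to $K_{\mu_0}$: its socle in positive degrees is of type $\triv$ or $\sgn$; a $\triv$-type socle would yield $\mu_0 \precsim_K \sgn$ via Corollary \ref{cor:K-socle-sgn-relation} with $\beta = \sgn \otimes \triv = \sgn$, forcing $[(K_{\mu_0})_{>0}:\sgn]_q = 0$ and contradicting our choice. Hence the socle is $\sgn$-isotypic, giving $\mu_0 \precsim_K \triv$ (Corollary \ref{cor:K-socle-sgn-relation} with $\beta = \sgn \otimes \sgn = \triv$); symmetrically $\mu_1 \precsim_K \sgn$. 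In particular, $[(K_{\mu_0})_{>0}:\triv]_q = 0 = [(K_{\mu_1})_{>0}:\sgn]_q$.

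The main step, and principal obstacle, is extracting a contradiction from these facts. My approach is to chase the chain $\mu_0 = \nu_0, \nu_1, \ldots, \nu_l = \triv$ of Definition \ref{dfn:precsimK} witnessing $\mu_0 \precsim_K \triv$: at the terminal arrow, $\Ext^1_A(K_{\nu_{l-1}}, L_\triv) \neq 0$, and Proposition \ref{prop:Ext1-gch} (using $\triv \otimes \chi_1 = \chi_1$) forces $\chi_1$ to appear in $(K_{\nu_{l-1}})_{>0}$. Exploiting the symmetry $\sgn \otimes \chi_1 = \chi_1$, I expect to convert this into a parallel chain witnessing $\mu_0 \precsim_K \sgn$, which would force $[(K_{\mu_0})_{>0}:\sgn]_q = 0$ and close the contradiction. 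The delicate point is that $\Ext^1_A(K_{\nu_{l-1}}, L_\triv) \neq 0$ does not automatically imply $\Ext^1_A(K_{\nu_{l-1}}, L_\sgn) \neq 0$: a converse to Proposition \ref{prop:Ext1-gch} is what is needed, and this will require a finer analysis of the semisimple top of $I_{\nu_{l-1}}$, likely via iterative application of Lemma \ref{lem:triv-socle} up the chain combined with the specific dihedral tensor-product structure that makes $\triv$ and $\sgn$ appear symmetrically as the two $1$-dimensional summands of $\chi_1 \otimes \chi_1$.
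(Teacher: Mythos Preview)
Your setup is correct through the derivation of $\mu_0 \precsim_K \triv$, $\mu_1 \precsim_K \sgn$, $[(K_{\mu_0})_{>0}:\sgn]_q \neq 0$, $[(K_{\mu_1})_{>0}:\triv]_q \neq 0$. But the closing step you propose---converting the chain witnessing $\mu_0 \precsim_K \triv$ into one witnessing $\mu_0 \precsim_K \sgn$ via analysis of the terminal $\Ext^1$---is a genuine gap. The ``converse to Proposition \ref{prop:Ext1-gch}'' you say you need does not hold, and the symmetric occurrence of $\triv,\sgn$ in $\chi_1\otimes\chi_1$ does not by itself force $\Ext^1_A(K_{\nu_{l-1}},L_\sgn)\neq 0$ from $\Ext^1_A(K_{\nu_{l-1}},L_\triv)\neq 0$: the head of $I_{\nu_{l-1}}$ may contain $\triv$ without containing $\sgn$. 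Moreover, your four derived relations are mutually consistent on their own (they only force $\triv$ and $\sgn$ to be $\precsim_K$-incomparable), so no amount of chain manipulation using just Definition \ref{dfn:precsimK} and Proposition \ref{prop:Ext1-gch} can close the argument.

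The missing idea, and what the paper uses, is a reciprocity step: whenever $[(K_\lambda)_{>0}:L_\nu]_q\neq 0$ one has $\nu\precsim_K\lambda$. Indeed $[K_\lambda:L_\nu]_q\neq 0$ forces $[\widetilde{K}_\lambda:L_\nu]_q\neq 0$ (as $K_\lambda$ is a quotient of $\widetilde{K}_\lambda$), whence $(P_\nu:K_\lambda)_q=[\widetilde{K}_\lambda:L_\nu]_q\neq 0$ by Proposition \ref{prop:reciprocity-K}; since $P_\nu$ is filtered by $\{K_\mu\}_{\mu\succsim_K\nu}$ (Corollary \ref{cor:PK-precsimK}) and the multiplicities $(P:K)$ are filtration-independent, $\lambda\succsim_K\nu$ follows. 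With this in hand your own setup closes immediately: $\sgn\precsim_K\mu_0\precsim_K\triv$ and $\triv\precsim_K\mu_1\precsim_K\sgn$ give $\triv\sim_K\sgn$, hence $\mu_0\precsim_K\sgn$, contradicting $[(K_{\mu_0})_{>0}:\sgn]_q\neq 0$.
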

        \begin{proof}
            We keep the notation in the proof of Lemma \ref{lem:triv-socle}.
            If $V$ is a nonzero isotypic semisimple module of type $\triv$, then we have $\triv \prec_K \lambda$ by Corollary \ref{cor:higherK-relation} and \ref{cor:ext-eliminated}.
            We also have $\lambda \precsim_K \sgn$ by Corollary \ref{cor:K-socle-sgn-relation}, and so $\triv \prec_K \sgn$ holds.
            By swapping the role of $\triv$ and $\sgn$, we can also show that $\sgn\prec_K\triv$ holds if $V$ is a nonzero isotypic semisimple module of type $\sgn$.
            Therefore, for $\lambda,\mu\in\irr{W}$, the maximal semisimple submodules of $(K_\lambda)_{>0}$ and $(K_\mu)_{>0}$ are isotypic of the same type or at least one of them is zero. 

            

            If $(K_\lambda)_{>0}$ is zero for all $\lambda\in \irr{W}$, there is nothing left to prove.
            Suppose that the maximal semisimple submodule $V$ of $(K_\lambda)_{>0}$ is nonzero for some $\lambda\in\irr{W}$.
            If $V$ is an isotypic semisimple module of type $\triv$, then $(K_\mu)_{>0}$ is zero or admits a simple submodule which is isomorphic to a grading shift of $L_\triv$ for all $\mu\in \irr{W}$.
            Therefore, we have $(K_\mu)_{>0}=0$ or $\mu\precsim_K\sgn$, and so $[(K_\mu)_{>0}:\sgn]_q=0$ holds for all $\mu\in\irr{W}$.
            By the reflexivity of a preorder, the isotypic component of $(K_\triv)_{>0}$ associated to $\triv$ is zero.
            Consequently, we obtain $(K_\triv)_{>0}=0$ and the desired equality $[(K_\triv)_{>0}:\mu]_q=0$ for all $\mu\in\irr{W}$.
            When $V$ is an isotypic semisimple module of type $\sgn$, we can show that $[(K_\mu)_{>0}:\triv]_q=[(K_\sgn)_{>0}:\mu]_q=0$ holds for all $\mu\in\irr{W}$ in a similar fashion.

            
        \end{proof}

        We define the additional property concerning $K$.
        \begin{dfn}
            \label{dfn:additional-relation}
            \mbox{}
            \begin{enumerate}[\upshape(i)\itshape]
                \setcounter{enumi}{4}
                \item \label{item:triv-minimum}
                    $[(K_\triv)_{>0}:\lambda]_q = [(K_\lambda)_{>0}:\sgn]_q=0$ holds for all $\lambda\in\irr{W}$.
            \end{enumerate}
        \end{dfn}
        Due to the Corollary \ref{cor:modified-springerorder}, it suffices to consider with the property of Definition \ref{dfn:additional-relation} because the rest case is obtained by applying an equivalence functor $-\otimes\sgn$ to swap the role of $\triv$ and $\sgn$.
        If a Springer correspondence $K$ satisfies the property of Definition \ref{dfn:additional-relation}, then $K_\lambda$ is a quotient module of $F_\lambda^{\lambda,\sgn}$ for $\lambda\in\irr{W}$.

        \begin{lem}
            \label{lem:gchK-upperbound}
            For each $\lambda \in \irr{W}$, we define a module $J_\lambda$ by
            \[
                J_\lambda \coloneqq F_\lambda^{\lambda,\sgn} = P_\lambda \left/ \sum_{\mu\in\{\lambda,\sgn\},f\in\Hom_{A}(P_\mu,P_\lambda)_{>0}}\mathrm{Im}\,f \right.
            \]
            We have
            \begin{align}
                &\gch J_\sgn = [L_\sgn]+q^n[L_\triv] +\sum_{j=1}^\C(q^j + q^{n-j})[L_{\chi_j}] \qquad \textrm{ and}\\
                &\gch J_{\chi_i} = q^i[L_\triv] +\sum_{j=1}^i q^{i-j}[L_{\chi_j}] + \sum_{j=i+1}^\C(q^{j-i} + q^{n-j-i})[L_{\chi_j}] \qquad(1\leq i\leq C).
            \end{align}
        \end{lem}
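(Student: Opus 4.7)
The case $\lambda = \sgn$ is immediate: since $J_\sgn = F_\sgn^\sgn$ by definition, Lemma \ref{lem:F_sgn^sgn-resolution} (more precisely, the character computation inside its proof) gives
$\gch J_\sgn = (q^{n+2}-q^2-q^n+1)\gch P_\sgn = (1-q^2)(1-q^n)\gch P_\sgn$,
and substituting the formula of Corollary \ref{cor:gch-Psgn} collapses the prefactor $(1-q^2)(1-q^n)$ against the denominator to yield exactly the claimed expression for $\gch J_\sgn$.

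For $J_{\chi_i}$ the plan is to exhibit explicit generators of the defining submodule of $P_{\chi_i}$ and then build a free resolution modelled on those of Lemmas \ref{lem:F_i^sgn-resolution}, \ref{lem:F_i^sgn^j-resolution}, and \ref{lem:F_C^sgn^C-resolution}. The $\sgn$-isotypic part of $(P_{\chi_i})_{>0}$ is already generated, by Lemma \ref{lem:F_i^sgn-resolution}, by $Y^i\otimes b_i^+ - X^i\otimes b_i^-$ (in degree $i$) and $X^{n-i}\otimes b_i^+ - Y^{n-i}\otimes b_i^-$ (in degree $n-i$). For the $\chi_i$-isotypic part in positive degree, a direct inspection of the $r$- and $s$-actions on the monomial basis $X^aY^b\otimes b_i^{\pm}$ shows that $W$-isotypic copies of $\chi_i$ first occur in degrees $2i$ and $n-2i$, generated by elements such as $(XY)^i\otimes b_i^\pm$ together with $X^{2i}\otimes b_i^- + Y^{2i}\otimes b_i^+$ and $s$-symmetric analogues (with the pattern at degree $n-2i$ obtained by interchanging the roles). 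Denoting by $I\subset P_{\chi_i}$ the submodule generated by all these elements, one has $J_{\chi_i} = P_{\chi_i}/I$ by construction, and the graded character is then computable as the alternating sum of the characters of the terms in a minimal free resolution, using Proposition \ref{prop:gch-S}, Corollaries \ref{cor:gch-Psgn} and \ref{cor:gch-Pi}, and Lemma \ref{lem:gch-diff} to simplify.

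A shortcut that bypasses writing out the full resolution is to use Lemma \ref{lem:F-gch-identify}: it suffices to produce \emph{any} graded quotient $M$ of $P_{\chi_i}$ whose character matches the claimed right-hand side and to check that $[M_{>0}:L_{\chi_i}]_q = [M_{>0}:L_\sgn]_q = 0$, which by Lemma \ref{lem:F-gch-identify} forces $M\simeq J_{\chi_i}$. The main obstacle is the syzygy computation for the $\chi_i$-isotypic generators, which live in two different degrees ($2i$ and $n-2i$) and interact with the $\sgn$-generators through explicit $S$-linear relations in $\cc[X,Y]$; these syzygies can ultimately be pinned down using unique factorisation together with the determinantal injectivity criterion of Lemma \ref{lem:detinjective}, as in the analogous parts of Lemmas \ref{lem:F_i^sgn^j-resolution} and \ref{lem:F_C^sgn^C-resolution}. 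The boundary value $i = C = (n-1)/2$ will need separate attention because $n-2i = 1$ collapses to the lowest possible degree, mirroring the situation of Lemma \ref{lem:F_C^sgn^C-resolution}; the computation is still tractable because $\chi_{i+1}\simeq\chi_{i}$ in that case, so the two families of generators merge into one.
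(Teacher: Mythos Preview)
Your treatment of $J_\sgn$ is correct and matches the paper exactly.

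For $J_{\chi_i}$ your overall strategy---explicit generators, a free resolution built as in Lemmas \ref{lem:F_i^sgn-resolution}--\ref{lem:F_C^sgn^C-resolution}, then identification via Lemma \ref{lem:F-gch-identify}---is precisely what the paper does. But your identification of the $\chi_i$-isotypic generators is wrong. The element $XY$ is $W$-invariant, so $\{XY\otimes b_i^+, XY\otimes b_i^-\}$ is already a copy of $\chi_i$ in degree~$2$; $\chi_i$-copies do \emph{not} first occur in degree $2i$. Moreover the element $X^{2i}\otimes b_i^- + Y^{2i}\otimes b_i^+$ you write down is not an $r$-eigenvector (the two summands have eigenvalues $\zeta^{i}$ and $\zeta^{-i}$), so it does not lie in a single isotypic component. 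The paper's generating set for the kernel is
\[
\bigl\{\,Y^i\otimes b_i^+ - X^i\otimes b_i^-,\ XY\otimes b_i^+,\ XY\otimes b_i^-,\ X^{n-2i}\otimes b_i^+,\ Y^{n-2i}\otimes b_i^-\,\bigr\},
\]
with the second $\sgn$-generator $X^{n-i}\otimes b_i^+ - Y^{n-i}\otimes b_i^-$ from Lemma \ref{lem:F_i^sgn-resolution} dropped as redundant (it lies in the $A$-span of the degree-$(n-2i)$ generators). This yields a first syzygy module $P_\sgn\gs{i}\oplus P_{\chi_i}\gs{2}\oplus P_{\chi_i}\gs{n-2i}$ and a second syzygy module $P_\sgn\gs{i+2}\oplus P_{\chi_{i+1}}\gs{n-2i+1}$, whose exactness is checked by unique factorisation in $\cc[X,Y]$ and Lemma \ref{lem:detinjective} exactly as you anticipate. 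Your degree-$2i$ ``generators'' such as $(XY)^i\otimes b_i^\pm$ are already in the submodule generated by the degree-$2$ elements, so including them instead of the degree-$2$ ones gives a strictly smaller submodule and the wrong quotient.

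One further point: contrary to your expectation, the paper's resolution works uniformly for $1\le i\le C$ with no separate case $i=C$; the collapse $n-2i=1$ is absorbed automatically by the shape of the syzygy terms.
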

        \begin{proof}
            By Corollary \ref{cor:gch-Psgn} and Lemma \ref{lem:F_sgn^sgn-resolution}, we obtain
            \begin{align}
                \gch F_\sgn^\sgn &= (1-q^2-q^n +q^{n+2})\gch P_\sgn\\
                &= [L_\sgn]+q^n[L_\triv]+\sum_{j=1}^{n-1}q^j[L_{\chi_j}] = [L_\sgn]+q^n[L_\triv]+\sum_{j=1}^C(q^j+q^{n-j})[L_{\chi_j}].
            \end{align}

            We calculate $\gch J_{\chi_i}$ for $1\leq i\leq \C$.
            Set 
            \[
                M=P_{\chi_i}/ \langle Y^i\otimes b_i^+ -X^i\otimes b_i^-, XY\otimes b_i^+, XY\otimes b_i^-, Y^{n-2i}\otimes b_i^-, X^{n-2i}\otimes b_i^+  \rangle_A.
            \]
            We show that $J_{\chi_i} = M$.
            We have a following projective resolution:            
            \begin{equation}\label{eq:res-Fi^isgn}\vcenter{\xymatrix{
                0 \ar[rrrr] &&&&{\begin{matrix} P_\sgn\gs{i+2}\\ \oplus \\ P_{\chi_{i+1}}\gs{n-2i+1} \end{matrix}} \ar[rrrr]^{\left\{b_\sgn\mapsto \begin{pmatrix} -XY\otimes b_\sgn \\ Y^i\otimes b_i^+ - X^i\otimes b_i^-\\0\end{pmatrix}\right\}}_{\left\{\begin{matrix}b_{i+1}^+\mapsto \begin{pmatrix}0 \\Y^{n-2i-1}\otimes b_i^- \\ -X\otimes b_i^+\end{pmatrix}\\b_{i+1}^-\mapsto \begin{pmatrix}0 \\X^{n-2i-1}\otimes b_i^+ \\ -Y\otimes b_i^-\end{pmatrix}\end{matrix}\right\}}  &&&&{\begin{matrix} P_\sgn\gs{i}\\ \oplus\\ P_{\chi_i}\gs{2} \\ \oplus \\ P_{\chi_i}\gs{n-2i} \end{matrix}}\\\\
                \ar[rrrr]^{\begin{pmatrix}\{b_\sgn \mapsto Y^i\otimes b_i^+ -X^i\otimes b_i^-\}\\\left\{\begin{matrix} b_i^+ \mapsto XY\otimes b_i^+\\ b_i^- \mapsto XY\otimes b_i^- \end{matrix} \right\}\\ \left\{\begin{matrix} b_i^+ \mapsto Y^{n-2i}\otimes b_i^-\\ b_i^- \mapsto X^{n-2i}\otimes b_i^+ \end{matrix} \right\}\end{pmatrix}} &&&&P_{\chi_i} \ar[r] &M \ar[r] &0.&&
            }}\end{equation}
            We verify the exactness of this chain complex.
            The second row of this complex is exact by inspection.
            The left map of the second row is a $S$-module homomorphism between free $S$-modules which has a matrix representation
            \begin{align}
                \label{eq:matrixrep}
                \begin{pmatrix}
                    Y^i   &XY &0  &0          &X^{n-2i}\\
                    -X^i  &0  &XY  &Y^{n-2i} &0\\
                \end{pmatrix}
            \end{align}
            with respect to the bases
            \begin{multline}
            \left\{\begin{pmatrix}1\otimes b_\sgn\\0\\0\end{pmatrix},\begin{pmatrix}0\\1\otimes b_{\chi_i}^+\\0\end{pmatrix},\begin{pmatrix}0\\1\otimes b_{\chi_i}^-\\0\end{pmatrix},\begin{pmatrix}0\\0\\1\otimes b_{\chi_i}^+\end{pmatrix},\begin{pmatrix}0\\0\\1\otimes b_{\chi_i}^-\end{pmatrix}\right\} \subset \begin{matrix} P_\sgn\gs{i}\\ \oplus\\ P_{\chi_i}\gs{2} \\ \oplus \\ P_{\chi_i}\gs{n-2i} \end{matrix} \\
            \textrm{ and } \left\{1\otimes b_i^+,1\otimes b_i^-\right\} \subset P_{\chi_i}.\\
            \end{multline}
            Suppose that a column vector
            \[
                \begin{pmatrix} f_1\\ f_2\\ f_3\\f_4\\ f_5 \end{pmatrix} \qquad (f_1,f_2,f_3,f_4, f_5 \in S)
            \]
            belongs to the kernel of the linear map \eqref{eq:matrixrep}.
            Then we have 
            \[
                f_1Y^i + f_2 XY + f_5 X^{n-2i} = -f_1X^i + f_3XY + f_4 Y^{n-2i} = 0.    
            \]
            Since $XY$ and $X^{n-2i}$ are divisible by $X$, thus $f_1Y^i = -f_2 XY - f_5 X^{n-2i}$ is also divisible by $X$.
            A polynomial $Y^i$ does not have an irreducible factor $X$.
            Therefore $f_1$ has an irreducible factor $X$.
            Similarly, $f_1X^i= f_3XY + f_4 Y^{n-2i}$ is divisible by $Y$.
            Since $X^i$ does not have an irreducible factor $Y$, thus $f_1$ also has an irreducible factor $Y$.
            Therefore, $f_1$ can be written as $f_1 = f_1'XY$ with $f_1'\in \cc[X,Y]$.
            Then we obtain $f_5 X^{n-2i} = XY(- f_1' Y^i- f_2)$ and  $f_4 Y^{n-2i} = XY(f_1'X^i -f_3)$.
            In particular, $f_5X^{n-2i}$ is divisible by $Y$, and $f_4 Y^{n-2i}$ is divisible by $X$.
            Since $X^{n-2i}$ does not have an irreducible factor $Y$, a polynomial $f_5$ can be written as $f_5 = f_5'Y$ with $f_5'\in \cc[X,Y]$.
            Similarly, $f_4$ can be written as $f_4 = f_4'X$ with $f_4'\in \cc[X,Y]$ because $Y^{n-2i}$ does not have an irreducible factor $X$.
            Then, we obtain $f_2 = -f_1'Y^i  -f_5'X^{n-2i-1}$ and $f_3= f_1' X^i -f_4'Y^{n-2i-1}$.
            Hence, we have 
            \[
                \begin{pmatrix} f_1\\ f_2\\ f_3\\f_4\\ f_5 \end{pmatrix} = f_1' \begin{pmatrix} XY\\ -Y^i\\ X^i\\0\\0 \end{pmatrix} + f_4'\begin{pmatrix} 0\\ 0\\ -Y^{n-2i-1}\\X\\ 0 \end{pmatrix} + f_5'\begin{pmatrix} 0\\ -X^{n-2i-1}\\ 0\\0\\Y \end{pmatrix}.
            \]
            Therefore, the kernel of the left map of the second row in a complex \eqref{eq:res-Fi^isgn} is generated by
            \[
                \begin{pmatrix} XY\otimes b_\sgn \\ -Y^i\otimes b_i^+ + X^i\otimes b_i^-\\0\end{pmatrix}, \begin{pmatrix}0 \\-Y^{n-2i-1}\otimes b_i^- \\ X\otimes b_i^+\end{pmatrix}, \textrm{ and }\begin{pmatrix}0 \\-X^{n-2i-1}\otimes b_i^+ \\ Y\otimes b_i^-\end{pmatrix}.
            \]
            Clearly, the image of the right map of the first row contains these generators.
            This implies the surjectivity onto the kernel.

            The right map of the first row is a $S$-module homomorphism between free $S$-modules which admits a matrix representation
            \[
                \begin{pmatrix}
                    -XY     &0          &0\\
                    Y^i     &0          &X^{n-2i-1}\\
                    -X^i    &Y^{n-2i-1} &0\\
                    0       &-X         &0\\
                    0       &0          &Y
                \end{pmatrix}
            \]
            with respect to the bases
            \begin{multline}
            \left\{\begin{pmatrix}1\otimes b_\sgn\\0\end{pmatrix},\begin{pmatrix}0\\1\otimes b_{\chi_{i+1}}^+\end{pmatrix},\begin{pmatrix}0\\1\otimes b_{\chi_{i+1}}^-\end{pmatrix}\right\} \textrm{ and }\\
            \left\{\begin{pmatrix}1\otimes b_\sgn\\0\\0\end{pmatrix},\begin{pmatrix}0\\1\otimes b_{\chi_i}^+\\0\end{pmatrix},\begin{pmatrix}0\\1\otimes b_{\chi_i}^-\\0\end{pmatrix},\begin{pmatrix}0\\0\\1\otimes b_{\chi_i}^+\end{pmatrix},\begin{pmatrix}0\\0\\1\otimes b_{\chi_i}^-\end{pmatrix}\right\}.  
            \end{multline}
            This matrix representation has a $(3\times 3)$-submatrix 
            \[
            \begin{pmatrix}
                -X^i    &Y^{n-2i-1} &0\\
                0       &-X         &0\\
                0       &0          &Y
            \end{pmatrix}\]
            obtained by deleting the first row and the second row.
            Clearly, the determinant of this submatrix is $-X^{i+1}Y$.
            By Lemma \ref{lem:detinjective}, the right map of the first row of \eqref{eq:res-Fi^isgn} is injective.

            Therefore, a chain complex \eqref{eq:res-Fi^isgn} is exact.
            Then, we have
            \begin{align}
                \gch M &= \gch P_{\chi_i} - \gch (P_\sgn\gs{i}\oplus P_{\chi_i}\gs{2} \oplus  P_{\chi_i}\gs{n-2i})   \\
                &\qquad+ \gch (P_\sgn\gs{i+2} \oplus P_{\chi_{i+1}}\gs{n-2i+1})\\
                &= (1-q^2)\gch P_{\chi_i} -q^i(1-q^2)\gch P_\sgn - q^{n-2i}(\gch P_{\chi_i} -\gch \chi_{i+1}\gs{1})
            \end{align}
            By Proposition \ref{prop:gch-S}, Corollary \ref{cor:gch-Psgn}, Corollary \ref{cor:gch-Pi} and Lemma \ref{lem:gch-diff}, we obtain
            \begin{align}\scriptsize
                \gch M &=
                \frac{1}{1-q^n} \left(\sum_{j=0}^{n-1} q^j\gch L_{\chi_{i+j}}+ \sum_{j=1}^n q^j\gch L_{\chi_{i-j}}\right)\\
                &\quad -\frac{q^i}{1-q^n}\left([L_\sgn]+q^n[L_\triv]+\sum_{j=1}^{n-1}q^j[L_{\chi_j}]\right)-\frac{q^{n-2i}}{1-q^n}\sum_{j=0}^{n-1} q^j\gch L_{\chi_{i-j}}\\
                &=\frac{1}{1-q^n} \left(\sum_{j=0}^{n-1}q^j\gch L_{\chi_{i+j}} + \sum_{j=1}^{i-1} q^j\gch L_{\chi_{i-j}}+ q^i([L_\triv]+[L_\sgn]) +\sum_{j=i+1}^n q^j\gch L_{\chi_{i-j}}\right)\\
                &\quad -\frac{1}{1-q^n}\left(q^i[L_\sgn]+q^{n+i}[L_\triv]+\sum_{j=i+1}^{n+i-1}q^{j}[L_{\chi_{j-i}}]\right) -\frac{1}{1-q^n}\sum_{j=n-2i}^{2n-2i-1} q^{j}\gch L_{\chi_{-i-j}}\\
                &=\frac{1}{1-q^n} \left(\sum_{j=0}^{n-2i-1}q^j\gch L_{\chi_{i+j}} - \sum_{j=n}^{2n-2i-1} q^j \gch L_{\chi_{i+j}} + \sum_{j=1}^{i-1} q^j\gch L_{\chi_{i-j}}\right.\\
                &\qquad\qquad\qquad\qquad\qquad\qquad \left.+  q^i[L_\triv]-q^{n+i}[L_\triv] -\sum_{j=n+1}^{n+i-1}q^{j}[L_{\chi_{i-j}}]\right)\\
                &= \sum_{j=0}^{n-2i-1}q^j [L_{\chi_{i+j}}] + q^i[L_\triv] + \sum_{j=1}^{i-1} q^j [L_{\chi_{i-j}}].
            \end{align}
            From this, we obtain
            \[
                [M:L_\mu]_q = \begin{cases}
                    1 &(\mu = \chi_i),\\
                    q^{i-k} &(\mu = \chi_k \text{ for } 1\leq k < i),\\
                    q^{k-i} + q^{n-k-i} &(\mu = \chi_k \text{ for } i< k \leq C),\\
                    q^i &(\mu = \triv), \\
                    0 &(\mu=\sgn).
                \end{cases}    
            \]
            In particular, we have $[M_{>0}:L_\sgn] = [M_{>0}:L_{\chi_i}] = 0$.
            Therefore, we conclude that $\langle Y^i\otimes b_i^+ -X^i\otimes b_i^-, XY\otimes b_i^+, XY\otimes b_i^-, Y^{n-2i}\otimes b_i^-, X^{n-2i}\otimes b_i^+  \rangle_A$ contains the isotypic components of $(P_{\chi_i})_{>0}$ associated to $\sgn$ and $\chi_i$,
            and hence we have $J_{\chi_i} = M$.
            Consequently, we obtain
            \[
                \gch J_{\chi_i} = \gch M = q^i[L_\triv] +\sum_{j=1}^i q^{i-j}[L_{\chi_j}] + \sum_{j=i+1}^\C(q^{j-i} + q^{n-j-i})[L_{\chi_j}]
            \]
            by Proposition \ref{lem:gch-diff} as required.
        \end{proof}

        \begin{lem}
            \label{lem:chain-2dimsimple}
            Let $K$ be a Springer correspondence.
            Let $i$ be a positive integer such that $1\leq i \leq C$ and $\lambda \in \irr{W}$.
            If $[(K_\lambda)_{>0}:L_{\chi_i}]_q \neq 0$ holds, then we have $[(K_\lambda)_{>0}:L_{\chi_j}]_q\neq 0$ for $1\leq j < i$.
        \end{lem}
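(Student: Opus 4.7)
By downward induction on $j$, it suffices to prove the single step: whenever $2\leq i\leq C$ and $[(K_\lambda)_{>0}:L_{\chi_i}]_q\neq 0$, we also have $[(K_\lambda)_{>0}:L_{\chi_{i-1}}]_q\neq 0$. I will argue this by contradiction. Suppose $\chi_i$ appears in $(K_\lambda)_{>0}$ but $\chi_{i-1}$ does not. Set $a_\mu(q):=\sum_{d\geq 0} q^{-d}[K_{\lambda,d}:\mu]$; the second assumption then reads $a_{\chi_{i-1}}=\delta:=\delta_{\lambda,\chi_{i-1}}\in\{0,1\}$, while the first implies $a_{\chi_i}$ has a nonzero bottom coefficient $\beta_{-D}\geq 1$ at some $q^{-D}$ with $D\geq 1$.

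Whenever $\chi_j$ appears in $(K_\lambda)_{>0}$, one has $\lambda\neq\chi_j$ (so $\Hom_A(K_\lambda,L_{\chi_j})=0$) and $\lambda\not\precsim_K\chi_j$, hence $\Ext^{>0}_A(K_\lambda,L_{\chi_j})=0$ by Corollary \ref{cor:higherK-relation}. Consequently $\gep{K_\lambda}{L_{\chi_j}}=0$. Combining this with Proposition \ref{prop:gEP-gch-Lmu} together with the tensor product rules $\sgn\otimes\chi_j\simeq\chi_j$ and $\chi_1\otimes\chi_j\simeq\chi_{j-1}\oplus\chi_{j+1}$ from Proposition \ref{prop:tensorofreps}, I read off the interior recurrence
\[
    a_{\chi_{j+1}}=(q+q^{-1})\,a_{\chi_j}-a_{\chi_{j-1}}\qquad (1<j<C),
\]
and, using $\chi_{C+1}\simeq\chi_C$, the modified boundary relation
\[
    a_{\chi_{C-1}}=(q+q^{-1}-1)\,a_{\chi_C}.
\]

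Starting from $a_{\chi_{i-1}}=\delta$ and $a_{\chi_i}$, the interior recurrence propagates upward to give
\[
    a_{\chi_{i+k}}=T_k(q)\,a_{\chi_i}-T_{k-1}(q)\,\delta
    \qquad\text{where}\qquad T_k(q):=q^k+q^{k-2}+\cdots+q^{-k}.
\]
The leading bottom term $q^{-k-D}$ of $T_k\,a_{\chi_i}$ sits strictly below the bottom of the constant-multiple $T_{k-1}\,\delta$, so $\chi_{i+k}$ really does appear in $(K_\lambda)_{>0}$ for each $k=0,1,\ldots,C-i$; the recurrence therefore legitimately applies at every intermediate step.

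Substituting the resulting expressions for $a_{\chi_{C-1}}$ and $a_{\chi_C}$ into the boundary relation gives
\[
    T_{C-i-1}\,a_{\chi_i}-T_{C-i-2}\,\delta
    =(q+q^{-1}-1)\bigl(T_{C-i}\,a_{\chi_i}-T_{C-i-1}\,\delta\bigr).
\]
The contradiction is extracted by reading off the coefficient of $q^{-(C-i+D+1)}$ on both sides. On the right, $q^{-1}\cdot T_{C-i}\,a_{\chi_i}$ contributes $\beta_{-D}\geq 1$ to this monomial, while on the left both $T_{C-i-1}\,a_{\chi_i}$ and $T_{C-i-2}\,\delta$ only reach down to $q^{-(C-i+D-1)}$ and $q^{-(C-i-2)}$ respectively, both strictly higher than $q^{-(C-i+D+1)}$ (thanks to $D\geq 1$); hence the left-hand coefficient is $0$, contradicting $\beta_{-D}\neq 0$. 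The main obstacle is the careful bookkeeping of the propagation: verifying at every step that $\chi_{i+k}$ truly appears so that the Euler--Poincar\'e vanishing is applicable, and separately noting that the marginal case $i=C$ is subsumed since the propagation is then empty and the boundary relation applies directly.
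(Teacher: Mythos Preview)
Your proof is correct. You and the paper use the same basic input---Corollary~\ref{cor:appear-gep-vanish} (which yields $\gep{K_\lambda}{L_{\chi_j}}=0$ whenever $\chi_j$ appears in $(K_\lambda)_{>0}$) together with the formula of Proposition~\ref{prop:gEP-gch-Lmu} and the tensor rules of Proposition~\ref{prop:tensorofreps}---but you organize the argument quite differently. The paper's proof is more local: fixing some $j<i$ with $[(K_\lambda)_{>0}:L_{\chi_j}]_q=0$, it takes the \emph{largest} degree $d$ at which any $\chi_k$ with $j<k\leq C$ appears in $K_\lambda$, and then reads off the single coefficient of $q^{-(d+2)}$ in $\gep{K_\lambda}{L_{\chi_k}}$. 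The positive term $[(K_\lambda)_d:\chi_k]$ survives because at degree $d+1$ both $\chi_{k-1}$ and $\chi_{k+1}$ vanish (either by maximality of $d$, by $\chi_{C+1}\simeq\chi_C$, or by the hypothesis on $\chi_j$ when $k=j+1$), giving an immediate contradiction. This extremal-degree trick avoids all the recurrence bookkeeping. Your approach instead propagates the full generating series $a_{\chi_j}$ from $j=i$ up to $j=C$ via the second-order recurrence, and then exploits the modified boundary relation at $C$ to produce a monomial of strictly lower degree than anything on the other side. Both work; the paper's version is shorter and needs no inductive check that the vanishing of $\gep{K_\lambda}{L_{\chi_{i+k}}}$ remains applicable at each intermediate step, while yours gives explicit closed formulas for the $a_{\chi_{i+k}}$ along the way.
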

        \begin{proof}
            It suffices to consider the case $i\geq 2$. 
            Assume to the contrary that there is a positive integer $j$ such that $1\leq j < i$ and $[(K_\lambda)_{>0}:L_{\chi_j}]_q\neq 0$.
            Let $d$ be the largest integer such that $[(K_\lambda)_d:\chi_k]\neq 0$ for some $k=j+1,j+2, \ldots ,C-1,C$.
            Since $K_\lambda$ is finite dimensional and $[(K_\lambda)_{>0}:L_{\chi_i}]_q \neq 0$ holds, we indeed have $d$ as above.
            Fix $k$ satisfying $[(K_\lambda)_d:\chi_k]\neq 0$ and $j < k \leq C$.
            By Corollary \ref{cor:appear-gep-vanish}, we have $\gep{K_\lambda}{L_{\chi_k}} = 0$.
            By Proposition \ref{prop:gEP-gch-Lmu}, the coefficient of $q^{-(d+2)}$ in a Laurent series $\gep{K_\lambda}{L_{\chi_k}}$ is 
            \[
                [(K_\lambda)_{d+2}:\chi_k]+[(K_\lambda)_{d}:\sgn\otimes\chi_k] -\sum_{\mu\in\irr{W}}[\chi_1\otimes\chi_k:\mu][(K_\lambda)_{d+1}:\mu]=0.
            \]
            Since $\sgn\otimes\chi_k \simeq \chi_k$ by Proposition \ref{prop:tensorofreps}, we have $[(K_\lambda)_{d}:\sgn\otimes\chi_k]>0$.
            Again by Proposition \ref{prop:tensorofreps}, we have $\chi_1\otimes\chi_k \simeq \chi_{k-1}\oplus \chi_{k+1}$.
            By Proposition \ref{prop:relationofreps}, we have $\chi_{k+1}\simeq\chi_C$ if $k=C$.
            If $k\neq C$ holds, then we have $j<k+1\leq C$.
            Therefore, we obtain $[(K_\lambda)_{d+1}:\chi_{k+1}] = 0$ by the maximality of $d$ as it holds when $k=\C$ and $k\neq \C$.
            If $k=j+1$, then we have $\chi_{k-1} = \chi_j$.
            In this case, we have $[(K_\lambda)_{d+1}:\chi_{k-1}] = 0$ by assumption $[(K_\lambda)_{>0}:L_{\chi_j}]_q\neq 0$.
            If $k\neq j+1$, then $j+1\leq k-1 \leq C$.
            Thus, we obtain $[(K_\lambda)_{d+1}:\chi_{k-1}] = 0$ by the maximality of $d$.
            Whether $k = j+1$ or not, we have $[(K_\lambda)_{d+1}:\chi_{k-1}] = 0$.
            Consequently, we obtain $\sum_{\mu\in\irr{W}}[\chi_1\otimes\chi_k:\mu][(K_\lambda)_{d+1}:\mu] = 0$.
            Then, the coefficient of $q^{-(d+2)}$ in $\gep{K_\lambda}{L_{\chi_k}}$ is nonzero.
            This contradicts $\gep{K_\lambda}{L_{\chi_k}} = 0$.
            Therefore, we conclude that $[(K_\lambda)_{>0}:L_{\chi_j}]_q = 0$ for $1\leq j < i$.
        \end{proof}

        \begin{prop}
            \label{prop:possible-D}
            Let $K$ be a Springer correspondence.
            Suppose that that the conditions in Definition \ref{dfn:additional-relation} is satisfied.
            We set $D_\lambda = \{\mu\in\irr{W} \mid \Ext^1_A(K_\lambda,L_\mu)\neq 0\}$ for each $\lambda\in\irr{W}$.
            Then we have
            \[
                D_\triv =  \{ \chi_1\},\quad D_\sgn =  \{\sgn\}\text{ or } \{\chi_1\},\quad   D_{\chi_1} = \{\chi_2,\sgn\} \text{ or } \{\chi_2,\triv,\sgn\} ,
            \]
            \[
                \text{and }  D_{\chi_i} = \{\chi_{i+1},\sgn\} \text{ or }\{\chi_{i+1},\chi_{i-1}\}  \qquad(2\leq i \leq C).
            \]
        \end{prop}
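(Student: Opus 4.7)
The plan is a case analysis on $\lambda$, combining four main tools: the character upper bound $\gch K_\lambda \le \gch J_\lambda$ (Lemma \ref{lem:gchK-upperbound}), the monotonicity of $\chi_j$-multiplicities in $(K_\lambda)_{>0}$ from Lemma \ref{lem:chain-2dimsimple}, the rigidity Lemma \ref{lem:F-gch-identify} that identifies quotients of $P_\lambda$ with prescribed graded character as specific $F_\lambda^D$'s, and the explicit projective resolutions of the $F_\lambda^D$ appearing in Lemmas \ref{lem:F_triv^sgn-resolution}--\ref{lem:F_C^sgn^C-resolution}. Once $K_\lambda$ is pinned down, $D_\lambda$ is read off by applying $\Hom_A(-, L_\mu)$ to the relevant resolution.

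For $\lambda = \triv$, \eqref{item:triv-minimum} forces $(K_\triv)_{>0}=0$, so $K_\triv = L_\triv$ and Corollary \ref{cor:Ext-LL} yields $D_\triv = \{\chi_1\}$ since $\chi_1 \otimes \triv \simeq \chi_1$. For $\lambda = \sgn$, I split on whether $(K_\sgn)_{>0}$ vanishes. In the vanishing case, $K_\sgn = L_\sgn$ and the same Ext computation gives $D_\sgn = \{\chi_1\}$. Otherwise, Lemma \ref{lem:triv-socle} together with \eqref{item:triv-minimum} forces the maximal semisimple submodule of $(K_\sgn)_{>0}$ to be isotypic of type $\triv$; pushing this through Lemma \ref{lem:chain-2dimsimple} and comparing with the character of $J_\sgn = F_\sgn^\sgn$ from Lemma \ref{lem:gchK-upperbound} gives $\gch K_\sgn = \gch F_\sgn^\sgn$, hence $K_\sgn \simeq F_\sgn^\sgn$ by Lemma \ref{lem:F-gch-identify}. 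Since every projective term in the resolution of Lemma \ref{lem:F_sgn^sgn-resolution} has the form $P_\sgn\langle \bullet \rangle$, computing $\Hom_A(-,L_\mu)$ yields $\Ext^1_A(F_\sgn^\sgn, L_\mu)\ne 0$ only when $\mu = \sgn$, so $D_\sgn = \{\sgn\}$.

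For $\lambda = \chi_i$ with $1 \le i \le C$, reflexivity of $\precsim_K$ gives $[(K_{\chi_i})_{>0}:L_{\chi_i}]_q = 0$, and Lemma \ref{lem:chain-2dimsimple} propagates this to $[(K_{\chi_i})_{>0}:L_{\chi_j}]_q = 0$ for all $j \ge i$. Together with \eqref{item:triv-minimum} and the bound from Lemma \ref{lem:gchK-upperbound}, the graded character of $K_{\chi_i}$ is restricted to a two-parameter family, indexed by whether $L_\triv$ appears (with multiplicity $q^i$) and whether the initial chain $L_{\chi_1}, \ldots, L_{\chi_{i-1}}$ is fully present, with partial chains forbidden by Lemma \ref{lem:chain-2dimsimple}. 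The two listed options for $D_{\chi_i}$ correspond to $K_{\chi_i} \simeq F_{\chi_i}^{\sgn, \chi_{i+1}}$ (whose character comes from Corollary \ref{cor:gch-standard-K_i}) and $K_{\chi_i} \simeq F_{\chi_i}^{\chi_{i-1}, \chi_{i+1}}$, respectively. Lemma \ref{lem:F-gch-identify} identifies $K_{\chi_i}$ with one of these, and $D_{\chi_i}$ is then read off the resolutions of Lemmas \ref{lem:F_i^sgn^j-resolution} and \ref{lem:F_C^sgn^C-resolution}. The edge case $i = 1$ uses $\chi_0 \simeq \triv \oplus \sgn$ to produce the three-element option $\{\chi_2, \triv, \sgn\}$, while $i = C$ invokes $\chi_{C+1} \simeq \chi_C$ via Lemma \ref{lem:F_C^sgn^C-resolution}.

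The main obstacle is to rule out the extraneous characters admitted by the naive upper bounds: one must show, in the nonzero subcase for $\lambda = \sgn$, that $K_\sgn$ cannot equal a proper quotient such as $F_\sgn^{\sgn,\triv}$; and for $\lambda = \chi_i$, that no intermediate chain $L_{\chi_1}, \ldots, L_{\chi_{j_*}}$ with $1 < j_* < i-1$ is admissible. This exclusion will rely on the fixed-point consistency $K_\lambda = F_\lambda^{D_\lambda}$ provided by Proposition \ref{prop:I-Ext1} combined with the existence of $K$- and $\widetilde K$-filtrations from the Springer correspondence hypothesis, which together force the Ext-structure computed from a candidate $K_\lambda$ to agree with the $D_\lambda$ used to define it.
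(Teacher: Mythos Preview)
Your overall architecture---case split on $\lambda$, branching on whether $(K_\lambda)_{>0}$ vanishes, identifying $K_\lambda$ via Lemma~\ref{lem:F-gch-identify}, and then extracting $D_\lambda$---is the paper's route, and your treatment of $\lambda=\triv$ and of the vanishing subcases is correct.

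There is, however, a real gap in your nonzero subcase for $\lambda=\sgn$. Lemma~\ref{lem:chain-2dimsimple} propagates occurrences of $\chi_i$ \emph{downward} in the index: if $\chi_i$ appears in $(K_\lambda)_{>0}$ then so does $\chi_j$ for every $j<i$. It therefore cannot, from the bare facts that $(K_\sgn)_1\simeq\chi_1$ and that the socle is of type $\triv$, force $\chi_2,\dots,\chi_C$ to appear. The paper supplies the missing \emph{upward} step by a different mechanism: starting from $(K_\sgn)_1\simeq\chi_1$, one inducts on $j$ using $\gep{K_\sgn}{L_{\chi_{j-1}}}=0$ (Corollary~\ref{cor:appear-gep-vanish}) together with the explicit degree-$(-(j+1))$ coefficient formula of Proposition~\ref{prop:gEP-gch-Lmu} and the character bound from Lemma~\ref{lem:gchK-upperbound} to pass from $[(K_\sgn)_{j-1}:\chi_{j-1}]\neq0$ to $[(K_\sgn)_j:\chi_j]\neq0$. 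Once every $\chi_j$ is present, $\chi_j\notin D_\sgn$ by Corollary~\ref{cor:ext-eliminated}; a single $\mathbf{gEP}$-coefficient computation then shows $\sgn\in D_\sgn$; and Lemma~\ref{lem:triv-socle} with Corollary~\ref{cor:ext-eliminated} gives $\triv\notin D_\sgn$.

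Your proposed handling of the $\chi_i$ obstacle via ``fixed-point consistency'' from Proposition~\ref{prop:I-Ext1} is both vague (you would need resolutions of all the intermediate candidates to compute their $D$'s) and unnecessary. The paper's argument is direct: in the nonzero subcase one has $(P_{\chi_i})_1\simeq\chi_{i-1}\oplus\chi_{i+1}$, and $\chi_{i+1}$ has already been excluded from $(K_{\chi_i})_{>0}$ (contrapositive of Lemma~\ref{lem:chain-2dimsimple} applied to the vanishing of $\chi_i$, using $\chi_{C+1}\simeq\chi_C$ when $i=C$). Hence the nonzero $(K_{\chi_i})_1$ is exactly $\chi_{i-1}$, and \emph{now} Lemma~\ref{lem:chain-2dimsimple}, applied with $i-1$ in place of $i$, gives all of $\chi_1,\dots,\chi_{i-1}$; Lemma~\ref{lem:triv-socle} supplies $\triv$. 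Since each of these types has multiplicity exactly one in $J_{\chi_i}$ (Lemma~\ref{lem:gchK-upperbound}), the graded character is pinned down with no intermediate chain surviving, and Lemma~\ref{lem:F-gch-identify} with Corollary~\ref{cor:gch-standard-K_i} identifies $K_{\chi_i}\simeq F_{\chi_i}^{\sgn,\chi_{i+1}}$. Membership of $\sgn$ and $\chi_{i+1}$ in $D_{\chi_i}$ is then confirmed by two explicit negative $\mathbf{gEP}$-coefficients via Proposition~\ref{prop:gEP-gch-Lmu}, rather than by reading off a resolution as you propose.
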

        \begin{proof}
            We have $K_\lambda = F_\lambda^{D_\lambda}$ for each $\lambda\in\irr{W}$ by Proposition \ref{prop:I-Ext1}.
            Since we assume the condition \eqref{item:triv-minimum} in Definition \ref{dfn:additional-relation}, we have $K_\triv = L_\triv$.
            It follows from Proposition \ref{prop:tensorofreps} and Corollary \ref{cor:Ext-LL} that $D_\triv =  \{ \chi_1\}$.
            Then, we obtain $\triv\precsim_K \chi_1$.
            Recall that $K_\lambda$ is a quotient module of $F_\lambda^{\lambda,\sgn}$ for each $\lambda\in\irr{W}$.
            Then we have an inequality
            \begin{align}
                \label{eq:quotient-ineq}
                [(K_\lambda)_d:\mu] \leq [(F_\lambda^{\lambda,\sgn})_d:\mu]    
            \end{align}
            for any $\lambda,\mu\in\irr{W}$ and $d\in\zz$.

            If $\chi_1 \in D_\sgn$, then we have $K_\sgn = L_\sgn$ because $(P_\sgn)_1$ is isomorphic to $\chi_1$ and generates $(P_\sgn)_{\geq 1}$.
            In this case, it follows from Corollary \ref{cor:Ext-LL} that $D_\sgn = \{\chi_1\}$.
            
            Suppose that $\chi_1 \not\in D_\sgn$ holds.
            Then we have $(K_\sgn)_1 \simeq \chi_1$.
            We prove by induction on $j$ that if $(K_\sgn)_1 \simeq \chi_1$ holds, then $[(K_\sgn)_j:\chi_j] \neq 0$ for $1\leq j \leq C$.
            The case $j=1$ is obvious.
            We prove the case $2\leq j \leq C$.
            Suppose that $[(K_\sgn)_{j-1}:\chi_{j-1}] \neq 0$ holds.
            Then, we have $\gep{K_\sgn}{L_{\chi_{j-1}}} =0$ by Corollary \ref{cor:appear-gep-vanish}.
            By Proposition \ref{prop:gEP-gch-Lmu} and Proposition \ref{prop:tensorofreps}, the term of $\gep{K_\sgn}{L_{\chi_{j-1}}}$ of degree $-(j+1)$ is 
            \[
                q^{-(j+1)}\left([(K_\sgn)_{j+1}:\chi_{j-1}]+[(K_\sgn)_{j-1}:\chi_{j-1}] -\sum_{\lambda\in\irr{W}}[\chi_{j-2}\oplus\chi_j:\lambda][(K_\sgn)_{j}:\lambda]\right).
            \]
            Therefore, we obtain
            \[
                [(K_\sgn)_{j+1}:\chi_{j-1}]+[(K_\sgn)_{j-1}:\chi_{j-1}] = \sum_{\lambda\in\irr{W}}[\chi_{j-1}\otimes\chi_1:\lambda][(K_\sgn)_{j}:\lambda].
            \]
            Since $[(K_\sgn)_{j-1}:\chi_{j-1}] \neq 0$ by the induction hypothesis, the left-hand side of the equality above is positive.
            Hence, there exists $\lambda\in\irr{W}$ such that $[\chi_{j-1}\otimes\chi_1:\lambda]\neq 0$ and $[(K_\sgn)_{j}:\lambda]\neq 0$.
            By Proposition \ref{prop:relationofreps} and \ref{prop:tensorofreps}, we obtain following semisimple decompositions:
            \[
                \chi_{j-1}\otimes\chi_1\simeq \begin{cases}
                    \triv \oplus \sgn \oplus \chi_j &(j=2),\\
                    \chi_{j-2}\oplus\chi_j &(2<j\leq C).
                \end{cases}
            \]
            By Lemma \ref{lem:gchK-upperbound}, we have 
            \[\gch F_\sgn^\sgn = [\sgn]+q^n[\triv] +\sum_{j=1}^\C(q^j + q^{n-j})[\chi_j].\]
            Clearly we have $j\neq 0$ and $j\neq n$.
            Hence, we obtain $[(K_\sgn)_{j}:\triv]=[(K_\sgn)_{j}:\sgn]=0$.
            Since we have $j\neq j-2$ and $j\leq C =\frac{n-1}{2} < n-(j-2) $, then it follows from \eqref{eq:quotient-ineq} that $[(K_\sgn)_{j}:\chi_{j-2}]\leq [(F_\sgn^\sgn)_j:\chi_{j-2}]=0$ if $j>2$ .
            Consequently, we have $[(K_\sgn)_{j}:\chi_j] \neq 0$ for $j=2,3,\ldots,C-1,C$.
            Then, we have $\chi_j\not\in D_\sgn$ for $j=2,3,\ldots,C-1,C$ by Corollary \ref{cor:ext-eliminated}.
            The coefficient of $q^{-2}$ in $\gep{K_\sgn}{L_\sgn}$ is 
            \[
                [(K_\sgn)_2:\sgn]+[(K_\sgn)_0:\triv]-[(K_\sgn)_1:\chi_1]
            \]  
            by Proposition \ref{prop:gEP-gch-Lmu} and Proposition \ref{prop:tensorofreps}.
            Since we have $[(K_\sgn)_2:\sgn]+[(K_\sgn)_0:\triv] = 0$ and $[(K_\sgn)_1:\chi_1]=1$, we obtain $[(K_\sgn)_2:\sgn]+[(K_\sgn)_0:\triv]-[(K_\sgn)_1:\chi_1]=-1<0$.
            In particular, we have $\Ext^1_A(K_\sgn,L_\sgn)_{-2}\neq 0$, and hence $\sgn\in D_\sgn$.
            By Lemma \ref{lem:triv-socle}, we have $[(K_\sgn)_{>0}:L_\triv]_q\neq 0$.
            Then, we conclude that $\triv$ does not belong to $D_\sgn$ by Corollary \ref{cor:ext-eliminated}.
            Consequently, we have $D_\sgn = \{\sgn\}$ if $\chi_1 \not\in D_\sgn$ holds.

            Fix an integer $i$ such that $1\leq i \leq \C$.
            We consider $D_{\chi_i}$ and $K_{\chi_i}$.
            If $(K_{\chi_i})_{>0}$ is zero, then we have $K_{\chi_i} = L_{\chi_i}$.
            In this case, $D_{\chi_i}$ is the set of irreducible additive summands of $\chi_1\otimes \chi_i$ by Corollary \ref{cor:Ext-LL}.
            By Proposition \ref{prop:relationofreps} and \ref{prop:tensorofreps}, we have
            \[
                \chi_1\otimes \chi_i = \begin{cases}
                    \triv\oplus\sgn\oplus\chi_2 &(i=1),\\
                    \chi_{i-1}\oplus\chi_{i+1} &(2\leq i \leq C).
                \end{cases}
            \]
            Therefore, we have
            \[
                D_{\chi_i} = \begin{cases}
                    \{\triv,\sgn,\chi_2\} &(i=1),\\
                    \{\chi_{i-1},\chi_{i+1}\} &(2\leq i \leq C).
                \end{cases}
            \]

            Next, we assume that a graded module $(K_{\chi_i})_{>0}$ is nonzero.
            Since $(P_{\chi_i})_{>0}$ is generated by the homogeneous part $(P_{\chi_i})_1$ of degree one, we have $(K_{\chi_i})_1 \neq 0$.
            Since the isotypic component of $(K_{\chi_i})_{>0}$ corresponding to $\chi_i$ is zero, we obtain $[(K_{\chi_i})_{>0}:L_{\chi_j}]_q=0$ for $i<j\leq C$ by Lemma \ref{lem:chain-2dimsimple}.
            By the condition \ref{item:triv-minimum} in Definition \ref{dfn:additional-relation}, we also have $[(K_{\chi_i})_{>0}:L_\sgn]_q=0$.
            Therefore, $(K_{\chi_i})_{>0}$ consists of its isotypic components of type $\chi_1,\chi_2,\ldots,\chi_{i-2},\chi_{i-1},$ and $\triv$.
            Moreover, these isotypic components are all nonzero.
            Indeed, we have $[(K_{\chi_i})_{>0}:L_\triv]_q\neq 0$ by assumption $(K_{\chi_i})_{>0}\neq 0$ and Lemma \ref{lem:triv-socle}.
            We see $[(K_{\chi_i})_{>0}:L_{\chi_j}]_q\neq 0$ for $j = 1,2,\ldots, i-1$.
            There is nothing to show for $i=1$, so let $i>1$.
            Then we have $(P_{\chi_i})_1\simeq \chi_1\otimes\chi_i$ and its semisimple decomposition $\chi_1\otimes\chi_i\simeq \chi_{i-1}\oplus\chi_{i+1}$. 
            If $i\neq C$, then we have $i<i+1\leq C$.
            In this case, we have already seen $[(K_{\chi_i})_{>0}:L_{\chi_{i+1}}]_q\neq 0$.
            If $i=C$, then $\chi_{i+1}=\chi_{C+1}\simeq \chi_C=\chi_i$ by Proposition \ref{prop:relationofreps}.
            Thus, whether $i=C$ or not, we have $[(K_{\chi_i})_{>0}:L_{\chi_{i+1}}]_q\neq 0$.
            Therefore, nonzero representation $(K_{\chi_i})_1$ is isomorphic to $\chi_{i-1}$.
            By Lemma \ref{lem:chain-2dimsimple}, we obtain $[(K_{\chi_i})_{>0}:L_{\chi_j}]_q\neq 0$ for $j = 1,2,\ldots, i-1$.
            Now we have 
            \[ 
                \gch F_{\chi_i}^{\sgn,\chi_i} = q^i[L_\triv] +\sum_{j=1}^i q^{i-j}[L_{\chi_j}] + \sum_{j=i+1}^\C(q^{j-i} + q^{n-j-i})[L_{\chi_j}] 
            \]
            by Lemma \ref{lem:gchK-upperbound}.
            In particular, each of irreducible representations $\chi_1,\chi_2,\ldots,\chi_{i-2},\chi_{i-1},$ and $\triv$ appears in $F_{\chi_i}^{\sgn,\chi_i}$ with multiplicity one.
            Since $K_{\chi_i}$ has nonzero isotypic component of type $\mu$ for each $\mu =\chi_1,\chi_2,\ldots,\chi_{i-2},\chi_{i-1},$ and $\triv$, we obtain $[K_{\chi_i}:L_\mu]_q = [F_{\chi_i}^{\sgn,\chi_i}:L_\mu]_q$.
            Clearly, we also have $[K_{\chi_i}:L_{\chi_i}]_q = 1$.
            Hence, we have $\gch K_{\chi_i} = q^i[L_\triv] +\sum_{j=1}^i q^{i-j}[L_{\chi_j}]$. 
            By Corollary \ref{cor:gch-standard-K_i} and Proposition \ref{lem:F-gch-identify}, we obtain that $K_{\chi_i}\simeq F_{\chi_i}^{\sgn,\chi_{i+1}}$.
            Therefore, $D_{\chi_i}$ be a subset of $\{\sgn,\chi_{i+1}\}$.
            By \ref{prop:gEP-gch-Lmu}, the coefficient of $q^{-i}$ in $\gep{F_{\chi_i}^{\sgn,\chi_{i+1}}}{L_\sgn}$ is $[(F_{\chi_i}^{\sgn,\chi_{i+1}})_i:\sgn] - [(F_{\chi_i}^{\sgn,\chi_{i+1}})_{i-1}:\chi_i] + [(F_{\chi_i}^{\sgn,\chi_{i+1}})_{i-2}:\triv] = 0-1+0=-1<0$.
            Thus, $\Ext^1_A (F_{\chi_i}^{\sgn,\chi_{i+1}},L_\sgn)_{-i}$ is nonzero, and hence, $\sgn$ belongs to $D_{\chi_i}$.
            Again by Proposition \ref{prop:gEP-gch-Lmu}, the coefficient of $q^{-1}$ in $\gep{F_{\chi_i}^{\sgn,\chi_{i+1}}}{L_\sgn}$ is $[(F_{\chi_i}^{\sgn,\chi_{i+1}})_1:\chi_{i+1}] - [(F_{\chi_i}^{\sgn,\chi_{i+1}})_0:\chi_i] - [(F_{\chi_i}^{\sgn,\chi_{i+1}})_0:\chi_{i+2}] + [(F_{\chi_i}^{\sgn,\chi_{i+1}})_{-1}:\chi_{i+1}] = 0-1-0+0=-1<0$.
            Then, $\Ext^1_A (F_{\chi_i}^{\sgn,\chi_{i+1}},L_{\chi_{i+1}})_{-1}$ is nonzero, and hence, $\chi_{i+1}$ belongs to $D_{\chi_i}$.
            Consequently, we have $D_{\chi_i}=\{\sgn,\chi_{i+1}\}$ if $(K_{\chi_i})_{>0}\neq 0$.
        \end{proof}

        \begin{prop}
            \label{prop:F-destination}
            Let $\lambda\in \irr{W}$ and let $D$ be a subset of $\irr{W}$ which is one on the list of $D_\lambda$ in Proposition \ref{prop:possible-D}.
            For $\mu\in \irr{W}$, we have $\Ext^1_A(F_\lambda^D,L_\mu)\neq 0$ if and only if $\mu\in D$ holds.
        \end{prop}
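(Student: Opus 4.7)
The plan is to exploit the fact that every projective resolution of $F_\lambda^D$ constructed in Lemmas \ref{lem:F_triv^j-resolution}, \ref{lem:F_sgn^sgn-resolution}, \ref{lem:F_i^sgn^j-resolution}, \ref{lem:F_C^sgn^C-resolution}, and \ref{lem:resofsimple} has all of its differentials given by multiplication by homogeneous elements of strictly positive degree. Since $L_\mu$ is concentrated in degree zero, the induced maps on $\Hom_A(-,L_\mu)$ all vanish, so
\[
    \Ext^1_A(F_\lambda^D,L_\mu) \simeq \Hom_A(P_1,L_\mu),
\]
where $P_1$ denotes the first projective term in the resolution. By Proposition \ref{prop:ghom-multi}, this is nonzero if and only if $P_\mu\gs{d}$ occurs as a direct summand of $P_1$ for some $d$.

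With this reduction in hand, I would then go case by case through the list in Proposition \ref{prop:possible-D}. For $F_\triv^{\chi_1}$, Lemma \ref{lem:F_triv^j-resolution} with $j=1$ gives $P_1 = P_{\chi_1}\gs{1}$, matching $D = \{\chi_1\}$. For $F_\sgn^\sgn$, Lemma \ref{lem:F_sgn^sgn-resolution} gives $P_1 = P_\sgn\gs{2} \oplus P_\sgn\gs{n}$, matching $D = \{\sgn\}$. For $F_{\chi_i}^{\sgn,\chi_{i+1}}$ with $1\le i<C$, Lemma \ref{lem:F_i^sgn^j-resolution} (with $j=i+1$) yields $P_1 = P_\sgn\gs{i}\oplus P_{\chi_{i+1}}\gs{1}$, matching $D = \{\sgn,\chi_{i+1}\}$. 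For the boundary case $i=C$, since $\chi_{C+1}\simeq\chi_C$ we instead use Lemma \ref{lem:F_C^sgn^C-resolution}, giving $P_1 = P_\sgn\gs{C}\oplus P_{\chi_C}\gs{1}$, matching $D = \{\sgn,\chi_{C+1}\}$.

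The remaining cases are those in which $F_\lambda^D = L_\lambda$: namely $F_\sgn^{\chi_1}$ (since $(P_\sgn)_1\simeq \chi_1$ generates $(P_\sgn)_{>0}$), $F_{\chi_1}^{\triv,\sgn,\chi_2}$, and $F_{\chi_i}^{\chi_{i-1},\chi_{i+1}}$ for $2\le i\le C$, where in each case $(P_{\chi_i})_1\simeq \chi_1\otimes \chi_i$ decomposes as a $W$-representation into precisely the irreducibles listed in $D$, so the submodule generated by these isotypic components coincides with $(P_{\chi_i})_{>0}$. For each of these, Corollary \ref{cor:Ext-LL} directly computes $\dim\Ext^1_A(L_\lambda,L_\mu)=[\chi_1\otimes\lambda:\mu]$, which is nonzero exactly for the $\mu$ appearing in the semisimple decomposition of $\chi_1\otimes\lambda$, again matching $D$.

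The hardest part is really just careful bookkeeping: one must verify in each of the listed resolutions that the explicit formulas for the differentials genuinely land in $(P_i)_{>0}$ (so that they become zero after applying $\Hom_A(-,L_\mu)$), and one must correctly handle the edge cases where $\chi_{C+1}\simeq\chi_C$ (forcing the use of Lemma \ref{lem:F_C^sgn^C-resolution} instead of Lemma \ref{lem:F_i^sgn^j-resolution}) and where $F_\lambda^D$ collapses to $L_\lambda$ (requiring a separate check via Corollary \ref{cor:Ext-LL} rather than the constructed resolutions). Once these cases are distinguished, the identification of $D$ with the set of $\mu$ for which $P_\mu\gs{d}$ appears in $P_1$ is immediate.
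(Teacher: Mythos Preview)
Your proposal is correct and takes essentially the same approach as the paper. The paper's proof is terser---it simply says the claim follows ``by inspecting the forms of these projective resolutions'' after listing the relevant lemmas---while you spell out explicitly that minimality of the resolutions (differentials landing in the radical) forces $\Ext^1_A(F_\lambda^D,L_\mu)\simeq\Hom_A(P_1,L_\mu)$; the only cosmetic difference is that the paper groups $(\triv,\{\chi_1\})$ with the $L_\lambda$ cases handled via Corollary~\ref{cor:Ext-LL}, whereas you treat it via Lemma~\ref{lem:F_triv^j-resolution} with $j=1$, which is the same Koszul resolution.
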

        \begin{proof}
            When $(\lambda,D) = (\triv,\{\chi_1\}), (\sgn,\{\chi_1\}) , (\chi_1,\{\chi_2,\triv,\sgn,\}),$ or $(\chi_i,\{\chi_{i+1},\chi_{i-1}\})$ for $i\leq 2\leq C$, a graded $A$-module $F_\lambda^D$ is isomorphic to $L_\lambda$.
            Then the assertion follows from Corollary \ref{cor:Ext-LL} and Proposition \ref{prop:tensorofreps}.
            When $(\lambda,D) = (\sgn,\{\sgn\})$, we obtain a projective resolution of $F_\sgn^D = F_\sgn^\sgn$ by Lemma \ref{lem:F_sgn^sgn-resolution}.
            By inspecting the form of this projective resolution, we can see that $\Ext^1_A(F_\sgn^\sgn,L_\mu)\neq 0$ if and only if $\mu=\sgn$.
            If $(\lambda,D) = (\chi_i,\{\chi_{i+1},\sgn\})$ for $1\leq i < C$, we obtain the projective resolution of $F_\lambda^D$ by Lemma \ref{lem:F_i^sgn^j-resolution}.
            When $(\lambda,D) = (\chi_C,\{\chi_{C+1},\sgn\})$ for $1\leq i < C$, we have $F_\lambda^D = F_{\chi_C}^{\chi_{C+1},\sgn}=F_{\chi_C}^{\chi_C,\sgn}$.
            Then we obtain the projective resolution of $F_{\chi_C}^{\chi_C,\sgn}$ by Lemma \ref{lem:F_C^sgn^C-resolution}.
            We can show the assertion by inspecting the forms of these projective resolutions.
        \end{proof}
        \begin{prop}
            \label{prop:F_sgn^sgn-repeat}
            A projective module $P_\sgn$ has a filtration which has the multiplicity of factors
            \[
                P_\sgn\filt \frac{1}{1-q^2}\cdot\frac{1}{1-q^n}[F_\sgn^\sgn].
            \]
        \end{prop}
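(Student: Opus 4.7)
The plan is to apply Lemma \ref{lem:repeatingfilt} twice and then splice the resulting filtrations together via Lemma \ref{lem:cnctfilts}. The two regular elements that govern the construction are $XY$ and $X^n+Y^n$, both of which lie in $S^W$: the first because $r\cdot XY = \zeta\zeta^{-1}XY = XY$ and $s\cdot XY = YX = XY$, the second because $\zeta^n = 1$ and $s$ swaps $X^n \leftrightarrow Y^n$. Being $W$-invariant, they induce graded $A$-module homomorphisms $P_\sgn\gs{2}\to P_\sgn$ and (after passing to a quotient) a map of degree $n$, via multiplication in the first tensor factor.

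First I would set $M \coloneqq P_\sgn/(XY\cdot P_\sgn)$, where $XY\cdot P_\sgn$ denotes the image of multiplication by $XY$, and observe (using that $P_\sgn\simeq S$ as an $S$-module and $S$ is a domain) that multiplication by $XY$ is injective on $P_\sgn$, so Lemma \ref{lem:repeatingfilt} gives
\[
    P_\sgn \filt \frac{1}{1-q^2}\cdot[M].
\]
Since $XY\in S^W$ and $b_\sgn$ is a one-dimensional $W$-eigenvector, the $A$-submodule $XY\cdot P_\sgn$ coincides with $\langle XY\otimes b_\sgn\rangle_A$; analogously, the image of $(X^n+Y^n)\cdot(-)$ in $M$ coincides with the image of $\langle (X^n+Y^n)\otimes b_\sgn\rangle_A$, so combining these two quotients recovers $F_\sgn^\sgn$ by Lemma \ref{lem:F_sgn^sgn-resolution}.

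The second step is to show that multiplication by $X^n+Y^n$ on $M\gs{n}\to M$ is injective, so that Lemma \ref{lem:repeatingfilt} can be applied again to obtain $M \filt \frac{1}{1-q^n}\cdot[F_\sgn^\sgn]$. This amounts to the commutative-algebra fact that $(XY, X^n+Y^n)$ is a regular sequence in $S=\cc[X,Y]$: since $S$ is a two-dimensional Cohen-Macaulay (indeed regular) ring and $XY$, $X^n+Y^n$ are coprime in the UFD $S$ (neither $X$ nor $Y$ divides $X^n+Y^n$ for $n\ge 1$), any ordering of these two elements is a regular sequence; equivalently, one checks directly that no nonzero class in $S/(XY) = \cc\langle 1\rangle\oplus\bigoplus_{i\ge 1}\cc X^i\oplus\bigoplus_{j\ge 1}\cc Y^j$ is annihilated by $X^n+Y^n$. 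This is the main technical step.

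Finally, I would merge the two nested filtrations: Lemma \ref{lem:repeatingfilt} applied to $P_\sgn$ produces a separable filtration of $P_\sgn$ whose successive factors are copies of $M\gs{2k}$, each of which is itself filtered by shifts of $F_\sgn^\sgn$. Since $F_\sgn^\sgn$ is finite-dimensional (its graded character computed in Lemma \ref{lem:gchK-upperbound} is a finite Laurent polynomial), condition \eqref{item:findimfacts} of Lemma \ref{lem:cnctfilts} applies, and concatenation produces an honest $F_\sgn^\sgn$-filtration of $P_\sgn$. A routine bookkeeping of multiplicities then yields the claimed formula
\[
    P_\sgn\filt \frac{1}{(1-q^2)(1-q^n)}\cdot[F_\sgn^\sgn],
\]
which is also consistent with the character identity $\gch P_\sgn = \frac{1}{(1-q^2)(1-q^n)}\gch F_\sgn^\sgn$ following from Corollary \ref{cor:gch-Psgn} and Lemma \ref{lem:gchK-upperbound}.
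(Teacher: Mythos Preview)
Your proposal is correct and follows essentially the same approach as the paper: two applications of Lemma \ref{lem:repeatingfilt} using the $W$-invariant elements $XY$ and $X^n+Y^n$, the coprimality argument for injectivity of the second multiplication, and Lemma \ref{lem:cnctfilts} to splice the filtrations. The only cosmetic difference is that you invoke Lemma \ref{lem:F_sgn^sgn-resolution} directly to identify the double quotient with $F_\sgn^\sgn$, whereas the paper recomputes the graded character and argues via isotypic components; both routes are equivalent.
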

        \begin{proof}
        There is an injective map
            \[
                \begin{matrix}
                    P_\sgn\gs{2} &\longrightarrow &P_\sgn\\
                        1\otimes b_\sgn &\longmapsto &XY\otimes b_\sgn.
                \end{matrix}
            \]
            This map is $A$-module homomorphism because $XY\in R$ is commutative with $A$-action.
            By Lemma \ref{lem:repeatingfilt}, we have 
            \[P_\sgn \filt \frac{1}{1-q^2} [P_\sgn/XY\cdot P_\sgn].\]
            In particular, we have $\gdim P_\sgn/(XY\cdot P_\sgn) = (1-q^2)\gdim P_\sgn = (1+q)/(1-q)$.

            We have a $A$-module homomorphism
            \[
                \begin{matrix}
                    (P_\sgn/(XY\cdot P_\sgn)) \gs{n} &\longrightarrow &P_\sgn/(XY\cdot P_\sgn)\\
                        1\otimes b_\sgn &\longmapsto &(X^n+Y^n)^j\otimes b_\sgn.
                \end{matrix}
            \]
            Since $S$ is an unique factorization domain and $XY,X^n+Y^n\in S$ are coprime, this homomorphism is injective.
            Thus, we obtain
            \[
                P_\sgn/(XY\cdot P_\sgn) \filt \frac{1}{1-q^n} \cdot [P_\sgn/(XY\cdot P_\sgn + (X^n+Y^n)\cdot P_\sgn)].     
            \]
            Therefore, we have $\gdim P_\sgn/(XY\cdot P_\sgn + (X^n+Y^n)\cdot P_\sgn) = (1-q^n)\gdim P_\sgn/XY\cdot P_\sgn = (1+q)(1+q+q^2+\cdots+q^{n-1})$.
            In particular, $P_\sgn/(XY\cdot P_\sgn + (X^n+Y^n)\cdot P_\sgn)$ is finite dimensional.
            By Lemma \ref{lem:cnctfilts}, we have
            \[
                P_\sgn\filt \frac{1}{(1-q^2)(1-q^n)}\cdot[P_\sgn/(XY\cdot P_\sgn + (X^n+Y^n)\cdot P_\sgn)].    
            \]
            Then we have 
            \begin{align}
                \gch P_\sgn/(XY\cdot P_\sgn + (X^n+Y^n)\cdot P_\sgn) &= (1-q^2)(1-q^n)\gch P_\sgn   \\
                    &= [L_\sgn]+q^n[L_\triv]+\sum_{i=1}^{n-1}q^i[L_{\chi_i}]
            \end{align}
            by Corollary \ref{cor:gch-Psgn}.
            Since $[P_\sgn/(XY\cdot P_\sgn + (X^n+Y^n)\cdot P_\sgn):L_\sgn]_q = 1$ holds, a submodule $(XY\cdot P_\sgn + (X^n+Y^n)\cdot P_\sgn)$ of $P_\sgn$ contains isotypic component of $(P_\sgn)_{\geq 1}$ associated to $\sgn$.
            Consequently, we have $P_\sgn/(XY\cdot P_\sgn + (X^n+Y^n)\cdot P_\sgn) \simeq F_\sgn^\sgn$ and 
            \[
                P_\sgn\filt \frac{1}{(1-q^2)(1-q^n)}\cdot[F_\sgn^\sgn].    
            \]
        \end{proof}

        \begin{thm}
            \label{thm:mainthm}
            Let $K=\{K_\lambda\}_{\lambda\in\irr{W}}$ be a family of graded $A$-modules.
            Suppose that $K$ satisfies the property in Definition \ref{dfn:additional-relation}.
            A family of graded modules $K$ is a Springer correspondence if and only if $K$ is one of the following:
            \begin{enumerate}
                \item $K_\triv= F_\triv^{\chi_1}$, $K_\sgn = F_\sgn^{\chi_1}$, $K_{\chi_1} = F_{\chi_1}^{\triv,\sgn,\chi_2}$, and $K_{\chi_i} = F_{\chi_i}^{\chi_{i-1},\chi_{i+1}}$ for $2\leq i\leq C$.
                \item $K_\triv= F_\triv^{\chi_1}$, $K_\sgn = F_\sgn^{\sgn}$, $K_{\chi_1} = F_{\chi_1}^{\triv,\sgn,\chi_2}$ or $F_{\chi_1}^{\sgn,\chi_2}$, and $K_{\chi_i} = F_{\chi_i}^{\chi_{i-1},\chi_{i+1}}$ or $F_{\chi_i}^{\sgn,\chi_{i+1}}$ for $2\leq i\leq C$.
            \end{enumerate}
        \end{thm}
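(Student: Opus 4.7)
My plan is to combine the structural results of Sections~\ref{sec:reciprocity} and~\ref{sec:dihedralgroup}. By Proposition~\ref{prop:I-Ext1}, $K_\lambda = F_\lambda^{D_\lambda}$ with $D_\lambda = \{\mu : \Ext^1_A(K_\lambda,L_\mu)\neq 0\}$, and Proposition~\ref{prop:possible-D} constrains each $D_\lambda$ to an explicit short list. The theorem therefore reduces to deciding which combinations of these choices across $\lambda\in\irr{W}$ can come from a single Springer correspondence, and verifying that each configuration in the statement actually does.

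For the necessity direction I argue that $D_\sgn=\{\chi_1\}$ (i.e.\ $K_\sgn = L_\sgn$) forces every $K_{\chi_i}$ to be simple, yielding case~(1). Suppose to the contrary that some $D_{\chi_j}$ is the ``$\sgn$-containing'' option in Proposition~\ref{prop:possible-D}. Chasing the arrows of $\precsim_K$ from Definition~\ref{dfn:precsimK} through the relations supplied by Proposition~\ref{prop:possible-D}, one finds that $\{\sgn,\chi_1,\ldots,\chi_C\}$ collapses into a single equivalence class in which $\chi_j$ is maximal, so $\widetilde{K}_{\chi_j} = P_{\chi_j}$. Corollary~\ref{cor:gch-standard-K_i} gives $[K_{\chi_j}:L_\triv]_q = q^j\neq 0$, so Proposition~\ref{prop:reciprocity} requires that any $\widetilde{K}$-filtration of $P_\triv$ contain $P_{\chi_j}\langle j\rangle$ as a factor. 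Comparing graded dimensions — the single summand $P_{\chi_j}\langle j\rangle$ already contributes $2q^j/(1-q)^2$ whereas $\gdim P_\triv = 1/(1-q)^2$ — yields a contradiction in low degrees. Hence when $D_\sgn=\{\chi_1\}$ every $D_{\chi_i}$ must be the ``simple'' option, giving case~(1). When instead $D_\sgn=\{\sgn\}$, i.e.\ $K_\sgn = F_\sgn^\sgn$, the remaining choices listed in Proposition~\ref{prop:possible-D} are mutually compatible and give case~(2).

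For the sufficiency direction I fix a configuration from the statement and take $\precsim$ to be the preorder $\precsim_K$. That the trace and strict trace quotient modules of this preorder recover the declared $F$-modules follows from Propositions~\ref{prop:recoverpreorder} and~\ref{prop:F-destination}. In case~(1) every $\widetilde{K}_\lambda = P_\lambda$ and $K_\lambda = L_\lambda$, reducing conditions (iii) and (iv) of Definition~\ref{dfn:setting-problem} to filtering each $P_\lambda$ by simples, which is automatic. In case~(2) I build the required filtrations by chaining Propositions~\ref{prop:F_triv^sgn-filt-F_triv^j-F_j-sgn}, \ref{prop:F_i^sgn-filt-F_i^sgn^j-F_j^sgn}, \ref{prop:F_C^sgn-filt-F_C^sgn^C}, and \ref{prop:F_sgn^sgn-repeat} via Lemma~\ref{lem:cnctfilts}: Proposition~\ref{prop:F_sgn^sgn-repeat} filters $P_\sgn$ by copies of $F_\sgn^\sgn = K_\sgn$; Proposition~\ref{prop:F_triv^sgn-filt-F_triv^j-F_j-sgn} expands $F_\triv^\sgn$ into an $F_\triv^{\chi_j}$ layer over $F_{\chi_j}^\sgn\langle j\rangle$, which is then broken further by iterating Proposition~\ref{prop:F_i^sgn-filt-F_i^sgn^j-F_j^sgn} and terminates at $F_{\chi_C}^\sgn$, itself resolved by Proposition~\ref{prop:F_C^sgn-filt-F_C^sgn^C}. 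Matching graded characters against Corollary~\ref{cor:gch-standard-K_i} and Lemma~\ref{lem:gchK-upperbound} establishes $(P_\lambda:\widetilde{K}_\mu)_q = [K_\mu:L_\lambda]_q$ as formal Laurent series, and Proposition~\ref{prop:verify-orth} then supplies the orthogonality condition.

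The main obstacle will be the case-by-case bookkeeping in case~(2): the $2^C$ independent choices of $D_{\chi_i}$ yield different preorders, and for each one the iterated filtration has to terminate with the correct factors carrying the correct grading shifts. Aligning the terminal step at $\chi_C$ — where $\chi_{C+1}\simeq\chi_C$ forces Proposition~\ref{prop:F_C^sgn-filt-F_C^sgn^C} in place of Proposition~\ref{prop:F_i^sgn-filt-F_i^sgn^j-F_j^sgn} — uniformly across all configurations is the most delicate technical point.
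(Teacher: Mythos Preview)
Your overall strategy matches the paper's: invoke Proposition~\ref{prop:possible-D} to pin down the possible $D_\lambda$, eliminate the incompatible combinations when $K_\sgn=F_\sgn^{\chi_1}$, and for each surviving configuration build the $\widetilde K$-filtrations of the $P_\lambda$ from Propositions~\ref{prop:F_triv^sgn-filt-F_triv^j-F_j-sgn}, \ref{prop:F_i^sgn-filt-F_i^sgn^j-F_j^sgn}, \ref{prop:F_C^sgn-filt-F_C^sgn^C}, \ref{prop:F_sgn^sgn-repeat}, then apply Proposition~\ref{prop:verify-orth}.

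Where you genuinely diverge is the elimination step. The paper does not use any graded-dimension comparison. Instead it observes that $K_\sgn=L_\sgn$ gives $\Ext^2_A(K_\sgn,L_\triv)\neq 0$, whence $\sgn\precsim_K\triv$ by Corollary~\ref{cor:higherK-relation}; then if $K_{\chi_j}=F_{\chi_j}^{\sgn,\chi_{j+1}}$ one gets $\chi_j\precsim_K\sgn\precsim_K\triv$, so $\triv\succsim_K\chi_j$ and the $\triv$-isotypic part of $(K_{\chi_j})_{>0}$ must vanish --- contradicting $[K_{\chi_j}:L_\triv]_q=q^j$ from Corollary~\ref{cor:gch-standard-K_i}. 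This is a two-line contradiction with no case split. Your route via $\widetilde K_{\chi_j}=P_{\chi_j}$ and subquotient dimensions is also valid (the inequality $2(m-j+1)\le m+1$ fails at $m=2j$, not ``in low degrees''), but it is longer and forces you to track whether $\triv$ lies in the big equivalence class.

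One citation to fix: invoking Proposition~\ref{prop:recoverpreorder} in the sufficiency direction is circular, since that proposition already assumes the three conditions of Definition~\ref{dfn:BHR-conditions}. What you actually need there is to compute $\precsim_K$ directly from Proposition~\ref{prop:F-destination}, read off the equivalence classes, and check by hand (using the explicit characters in Corollary~\ref{cor:gch-standard-K_i} and Lemma~\ref{lem:gchK-upperbound}) that the trace and strict trace quotients of $\precsim_K$ coincide with the declared $F$-modules. The paper does exactly this computation. Both the paper and your outline also silently rely on the $K$-filtration of each $\widetilde K_\lambda$ (condition~(iv)), needed to invoke Proposition~\ref{prop:verify-orth}; this is easy but should be stated.
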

        \begin{proof}
            At the beginning, we prove the ``only if'' part.
            Suppose that the conditions in Definition \ref{dfn:setting-problem} are satisfied.
            By Proposition \ref{prop:possible-D}, we have $K_\triv= F_\triv^{\chi_1}$, $K_\sgn = F_\sgn^{\sgn}$ or $F_\sgn^{\chi_1}$, $K_{\chi_1} = F_{\chi_1}^{\triv,\sgn,\chi_2}$ or $F_{\chi_1}^{\sgn,\chi_2}$, and $K_{\chi_i} = F_{\chi_i}^{\chi_{i-1},\chi_{i+1}}$ or $F_{\chi_i}^{\sgn,\chi_{i+1}}$ for $2\leq i\leq C$.
            Thus, it suffices to show that if $K_\sgn = F_\sgn^{\chi_1}$ then $K_{\chi_1}$ cannot be $F_{\chi_1}^{\sgn,\chi_2}$ and $K_{\chi_i}$ cannot be $F_{\chi_i}^{\sgn,\chi_{i+1}}$ for $2\leq i\leq C$.
            Since $F_\sgn^{\chi_1} = L_\sgn$, we have $\Ext^2_A(F_\sgn^{\chi_1},L_\triv)\neq 0$ by Corollary\ref{cor:Ext-LL}.
            Therefore, we have $\sgn \precsim_K \triv$ if $K_\sgn = F_\sgn^{\chi_1}$.
            
            Assume to the contrary that $K_{\chi_1} = F_{\chi_1}^{\sgn,\chi_2}$.
            Then, we have $\chi_1\precsim_K \sgn$ by Proposition \ref{prop:F-destination}, and hence, $\chi_1\precsim_K \triv$ holds.
            Therefore, the isotypic component of $K_{\chi_1}$ corresponding to $\triv$ must be zero.
            However, it follows from Corollary \ref{cor:gch-standard-K_i} that $[F_{\chi_1}^{\sgn,\chi_2}:L_\triv]_q = q$.
            This is a contradiction.
            Consequently, $K_{\chi_1}$ cannot be $F_{\chi_1}^{\sgn,\chi_2}$ if $K_\sgn = F_\sgn^{\chi_1}$.
            
            Fix an integer $i$ with $2\leq i\leq C$ and assume $K_{\chi_i} = F_{\chi_i}^{\sgn,\chi_{i+1}}$.
            In a manner similar to the case $K_{\chi_1} = F_{\chi_1}^{\sgn,\chi_2}$, we can show $\chi_i\precsim_K \triv$ and $[F_{\chi_i}^{\sgn,\chi_{i+1}}:L_\triv]_q\neq 0$.
            Therefore, $K_{\chi_i} \neq F_{\chi_i}^{\sgn,\chi_{i+1}}$ if $K_\sgn = F_\sgn^{\chi_1}$.

            In the second place, to show the ``if'' part, we verify that the conditions in Definition \ref{dfn:setting-problem} are indeed satisfied if $K=\{K_\lambda\}_{\lambda\in\irr{W}}$ is one in the above list.
            We define a graded $A$-module $\widetilde{K}_\lambda$ for each $\lambda\in\irr{W}$ by 
            \[
                \widetilde{K}_\lambda \simeq P_\lambda \left/ \sum_{\mu\succ_K\lambda,f\in\Hom_{A}(P_\mu,P_\lambda)_{>0}}\im\,f \right..
            \]

            We assume that $K_\triv= F_\triv^{\chi_1}$, $K_\sgn = F_\sgn^{\chi_1}$, $K_{\chi_1} = F_{\chi_1}^{\triv,\sgn,\chi_2}$, and $K_{\chi_i} = F_{\chi_i}^{\chi_{i-1},\chi_{i+1}}$ for $2\leq i\leq C$.
            By Proposition \ref{prop:relationofreps} and \ref{prop:tensorofreps}, it is straight forward to see that $(K_\lambda)_1 = 0$ for each $\lambda\in\irr{W}$.
            Thus, we have $K_\lambda = L_\lambda$ for $\lambda\in\irr{W}$.
            By Proposition \ref{prop:F-destination}, all irreducible representations are equivalent with respect to the preorder $\precsim_K$.
            Therefore, $\widetilde{K}_\lambda$ is isomorphic to a projective module $P_\lambda$ for each $\lambda\in\irr{W}$.
            In this case, the conditions in Definition \ref{dfn:setting-problem} are clearly satisfied.

            Let $l$ be a non-negative integer and let $0=a_0 < a_1 < a_2 < \cdots < a_l \leq C$ be an increasing sequence of integers.
            We set $K_\triv = F_\triv^{\chi_1}$, $K_\sgn = F_\sgn^{\sgn}$,
            \begin{align}
                K_{\chi_1} = \begin{cases}
                    F_{\chi_1}^{\sgn ,\chi_2} &(1\in\{a_1,a_2,\ldots,a_l\}),\\
                    F_{\chi_1}^{\triv,\sgn,\chi_2} &(1\not\in\{a_1,a_2,\ldots,a_l\}),
                \end{cases}
            \end{align}
            and 
            \begin{align}
                K_{\chi_i} = \begin{cases}
                    F_{\chi_i}^{\sgn ,\chi_{i+1}} &(i\in\{a_1,a_2,\ldots,a_l\}),\\
                    F_{\chi_i}^{\chi_{i-1},\chi_{i+i}} &(i\not\in\{a_1,a_2,\ldots,a_l\})
                \end{cases}
            \end{align}
            for $2\leq i\leq C$.
            By Proposition \ref{prop:F-destination}, we can illustrate the preorder $\precsim_K$ as follows:
            \[
                \begin{pmatrix} (\triv)  \\\prec_K (\chi_{a_1} \sim_K \chi_2 \sim_K \cdots \sim_K \chi_{a_2-1}) \\
                                     \prec_K (\chi_{a_2} \sim_K \chi_{a_2+1} \sim_K \cdots \sim_K \chi_{a_3-1})\\
                                    \vdots\\
                                     \prec_K (\chi_{a_{l-1}} \sim_K \chi_{a_{l-1}+1} \sim_K \cdots \sim_K \chi_{a_l-1})\\
                                     \prec_K (\chi_{a_l} \sim_K \chi_{a_l+1} \sim_K \cdots \sim_K \chi_{C}) \\
                                     \prec_K (\sgn)
                \end{pmatrix} \qquad\qquad(a_1 = 1),\\
            \]   
            \[  
            \begin{pmatrix}(\triv \sim_K \chi_1 \sim_K \chi_2 \sim_K \cdots \sim_K \chi_{a_1-1}) \\
                \prec_K (\chi_{a_1} \sim_K \chi_{a_1+1} \sim_K \cdots \sim_K \chi_{a_2-1})\\
                \vdots\\
                \prec_K (\chi_{a_{l-1}} \sim_K \chi_{a_{l-1}+1} \sim_K \cdots \sim_K \chi_{a_l-1})\\
                \prec_K (\chi_{a_l} \sim_K \chi_{a_l+1} \sim_K \cdots \sim_K \chi_{C}) \\\prec_K (\sgn)
            \end{pmatrix} \qquad(a_1 \neq 1 \textrm{ or } l=0).
            \]
    
            Therefore, we have $\widetilde{K}_\sgn = P_\sgn$, 
            \[
                \widetilde{K}_\triv = \begin{cases}
                    F_\triv^{\chi_{a_1}} &(l\geq 1),\\
                    F_\triv^{\sgn}      &(l=0),
                \end{cases}
                \textrm{ and }
                \widetilde{K}_{\chi_i} = \begin{cases}
                    F_{\chi_i}^{\sgn,\chi_{a_k}} &(a_{k-1}\leq i < a_k),\\
                    F_{\chi_i}^{\sgn} &( a_l\leq i \textrm{ or } l=0).
                \end{cases}
            \]

            Clearly, $P_\sgn = \widetilde{K}_\sgn$ is filtered by $\widetilde{K}=\{\widetilde{K}_\mu\}_{\mu\in\irr{W}}$.
            By Lemma \ref{lem:F_triv^sgn-resolution}, we have $P_\triv \filt  [F_\triv^\sgn] + q^n[P_\sgn]$.
            When $l=0$, we have $\widetilde{K}_\triv = F_\triv^\sgn$ and $\widetilde{K}_\sgn = P_\sgn$.
            Hence, $P_\triv$ is filtered by $\widetilde{K}$ if $l=0$.
            If $l\geq 1$, we obtain $P_\triv \filt  [F_\triv^{\chi_{a_1}}]+ q^{a_1}[F_{\chi_{a_1}}^\sgn] + q^n[P_\sgn]$ by Lemma \ref{lem:cnctfilts} and Proposition \ref{prop:F_triv^sgn-filt-F_triv^j-F_j-sgn}.
            Applying Lemma \ref{lem:cnctfilts} and Proposition \ref{prop:F_i^sgn-filt-F_i^sgn^j-F_j^sgn} repeatedly, we obtain
            \[
                P_\triv \filt  [F_\triv^{\chi_{a_1}}]+ \sum_{j=1}^{l-1} q^{a_j}[F_{\chi_{a_j}}^{\sgn,\chi_{a_{j+1}}}] + q^{a_l} [F_{\chi_{a_l}}^\sgn] + q^n[P_\sgn].
            \]
            Thus, $P_\triv$ is filtered by $\widetilde{K}$ even if $l\geq 1$.
            
            By Lemma \ref{lem:F_i^sgn-resolution}, we have $P_{\chi_i}\filt [F_{\chi_i}^{\sgn}]+(q^i+q^{n-i})[P_\sgn]$ for $1\leq i\leq C$.
            Therefore, $P_{\chi_i}$ is filtered by $\widetilde{K}$ if $l=0$ or $a_l\leq i$.
            When $a_l > i$, there exists some positive integer $k$ such that $a_{k-1}\leq i<a_k$.
            Then, we obtain 
            \[
                P_{\chi_i}\filt [F_{\chi_i}^{\sgn,\chi_{a_k}}] + \sum_{j=k}^{l-1} q^{a_j-i}[F_{\chi_{a_j}}^{\sgn,\chi_{a_{j+1}}}]+ q^{a_l-i}[F_{\chi_{a_l}}^\sgn]+(q^i+q^{n-i})[P_\sgn]
            \]
            by applying Lemma \ref{lem:cnctfilts} and Proposition \ref{prop:F_i^sgn-filt-F_i^sgn^j-F_j^sgn} repeatedly.
            Therefore, $P_{\chi_i}$ is filtered by $\widetilde{K}$ whether there exists $k$ as above or not.
            Consequently, the condition \eqref{item:PKtildefilter} is satisfied.

            By Corollary \ref{cor:gch-standard-K_i} we have  
            \[
                \gch K_\triv = [L_\triv], \quad \gch K_{\chi_1} =  \begin{cases}
                        [L_{\chi_1}]+q^1[L_\triv] &(1\in\{a_1,a_2,\ldots,a_l\}),\\
                        [L_{\chi_1}] &(1\not\in\{a_1,a_2,\ldots,a_l\}),
                    \end{cases}
            \]
            and
            \[
                \gch K_{\chi_i} = \begin{cases}
                    q^i[L_\triv] + \sum_{j=1}^i q^{i-j}[L_{\chi_j}] &(i\in\{a_1,a_2,\ldots,a_l\}),\\
                    [L_{\chi_i}] &(i\not\in\{a_1,a_2,\ldots,a_l\})
                \end{cases}
            \]
            for $2\leq i\leq C$.
            We also have  
            \[
                \gch K_\sgn = \gch F_\sgn^\sgn = [L_\sgn]+q^n[L_\triv]+\sum_{j=1}^{C}(q^j+q^{n-j})[L_{\chi_j}]
            \]
            by Lemma \ref{lem:gchK-upperbound}.
            Hence, we obtain
            \[
              [K_\triv:L_\mu]_q = \begin{cases} 
                1 &(\mu=\triv),\\
                 0 &(\mu\neq \triv),\end{cases}  \qquad
              [K_\sgn:L_\mu]_q = \begin{cases} 
                1 &(\mu=\sgn),\\
                q^j+q^{n-j} &(\mu = \chi_j \textrm{ for } 1\leq j\leq C),\\
                q^n &(\mu=\triv),\end{cases}
            \]
            \[
            \textrm{and } [K_{\chi_i}:L_\mu]_q = \begin{cases} 
                1 &(\mu=\chi_i),\\ 
                q^{i-j} &(\mu = \chi_j \textrm{ for } 1\leq j < i \textrm{ and } i\in\{a_1,a_2,\ldots,a_l\}),\\
                q^i &(\mu = \triv \textrm{ and } i\in \{a_1,a_2,\ldots,a_l\}),\\
                0 &(otherwise)\end{cases}
            \]
            for $1\leq i\leq C$.
            Then, for each $\lambda,\mu\in\irr{W}$, the number of factors in $\widetilde{K}$-filtration of $P_\lambda$ which is isomorphic to $\widetilde{K}_\mu\gs{d}$ is the same as the multiplicity of $\lambda$ in $(K_{\mu})_d$ by inspection.
            Hence, by Proposition \ref{prop:verify-orth}, $\widetilde{K}$ and $K$ satisfies the orthogonality condition.
            This completes the proof.
        \end{proof}
        In view of Proposition \ref{prop:lessrelations}, a family of modules $K=\{K_\lambda\}_{\lambda\in\irr{W}}$ may be obtained from different preorders; that is, there may exist a preorder $\precsim$ containing much more relations than $\precsim_K$ such that its trace quotient modules are $\{K_\lambda\}$.
        However, the following Corollary shows that $\precsim_K$ is a unique preorder whose trace quotient module is $K$ if $K$ is the one given in Theorem \ref{thm:mainthm}. 
        \begin{cor}
            \label{cor:preorder-uniqueness}
            Let $K=\{K_\lambda\}_{\lambda\in\irr{W}}$ be one of the families of modules given in Theorem \ref{thm:mainthm}.
            Let $\lambda,\mu \in \irr{W}$ such that $\lambda \not\precsim_K \mu$ holds.
            There exists $\lambda',\mu'\in\irr{W}$ such that $\lambda'\sim_K \lambda$, $\mu'\sim_K \mu$ and $[K_{\lambda'}:L_{\mu'}]_q\neq 0$ hold. 
        \end{cor}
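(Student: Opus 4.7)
The plan is to proceed by case analysis on the families $K$ listed in Theorem \ref{thm:mainthm}, using the explicit graded characters established in its proof. In Case (1) of that theorem the argument showed $\widetilde{K}_\lambda \simeq P_\lambda$ for every $\lambda \in \irr{W}$, so all irreducibles are $\sim_K$-equivalent and the hypothesis $\lambda \not\precsim_K \mu$ is never satisfied; the statement is vacuous there, and only Case (2) needs a positive construction.

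In Case (2), I would first recall the description of $\precsim_K$ produced in the proof of Theorem \ref{thm:mainthm}: it is a total preorder whose equivalence classes form a linear chain, with a bottom class containing $\triv$, intermediate classes of the form $\{\chi_{a_k}, \chi_{a_k+1}, \ldots, \chi_{a_{k+1}-1}\}$ for $1 \leq k \leq l$ (with the convention $a_{l+1} = C+1$), and top class $\{\sgn\}$. Given $\mu \prec_K \lambda$, I plan to set $\mu' = \mu$ and to choose $\lambda'$ to be the smallest-index representative of $\lambda$'s class---namely $\lambda' = \sgn$ when $\lambda \sim_K \sgn$, and $\lambda' = \chi_{a_k}$ when $\lambda$ lies in the class starting at $\chi_{a_k}$. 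Note that if $\lambda$ lies in the bottom class, then no $\mu$ with $\mu \prec_K \lambda$ exists, so this sub-case does not arise.

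The verification that $[K_{\lambda'}:L_\mu]_q \neq 0$ is then an appeal to the explicit graded characters computed in the proof of Theorem \ref{thm:mainthm}. Lemma \ref{lem:gchK-upperbound} gives $\gch K_\sgn = [L_\sgn] + q^n[L_\triv] + \sum_{j=1}^C (q^j+q^{n-j})[L_{\chi_j}]$, which contains every simple with nonzero coefficient and handles the case $\lambda' = \sgn$. Corollary \ref{cor:gch-standard-K_i} gives $\gch K_{\chi_{a_k}} = q^{a_k}[L_\triv] + \sum_{j=1}^{a_k} q^{a_k-j}[L_{\chi_j}]$, so the support of $K_{\chi_{a_k}}$ is exactly $\{\triv, \chi_1, \ldots, \chi_{a_k}\}$; since the strict predecessors of $\lambda'$'s class are all contained in $\{\triv, \chi_1, \ldots, \chi_{a_k-1}\}$, this covers the intermediate-class case.

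I expect no serious obstacle beyond organising the case distinctions and citing the characters produced earlier. The only point deserving any care is the observation that, within an intermediate class, it is precisely the smallest-indexed representative $\chi_{a_k}$---the one for which $K_{\chi_i}$ takes the ``large'' form $F_{\chi_i}^{\sgn,\chi_{i+1}}$ rather than reducing to $L_{\chi_i}$---whose $K$-character reaches all strictly smaller classes; but this asymmetry is already built into the classification provided by Theorem \ref{thm:mainthm}.
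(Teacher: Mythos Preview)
Your proposal is correct and follows essentially the same route as the paper: both arguments dispose of Case~(1) as vacuous, and in Case~(2) both pick $\lambda'$ to be the canonical representative $\chi_{a_k}$ (or $\sgn$) of the $\sim_K$-class of $\lambda$ and then read off $[K_{\lambda'}:L_{\mu'}]_q\neq 0$ from the explicit graded characters in Corollary~\ref{cor:gch-standard-K_i} and Lemma~\ref{lem:gchK-upperbound}. The only cosmetic difference is that the paper also replaces $\mu$ by its class representative $\mu'\in\{\triv,\chi_{a_1},\ldots,\chi_{a_l},\sgn\}$, whereas you keep $\mu'=\mu$; since the support of $K_{\chi_{a_k}}$ is all of $\{\triv,\chi_1,\ldots,\chi_{a_k}\}$, your choice works just as well.
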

        \begin{proof}
            If $K=\{L_\lambda\}_{\lambda\in\irr{W}}$, then we have $\lambda \precsim_K \mu$ for all $\lambda,\mu\in\irr{W}$.
            In this case, there is nothing to prove.
            Let $1 \leq a_1 < a_2 < \cdots < a_l \leq C$ be an increasing sequence of integers borrowed from the proof of Theorem \ref{thm:mainthm}.
            We have $K_\triv = F_\triv^{\chi_1}$, $K_\sgn = F_\sgn^{\sgn}$,
            \begin{align}
                K_{\chi_1} = \begin{cases}
                    F_{\chi_1}^{\sgn ,\chi_2} &(1\in\{a_1,a_2,\ldots,a_l\}),\\
                    F_{\chi_1}^{\triv,\sgn,\chi_2} &(1\not\in\{a_1,a_2,\ldots,a_l\}),
                \end{cases}
            \end{align}
            and 
            \begin{align}
                K_{\chi_i} = \begin{cases}
                    F_{\chi_i}^{\sgn ,\chi_{i+1}} &(i\in\{a_1,a_2,\ldots,a_l\}),\\
                    F_{\chi_i}^{\chi_{i-1},\chi_{i+i}} &(i\not\in\{a_1,a_2,\ldots,a_l\})
                \end{cases}
            \end{align}
            for $2\leq i\leq C$.
            There is a complete system of representatives of an equivalent relation $\sim_K$ given by
            \[
                \triv \succ_K \chi_{a_1} \succ_K \chi_{a_2} \succ_K \cdots \succ_K \chi_{a_l} \succ_K \sgn.    
            \]
            We choose $\lambda'$ and $\mu'$ from the set $\mathcal{S} = \{\triv, \chi_{a_1}, \chi_{a_2},\ldots, \chi_{a_l}, \sgn\}$.
            It suffices to show that $[K_{\lambda'}:L_{\mu'}]_q\neq 0$ if $\lambda' \not\precsim_K \mu'$.
            Since we have formulas of graded characters
            \[
                \gch F_{\chi_i}^{\sgn,\chi_{i+1}} = q^i[L_\triv] +\sum_{j=1}^i q^{i-j}[L_{\chi_j}]
            \]
            and 
            \[    
                \gch F_\sgn^\sgn = [L_\sgn]+q^n[L_\triv]+\sum_{j=1}^C(q^j+q^{n-j})[L_{\chi_j}],
            \]
            we find that $[K_{\lambda'}:L_{\mu'}]_q\neq 0$ for $\lambda' \not\precsim_K \mu'$ by inspection.
            This completes the proof.
        \end{proof}
        By Corollary \ref{cor:preorder-uniqueness}, we cannot add relations to the preorder $\precsim_K$ without changing the trace quotient modules.
        In particular, a preorder $\precsim_K$ is a unique preorder whose trace quotient modules are $K=\{K_\lambda\}_{\lambda\in\irr{W}}$.

        \bibliographystyle{plain}
        \bibliography{reflist}
    
\end{document}